\title{A Foundation for Synthetic Algebraic Geometry}
\author{Felix Cherubini, Thierry Coquand and Matthias Hutzler}
\begin{document}

\maketitle

\begin{abstract}
  This is a foundation for algebraic geometry, developed internal to the Zariski topos, building on the work of Kock and Blechschmidt (\cite{kock-sdg}[I.12], \cite{ingo-thesis}).
  The Zariski topos consists of sheaves on the site opposite to the category of finitely presented algebras over a fixed ring, with the Zariski topology, i.e.\ generating covers are given by localization maps $A\to A_{f_1}$ for finitely many elements $f_1,\dots,f_n$ that generate the ideal $(1)=A\subseteq A$.
  We use homotopy type theory together with three axioms as the internal language of a (higher) Zariski topos.

  One of our main contributions is the use of higher types -- in the homotopical sense -- to define and reason about cohomology.
  Actually computing cohomology groups, seems to need a principle along the lines of our ``Zariski local choice'' axiom,
  which we justify as well as the other axioms using a cubical model of homotopy type theory.
\end{abstract}

\tableofcontents

\section*{Introduction}

Algebraic geometry is the study of solutions of polynomial equations using methods from geometry.
The central geometric objects in algebraic geometry are called \emph{schemes}.
Their basic building blocks are called \emph{affine schemes},
where, informally, an affine scheme corresponds to a solution sets of polynomial equations.
While this correspondence is clearly visible in the functorial approach to algebraic geometry and our synthetic approach,
it is somewhat obfuscated in the most commonly used, topological approach.

In recent years,
computer formalization of the intricate notion of affine schemes
received some attention as a benchmark problem
-- this is, however, \emph{not} a problem addressed by this work.
Instead, we use a synthetic approach to algebraic geometry,
very much alike to that of synthetic differential geometry.
This means, while a scheme in classical algebraic geometry is a complicated compound datum,
we work in a setting, based on homotopy type theory, where schemes are types,
with an additional property that can be defined within our synthetic theory.

Following ideas of Ingo Blechschmidt and Anders Kock  (\cite{ingo-thesis}, \cite{kock-sdg}[I.12]),
we use a base ring $R$ which is local and satisfies an axiom reminiscent of the Kock-Lawvere axiom.
This more general axiom is called \emph{synthetic quasi coherence (SQC)} by Blechschmidt and
a version quantifying over external algebras is called the \emph{comprehensive axiom}\footnote{
  In \cite{kock-sdg}[I.12], Kock's ``axiom $2_k$'' could equivalently be Theorem 12.2,
  which is exactly our synthetic quasi coherence axiom, except that it only quantifies over external algebras.
}
by Kock.
The exact concise form of the SQC axiom we use, was noted by David Jaz Myers in 2018 and communicated to the first author.

Before we state the SQC axiom, let us take a step back and look at the basic objects of study in algebraic geometry,
solutions of polynomial equations.
Given a system of polynomial equations
\begin{align*}
  p_1(X_1, \dots, X_n) &= 0\rlap{,} \\
  \vdots\quad\quad\;\;   \\
  p_m(X_1, \dots, X_n) &= 0\rlap{,}
\end{align*}
the solution set
$\{\, x : R^n \mid \forall i.\; p_i(x_1, \dots, x_n) = 0 \,\}$
is in canonical bijection to the set of $R$-algebra homomorphisms
\[ \Hom_{\Alg{R}}(R[X_1, \dots, X_n]/(p_1, \dots, p_m), R) \]
by identifying a solution $(x_1,\dots,x_n)$ with the homomorphism that maps each $X_i$ to $x_i$.
Conversely, for any $R$-algebra $A$ which is merely of the form $R[X_1, \dots, X_n]/(p_1, \dots, p_m)$,
we define the \emph{spectrum} of $A$ to be
\[
  \Spec A \colonequiv \Hom_{\Alg{R}}(A, R)
  \rlap{.}
\]
In contrast to classical, non-synthetic algebraic geometry,
where this set needs to be equipped with additional structure,
we postulate axioms that will ensure that $\Spec A$ has the expected geometric properties.
Namely, SQC is the statement that, for all finitely presented $R$-algebras $A$, the canonical map
  \begin{align*}
    A&\xrightarrow{\sim} (\Spec A\to R) \\
    a&\mapsto (\varphi\mapsto \varphi(a))
  \end{align*}
is an equivalence.
A prime example of a spectrum is $\A^1\colonequiv \Spec R[X]$,
which turns out to be the underlying set of $R$.
With the SQC axiom,
\emph{any} function $f:\A^1\to \A^1$ is given as a polynomial with coefficients in $R$.
In fact, all functions between affine schemes are given by polynomials.
Furthermore, for any affine scheme $\Spec A$,
the axiom ensures that
the algebra $A$ can be reconstructed as the algebra of functions $\Spec A \to R$,
therefore establishing a duality between affine schemes and algebras.

The Kock-Lawvere axiom used in synthetic differential geometry
might be stated as the SQC axiom restricted to (external) \emph{Weil-algebras},
whose spectra correspond to pointed infinitesimal spaces.
These spaces can be used in both synthetic differential and algebraic geometry
in very much the same way.

In the accompanying formalization \cite{formalization} of some basic results,
we use a setup which was already proposed by David Jaz Myers
in a conference talk (\cite{myers-talk1, myers-talk2}).
On top of Myers' ideas,
we were able to define schemes, develop some topological properties of schemes,
and construct projective space.

An important, not yet formalized result
is the construction of cohomology groups.
This is where the \emph{homotopy} type theory really comes to bear --
instead of the hopeless adaption of classical, non-constructive definitions of cohomology,
we make use of higher types,
for example the $n$-th Eilenberg-MacLane space $K(R,n)$ of the group $(R,+)$.
As an analogue of classical cohomology with values in the structure sheaf,
we then define cohomology with coefficients in the base ring as:
\[
  H^n(X,R):\equiv \propTrunc{X\to K(R,n)}_0
  \rlap{.}
\]
This definition is very convenient for proving abstract properties of cohomology.
For concrete calculations we make use of another axiom,
which we call \emph{Zariski-local choice}.
While this axiom was conceived of for exactly these kind of calculations,
it turned out to settle numerous questions with no apparent connection to cohomology.
One example is the equivalence of two notions of \emph{open subspace}.
A pointwise definition of openness was suggested to us by Ingo Blechschmidt and
is very convenient to work with.
However, classically, basic open subsets of an affine scheme are given
by functions on the scheme and the corresponding open is morally the collection of points where the function does not vanish.
With Zariski-local choice, we were able to show that these notions of openness agree in our setup.

Apart from SQC, locality of the base ring $R$ and Zariski-local choice,
we only use homotopy type theory, including univalent universes, truncations and some very basic higher inductive types.
Roughly, Zariski-local choice states that any surjection into an affine scheme merely has sections on a \emph{Zariski}-cover.%
\footnote{It is related to the set-theoretic axiom called
\emph{axiom of multiple choice} (AMC) \cite{vandenberg-moerdijk-amc} or \emph{weakly initial set of covers axiom} (WISC):
the set of all Zariski-covers of an affine scheme is weakly initial among all covers.
However, our axiom only applies to (affine) schemes, not all types or sets.}
The latter, internal, notion of cover corresponds quite directly to the covers in the site of the \emph{Zariski topos},
which we use to construct a model of homotopy type theory with our axioms.

More precisely, we can use the \emph{Zariski topos} over any base ring.
Toposes built using other Grothendieck topologies, like for example the étale topology, are not compatible with Zariski-local choice.
We did not explore whether an analogous setup can be used for derived algebraic geometry%
\footnote{Here, the word ``derived'' refers to the rings the algebraic geometry is built up from --
  instead of the 0-truncated rings we use, ``derived'' algebraic geometry would use simplicial or spectral rings.
  Sometimes, ``derived'' refers to homotopy types appearing in ``the other direction'', namely as the values of the sheaves that are used.
  In that direction, our theory is already derived, since we use homotopy type theory.
  Practically that means that we expect no problems when expanding our theory of synthetic schemes to what classic algebraic geometers
  call ``stacks''.
}
-- meaning that the 0-truncated rings we used are replaced by higher rings.
This is only because for a derived approach, we would have to work with higher monoids, which is currently infeasible
-- we are not aware of any obstructions for, say, an SQC axiom holding in derived algebraic geometry.

In total, the scope of our theory so far includes quasi-compact, quasi-separated schemes of finite type over an arbitrary ring.
These are all finiteness assumptions, that were chosen for convenience and include examples like closed subspaces of projective space,
which we want to study in future work, as example applications.
So far, we know that basic internal constructions, like affine schemes, correspond to the correct classical external constructions.
This can be expanded using our model, which is of course also important to ensure the consistency of our setup.

\section*{Formalization}
There is a related formalization project, which, at the time of writing,
contains the construction of projective $n$-space $\bP^n$ as a scheme.
The code may be found here:
\begin{center}
  \url{https://github.com/felixwellen/synthetic-geometry}
\end{center}
It makes extensive use of the algebra part of the cubical-agda library:
\begin{center}
  \url{https://github.com/agda/cubical}
\end{center}
-- which contains many contributions, in particular,
on finitely presented algebras and related concepts,
which where made in the scope of that project.

\section*{Acknowledgements}
We use work from Ingo Blechschmidt's PhD thesis, section 18 as a basis.
This includes in particular the synthetic quasi-coherence axiom and the assumption that the base ring is local.
David Jaz Myers had the idea to use Blechschmidt's ideas in homotopy type theory
and presented his ideas 2019 at the workshop ``Geometry in Modal Homotopy Type Theory'' in Pittsburgh.
Myers ideas include the algebra-setup we used in our formalization.

In December 2022, there was a mini-workshop in Augsburg, which helped with the development of this work.
We thank Jonas Höfer and Lukas Stoll for spotting a couple of small errors.

\section{Preliminaries}
\subsection{Subtypes and Logic}

We use the notation $\exists_{x:X}P(x)\colonequiv \propTrunc{\sum_{x:X}P(x)}$.
We use $+$ for the coproduct of types and for types $A,B$ we write
\[ A\vee B\colonequiv \propTrunc{ A+B }\rlap{.}\]

We will use subtypes extensively.

\begin{definition}
  \index{$\subseteq$}
  Let $X$ be a type.
  A \notion{subtype} of $X$ is a function $U:X\to\Prop$ to the type of propositions.
  We write $U\subseteq X$ to indicate that $U$ is as above.
  If $X$ is a set, a subtype may be called \notion{subset} for emphasis.
  For subtypes $A,B\subseteq X$, we write $A\subseteq B$ as a shorthand for pointwise implication.
\end{definition}

We will freely switch between subtypes $U:X\to\Prop$ and the corresponding embeddings
\[
  \begin{tikzcd}
    \sum_{x:X}U(x) \ar[r,hook] & X
  \end{tikzcd}
  \rlap{.}
\]
In particular, if we write $x:U$ for a subtype $U:X\to\Prop$, we mean that $x:\sum_{x:X}U(x)$ -- but we might silently project $x$ to $X$.

\begin{definition}
  Let $I$ and $X$ be types and $U_i:X\to\Prop$ a subtype for any $i:I$.
  \begin{enumerate}[(a)]
  \item The \notion{union} $\bigcup_{i:I}U_i$\index{$\bigcup_{i:I}U_i$} is the subtype $(x:X)\mapsto \exists_{i:I}U_i(x)$.
  \item The \notion{intersection} $\bigcap_{i:I}U_i$\index{$\bigcap_{i:I}U_i$} is the subtype $(x:X)\mapsto\prod_{i:I}U_i(x)$.
  \end{enumerate}
\end{definition}

We will use common notation for finite unions and intersections.
The following formula hold:

\begin{lemma}
  Let $I$, $X$ be types, $U_i:X\to\Prop$ a subtype for any $i:I$ and $V,W$ subtypes of $X$.
  \begin{enumerate}[(a)]
  \item Any subtype $P:V\to\Prop$ is a subtype of $X$ given by $(x:X)\mapsto\sum_{x:V}P(x)$.
  \item $V\cap \bigcup_{i:I} U_i=\bigcup (V\cap U_i)$.
  \item If $\bigcup_{i:I}U_i=X$ we have $V=\bigcup_{i:I}U_i\cap V$.
  \item If $\bigcup_{i:I}U_i=\emptyset$, then $U_i=\emptyset$ for all $i:I$.
  \end{enumerate}
\end{lemma}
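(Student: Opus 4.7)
The plan is to verify each part by routine propositional reasoning about subtypes, unfolding the definitions of $\bigcup$ and $\bigcap$ pointwise.

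For (a), the first task is to check that the proposed expression really defines a subtype, i.e.\ a function into $\Prop$. I read $(x:X)\mapsto \sum_{x:V}P(x)$ as the fiber of the composite $\sum_{v:V} P(v) \to V \hookrightarrow X$ over $x$, or equivalently as $\sum_{v:V}(\pr_1(v)=x)\times P(v)$. Since $V\hookrightarrow X$ is an embedding (its fibers $V(x)$ are propositions) and $P$ is pointwise propositional, this dependent sum is propositional: if $x$ lies in $V$ then the $V$-component is contractible to a canonical $v$ with $\pr_1(v)=x$, and $P(v)$ is a proposition; otherwise the whole sum is empty.

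For (b), I would prove two pointwise implications. Forward: given $x$ with $V(x)$ and $\exists_{i:I}U_i(x)$, the goal $\exists_{i:I}(V(x)\times U_i(x))$ is a proposition, so by truncation elimination I may assume a concrete $i$ with $U_i(x)$ and pair it with $V(x)$. Backward: given $\exists_{i:I}(V(x)\times U_i(x))$, the first projection $V(x)$ is a proposition so it is extracted through the truncation, and discarding the $V$-component inside the truncation yields $\exists_{i:I}U_i(x)$.

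Parts (c) and (d) then fall out almost immediately. For (c), by (b) we have $\bigcup_{i:I}(U_i\cap V)=V\cap \bigcup_{i:I}U_i=V\cap X=V$, where the middle equality uses the hypothesis $\bigcup_{i:I}U_i=X$. For (d), fix $i:I$ and $x:X$; if $U_i(x)$ holds then so does $\exists_{j:I}U_j(x)$ by taking $j\colonequiv i$, but this is the hypothesis $\bigcup_{j:I}U_j(x)=\emptyset$, a contradiction. Hence $U_i(x)$ is empty for every $x$.

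None of the steps presents a real obstacle; the only mild subtlety is parsing (a) correctly, since the displayed formula reuses the bound variable $x$ and must be interpreted as fiber-formation along the inclusion $V\hookrightarrow X$ in order to land in $\Prop$.
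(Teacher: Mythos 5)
The paper states this lemma without giving a proof, so there is nothing to compare against; it is left as routine. Your argument is correct: parts (b)–(d) are exactly the expected pointwise propositional reasoning, and in (a) you correctly resolve the (admittedly unfortunate) reuse of the bound variable $x$ by reading the formula as the fiber $\sum_{p:V(x)}P(x,p)$ of $\sum_{v:V}P(v)\to V\hookrightarrow X$ over $x$, which is a proposition since a dependent sum of propositions over a proposition is a proposition.
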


\begin{definition}
  Let $X$ be a type.
  \begin{enumerate}[(a)]
  \item $\emptyset\colonequiv (x:X)\mapsto \emptyset$. \index{$\emptyset$}
  \item For $U\subseteq X$, let $\neg U\colonequiv (x:X)\mapsto \neg U(x)$. \index{$\neg U$}
  \item For $U\subseteq X$, let $\neg\neg U\colonequiv (x:X)\mapsto \neg\neg U(x)$. \index{$\neg\neg U$}
  \end{enumerate}
\end{definition}

\begin{lemma}
  $U=\emptyset$ if and only if $\neg\left(\exists_{x:X}U(x)\right)$.
\end{lemma}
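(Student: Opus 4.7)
The plan is to unfold both sides and exhibit the usual constructive equivalence between a universally quantified negation and the negation of the corresponding existential, then invoke propositional and functional extensionality for the subtype equality.

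First, I would observe that by function extensionality and propositional extensionality, the equality of subtypes $U = \emptyset$ is equivalent to the pointwise statement $\prod_{x:X} (U(x) \leftrightarrow \bot)$, which, since implication from $\bot$ is automatic, reduces further to $\prod_{x:X} \neg U(x)$. So the task becomes showing
\[ \Bigl(\prod_{x:X} \neg U(x)\Bigr) \;\longleftrightarrow\; \neg \bigl\lVert \textstyle\sum_{x:X} U(x) \bigr\rVert\rlap{.} \]

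For the forward direction, assume $f : \prod_{x:X} \neg U(x)$ and suppose given $p : \lVert \sum_{x:X} U(x) \rVert$. Since the goal $\bot$ is a proposition, I may apply the recursion principle of propositional truncation to $p$ and receive an honest pair $(x, u)$ with $u : U(x)$; applying $f(x)(u)$ yields $\bot$.

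For the backward direction, assume $g : \neg \lVert \sum_{x:X} U(x) \rVert$ and let $x:X$ and $u : U(x)$ be given; then $\lvert (x,u) \rvert$ inhabits $\lVert \sum_{x:X} U(x) \rVert$ and $g$ applied to it yields $\bot$, so $\neg U(x)$ holds. I do not expect any genuine obstacle here; the only subtle point is remembering that the subtype equality $U = \emptyset$ is not definitional but must be obtained through funext and propext applied pointwise.
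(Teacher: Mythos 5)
Your proof is correct, and it is the standard argument: reduce subtype equality to pointwise negation via function and propositional extensionality, then use that $\bot$ is a proposition to eliminate the truncation in one direction and introduce it in the other. The paper states this lemma without a proof, evidently regarding it as routine, so there is nothing to compare against; your write-up would serve.
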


\subsection{Homotopy type theory}

Our truncation levels start at $-2$, so $(-2)$-types are contractible, $(-1)$-types are propositions and $0$-types are sets.

\begin{definition}%
  Let $X$ and $I$ be types.
  A family of propositions $U_i:X\to\Prop$ \notion{covers} $X$,
  if for all $x:X$, there merely is a $i:I$ such that $U_i(x)$.
\end{definition}

\begin{lemma}%
  \label{kraus-glueing}
  Let $X$ and $I$ be types.
  For propositions $(U_i:X \to \Prop)_{i:I}$ that cover $X$ and $P:X\to \nType{0}$, we have the following glueing property: \\
  If for each $i:I$ there is a dependent function $s_i:(x:U_i)\to P(x)$ together with
  proofs of equality on intersections $p_{ij}:(x:U_i\cap U_j)\to (s_i(x)=s_j(x))$,
  then there is a globally defined dependent function $s:(x:X) \to P(x)$,
  such that for all $x:X$ and $i:I$ we have $U_i(x) \to s(x)=s_i(x)$
\end{lemma}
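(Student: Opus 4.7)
The plan is to construct $s(x)$ pointwise by eliminating out of the propositional truncation supplied by the cover, using the fact that $P(x)$ is a set. The key tool is Kraus's theorem: a weakly constant function into a set factors through its propositional truncation. Concretely, for each $x:X$ I will package the data $(s_i, p_{ij})$ into such a weakly constant function on $\sum_{i:I} U_i(x)$.

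Fix $x:X$ and set $A_x \colonequiv \sum_{i:I} U_i(x)$. The covering hypothesis gives an element $q_x : \propTrunc{A_x}$. Define $f_x : A_x \to P(x)$ by $(i,u) \mapsto s_i(x,u)$, where I silently use that an element of the subtype $U_i$ is a pair $(x,u)$ with $u : U_i(x)$. To apply Kraus's factorization I must check that $f_x$ is weakly constant, i.e.\ that $f_x(i,u) = f_x(j,v)$ for all $(i,u),(j,v):A_x$. But $(x,(u,v))$ witnesses membership of $x$ in $U_i \cap U_j$, so $p_{ij}(x,(u,v))$ provides exactly the equality $s_i(x,u) = s_j(x,v)$.

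Since $P(x)$ is a $0$-type and $f_x$ is weakly constant, Kraus's theorem yields a (unique) factorization $\bar f_x : \propTrunc{A_x} \to P(x)$ with $\bar f_x(|(i,u)|) = s_i(x,u)$. Define $s(x) \colonequiv \bar f_x(q_x)$. The required compatibility condition is then immediate: given $x:X$ with some $u : U_i(x)$, the element $(i,u)$ of $A_x$ gives $q_x = |(i,u)|$ in the proposition $\propTrunc{A_x}$, and hence $s(x) = \bar f_x(q_x) = \bar f_x(|(i,u)|) = s_i(x,u)$.

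The main subtlety is really just the invocation of Kraus's factorization lemma for weakly constant maps into sets; everything else is straightforward manipulation of subtypes and propositions. A minor point worth checking carefully is that the weak constancy proof genuinely uses only the hypotheses $p_{ij}$ on binary intersections (not triple compatibilities): this is the case precisely because the target is a set, so no higher coherence data is needed. If one wanted to generalize $P$ to a family of $1$-types or higher, the statement would need additional cocycle conditions and a substantially different proof.
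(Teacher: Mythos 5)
Your proof is correct and follows essentially the same route as the paper: both construct $s$ pointwise by applying Kraus's factorization lemma (a weakly constant map into a set factors through the propositional truncation of its domain) to the map $\sum_{i:I} U_i(x) \to P(x)$ determined by the $s_i$, with weak constancy witnessed by the $p_{ij}$. Your write-up is simply more explicit about the weak-constancy check and the final compatibility verification than the paper's terse version.
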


\begin{proof}
  We define $s$ pointwise.
  Let $x:X$.
  Using a Lemma of Kraus\footnote{For example this is the $n=-1$ case of \cite{dagstuhl-kraus}[Theorem 2.1].}
  and the $p_{ij}$, we get a factorization
  \[ \begin{tikzcd}[row sep=0mm]
    \sum_{i:I} U_i(x) \ar[rr, "s_{\pi_1(\_)}(x)"]\ar[rd] & & P(x) \\
    & \propTrunc{\sum_{i:I} U_i(x)}_{-1}\ar[ru,dashed] &
  \end{tikzcd} \]
-- which defines a unique value $s(x):P(x)$.
\end{proof}

Similarly we can prove.

\begin{lemma}%
  \label{kraus-glueing-1-type}
  Let $X$ and $I$ be types.
  For propositions $(U_i:X \to \Prop)_{i:I}$ that cover $X$ 
  and $P:X\to \nType{1}$, we have the following glueing property: \\
  If for each $i:I$ there is a dependent function $s_i:(x:U_i)\to P(x)$ together with
  proofs of equality on intersections $p_{ij}:(x:U_i\cap U_j)\to (s_i(x)=s_j(x))$ satisfying the cocycle
  condition $p_{ij}\cdot p_{jk} = p_{ik}$.
  then there is a globally defined dependent function $s:(x:X) \to P(x)$,
  such that for all $x:X$ and $i:I$ we have $p_i:U_i(x) \to s(x)=s_i(x)$ such that $p_i\cdot p_{ij} = p_j$.
\end{lemma}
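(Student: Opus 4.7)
The plan is to proceed in exact parallel with the proof of Lemma \ref{kraus-glueing}, replacing the $n = -1$ case of Kraus's factorization theorem by the $n = 0$ case. Going up one truncation level on the target side requires one extra layer of coherence on the local data, and this is precisely the role of the new cocycle hypothesis.

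Concretely, fix $x : X$. As before, the local sections assemble into a map $f_x : \sum_{i:I} U_i(x) \to P(x)$ sending $(i, u)$ to $s_i(x)$. The $p_{ij}(x)$ supply the first layer of coherence needed to factor $f_x$ through $\propTrunc{\sum_{i:I} U_i(x)}_{-1}$, and the cocycle condition $p_{ij} \cdot p_{jk} = p_{ik}$ supplies the second. Since $P(x)$ is a 1-type, the $n = 0$ instance of \cite{dagstuhl-kraus}[Theorem 2.1] applies and yields a unique dashed factorization
\[ \begin{tikzcd}[row sep=0mm]
    \sum_{i:I} U_i(x) \ar[rr, "s_{\pi_1(\_)}(x)"]\ar[rd] & & P(x) \\
    & \propTrunc{\sum_{i:I} U_i(x)}_{-1}\ar[ru,dashed] &
\end{tikzcd} \]
from which we read off the value $s(x) : P(x)$. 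The factorization evaluated at each $(i, u)$ produces the required equality $p_i(u) : s(x) = s_i(x)$, and compatibility of the factorization with the edge $p_{ij}(x)$ translates directly into the triangle identity $p_i \cdot p_{ij} = p_j$.

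The main obstacle is confirming that the $n = 0$ case of Kraus's theorem takes as input exactly the data we have -- local values $s_i(x)$, pairwise equalities $p_{ij}(x)$, and the triangle cocycle -- and that no further higher coherence is required. This hinges on $P(x)$ being a 1-type, so that any prospective $3$-cell coherence automatically trivializes, and the recursion principle for the propositional truncation terminates at the triangle level. Once this is in place, pointwise application of Kraus's construction defines $s$, and the promised equalities $p_i$ and their compatibility with $p_{ij}$ are exactly the components that the factorization spells out.
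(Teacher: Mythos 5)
Your proposal is correct and matches the paper's intended approach: the paper gives no written-out proof, only the remark that this is proved ``similarly'' to \Cref{kraus-glueing}, i.e.\ by a pointwise application of the next case of Kraus's factorization theorem, which is exactly what you carry out. You correctly identify the cocycle condition $p_{ij}\cdot p_{jk}=p_{ik}$ as the one extra layer of coherence required by the $n=0$ case when the target is a $1$-type, and that the resulting factorization triangle supplies both the witnesses $p_i$ and the compatibility $p_i\cdot p_{ij}=p_j$.
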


This can be generalized to $\nType{k}$ for each {\em external} $k$.

The condition for $\nType{0}$ can be seen as an internal version of the usual patching {\em sheaf} condition.
The condition for $\nType{1}$ is then the internal version of the usual patching {\em $1$-stack} condition.

\subsection{Algebra}

\begin{definition}%
  \label{local-ring}
  A commutative ring $R$ is \notion{local} if $1\neq 0$ in $R$ and
  if for all $x,y:R$ such that $x+y$ is invertible, $x$ is invertible or $y$ is invertible.
\end{definition}

\begin{definition}%
  Let $R$ be a commutative ring.
  A \notion{finitely presented} $R$-algebra is an $R$-algebra $A$,
  such that there merely are natural numbers $n,m$ and polynomials $f_1,\dots,f_m:R[X_1,\dots,X_n]$
  and an equivalence of $R$-algebras $A\simeq R[X_1,\dots,X_n]/(f_1,\dots,f_m)$.
\end{definition}

\begin{definition}%
  \label{regular-element}
  Let $A$ be a commutative ring.
  An element $r:A$ is \notion{regular},
  if the multiplication map $r\cdot\_:A\to A$ is injective.
\end{definition}

\begin{lemma}%
  \label{units-products-regular}
  Let $A$ be a commutative ring.
  \begin{enumerate}[(a)]
  \item All units of $A$ are regular.
  \item If $f$ and $g$ are regular, their product $fg$ is regular.
  \end{enumerate}
\end{lemma}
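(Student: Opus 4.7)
The plan is to unfold the definition of regular (multiplication being injective) and argue directly in each case.

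For part (a), suppose $u : A$ is a unit with inverse $u^{-1}$. I would observe that the map $u \cdot \_ : A \to A$ has a two-sided inverse given by $u^{-1} \cdot \_$, since $u^{-1}(ux) = x$ and $u(u^{-1}x) = x$ by associativity and commutativity. Hence $u \cdot \_$ is a bijection, in particular injective, so $u$ is regular.

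For part (b), suppose $f$ and $g$ are regular. I would rewrite the multiplication map as a composition:
\[
fg \cdot \_ \;=\; (f \cdot \_) \circ (g \cdot \_),
\]
using associativity of multiplication in $A$. Since both $f \cdot \_$ and $g \cdot \_$ are injective by assumption, their composition is injective, which exactly says $fg$ is regular.

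Neither part has a genuine obstacle; the only thing to be careful about is simply citing the right lemma, namely that composition of injections is an injection, and writing the computation that identifies $(fg)\cdot x$ with $f \cdot (g \cdot x)$. The whole proof should fit in a few lines.
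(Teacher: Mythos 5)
Your proof is correct, and since the paper states this lemma without proof (it is elementary), there is nothing to compare against: unfolding the definition of regularity as injectivity of the multiplication map, noting that a unit gives a bijection, and writing $fg\cdot\_ = (f\cdot\_)\circ(g\cdot\_)$ to reduce part (b) to closure of injections under composition is exactly the intended argument.
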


\begin{example}
  The monomials $X^k:A[X]$ are regular.
\end{example}

\begin{lemma}%
  \label{polynomial-with-regular-value-is-regular}
  Let $f : A[X]$ be a polynomial
  and $a : A$ an element
  such that $f(a) : A$ is regular.
  Then $f$ is regular as an element of $A[X]$.
\end{lemma}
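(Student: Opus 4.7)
The plan is to reduce to the case $a = 0$ by a change of variable, then use induction on the (formal) degree of a would-be annihilator. We must show that the multiplication map $f \cdot \_ : A[X] \to A[X]$ is injective, i.e.\ that $fg = 0$ implies $g = 0$.

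First I would reduce to the case $a = 0$. The substitution $X \mapsto X + a$ is an automorphism of $A[X]$ (with inverse $X \mapsto X - a$), so it preserves both regularity and the zero polynomial. Setting $\tilde f(X) \colonequiv f(X+a)$ and $\tilde g(X) \colonequiv g(X+a)$, we have $\tilde f \tilde g = 0$ iff $fg = 0$, and $\tilde f(0) = f(a)$ is regular. So it suffices to prove the statement under the assumption $a = 0$, i.e.\ that $f(0)$ is regular.

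Now write $f = c_0 + X f_1$ with $c_0 \colonequiv f(0)$ regular. Given any $g = \sum_{i=0}^n g_i X^i$ with $fg = 0$, I would prove $g = 0$ by induction on the formal degree bound $n$. The base case $n = -1$ (i.e.\ $g = 0$) is trivial. For the inductive step, reading off the constant term of $fg$ gives $c_0 g_0 = 0$, so regularity of $c_0$ forces $g_0 = 0$. Then $g = X g'$ for some $g' = \sum_{i=0}^{n-1} g_{i+1} X^i$, and from $fg = X f g' = 0$ together with regularity of the monomial $X$ (the Example just above), we conclude $fg' = 0$. By the induction hypothesis $g' = 0$, hence $g = 0$.

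I do not expect a real obstacle here; the only mild subtlety is that ``degree'' is not a well-behaved constructive invariant of a polynomial, which is why I induct on a chosen \emph{formal} representation $\sum_{i=0}^n g_i X^i$ rather than on the true degree. The two non-trivial inputs used are the regularity of monomials (Example above Lemma~\ref{polynomial-with-regular-value-is-regular}) and Lemma~\ref{units-products-regular}, whose content is implicit in the observation that cancelling by $X$ preserves the equation $fg' = 0$.
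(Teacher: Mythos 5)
Your proof is correct and follows essentially the same route as the paper's: reduce to $f(0)$ regular via the substitution $X\mapsto X+a$, read off $g(0)=0$ from the constant term, and induct to kill all coefficients of $g$. You spell out more explicitly what the paper leaves terse — that the induction is on a formal degree bound, and that the step down from $g$ to $g'$ uses regularity of the monomial $X$ to cancel in $X\cdot fg'=0$ — but this is the same argument, just with the details made precise.
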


\begin{proof}
  After a variable substitution $X \mapsto X + a$
  we can assume that $f(0)$ is regular.
  Now let $g : A[X]$ be given with $fg = 0$.
  Then in particular $f(0) g(0) = 0$,
  so $g(0) = 0$.
  By induction,
  all coefficients of $g$ vanish.
\end{proof}

\begin{definition}
  Let $A$ be a ring and $f:A$.
  Then $A_{f}$ denotes the \notion{localization} of $A$ at $f$,
  i.e. a ring $A_f$ together with a homomorphism $A\to A_f$,
  such that for all homomorphisms $\varphi:A\to B$ such that
  $\varphi(f)$ is invertible, there is a unique homomorphism as indicated in the diagram:
  \begin{center}
    \begin{tikzcd}
      A\ar[r]\ar[rd,"\varphi",swap] & A_f\ar[d,dashed] \\
      & B
    \end{tikzcd}
    \rlap{.}
  \end{center}
  For $a:A$, we denote the image of $a$ in $A_f$ as $\frac{a}{1}$ and the inverse of $f$ as $\frac{1}{f}$.
\end{definition}

\begin{lemma}%
  \label{fg-ideal-local-global}
  Let $A$ be a commutative ring and $f_1,\dots,f_n:A$.
  For finitely generated ideals $I_i\subseteq A_{f_i}$,
  such that $A_{f_if_j}\cdot I_i=A_{f_if_j}\cdot I_j$ for all $i,j$,
  there is a finitely generated ideal $I\subseteq A$,
  such that $A_{f_i}\cdot I=I_i$ for all $i$.
\end{lemma}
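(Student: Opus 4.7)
The plan is to construct $I$ explicitly, as the ideal generated by suitably rescaled lifts of generators of each $I_i$. First, for each $i$, I pick finitely many elements $g_{i,1},\dots,g_{i,k_i}\in A$ whose images generate $I_i$ in $A_{f_i}$. This is always possible: any generator of $I_i$ has the form $a/f_i^n$ with $a\in A$, and since $f_i$ is a unit in $A_{f_i}$, the principal ideal $(a/f_i^n)$ coincides with $(a/1)$, so denominators can be cleared.

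I then extract numerical data from the compatibility hypothesis. Fix $i\neq k$ and $l$. The element $g_{i,l}/1$ lies in $I_i$, hence in $A_{f_if_k}\cdot I_i$, which by hypothesis equals $A_{f_if_k}\cdot I_k$. Viewing $A_{f_if_k}$ as $(A_{f_k})_{f_i}$, the usual clearing-denominators property of localization provides an integer $M_{i,k,l}\ge 0$ with $f_i^{M_{i,k,l}}\cdot g_{i,l}/1\in I_k$ inside $A_{f_k}$. Set $N_i\colonequiv\max_{k\neq i,\,l}M_{i,k,l}$ and define
\[
  I\colonequiv \bigl(\,f_i^{N_i}g_{i,l}\mid 1\le i\le n,\ 1\le l\le k_i\,\bigr)\subseteq A\rlap{,}
\]
which is finitely generated by construction.

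It then remains to check $A_{f_k}\cdot I=I_k$ for each $k$. The inclusion $\subseteq$ is immediate: a generator $f_i^{N_i}g_{i,l}/1$ of $A_{f_k}\cdot I$ either lies in $I_k$ because $i=k$ (so $g_{k,l}/1\in I_k$ already), or it lies in $I_k$ by the defining property $N_i\ge M_{i,k,l}$ when $i\neq k$. Conversely, since $f_k^{N_k}$ is a unit in $A_{f_k}$, the particular subset of generators $\{f_k^{N_k}g_{k,l}/1\}_l$ already spans $I_k=(g_{k,l}/1)_l$ inside $A_{f_k}$.

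The main subtlety, and the reason to depart from the naive guess $I\colonequiv(g_{i,l})_{i,l}$, is the rescaling by $f_i^{N_i}$. For $n=2$ the naive choice is essentially adequate, but for $n\ge 3$ it is not: in general $g_{i,l}/1$ need not lie in $I_k$ for $k\neq i$, only a sufficiently high $f_i$-power multiple does. Multiplying $g_{i,l}$ by $f_i^{N_i}$ costs nothing in $A_{f_i}$ (unit factor), yet it is precisely what the compatibility uses to guarantee that the resulting lift also lies in every other $I_k$.
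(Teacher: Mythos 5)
Your proof is correct and takes essentially the same approach as the paper: choose lifts of generators of each $I_i$, observe that the compatibility hypothesis forces $f_i$-power multiples of these lifts into each $I_k$, and rescale by a sufficiently high power of $f_i$ (harmless in $A_{f_i}$ since $f_i$ is a unit there) before assembling $I$. Your write-up is somewhat more careful about the uniform bound $N_i=\max_{k\neq i,l}M_{i,k,l}$, which the paper leaves implicit in its phrase ``we can repeat this for all $i$ and $k$.''
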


\begin{proof}
  Choose generators 
  \[ \frac{g_{i1}}{1},\dots,\frac{g_{ik_i}}{1} \]
  for each $I_i$.
  These generators will still generate $I_i$, if we multiply any of them with any power of the unit $\frac{f_i}{1}$.
  Now
  \[ A_{f_if_j}\cdot I_i\subseteq A_{f_if_j}\cdot I_j \]
  means that for any $g_{ik}$, we have a relation
  \[ (f_if_j)^l g_{ik}=\sum_{l}h_{l}g_{jl}\]
  for some power $l$ and coefficients $h_{l}:A$.
  This means, that $f_i^lg_{ik}$ is contained in $I_j$.
  Multiplying $f_i^lg_{ik}$ with further powers of $f_i$ or multiplying $g_{jl}$ with powers of $f_j$ does not change that.
  So we can repeat this for all $i$ and $k$ to arrive at elements $\tilde{g_{ik}}:A$,
  which generate an ideal $I\subseteq A$ with the desired properties.
\end{proof}

The following definition also appears as \cite{ingo-thesis}[Definition 18.5]
and a version restricted to external finitely presented algebras was already used by Anders Kock in \cite{kock-sdg}[I.12]:

\begin{definition}
  \label{spec}
  The \notion{(synthetic) spectrum}\index{$\Spec A$} of a finitely presented $R$-algebra $A$
  is the set of $R$-algebra homomorphisms from $A$ to $R$:
  \[ \Spec A \colonequiv \Hom_{\Alg{R}}(A, R) \]
\end{definition}

We write $\A^n$ for $\Spec R[X_1, \dots, X_n]$,
which is canonically in bijection with $R^n$
by the universal property of the polynomial ring.
In particular,
$\A^1$ is (in bijection with) the underlying set of $R$.
Our convention is to use the letter $R$
when we think of it as an algebraic object,
and to write $\A^1$ (or $\A^n$) when we think of it as a set or a geometric object.

The $\Spec$ construction is functorial:

\begin{definition}
  \label{spec-on-maps}
  For an algebra homomorphism $f:\Hom_{\Alg{R}}(A,B)$
  between finitely presented $R$-algebras $A$ and $B$,
  we write \notion{$\Spec f$} for the map from $\Spec B$ to $\Spec A$
  given by precomposition with $f$.
\end{definition}

\begin{definition}%
  \label{standard-open-subset}
  Let $A$ be a finitely presented $R$-algebra.
  For $f:A$, the \notion{standard open subset} given by $f$,
  is the subtype 
  \[
    D(f)\colonequiv (x:\Spec A)\mapsto (x(f)\text{ is invertible})
    \rlap{.}
  \]
\end{definition}

later, we will use the following more general and related definitions:

\begin{definition}
  \label{open-closed-affine-subsets}
  Let $A$ be a finitely presented $R$-algebra.
  For $n:\N$ and $f_1,\dots,f_n:A$, there are
  \begin{enumerate}[(i)]
  \item the ``open'' subset
    \[
      D(f_1,\dots,f_n)\colonequiv (x:\Spec A)\mapsto (\text{$\exists_i$ such that $x(f_i)$ is invertible})
    \]  
  \item the ``closed'' subset
    \[
      V(f_1,\dots,f_n)\colonequiv (x:\Spec A)\mapsto (\forall_i\ x(f_i)=0)
    \]  
  \end{enumerate}
  It will be made precise in \Cref{topology-of-schemes}, in which sense these subsets are open or closed.
\end{definition}

We will later also need the notion of a \emph{Zariski-Cover} of a spectrum $\Spec A$,
for some finitely presented $R$-algebra $A$.
Intuitively, this is a collection of standard opens which jointly cover $\Spec A$.
Since it is more practical, we will however stay on the side of algebras.
A finite list of elements $f_1,\dots,f_n:A$ yields a Zariski-Cover,
if and only if they are a \emph{unimodular vector}:

\begin{definition}
  \label{unimodular}
  Let $A$ be a finitely presented $R$-algebra.
  Then a list $f_1,\dots,f_n:A$ of elements of $A$ is called \notion{unimodular}
  if we have an identity of ideals $(f_1,\dots,f_n)=(1)$.
  We use $\Um(A)$\index{$\Um(A)$} to denote the type of unimodular sequences in $A$:
  \[
    \Um(A)\colonequiv \sum_{n:\N}\sum_{f_1,\dots,f_n:A} (f_1,\dots,f_n)=(1)
    \rlap{.}
  \]
  We will sometimes drop the natural number and the equality and just write $(f_1,\dots,f_n):\Um(A)$.
\end{definition}

\begin{definition}
  $\AbGroup$\index{$\AbGroup$} denotes the type of abelian groups.
\end{definition}

\begin{lemma}%
  \label{surjective-abgroup-hom-is-cokernel}
  Let $A,B:\AbGroup$ and $f:A\to B$ be a homomorphism of abelian groups.
  Then $f$ is surjective, if and only if, it is a cokernel.
\end{lemma}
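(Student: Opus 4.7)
The plan is to prove the two directions separately. For the forward direction ($f$ surjective implies $f$ is a cokernel), I would exhibit $f$ as the cokernel of the inclusion $i : \ker f \hookrightarrow A$. The composite $f \circ i$ vanishes by definition of $\ker f$. To verify the universal property, let $h : A \to C$ be any homomorphism with $h \circ i = 0$. Uniqueness of a factorization $\tilde h : B \to C$ with $\tilde h \circ f = h$ is immediate from surjectivity of $f$: two homomorphisms into a set agreeing after precomposition with a surjection are equal. For existence, I want to set $\tilde h(b) := h(a)$ for any $a$ with $f(a) = b$; the value is independent of the chosen preimage because any two preimages differ by an element of $\ker f$, on which $h$ vanishes. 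Verifying that $\tilde h$ is a group homomorphism is then routine, using surjectivity to pick preimages of any pair of elements of $B$ and transporting the homomorphism property of $h$.

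For the converse, suppose $f : A \to B$ is the cokernel of some $g : K \to A$. Let $I$ denote the image of $f$ as a subgroup of $B$, and let $\pi : B \to B/I$ be the quotient homomorphism. Then both $\pi$ and the zero map $0_{B \to B/I}$ have the property that their composites with $f$ are the zero map $A \to B/I$ (the first because every $f(a)$ lies in $I$, the second trivially). Thus both are factorizations through $f$ of the zero homomorphism $A \to B/I$, which itself satisfies $0 \circ g = 0$. The uniqueness clause of the cokernel universal property forces $\pi = 0_{B \to B/I}$, hence $B/I = 0$, i.e.\ $I = B$, so $f$ is surjective.

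The main obstacle is the forward direction: in HoTT we cannot simply pick a preimage $a$ from the mere proposition $\exists_{a:A}\, f(a) = b$. However, the assignment $a \mapsto h(a)$ is constant on the fiber of $f$ over $b$, and since $C$ is a set, a Kraus-style factorization through the propositional truncation — exactly of the kind used in the proof of \Cref{kraus-glueing} — yields a well-defined value $\tilde h(b) : C$. Once $\tilde h$ is defined pointwise this way, the identity $\tilde h(f(a)) = h(a)$ holds by construction and can be used to transfer the group structure verification from $h$ to $\tilde h$.
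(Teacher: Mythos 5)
Your forward direction is essentially the paper's: the key obstacle (cannot pick a preimage from mere existence) is handled by observing that $a \mapsto h(a)$ is constant on fibers of $f$ and so, with $C$ a set, factors through the truncation. The paper packages this as ``the type $\sum_{z:C}\exists_{y:A}(f(y)=x) \wedge g(y)=z$ is a proposition, hence contractible once inhabited,'' which is just the Kraus argument you cite spelled out concretely.

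Your converse is a genuinely different route. The paper argues concretely: a cokernel is (by construction in HoTT) a set-quotient by an effective relation, and set-quotient projections are surjective -- one line. You instead run the abstract categorical argument: take $I = \im f$, observe that both the quotient map $\pi : B \to B/I$ and the zero map factor the zero homomorphism $A \to B/I$ through $f$, and invoke uniqueness in the cokernel universal property to conclude $\pi = 0$, hence $B/I = 0$. This is correct and has the merit of using only the universal property rather than how cokernels happen to be realized; but in exchange you need $\im f$ and the quotient $B/I$ to exist as abelian groups, which in HoTT again means a set-quotient construction -- so you haven't avoided the machinery, only deferred it. Both proofs are fine; the paper's is shorter because it exploits the concrete representation directly.
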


\begin{proof}
  A cokernel is a set-quotient by an effective relation,
  so the projection map is surjective.
  On the other hand, if $f$ is surjective and we are in the situation:
  \begin{center}
    \begin{tikzcd}
      \ker(f)\ar[r,hook]\ar[dr] & A\ar[r,"f",->>]\ar[dr,"g"] & B \\
      & 0\ar[r] & C
    \end{tikzcd}
  \end{center}
  then we can construct a map $\varphi:B\to C$ as follows.
  For $x:B$, we define the type of possible values $\varphi(x)$ in $C$ as
  \[
    \sum_{z:C}\exists_{y:A}(f(y)=x) \wedge g(y)=z
  \]
  which is a proposition by algebraic calculation.
  By surjectivity of $f$, this type is inhabited and therefore contractible.
  So we can define $\varphi(x)$ as its center of contraction.
\end{proof}

\ignore{
    - injective/embedding/-1-truncated map
  pushouts:
    - inclusions are jointly surjective,
    - pushouts of embeddings between sets are sets
  subtypes:
    - embeddings (composition, multiple definitions, relation to injection)  
    - we freely switch between predicates and types
    - subtypes of subtypes are subtypes
  pullbacks:
    - pasting (reference)
    - pullback of subtype = composition
  algebra:
    - free comm algebras, quotients
    - other definitions of polynomials
    - fp closed under: quotients, adjoining variables, tensor products
}

\section{Axioms}
\subsection{Statement of the axioms}%
\label{statement-of-axioms}

We always assume there is a fixed commutative ring $R$.
In addition, we assume the following three axioms about $R$,
which were already mentioned in the introduction,
but we will indicate which of these axioms are used to prove each statement
by listing their shorthands.

\begin{axiom}[Loc]%
  \label{loc}\index{Loc}
  $R$ is a local ring (\Cref{local-ring}).
\end{axiom}

\begin{axiom}[SQC]%
  \label{sqc}\index{sqc}
  For any finitely presented $R$-algebra $A$, the homomorphism
  \[ a \mapsto (\varphi\mapsto \varphi(a)) : A \to (\Spec A \to R)\]
  is an isomorphism of $R$-algebras.
\end{axiom}

\begin{axiom}[Z-choice]%
  \label{Z-choice}\index{Z-choice}
  Let $A$ be a finitely presented $R$-algebra
  and let $B : \Spec A \to \mU$ be a family of inhabited types.
  Then there merely exist unimodular $f_1, \dots, f_n : A$
  together with dependent functions $s_i : \Pi_{x : D(f_i)} B(x)$.
  As a formula\footnote{Using the notation from \Cref{unimodular}}:
  \[ (\Pi_{x : \Spec A} \propTrunc{B(x)}) \to
     \propTrunc{ ((f_1,\dots,f_n):\Um(A)) \times
      \Pi_i \Pi_{x : D(f_i)} B(x) }
     \rlap{.}
  \]
\end{axiom}

\subsection{First consequences}

Let us draw some first conclusions from the axiom (\axiomref{sqc}),
in combination with (\axiomref{loc}) where needed.

\begin{proposition}[using \axiomref{sqc}]%
  \label{spec-embedding}
  For all finitely presented $R$-algebras $A$ and $B$ we have an equivalence
  \[
    f\mapsto \Spec f : \Hom_{\Alg{R}}(A,B) = (\Spec B \to \Spec A)
    \rlap{.}
  \]
\end{proposition}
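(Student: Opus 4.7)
The plan is to chain two equivalences using the SQC axiom. First, I would apply \axiomref{sqc} to $B$, which gives an isomorphism of $R$-algebras $B \simeq (\Spec B \to R)$, where $\Spec B \to R$ carries the pointwise $R$-algebra structure. Post-composing with this isomorphism yields
\[ \Hom_{\Alg{R}}(A, B) \simeq \Hom_{\Alg{R}}(A, \Spec B \to R). \]

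Second, I would argue that the $R$-algebra $\Spec B \to R$ is the product of $\Spec B$-many copies of $R$ in the category of $R$-algebras (the operations are all pointwise, and the projections are evaluation at points $\varphi : \Spec B$). Consequently
\[ \Hom_{\Alg{R}}(A, \Spec B \to R) \;\simeq\; \Spec B \to \Hom_{\Alg{R}}(A, R) \;=\; \Spec B \to \Spec A, \]
by the usual universal property: a map $g : A \to (\Spec B \to R)$ of $R$-algebras is the same data as, for each $\varphi : \Spec B$, an $R$-algebra map $a \mapsto g(a)(\varphi)$ from $A$ to $R$.

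It then remains to check that the composite equivalence sends $f : \Hom_{\Alg{R}}(A,B)$ to $\Spec f$. Unwinding: under SQC, $f$ is sent to $a \mapsto (\varphi \mapsto \varphi(f(a)))$; under the product property this is sent to $\varphi \mapsto (a \mapsto \varphi(f(a))) = \varphi \mapsto \varphi \circ f$, which is exactly $\Spec f$.

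The only genuinely substantive step is the product property in the second equivalence, and this is really just the statement that an $R$-algebra homomorphism into a pointwise-defined function algebra is determined by, and freely specified by, its values at each point, together with the fact that the $R$-algebra operations on $\Spec B \to R$ and on $\Spec B \to \Spec A$ (as a set of functions determined pointwise) match up. I expect no obstacle beyond bookkeeping; in particular no further use of (\axiomref{loc}) or (\axiomref{Z-choice}) should be needed.
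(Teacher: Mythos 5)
Your proof is correct, and it takes a genuinely more direct route than the paper's. The paper establishes the contravariant equivalence between finitely presented $R$-algebras and affine schemes by observing that both the counit $X \to \Spec(R^X)$ (via \Cref{algebra-from-affine-scheme}) and the unit $A \to R^{\Spec A}$ (via \axiomref{sqc}) are isomorphisms, and then extracts full faithfulness of $\Spec$ as a corollary of that duality. You instead compute $\Hom_{\Alg{R}}(A,B)$ head-on: apply \axiomref{sqc} once, only to $B$, to replace $B$ by $R^{\Spec B}$, and then invoke the purely formal universal property of $R^{\Spec B}$ as the product $\prod_{\varphi : \Spec B} R$ in $\Alg{R}$. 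This avoids the auxiliary lemma $X = \Spec(R^X)$ entirely, makes visible that only one application of \axiomref{sqc} is needed, and produces the explicit formula $f \mapsto \Spec f$ by simple unwinding rather than by appeal to a categorical argument. What the paper's phrasing buys in exchange is the equivalence-of-categories statement itself, which is reused elsewhere; your calculation is the more economical proof of the specific proposition.
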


\begin{proof}
  By \Cref{algebra-from-affine-scheme}, we have a natural equivalence
  \[
    X\to \Spec (R^X)
  \]
  and by \axiomref{sqc}, the natural map
  \[
    A\to R^{\Spec A}
  \]
  is an equivalence.
  We therefore have a contravariant equivalence between
  the category of finitely presented $R$-algebras
  and the category of affine schemes.
  In particular, $\Spec$ is an embedding.
\end{proof}

An important consequence of \axiomref{sqc}, which may be called \notion{weak nullstellensatz}:

\begin{proposition}[using \axiomref{loc}, \axiomref{sqc}]%
  \label{weak-nullstellensatz}
  If $A$ is a finitely presented $R$-algebra,
  then we have $\Spec A=\emptyset$ if and only if $A=0$.
\end{proposition}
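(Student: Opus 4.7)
The plan is to handle the two directions separately, with each axiom playing a clear role.

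For the ``only if'' direction ($\Spec A = \emptyset \Rightarrow A = 0$), the natural tool is \axiomref{sqc}. It gives an isomorphism of $R$-algebras $A \xrightarrow{\sim} (\Spec A \to R)$. If $\Spec A = \emptyset$, then $\Spec A \to R$ is the type of functions out of the empty type, which is contractible (a singleton) and hence carries the structure of the zero ring. Transporting along the SQC isomorphism, $A$ is the zero ring, i.e.\ $1 = 0$ in $A$, which is exactly what we mean by $A = 0$.

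For the ``if'' direction ($A = 0 \Rightarrow \Spec A = \emptyset$), I would use \axiomref{loc}. Suppose $\varphi : A \to R$ is an $R$-algebra homomorphism and $A = 0$. Then $1_A = 0_A$, so $1_R = \varphi(1_A) = \varphi(0_A) = 0_R$. But \Cref{local-ring} requires $1 \neq 0$ in $R$, contradiction. Hence $\Spec A$ has no elements, i.e.\ $\Spec A = \emptyset$.

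I do not expect any serious obstacle here: both directions are short and follow directly from unwinding the definitions, once one recognises that ``$A = 0$'' should be read as ``$A$ is the zero ring'' (equivalently $1 = 0$ in $A$), and once one uses the fact that a map out of the empty type is unique. The only subtle point is the first direction, where one must argue that the algebra of functions $\emptyset \to R$ is indeed the zero $R$-algebra, so that the SQC isomorphism identifies $A$ with $0$.
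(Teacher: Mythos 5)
Your proposal is correct and matches the paper's proof essentially verbatim: the forward direction identifies $A$ with $R^{\Spec A} = R^\emptyset = 0$ via \axiomref{sqc}, and the backward direction notes that a homomorphism $0 \to R$ would force $1 = 0$ in $R$, contradicting \axiomref{loc}.
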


\begin{proof}
  If $\Spec A = \emptyset$
  then $A = R^{\Spec A} = R^\emptyset = 0$
  by (\axiomref{sqc}).
  If $A = 0$
  then there are no homomorphisms $A \to R$
  since $1 \neq 0$ in $R$ by (\axiomref{loc}).
\end{proof}

For example, this weak nullstellensatz suffices
to prove the following properties of the ring $R$,
which were already proven in
\cite{ingo-thesis}[Section 18.4].

\begin{proposition}[using \axiomref{loc}, \axiomref{sqc}]%
  \label{nilpotence-double-negation}\label{non-zero-invertible}\label{generalized-field-property}
  
  \begin{enumerate}[(a)]
  \item An element $x:R$ is invertible,
    if and only if $x\neq 0$.
  \item A vector $x:R^n$ is non-zero,
    if and only if one of its entries is invertible.
  \item An element $x:R$ is nilpotent,
    if and only if $\neg \neg (x=0)$.
  \end{enumerate}
\end{proposition}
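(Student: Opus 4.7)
The plan is to derive all three parts from the weak nullstellensatz (\Cref{weak-nullstellensatz}) applied to carefully chosen finitely presented $R$-algebras, combined with (\axiomref{loc}). The unifying observation is that for an element or vector of $R$, one can package the relevant condition into an algebra whose spectrum is easy to compute, since the only $R$-algebra endomorphism of $R$ is the identity.

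For (a), the direction ``invertible implies $x\neq 0$'' is immediate: if $xy=1$ and $x=0$ then $1=0$, contradicting (\axiomref{loc}). For the converse I would take $A:=R/(x)$. An $R$-algebra map $A\to R$ is an $R$-algebra map $R\to R$ sending $x$ to $0$; the only $R$-algebra endomorphism of $R$ is $\mathrm{id}$, so $\Spec A$ is inhabited iff $x=0$. Hence $x\neq 0$ forces $\Spec A=\emptyset$, and by the weak nullstellensatz $A=0$, i.e.\ $1\in(x)$, so $x$ is invertible.

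Part (b) uses the same trick for the vector. If some $x_i$ is invertible then $x_i\neq 0$ already forces $x\neq 0$. Conversely, set $A:=R/(x_1,\dots,x_n)$; by the same computation $\Spec A$ is inhabited iff every $x_i=0$, i.e.\ $x=0$. So $x\neq 0$ gives $\Spec A=\emptyset$ and hence $A=0$, which yields a relation $1=\sum_i a_ix_i$ in $R$. Iterating (\axiomref{loc}) on this sum produces some $a_ix_i$ invertible, whence some $x_i$ is invertible.

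For (c), if $x^n=0$ then $x$ cannot be invertible (otherwise $x^n$ would be, contradicting (\axiomref{loc})), so part (a) gives $\neg(x\neq 0)$, i.e.\ $\neg\neg(x=0)$. For the converse I would take the finitely presented algebra $A:=R[Y]/(xY-1)$, whose spectrum consists of those $y:R$ with $xy=1$ and so, by part (a), is inhabited exactly when $x\neq 0$. The hypothesis $\neg\neg(x=0)$ therefore empties $\Spec A$, so $A=0$ by the weak nullstellensatz; unfolding $1=0$ in the localization $R_x$ yields some power $x^n=0$ in $R$, exhibiting $x$ as nilpotent. The main risk here is purely bookkeeping -- confirming that these auxiliary algebras have the advertised spectra and that vanishing of the localization really is equivalent to nilpotence -- but no step demands a new idea beyond weak nullstellensatz and (\axiomref{loc}).
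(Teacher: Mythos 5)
Your proof is correct and follows essentially the same route as the paper: all three parts reduce to the weak nullstellensatz applied to $R/(x_1,\dots,x_n)$ for (a) and (b) and to $R_x\cong R[Y]/(xY-1)$ for (c), combined with (\axiomref{loc}) to extract an invertible entry from a unimodular relation. The only cosmetic differences are that you prove (a) standalone rather than as the $n=1$ case of (b), and you split (c) into two implications (handling the easy direction by a direct algebraic observation) where the paper runs a single chain of equivalences; neither affects the substance.
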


\begin{proof}
  Part (a) is the special case $n = 1$ of (b).
  For (b),
  consider the $R$-algebra $A \colonequiv R/(x_1, \dots, x_n)$.
  Then the set $\Spec A \equiv \Hom_{\Alg{R}}(A, R)$
  is a proposition (that is, it has at most one element),
  and, more precisely, it is equivalent to the proposition $x = 0$.
  By \Cref{weak-nullstellensatz},
  the negation of this proposition is equivalent to $A = 0$
  and thus to $(x_1, \dots, x_n) = R$.
  Using (\axiomref{loc}),
  this is the case if and only if one of the $x_i$ is invertible.

  For (c),
  we instead consider the algebra $A \colonequiv R_x \equiv R[\frac{1}{x}]$.
  Here we have $A = 0$ if and only if $x$ is nilpotent,
  while $\Spec A$ is the proposition $\inv(x)$.
  Thus, we can finish by \Cref{weak-nullstellensatz},
  together with part (a) to go from $\lnot \inv(x)$ to $\lnot \lnot (x = 0)$.
\end{proof}

The following lemma,
which is a variant of \cite{ingo-thesis}[Proposition 18.32],
shows that $R$ is in a weak sense algebraically closed.
See \Cref{non-existence-of-roots} for a refutation of
a stronger formulation of algebraic closure of~$R$.

\begin{lemma}[using \axiomref{loc}, \axiomref{sqc}]%
  \label{polynomials-notnot-decompose}
  Let $f : R[X]$ be a polynomial.
  Then it is not not the case that:
  either $f = 0$ or
  $f = \alpha \cdot {(X - a_1)}^{e_1} \dots {(X - a_n)}^{e_n}$
  for some $\alpha : R^\times$,
  $e_i \geq 1$ and pairwise distinct $a_i : R$.
\end{lemma}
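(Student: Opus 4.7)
The statement is of the form $\neg\neg Q$, so we may reason classically throughout: for any proposition $P$ we have $\neg\neg(P \vee \neg P)$, and, combining this with part (a) of \Cref{non-zero-invertible}, for each $x : R$ we may assume $x = 0$ or $x$ is invertible. The plan is to induct (externally) on the length of the coefficient list representing $f$. Applying the dichotomy to each coefficient, either all vanish---in which case $f = 0$ and we are done with the first branch---or there is a largest index $d$ at which the coefficient $\alpha := a_d$ is invertible. If $d = 0$, take $n = 0$ and $f = \alpha$.

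For $d \geq 1$, I would form the $R$-algebra $A := R[X]/(f)$. Since $\alpha$ is a unit, $A$ is a free $R$-module of rank $d \geq 1$, hence $A \neq 0$. By the weak Nullstellensatz (\Cref{weak-nullstellensatz}), $\Spec A$ is nonempty, and under our classical assumption this produces an actual root $a : R$ with $f(a) = 0$. Because $\alpha$ is a unit, polynomial division of $f$ by the monic $(X - a)$ goes through cleanly and gives $f = (X - a) \cdot g$ with $g : R[X]$ of one-lower degree and leading coefficient $\alpha$. Apply the induction hypothesis to $g$; the $g = 0$ branch is excluded because $g$ has an invertible leading coefficient, so we obtain $g = \alpha \cdot (X - b_1)^{e_1} \cdots (X - b_m)^{e_m}$ with pairwise distinct $b_j$. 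Multiplying yields $f = \alpha \cdot (X - a) \cdot \prod_j (X - b_j)^{e_j}$.

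To restore pairwise distinctness, I would use the classical dichotomy to decide for each $j$ whether $a = b_j$; if so, absorb $a$ by incrementing $e_j$, otherwise list $a$ as a new distinct root. The hard part is keeping the induction clean: the whole argument depends on maintaining an invertible leading coefficient after each division step, because this is simultaneously what makes polynomial division exact and, via freeness of $R[X]/(f)$ as an $R$-module of rank $d$, what guarantees the $A \neq 0$ hypothesis needed to invoke the Nullstellensatz.
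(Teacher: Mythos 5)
Your proof is correct and follows essentially the same route as the paper's: use the double-negation stability of the goal to assume excluded middle for the finitely many coefficient questions, identify an invertible leading coefficient, apply the weak Nullstellensatz to $R[X]/(f)$ to extract a root, peel off a linear factor, recurse, and finally decide the finitely many equalities $a_i = a_j$ to group the roots. The only cosmetic difference is that where the paper justifies $R[X]/(f) \neq 0$ by noting $f$ is not invertible, you justify it by observing that $R[X]/(f)$ is free of rank $d \geq 1$; both are fine. One small point to tighten: you phrase the induction as ``external'' on the coefficient list, but the cleaner formulation (which the paper uses) is that, since the goal is a proposition, one may assume an actual bound on the degree from the mere existence of one, and then do internal induction on that bound.
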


\begin{proof}
  Let $f : R[X]$ be given.
  Since our goal is a proposition,
  we can assume we have a bound $n$ on the degree of $f$,
  so
  \[ f = \sum_{i = 0}^n c_i X^i \rlap{.} \]
  Since our goal is even double-negation stable,
  we can assume $c_n = 0 \lor c_n \neq 0$
  and by induction $f = 0$ (in which case we are done)
  or $c_n \neq 0$.
  If $n = 0$ we are done,
  setting $\alpha \colonequiv c_0$.
  Otherwise,
  $f$ is not invertible (using $0 \neq 1$ by (\axiomref{loc})),
  so $R[X]/(f) \neq 0$,
  which by (\axiomref{sqc}) means that
  $\Spec(R[X]/(f)) = \{ x : R \mid f(x) = 0 \}$
  is not empty.
  Using the double-negation stability of our goal again,
  we can assume $f(a) = 0$ for some $a : R$
  and factor $f = (X - a_1) f_{n - 1}$.
  By induction, we get $f = \alpha \cdot (X - a_1) \dots (X - a_n)$.
  Finally, we decide each of the finitely many propositions $a_i = a_j$,
  which we can assume is possible
  because our goal is still double-negation stable,
  to get the desired form
  $f = \alpha \cdot {(X - \widetilde{a}_1)}^{e_1} \dots {(X - \widetilde{a}_n)}^{e_n}$
  with distinct $\widetilde{a}_i$.
\end{proof}

\section{Affine schemes}
\subsection{Affine-open subtypes}

We only talk about affine schemes of finite type, i.e. schemes of the form $\Spec A$ (\Cref{spec}),
where $A$ is a finitely presented algebra.

\begin{definition}%
  A type $X$ is \notion{(qc-)affine},
  if there is a finitely presented $R$-algebra $A$, such that $X=\Spec A$.
\end{definition}

If $X$ is affine, it is possible to reconstruct the algebra directly.

\begin{lemma}[using \axiomref{sqc}]%
  \label{algebra-from-affine-scheme}
  Let $X$ be an affine scheme, then there is a natural equivalence $X=\Spec (R^X)$.
\end{lemma}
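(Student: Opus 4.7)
The plan is to produce the equivalence as the canonical evaluation map $\eta_X : X \to \Spec(R^X)$, defined by $\eta_X(x)(f) \colonequiv f(x)$, and then verify that this is an equivalence using \axiomref{sqc} together with a presentation $X = \Spec A$ for some finitely presented $R$-algebra $A$.

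First I would write down $\eta_X$ explicitly; note that $\eta_X(x)$ is an $R$-algebra homomorphism $R^X \to R$ since the $R$-algebra structure on $R^X$ is pointwise, so $\eta_X$ really does land in $\Spec(R^X)$. The naturality of $\eta_X$ in $X$ is immediate from the definition: for any map $g : X \to Y$ and $f : R^Y$, we have $\eta_Y(g(x))(f) = f(g(x)) = (f \circ g)(x) = \eta_X(x)(f \circ g)$, i.e.\ $\eta_Y \circ g = \Spec(g^*) \circ \eta_X$ where $g^* : R^Y \to R^X$ is precomposition with $g$.

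Next I would use affineness to pick $A$ with $X = \Spec A$. By \axiomref{sqc} the canonical $R$-algebra homomorphism
\[ \alpha : A \to R^{\Spec A} = R^X, \qquad \alpha(a) \colonequiv (\varphi \mapsto \varphi(a)) \]
is an isomorphism. Applying the functor $\Spec$ (\Cref{spec-on-maps}) yields a map $\Spec(\alpha) : \Spec(R^X) \to \Spec A = X$, which I claim is inverse to $\eta_X$. Unfolding the definitions, for $x : \Spec A$ and $a : A$,
\[ \bigl(\Spec(\alpha) \circ \eta_X\bigr)(x)(a) = \eta_X(x)(\alpha(a)) = \alpha(a)(x) = x(a), \]
so $\Spec(\alpha) \circ \eta_X = \mathrm{id}_X$. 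Conversely, for $\varphi : \Spec(R^X)$ and $f : R^X$, writing $f = \alpha(a)$ (which exists uniquely by \axiomref{sqc}), we compute
\[ \bigl(\eta_X \circ \Spec(\alpha)\bigr)(\varphi)(f) = (\varphi \circ \alpha)(a) = \varphi(\alpha(a)) = \varphi(f), \]
so $\eta_X \circ \Spec(\alpha) = \mathrm{id}$ as well.

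There is no serious obstacle here; the only mildly delicate point is that the equivalence produced this way is the canonical map $\eta_X$ and does not depend on the chosen presentation $A$ of $X$. This is what guarantees naturality and will let us use the equivalence functorially in later arguments such as \Cref{spec-embedding}. Since being an equivalence is a proposition, it suffices to exhibit the presentation-dependent inverse $\Spec(\alpha)$ locally and note that $\eta_X$ itself is defined without reference to $A$.
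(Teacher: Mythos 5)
Your proof is correct and follows essentially the same route as the paper: define the canonical evaluation map $X \to \Spec(R^X)$, obtain a presentation $X = \Spec A$, and use (\axiomref{sqc}) to see that $\Spec$ of the canonical map $A \to R^{\Spec A}$ provides an inverse. The paper is terser — it observes that a one-sided inverse that is itself an equivalence suffices — whereas you spell out both compositions, but the underlying argument is identical.
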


\begin{proof}
  The natural map $X\to \Spec (R^X)$ is given by mapping $x:X$ to the
  evaluation homomorphism at $x$. 
  There merely is an $A$ such that $X=\Spec A$.
  Applying $\Spec$ to the canonical map $A\to R^{\Spec A}$,
  yields an equivalence by \axiomref{sqc}.
  This is a (one sided) inverse to the map above.
  So we have $X=\Spec (R^X)$.
\end{proof}

\begin{proposition}%
  Let $X$ be a type.
  The type of all finitely presented $R$-algebras $A$, such that $X=\Spec A$, is a proposition.
\end{proposition}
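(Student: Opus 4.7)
The plan is to show that the map $A \mapsto \Spec A$, from the type of finitely presented $R$-algebras to the universe of types, is an embedding. Since fibers of embeddings are propositions and the type in question is (up to symmetry of equality) the fiber of this map over $X$, it will then be a proposition.

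To establish the embedding claim, I fix finitely presented $R$-algebras $A$ and $B$ and show that the canonical map $(A = B) \to (\Spec A = \Spec B)$ is an equivalence. By univalence for types, together with the structure identity principle for $R$-algebras (which presents an equality of algebras as an algebra isomorphism), this reduces to constructing an equivalence between the type of $R$-algebra isomorphisms $A \to B$ and the type of type-equivalences $\Spec A \simeq \Spec B$. Here I invoke \Cref{spec-embedding}, which yields an equivalence $\Hom_{\Alg{R}}(A, B) \simeq (\Spec B \to \Spec A)$ via $f \mapsto \Spec f$. Functoriality of $\Spec$ (it preserves identities and turns composition into reverse composition) restricts this to the sub-types of isomorphisms on the algebra side and of equivalences on the type side: an algebra map $f$ is invertible iff $\Spec f$ is a type equivalence, the inverses corresponding to one another under \Cref{spec-embedding}. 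Post-composing with the inversion equivalence $(\Spec B \simeq \Spec A) \simeq (\Spec A \simeq \Spec B)$ finishes the construction.

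The main thing to be careful about is a coherence check: that the composite equivalence built above actually coincides with the canonical action of $\Spec$ on identity types. This is routine, since every map in the chain sends $\mathrm{refl}$ to $\mathrm{refl}$. A more hands-on alternative runs as follows: given pairs $(A, p)$ and $(B, q)$, \Cref{algebra-from-affine-scheme} (which already uses \axiomref{sqc}) supplies canonical algebra isomorphisms $A \cong R^{\Spec A}$ and $B \cong R^{\Spec B}$; transporting along $p$ and $q$ identifies both with $R^X$, producing an algebra isomorphism $A \cong B$ and hence, by univalence for algebras, an equality $A = B$; the residual equality of proofs $p = q$ then follows by unwinding the definitions. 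Either way, \axiomref{sqc} (via \Cref{spec-embedding} or \Cref{algebra-from-affine-scheme}) is the essential ingredient; the remaining difficulty is purely bookkeeping between the several univalences at play.
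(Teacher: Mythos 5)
The paper leaves this proposition without an explicit proof; it is intended as an immediate consequence of the last line in the proof of \Cref{spec-embedding}, which already records ``In particular, $\Spec$ is an embedding.'' Your argument is correct and fills in exactly the same chain of ideas -- reduce to showing $\Spec$ is an embedding, deduce this from full faithfulness (\Cref{spec-embedding}) together with the structure identity principle and univalence, and note that the type in question is the fiber of $\Spec$ -- with the coherence check correctly identified and handled via the fact that all maps in the chain send $\mathrm{refl}$ to $\mathrm{refl}$.
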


When we write ``$\Spec A$'' we implicitly assume $A$ is a finitely presented $R$-algebra.
Recall from \Cref{standard-open-subset}
that the standard open subset $D(f) \subseteq \Spec A$
is given by $D(f)(x)\colonequiv \inv(f(x))$.

\begin{example}[using \axiomref{loc}, \axiomref{sqc}]
  For $a_1, \dots, a_n : R$, we have
  \[ D((X - a_1) \cdots (X - a_n)) = \A^1 \setminus \{ a_1, \dots, a_n \} \rlap{.}\]
  Indeed,
  for any $x : \A^1$,
  $((X - a_1) \dots (X - a_n))(x)$ is invertible if and only if
  $x - a_i$ is invertible for all $i$.
  But by \Cref{non-zero-invertible}
  this means $x \neq a_i$ for all $i$.
\end{example}

\begin{definition}%
  \label{affine-open}
  Let $X=\Spec A$.
  A subtype $U:X\to\Prop$ is called \notion{affine-open},
  if one of the following logically equivalent statements holds:
  \begin{enumerate}[(i)]%
  \item $U$ is the union of finitely many affine standard opens.
  \item There are $f_1,\dots,f_n:A$ such that
    \[U(x) \Leftrightarrow \exists_{i} f_i(x)\neq 0 \]
  \end{enumerate}
\end{definition}

By \Cref{open-closed-affine-subsets} we have $D(f_1, \dots, f_n) = D(f_1) \cup \dots \cup D(f_n)$.
Note that in general, affine-open subtypes do not need to be affine
-- this is why we use the dash ``-''.

We will introduce a more general definition of open subtype in \Cref{qc-open}
and show in \Cref{qc-open-affine-open}, that the two notions agree on affine schemes.

\begin{proposition}
  Let $X = \Spec A$ and $f : A$.
  Then $D(f) = \Spec A[f^{-1}]$.
\end{proposition}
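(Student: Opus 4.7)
The plan is to use the universal property of the localization $A[f^{-1}]$ to identify its spectrum with the subtype $D(f) \subseteq \Spec A$.

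First I would note that $A[f^{-1}]$ is a finitely presented $R$-algebra, since it admits the presentation $A[f^{-1}] \simeq A[Y]/(fY - 1)$, and finite presentation is preserved under adjoining a variable and quotienting by a finitely generated ideal (and the composition of finite presentations over $R$). Thus $\Spec A[f^{-1}]$ makes sense in our framework.

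Next I would use \Cref{spec-on-maps} applied to the canonical localization homomorphism $\iota : A \to A[f^{-1}]$ to obtain a map
\[ \Spec \iota : \Spec A[f^{-1}] \to \Spec A \rlap{.} \]
The key observation is that this map is an embedding whose propositional image is exactly $D(f)$. By the universal property of localization, for any $x : \Spec A$, the type of homomorphisms $A[f^{-1}] \to R$ whose restriction along $\iota$ equals $x$ is contractible if $x(f)$ is invertible, and empty otherwise. Concretely: if $\varphi : A[f^{-1}] \to R$ lifts $x$, then $x(f) = \varphi(f/1)$ is invertible with inverse $\varphi(1/f)$; conversely, if $x(f)$ is invertible, the universal property furnishes a unique extension $\varphi$.

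This exhibits the fiber of $\Spec \iota$ over $x$ as the proposition $\inv(x(f))$, which is $D(f)(x)$ by definition (\Cref{standard-open-subset}). Consequently $\Spec \iota$ factors as an equivalence $\Spec A[f^{-1}] \simeq \sum_{x:\Spec A} D(f)(x)$ followed by the canonical projection, so under our convention of identifying subtypes with the corresponding embeddings, we have $D(f) = \Spec A[f^{-1}]$ as subtypes of $\Spec A$. The main subtlety, and the only point worth spelling out carefully, is the translation between ``$x(f)$ is invertible'' and the existence-and-uniqueness of an extension to $A[f^{-1}]$; once the universal property is invoked this is immediate, and no axiom beyond the basic algebra of \Cref{spec} and \Cref{spec-on-maps} is needed here.
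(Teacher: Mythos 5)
Your proposal is correct and follows essentially the same approach as the paper: the paper proves this as a one-line chain of equalities whose crucial step is $\sum_{x : \Hom_{\Alg{R}}(A, R)} \inv(x(f)) = \Hom_{\Alg{R}}(A[f^{-1}], R)$, i.e.\ the universal property of localization, which is exactly the observation you unpack in more detail. Your remarks about $A[f^{-1}]$ being finitely presented and $\Spec\iota$ being an embedding are correct but just make explicit what the paper leaves implicit.
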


\begin{proof}
  \[ D(f) =
     \sum_{x : X} D(f)(x) =
     \sum_{x : \Spec A} \inv(f(x)) =
     \sum_{x : \Hom_{\Alg{R}}(A, R)} \inv(x(f)) =
     \Hom_{\Alg{R}}(A[f^{-1}], R) =
     \Spec A[f^{-1}]
     \]
\end{proof}

Affine-openness is transitive in the following sense:

\begin{lemma}%
  \label{affine-open-trans}
  Let $X=\Spec A$ and $D(f)\subseteq X$ be a standard open.
  Any affine-open subtype $U$ of $D(f)$ is also affine-open in $X$.
\end{lemma}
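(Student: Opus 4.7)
The plan is to unpack the definition of affine-openness of $U$ inside $D(f) = \Spec A[f^{-1}]$ (using the previous proposition identifying $D(f)$ as an affine scheme), then translate the defining data back to elements of $A$ via the universal property of the localization.

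First I would apply condition (i) of \Cref{affine-open} to $U$ as a subtype of $D(f) = \Spec A[f^{-1}]$, obtaining elements $g_1, \dots, g_n : A[f^{-1}]$ such that $U$ equals the union of the standard opens of $D(f)$ given by the $g_i$. By the construction of the localization, each $g_i$ can be written as $h_i / f^{k_i}$ for some $h_i : A$ and $k_i : \N$.

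Next I would verify, as subtypes of $X$, the pointwise equality
\[
  \{\, x : X \mid x \in D(f) \text{ and } x(g_i) \text{ invertible} \,\}
  \;=\;
  D(f \cdot h_i) \subseteq X.
\]
For $x : X$, membership in the left-hand side says that $x(f)$ is invertible and that $x(h_i) \cdot x(f)^{-k_i}$ is invertible. Since a product in a commutative ring is invertible iff each factor is (and powers of units are units), this is equivalent to both $x(f)$ and $x(h_i)$ being invertible, i.e.\ to $x(f \cdot h_i) = x(f) x(h_i)$ being invertible, which is exactly the right-hand side.

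Combining these equalities, $U$ is the finite union $D(f h_1) \cup \dots \cup D(f h_n)$ of standard opens of $X$, hence affine-open in $X$ by \Cref{affine-open}(i). The only mildly technical step is the passage from $g_i$ to a representative fraction $h_i/f^{k_i}$ and the pointwise identification of the corresponding standard opens; there is no genuine obstacle, just bookkeeping with the universal property of $A[f^{-1}]$.
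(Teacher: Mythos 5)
Your proof is correct and follows essentially the same route as the paper: reduce to the representatives $g_i = h_i/f^{k_i}$, show pointwise that the corresponding standard open inside $D(f)$ equals $D(f h_i)$ in $X$, and take the union. The paper merely phrases the reduction as "it is enough to treat a single $D(g)$" before carrying out the identical computation.
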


\begin{proof}
  It is enough to show the statement for $U=D(g)$, $g:A_f$.
  Then
  \[ g=\frac{h}{f^k}\rlap{.}\]
  Now $D(hf)$ is an affine-open in $X$,
  that coincides with $U$: \\
  Let $x:X$, then $(hf)(x)$ is invertible, if and only if both $h(x)$ and $f(x)$ are invertible.
  The latter means $x:D(f)$, so we can interpret $x$ as a homomorphism from $A_f$ to $R$.
  Then $x:D(g)$ means $x(g)$ is invertible, which is equivalent to $x(h)$ being invertible,
  since $x(f)^k$ is invertible anyway.
\end{proof}

\begin{lemma}[using \axiomref{loc}, \axiomref{sqc}]%
  \label{standard-open-empty}
  Let $X=\Spec A$ be an affine scheme and $D(f)\subseteq X$ a standard open,
  then $D(f)=\emptyset$, if and only if, $f$ is nilpotent.
\end{lemma}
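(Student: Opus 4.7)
The plan is to reduce this statement to the weak Nullstellensatz (\Cref{weak-nullstellensatz}) by way of the identification $D(f) = \Spec A[f^{-1}]$ proved just above. First I would note that $A[f^{-1}]$ is itself a finitely presented $R$-algebra, since it is presented as $A[Y]/(fY - 1)$ (and $A$ is finitely presented by our convention), so \Cref{weak-nullstellensatz} applies to it. This immediately gives
\[ D(f) = \emptyset \iff \Spec A[f^{-1}] = \emptyset \iff A[f^{-1}] = 0 \rlap{.} \]

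It then remains to verify the purely algebraic equivalence $A[f^{-1}] = 0 \iff f \text{ is nilpotent}$. The easy direction is: if $f^k = 0$ in $A$, then in $A[f^{-1}]$ the element $f^k$ is simultaneously zero and a unit, so $1 = 0$ and hence $A[f^{-1}] = 0$. For the converse, $A[f^{-1}] = 0$ means $\tfrac{1}{1} = \tfrac{0}{1}$ in the localization, which by the defining universal property of $A_f$ (or equivalently by the presentation $A[Y]/(fY - 1)$, noting that $(1 - 0) \in (fY - 1)$ in $A[Y]$ forces $f^k = 0$ for some $k$ after collecting powers of $Y$) means $f^k = 0$ in $A$ for some $k : \mathbb{N}$.

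I expect the main point deserving care to be exactly this last algebraic step: extracting ``$f^k = 0$'' from ``$1 = 0$ in $A[f^{-1}]$'' requires unwinding the localization, either through the universal property or via the explicit presentation $A[Y]/(fY-1)$. Everything else is a direct application of cited results (\Cref{weak-nullstellensatz} and the preceding proposition $D(f) = \Spec A[f^{-1}]$) and needs no further input beyond (\axiomref{loc}) and (\axiomref{sqc}).
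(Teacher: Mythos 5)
Your proof is correct and follows exactly the same route as the paper's: identify $D(f)$ with $\Spec A_f$, apply \Cref{weak-nullstellensatz}, and then invoke the standard algebraic fact that $A_f = 0$ iff $f$ is nilpotent. The paper states all three steps tersely; you simply spell out the finite-presentation check on $A[f^{-1}]$ and the algebraic equivalence in more detail, which is sound.
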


\begin{proof}
  Since $D(f)=\Spec A_f$, by \Cref{weak-nullstellensatz}, we know $D(f)=\emptyset$,
  if and only if, $A_f=0$.
  The latter is equivalent to $f$ being nilpotent.
\end{proof}

More generally,
the Zariski-lattice consisting of the radicals
of finitely generated ideals of a finitely presented $R$-algebra $A$,
coincides with the lattice of open subtypes.
This means, that internal to the Zariski-topos,
it is not necessary to consider the full Zariski-lattice for a constructive treatment of schemes.

\begin{lemma}[using \axiomref{sqc}]%
  Let $A$ be a finitely presented $R$-algebra
  and let $f, g_1, \dots, g_n \in A$.
  Then we have $D(f) \subseteq D(g_1, \dots, g_n)$
  as subsets of $\Spec A$
  if and only if $f \in \sqrt{(g_1, \dots, g_n)}$.
\end{lemma}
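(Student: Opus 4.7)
The plan is to prove the two directions separately, using the weak Nullstellensatz (\Cref{weak-nullstellensatz}) for the hard direction and the locality of $R$ for the easy one. (I note that the lemma tag only mentions \axiomref{sqc}, but invoking \Cref{weak-nullstellensatz} will implicitly draw in \axiomref{loc} as well; this seems unavoidable.)

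For the direction ``$f \in \sqrt{(g_1,\dots,g_n)}$ implies $D(f) \subseteq D(g_1,\dots,g_n)$'', I would pick a witness $f^k = \sum_{i} a_i g_i$ in $A$ and let $x : D(f)$. Evaluating gives that $f(x)^k = \sum_i a_i(x) g_i(x)$ is invertible in $R$. By locality of $R$ (\axiomref{loc}), one of the summands $a_i(x) g_i(x)$ must be invertible, which forces the corresponding $g_i(x)$ to be invertible. Hence $x : D(g_1,\dots,g_n)$.

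For the converse, I would consider the finitely presented $R$-algebra
\[ B \colonequiv A_f / (g_1, \dots, g_n) \rlap{.} \]
By the universal properties of localization and quotient, a homomorphism $A \to R$ factors through $B$ precisely when $f$ maps to a unit and each $g_i$ maps to $0$, so $\Spec B = D(f) \cap V(g_1,\dots,g_n)$ as subtypes of $\Spec A$. The assumption $D(f) \subseteq D(g_1,\dots,g_n)$ forces this intersection to be empty: any point in it would supply some invertible $g_i(x)$, but also $g_i(x) = 0$, contradicting $1 \neq 0$ in $R$. By the weak Nullstellensatz we conclude $B = 0$, i.e.\ $1 \in (g_1,\dots,g_n)$ already inside $A_f$. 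Writing out such a relation and clearing the denominators, which are powers of $f$, produces an identity $f^m = \sum_i b_i g_i$ in $A$, establishing $f \in \sqrt{(g_1,\dots,g_n)}$.

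The main obstacle is not any single step but the conceptual move of reducing the inclusion of opens to emptiness of a spectrum so that the weak Nullstellensatz becomes applicable; once the algebra $A_f/(g_1,\dots,g_n)$ is in view, the rest is bookkeeping about localization. The forward direction is a direct appeal to \axiomref{loc} and should be entirely routine.
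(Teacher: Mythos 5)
Your proof is correct and takes essentially the same route as the paper: both reduce the containment to emptiness of $\Spec$ of the auxiliary algebra $A_f/(g_1,\dots,g_n)\cong (A/(g_1,\dots,g_n))[f^{-1}]$ and then invoke the weak Nullstellensatz; you merely split the biconditional into two directions where the paper works with a chain of equivalences. Your observation about \axiomref{loc} is apt — the paper's step identifying $D(g_1,\dots,g_n)$ with the complement of $V(g_1,\dots,g_n)$ already relies on \Cref{generalized-field-property}, so the lemma tag's ``using \axiomref{sqc}'' is incomplete either way.
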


\begin{proof}
  Since $D(g_1, \dots, g_n) = \{\, x \in \Spec A \mid x \notin V(g_1, \dots, g_n) \,\}$,
  \footnote{See \Cref{open-closed-affine-subsets} for ``$V(\dots)$''}
  the inclusion $D(f) \subseteq D(g_1, \dots, g_n)$
  can also be written as
  $D(f) \cap V(g_1, \dots, g_n) = \varnothing$, that is,
  $\Spec((A/(g_1, \dots, g_n))[f^{-1}]) = \varnothing$.
  By (\axiomref{sqc})
  this means that the finitely presented $R$-algebra $(A/(g_1, \dots, g_n))[f^{-1}]$
  is zero.
  And this is the case if and only if $f$ is nilpotent in $A/(g_1, \dots, g_n)$,
  that is, if $f \in \sqrt{(g_1, \dots, g_n)}$, as stated.
\end{proof}

In particular,
we have $\Spec A = \bigcup_{i = 1}^n D(f_i)$
if and only if $(f_1, \dots, f_n) = (1)$.

\subsection{Pullbacks of affine schemes}

\begin{lemma}%
  \label{affine-product}
  The product of two affine schemes is again an affine scheme,
  namely
  $\Spec A \times \Spec B = \Spec (A \otimes_R B)$.
\end{lemma}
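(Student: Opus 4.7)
The plan is to combine two standard facts: finite presentations are closed under tensor product, and the universal property of the tensor product of $R$-algebras converts pairs of homomorphisms into single homomorphisms out of the tensor product.

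First I would verify that $A \otimes_R B$ is finitely presented whenever $A$ and $B$ are. Choosing (mere) presentations $A \simeq R[X_1,\dots,X_n]/(f_1,\dots,f_m)$ and $B \simeq R[Y_1,\dots,Y_k]/(g_1,\dots,g_l)$, one checks that
\[ A \otimes_R B \simeq R[X_1,\dots,X_n,Y_1,\dots,Y_k]/(f_1,\dots,f_m,g_1,\dots,g_l) \rlap{,} \]
which exhibits the required finite presentation. Since the goal is a proposition, working with mere presentations is harmless.

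Next, by the universal property of the tensor product of commutative $R$-algebras, for any $R$-algebra $C$ the canonical map
\[ \Hom_{\Alg{R}}(A \otimes_R B, C) \longrightarrow \Hom_{\Alg{R}}(A, C) \times \Hom_{\Alg{R}}(B, C) \]
sending $h$ to $(h \circ \iota_A,\, h \circ \iota_B)$ is an equivalence. Specializing to $C = R$ and unfolding the definition of $\Spec$ yields
\[ \Spec(A \otimes_R B) = \Hom_{\Alg{R}}(A \otimes_R B, R) \simeq \Hom_{\Alg{R}}(A, R) \times \Hom_{\Alg{R}}(B, R) = \Spec A \times \Spec B \rlap{.} \]

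There is essentially no serious obstacle here; the argument is a direct unwinding of the universal property. The one point worth being careful about is the finite presentation of $A \otimes_R B$, which is needed to even make the statement $\Spec(A \otimes_R B)$ meaningful under the convention from the previous subsection. This fact belongs to the list of basic algebraic closure properties the authors had flagged in the \texttt{ignore} block at the end of the preliminaries, and may be cited rather than reproved.
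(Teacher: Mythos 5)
Your proof is correct and uses the same idea as the paper, whose proof is just the one-liner ``By the universal property of the tensor product $A \otimes_R B$.'' You have simply spelled out what that amounts to, and you are right to flag the finite presentability of $A \otimes_R B$ as a necessary (if routine) prerequisite that the paper leaves implicit.
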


\begin{proof}
  By the universal property of the tensor product $A \otimes_R B$.
\end{proof}

More generally we have:

\begin{lemma}[using \axiomref{sqc}]%
  \label{affine-fiber-product}
  Let $X=\Spec A,Y=\Spec B$ and $Z=\Spec C$ be affine schemes
  with maps $f:X\to Z$, $g:Y\to Z$.
  Then the pullback of this diagram is an affine scheme given by $\Spec (A\otimes_C B)$.
\end{lemma}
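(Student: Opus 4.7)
The plan is to combine the contravariant equivalence of \Cref{spec-embedding} with the universal property of the tensor product $A \otimes_C B$ as the pushout of $R$-algebras along $A \xleftarrow{f^\sharp} C \xrightarrow{g^\sharp} B$.

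First, by \Cref{spec-embedding} the geometric maps $f : \Spec A \to \Spec C$ and $g : \Spec B \to \Spec C$ come from unique $R$-algebra homomorphisms $f^\sharp : C \to A$ and $g^\sharp : C \to B$, so that $f(x) = x \circ f^\sharp$ and $g(y) = y \circ g^\sharp$ for $x : \Spec A$ and $y : \Spec B$. These maps equip $A$ and $B$ with $C$-algebra structures and make the tensor product $A \otimes_C B$ available. One needs the auxiliary observation that $A \otimes_C B$ is still finitely presented as an $R$-algebra, which follows by taking finite $R$-algebra presentations of $A$ and $B$ together with finitely many $R$-algebra generators $c_1, \dots, c_r$ of $C$: then $A \otimes_C B$ is presented over $R$ by the combined generators and relations of $A$ and $B$ plus the finitely many new relations $f^\sharp(c_s) \otimes 1 = 1 \otimes g^\sharp(c_s)$ that enforce coequalization of the two maps from $C$.

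With this in place, the core of the argument is a direct chain of equivalences:
\begin{align*}
  \Spec(A \otimes_C B)
    &= \Hom_{\Alg{R}}(A \otimes_C B,\, R) \\
    &= \{\,(\varphi, \psi) : \Hom_{\Alg{R}}(A, R) \times \Hom_{\Alg{R}}(B, R) \mid \varphi \circ f^\sharp = \psi \circ g^\sharp\,\} \\
    &= \{\,(x, y) : \Spec A \times \Spec B \mid f(x) = g(y)\,\} \\
    &= \Spec A \times_{\Spec C} \Spec B\rlap{,}
\end{align*}
where the second line is the universal property of $A \otimes_C B$ as a pushout in $R$-algebras, and the third line is obtained by rewriting the condition using $f(x) = x \circ f^\sharp$ and $g(y) = y \circ g^\sharp$.

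The main obstacle is not the identification itself, which is essentially formal once \axiomref{sqc} has translated everything to the algebraic side via \Cref{spec-embedding}; rather, it is the bookkeeping: verifying finite presentation of $A \otimes_C B$ and confirming that the bijection produced above really is an equivalence \emph{of cones over $\Spec C$}. The latter check is automatic because the two projections $\Spec(A \otimes_C B) \to \Spec A$ and $\Spec(A \otimes_C B) \to \Spec B$ are by construction induced by the coprojections $A, B \to A \otimes_C B$, and the universal property of the pushout precisely encodes compatibility with $f^\sharp, g^\sharp$, hence via $\Spec$ with $f, g$.
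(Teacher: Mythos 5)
Your proof is correct and follows the same route as the paper's: translate $f,g$ to $R$-algebra homomorphisms $C\to A$ and $C\to B$ via (\axiomref{sqc})/\Cref{spec-embedding} and invoke the universal property of $A\otimes_C B$ as a pushout of $R$-algebras. The paper gives only the pushout step; your additional checks — that $A\otimes_C B$ is finitely presented over $R$ (needed for $\Spec(A\otimes_C B)$ to be in scope at all) and that the bijection respects the cone structure — are sound and fill in details the paper leaves implicit.
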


\begin{proof}
  The maps $f:X\to Z$, $g:Y\to Z$ are induced by $R$-algebra homomorphisms $f^*:A\to R$ and $g^*:B\to R$.
  Let
  \[ (h,k,p) : \Spec A \times_{\Spec C} \Spec B \]
  with $p:h\circ f^*=k\circ g^* $.
  This defines a $R$-cocone on the diagram
  \[
    \begin{tikzcd}
      A & C\ar[r,"g^*"]\ar[l,"f^*",swap] & B
    \end{tikzcd}
  \]
  Since $A\otimes_C B$ is a pushout in $R$-algebras,
  there is a unique $R$-algebra homomorphism $A\otimes_C B \to R$ corresponding to $(h,k,p)$.
\end{proof}

\subsection{Boundedness of functions to $\N$}

While the axiom \axiomref{sqc}
describes functions on an affine scheme
with values in $R$,
we can generalize it to functions taking values
in another finitely presented $R$-algebra,
as follows.

\begin{lemma}[using \axiomref{sqc}]%
  \label{algebra-valued-functions-on-affine}
  For finitely presented $R$-algebras $A$ and $B$,
  the function
  \begin{align*}
    A \otimes B &\xrightarrow{\sim} (\Spec A \to B) \\
    c &\mapsto (\varphi \mapsto (\varphi \otimes B)(c))
  \end{align*}
  is a bijection.
\end{lemma}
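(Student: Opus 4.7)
The plan is to decompose the claimed bijection as a chain of known equivalences and then verify that the composite coincides with the concrete map in the statement.

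First, I would apply (\axiomref{sqc}) to the finitely presented $R$-algebra $A \otimes_R B$, yielding a bijection
\[ A \otimes B \xrightarrow{\sim} (\Spec(A \otimes B) \to R), \quad c \mapsto (\psi \mapsto \psi(c)). \]
Next, I would use \Cref{affine-product} together with the universal property of $A \otimes_R B$ to identify $\Spec(A \otimes B) = \Spec A \times \Spec B$: a homomorphism $\psi : A \otimes B \to R$ corresponds to the pair $(\varphi, \chi)$ with $\varphi(a) = \psi(a \otimes 1)$ and $\chi(b) = \psi(1 \otimes b)$, satisfying $\psi(a \otimes b) = \varphi(a) \chi(b)$. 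Currying then gives
\[ (\Spec A \times \Spec B \to R) = (\Spec A \to (\Spec B \to R)). \]
Finally, I would apply the inverse of (\axiomref{sqc}) for $B$ pointwise in $\varphi : \Spec A$ to replace $(\Spec B \to R)$ by $B$, which produces the target $(\Spec A \to B)$.

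The only real obstacle is bookkeeping: one has to check that the composite really is the map $c \mapsto (\varphi \mapsto (\varphi \otimes B)(c))$ from the statement. It suffices to check this on pure tensors $c = a \otimes b$, since both sides are $R$-linear (and in fact $R$-algebra maps). Tracing $a \otimes b$ through the chain: SQC on $A \otimes B$ sends it to $\psi \mapsto \psi(a \otimes b)$; under the identification $\Spec(A \otimes B) = \Spec A \times \Spec B$ this becomes $(\varphi,\chi) \mapsto \varphi(a)\chi(b)$; currying and then inverting SQC for $B$ yields, at a point $\varphi : \Spec A$, the unique element of $B$ whose evaluation under any $\chi$ equals $\varphi(a)\chi(b)$, namely $\varphi(a) \cdot b$. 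This matches $(\varphi \otimes B)(a \otimes b) = \varphi(a) \cdot b$, so the explicit map agrees with the composite equivalence and is therefore itself a bijection.
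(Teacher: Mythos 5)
Your proposal is correct and follows essentially the same route as the paper: SQC applied to $A \otimes B$, the identification $\Spec(A \otimes B) = \Spec A \times \Spec B$ from \Cref{affine-product}, currying, and SQC applied to $B$. The only cosmetic difference is in the final bookkeeping: the paper tracks a general element $c$ through the chain using the identity $\psi \circ (\varphi \otimes B) = \varphi \otimes \psi$, whereas you verify agreement on pure tensors and appeal to $R$-linearity, which is equally valid.
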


\begin{proof}
  We recall $\Spec (A \otimes B) = \Spec A \times \Spec B$
  from \Cref{affine-product}
  and calculate as follows.
  \begin{alignat*}{8}
    A \otimes B
    &=& (\Spec (A \otimes B) \to R)
    &=& (\Spec A \times \Spec B \to R)
    &=& (\Spec A \to (\Spec B \to R))
    &=& (\Spec A \to B) \\
    c
    &\mapsto& (\chi \mapsto \chi(c))
    &\mapsto& ((\varphi, \psi) \mapsto (\varphi \otimes \psi)(c))
    &\mapsto& (\varphi \mapsto (\psi \mapsto (\varphi \otimes \psi)(c)))
    &\mapsto& (\varphi \mapsto (\varphi \otimes B)(c))
  \end{alignat*}
  The last step is induced by the identification
  $B = (\Spec B \to R),\, b \mapsto (\psi \mapsto \psi(b))$,
  and we use the fact that
  $\psi \circ (\varphi \otimes B) = \varphi \otimes \psi$.
\end{proof}

\begin{lemma}[using \axiomref{sqc}]%
  \label{eventually-vanishing-sequence-on-affine}
  Let $A$ be a finitely presented $R$-algebra
  and let $s : \Spec A \to (\N \to R)$
  be a family of sequences,
  each of which eventually vanishes:
  \[ \prod_{x : \Spec A} \propTrunc{\sum_{N : \N} \prod_{n \geq N} s(x)(n) = 0} \]
  Then there merely exists one number $N : \N$
  such that $s(x)(n) = 0$ for all $x : \Spec A$ and all $n \geq N$.
\end{lemma}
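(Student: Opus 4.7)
The plan is to recognize $s$ itself as a single polynomial in $A[X]$, whose degree will directly provide the uniform bound.

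The key observation is that the hypothesis says exactly that every value $s(x) : \N \to R$ lies in the subring $R[X] \hookrightarrow (\N \to R)$ of eventually-zero sequences: the inclusion is an embedding, so ``lying in $R[X]$'' is a proposition, and it coincides with the truncated condition $\propTrunc{\sum_{N : \N} \prod_{n \geq N} s(x)(n) = 0}$. Hence $s$ factors through $R[X]$ as a map $\widetilde s : \Spec A \to R[X]$.

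Now $R[X]$ is a finitely presented $R$-algebra (the free $R$-algebra on one generator), so \Cref{algebra-valued-functions-on-affine}---which uses (\axiomref{sqc})---supplies the bijection
\[
  (\Spec A \to R[X]) \;\cong\; A \otimes_R R[X] \;=\; A[X]\rlap{.}
\]
Under this bijection $\widetilde s$ corresponds to a formal polynomial
\[
  p \;=\; \sum_{n=0}^{M} c_n X^n \;\in\; A[X]
\]
of some concrete degree $M : \N$, in particular with $c_n = 0$ for every $n > M$. Tracing the correspondence of \Cref{algebra-valued-functions-on-affine} yields $s(x)(n) = x(c_n)$ for all $x$ and all $n$, so the choice $N \colonequiv M + 1$ makes $s(x)(n) = 0$ for every $x : \Spec A$ and every $n \geq N$, as required.

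The one point deserving care, and arguably the only nontrivial step, is the initial identification of the subtype $\{\, f : \N \to R \mid \propTrunc{\sum_{N : \N} \prod_{n \geq N} f(n) = 0} \,\}$ with the image of $R[X] \hookrightarrow (\N \to R)$. This holds because the inclusion is $(-1)$-truncated, so the existence of a preimage is automatically a proposition and matches the truncated condition above. Once this is granted, the rest is an essentially formal unfolding of \Cref{algebra-valued-functions-on-affine} and no additional axiom beyond (\axiomref{sqc}) is needed.
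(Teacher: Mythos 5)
Your proof is correct and follows essentially the same route as the paper's: both identify the eventually-vanishing sequences with $R[X]$, apply \Cref{algebra-valued-functions-on-affine} with $B = R[X]$ to obtain a single polynomial $p : A[X]$, observe $s(x)(n) = x(p_n)$, and extract the bound from the fact that $p$'s coefficients eventually vanish. Your extra care about the embedding $R[X] \hookrightarrow (\N \to R)$ making ``lies in $R[X]$'' a proposition is a welcome clarification; just note that constructively one only \emph{merely} has a degree bound $M$ for $p$, which is fine since the conclusion is itself a mere existential.
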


\begin{proof}
  The set of eventually vanishing sequences $\N \to R$
  is in bijection with the set $R[X]$ of polynomials,
  by taking the entries of a sequence as the coefficients of a polynomial.
  So the family of sequences $s$
  is equivalently a family of polynomials $s : \Spec A \to R[X]$.
  Now we apply \Cref{algebra-valued-functions-on-affine} with $B = R[X]$
  to see that such a family corresponds to a polynomial $p : A[X]$.
  Note that for a point $x : \Spec A$,
  the homomorphism
  \[ x \otimes R[X] : A[X] = A \otimes R[X] \to R \otimes R[X] = R[X] \]
  simply applies the homomorphism $x$ to every coefficient of a polynomial,
  so we have $(s(x))_n = x(p_n)$.
  This concludes our argument,
  because the coefficients of $p$,
  just like any polynomial,
  form an eventually vanishing sequence.
\end{proof}

\begin{theorem}[using \axiomref{loc}, \axiomref{sqc}]%
  \label{boundedness}
  Let $A$ be a finitely presented $R$-algebra.
  Then every function $f : \Spec A \to \N$ is bounded:
  \[ \Pi_{f : \Spec A \to \N} \propTrunc{\Sigma_{N : \N} \Pi_{x : \Spec A} f(x) \le N}
     \rlap{.} \]
\end{theorem}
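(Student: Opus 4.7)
The plan is to reduce \Cref{boundedness} to \Cref{eventually-vanishing-sequence-on-affine} by encoding the $\N$-valued function $f$ as a family of eventually vanishing $R$-valued sequences. Concretely, I would define
\[ s : \Spec A \to (\N \to R), \qquad
   s(x)(n) \colonequiv \begin{cases} 1 & \text{if } n \leq f(x), \\ 0 & \text{if } n > f(x), \end{cases} \]
which makes sense because the order on $\N$ is decidable. For each $x : \Spec A$, the sequence $s(x)$ vanishes for $n \geq f(x) + 1$, so the hypothesis of \Cref{eventually-vanishing-sequence-on-affine} is satisfied (with a concrete witness, not just a mere one).

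Applying that lemma yields some $N : \N$ such that $s(x)(n) = 0$ for all $x : \Spec A$ and all $n \geq N$. In particular $s(x)(N) = 0$ for every $x$. By \axiomref{loc} we have $1 \neq 0$ in $R$, so $s(x)(N) = 0$ rules out the branch $N \leq f(x)$ of the definition, forcing $N > f(x)$. Hence $N$ is a uniform bound on $f$, as required.

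The one point to be careful about is that the step from $s(x)(N) = 0$ back to $N > f(x)$ really does depend on $1 \neq 0$: without \axiomref{loc} the two branches of $s(x)(n)$ could coincide and $s$ would carry no information about $f$. Apart from that, the argument is essentially a repackaging of \Cref{eventually-vanishing-sequence-on-affine}, and I expect no further obstacles.
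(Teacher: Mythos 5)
Your proof is correct and is essentially the same as the paper's: both encode $f$ as a family of $\{0,1\}$-valued eventually-vanishing sequences (yours with cutoff $n \leq f(x)$, the paper's with $n < f(x)$), invoke \Cref{eventually-vanishing-sequence-on-affine}, and use $1 \neq 0$ from (\axiomref{loc}) to read off the bound.
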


\begin{proof}
  Given a function $f : \Spec A \to \N$,
  we construct the family $s : \Spec A \to (\N \to R)$
  of eventually vanishing sequences
  given by
  \[
    s(x)(n) \colonequiv
    \begin{cases}
      \,1 &\text{if $n < f(x)$}\\
      \,0 &\text{else} \rlap{.}
    \end{cases}
  \]
  Since $0 \neq 1 : R$ by \axiomref{loc},
  we in fact have $s(x)(n) = 0$ if and only if $n \geq f(x)$.
  Then the claim follows from \Cref{eventually-vanishing-sequence-on-affine}.
\end{proof}

If we also assume the axiom \axiomref{Z-choice},
we can formulate the following simultaneous strengthening
of \Cref{eventually-vanishing-sequence-on-affine}
and \Cref{boundedness}.

\begin{proposition}[using \axiomref{loc}, \axiomref{sqc}, \axiomref{Z-choice}]%
  \label{strengthened-boundedness}
  Let $A$ be a finitely presented $R$-algebra.
  Let $P : \Spec A \to (\N \to \Prop)$
  be a family of upwards closed, merely inhabited subsets of $\N$.
  Then the set
  \[ \bigcap_{x : \Spec A} P(x) \subseteq \N \]
  is merely inhabited.
\end{proposition}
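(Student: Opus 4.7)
The plan is to combine Zariski-local choice with the boundedness theorem, exploiting upward-closedness to patch local bounds into a global element of the intersection.

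First, I would apply \axiomref{Z-choice} to the family
\[ B(x) \colonequiv \sum_{N : \N} P(x)(N) \]
on $\Spec A$. Each $B(x)$ is merely inhabited by hypothesis, so Z-choice yields a unimodular sequence $(f_1, \dots, f_n) : \Um(A)$ together with dependent functions $s_i : \prod_{x : D(f_i)} B(x)$. Projecting to the first component gives, for each $i$, a function $N_i : D(f_i) \to \N$ such that $N_i(x) \in P(x)$ for every $x : D(f_i)$.

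Next, I would observe that each $D(f_i)$ is itself affine, namely $D(f_i) = \Spec A[f_i^{-1}]$, and $A[f_i^{-1}]$ is finitely presented. Hence \Cref{boundedness} applies to $N_i$ and yields (merely) a number $M_i : \N$ with $N_i(x) \le M_i$ for all $x : D(f_i)$. Since $P(x)$ is upwards closed and $N_i(x) \in P(x)$, we conclude $M_i \in P(x)$ for every $x : D(f_i)$.

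Finally, set $M \colonequiv \max(M_1, \dots, M_n)$. Again by upward closedness, $M \in P(x)$ whenever $x : D(f_i)$ for some $i$. But $(f_1, \dots, f_n)$ is unimodular, so by the observation following \Cref{affine-open} the standard opens $D(f_1), \dots, D(f_n)$ cover $\Spec A$; for any $x : \Spec A$ there merely exists $i$ with $x : D(f_i)$, and thus $M \in P(x)$. Since the conclusion is a mere proposition, we can discharge all the ``merely'' quantifiers and conclude that $\bigcap_{x : \Spec A} P(x)$ is merely inhabited. There is no real obstacle here beyond carefully setting up Z-choice; the only subtlety is remembering that upward-closedness is precisely what lets us replace the pointwise values $N_i(x)$ by the uniform bound $M_i$, and then by the global $M$.
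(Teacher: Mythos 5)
Your proof is correct and follows essentially the same route as the paper's: apply \axiomref{Z-choice} to obtain local choice functions $N_i : D(f_i) \to \N$, bound each using \Cref{boundedness} on the affine pieces $D(f_i) = \Spec A[f_i^{-1}]$, and take the maximum, invoking upward-closedness of the $P(x)$ to conclude. The paper's proof is merely more terse, leaving implicit the points you spell out about upward-closedness and discharging the truncations.
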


\begin{proof}
  By \axiomref{Z-choice},
  there merely exists a cover
  $\Spec A = \bigcup_{i = 1}^n D(f_i)$
  and functions $p_i : D(f_i) \to \N$
  such that $p_i(x) \in P(x)$ for all $x : D(f_i)$.
  By \Cref{boundedness},
  every $p_i : D(f_i) = \Spec A[f_i^{-1}] \to \N$
  is merely bounded by some $N_i : \N$,
  and then $\mathrm{max}(N_1, \dots, N_n) \in P(x)$ for all $x : \Spec A$.
\end{proof}

\section{Topology of schemes}%
\label{topology-of-schemes}

\subsection{Closed subtypes}

\begin{definition}%
  \label{closed-proposition}\label{closed-subtype}
  \begin{enumerate}[(a)]
  \item
    A \notion{closed proposition} is a proposition
    which is merely of the form $x_1 = 0 \land \dots \land x_n = 0$
    for some elements $x_1, \dots, x_n \in R$.
  \item
    Let $X$ be a type.
    A subtype $U : X \to \Prop$ is \notion{closed}
    if for all $x : X$, the proposition $U(x)$ is closed.
  \item
    For $A$ a finitely presented $R$-algebra
    and $f_1, \dots, f_n : A$,
    we set
    $V(f_1, \dots, f_n) \colonequiv
    \{\, x : \Spec A \mid f_1(x) = \dots = f_n(x) = 0 \,\}$.
  \end{enumerate}
\end{definition}

Note that $V(f_1, \dots, f_n) \subseteq \Spec A$ is a closed subtype
and we have $V(f_1, \dots, f_n) = \Spec (A/(f_1, \dots, f_n))$.

\begin{proposition}[using \axiomref{sqc}]%
  There is an order-reversing isomorphism of partial orders
  \begin{align*}
    \text{f.g.-ideals}(R) &\xrightarrow{{\sim}} \Omega_{cl} \\
    I &\mapsto (I = (0))
  \end{align*}
  between the partial order of finitely generated ideals of $R$
  and the partial order of closed propositions.
\end{proposition}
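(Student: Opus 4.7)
The plan is to construct an inverse map
\[ \phi \mapsto I_\phi \colonequiv \ker\bigl(R \to R^\phi\bigr), \]
where $R \to R^\phi$ sends $r$ to the constant function with value $r$, and then to show that both round trips are identities and that the correspondence reverses the order. This definition of $I_\phi$ is entirely canonical, but I have to verify that it is finitely generated. Since $\phi$ is closed, it is merely of the form $x_1 = 0 \wedge \dots \wedge x_n = 0$ for some $x_i : R$; consider the finitely presented $R$-algebra $A \colonequiv R/(x_1, \dots, x_n)$. Unfolding $\Spec$, an $R$-algebra homomorphism $A \to R$ must agree with the identity on $R$, and this is well-defined precisely when $x_1 = \dots = x_n = 0$, so $\Spec A$ is a proposition equivalent to $\phi$. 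Applying \axiomref{sqc} yields $R^\phi \simeq A$ as $R$-algebras, and via this isomorphism the map $R \to R^\phi$ becomes the quotient map. Hence $I_\phi = (x_1, \dots, x_n)$, which is finitely generated; since finite generation is a proposition, this conclusion remains valid even though the generators were only merely given.

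The same \axiomref{sqc} calculation, applied with $\phi \colonequiv (J = (0))$ for an arbitrary finitely generated ideal $J$, shows that $I_{(J = (0))} = J$, giving one round trip. For the other, note that every $r \in I_\phi$ satisfies $\phi \Rightarrow r = 0$ by definition, so $\phi \Rightarrow (I_\phi = (0))$; conversely, using any representation $\phi = (x_1 = 0 \wedge \dots \wedge x_n = 0)$ together with the identification $I_\phi = (x_1, \dots, x_n)$ shows $(I_\phi = (0)) \Rightarrow \phi$.

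Finally, for order-reversal, one direction is elementary: if $I \subseteq J$ then $J = (0)$ trivially implies $I = (0)$. Conversely, suppose $(J = (0)) \Rightarrow (I = (0))$ and write $I = (x_1, \dots, x_n)$; applying the hypothesis to each generator gives $(J = (0)) \Rightarrow x_i = 0$, which says exactly that $x_i \in I_{(J = (0))} = J$, so $I \subseteq J$. The main obstacle is the \axiomref{sqc}-based identification $R^\phi \simeq R/(x_1, \dots, x_n)$; once $\phi$ is recognised as the spectrum of this algebra, this is immediate, and the rest of the argument is careful bookkeeping to make sure choices of generators occur only in propositional positions.
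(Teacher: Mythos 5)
Your proof is correct, but it takes a genuinely different route from the paper. The paper's proof does not construct an inverse; instead it notes that both sides of the desired equivalence --- $(I = (0)) \Rightarrow (J = (0))$ and $J \subseteq I$ --- are propositions equivalent to $\Hom(\Spec R/I, \Spec R/J)$ and $\Hom_{\Alg{R}}(R/J, R/I)$ respectively, and then appeals to the already-established anti-equivalence between finitely presented $R$-algebras and affine schemes (\Cref{spec-embedding}) to identify these two Hom-sets. Order-reflection then gives injectivity, and a short remark gives surjectivity. You instead build an explicit inverse $\phi \mapsto \ker(R \to R^\phi)$ and verify the two round trips and the order-reversal directly, using SQC in the concrete form $R^\phi \simeq R/(x_1,\dots,x_n)$. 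Your version is more hands-on and self-contained (it does not invoke \Cref{spec-embedding}), at the cost of more bookkeeping with mere generators; the paper's version is shorter because it delegates the work to an existing categorical statement. Your handling of truncation is careful and correct throughout --- in particular, the observation that $I_\phi$ is canonically defined while its finite generation is a proposition, so mere generators suffice, is exactly the right move.
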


\begin{proof}
  For a finitely generated ideal $I = (x_1, \dots, x_n)$,
  the proposition $I = (0)$ is indeed a closed proposition,
  since it is equivalent to $x_1 = 0 \land \dots \land x_n = 0$.
  It is also evident that we get all closed propositions in this way.
  What remains to show is that
  \[ I = (0) \Rightarrow J = (0)
     \qquad\text{iff}\qquad
     J \subseteq I
     \rlap{\text{.}}
  \]
  For this we use synthetic quasicoherence.
  Note that the set $\Spec R/I = \Hom_{\Alg{R}}(R/I, R)$ is a proposition
  (has at most one element),
  namely it is equivalent to the proposition $I = (0)$.
  Similarly, $\Hom_{\Alg{R}}(R/J, R/I)$ is a proposition
  and equivalent to $J \subseteq I$.
  But then our claim is just the equation
  \[ \Hom(\Spec R/I, \Spec R/J) = \Hom_{\Alg{R}}(R/J, R/I) \]
  which holds by \Cref{spec-embedding},
  since $R/I$ and $R/J$ are finitely presented $R$-algebras
  if $I$ and $J$ are finitely generated ideals.
\end{proof}

\begin{lemma}[using \axiomref{sqc}]%
  \label{ideals-embed-into-closed-subsets}
  We have $V(f_1, \dots, f_n) \subseteq V(g_1, \dots, g_m)$
  as subsets of $\Spec A$
  if and only if
  $(g_1, \dots, g_m) \subseteq (f_1, \dots, f_n)$
  as ideals of $A$.
\end{lemma}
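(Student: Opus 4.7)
The plan is to identify the two closed subsets with spectra of quotient algebras and then transport the inclusion across the contravariant equivalence from \Cref{spec-embedding}. Concretely, from the remark right after \Cref{closed-subtype} we have $V(f_1,\dots,f_n) = \Spec(A/(f_1,\dots,f_n))$ (and similarly for $V(g_1,\dots,g_m)$), and the inclusion into $\Spec A$ is exactly $\Spec$ applied to the quotient homomorphism $q_f : A \to A/(f_1,\dots,f_n)$ (respectively $q_g : A \to A/(g_1,\dots,g_m)$).

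First I would observe that the subset inclusion $V(f_1,\dots,f_n) \subseteq V(g_1,\dots,g_m)$ in $\Spec A$ is equivalent to the existence of a (necessarily unique) map $\varphi : \Spec(A/(f_1,\dots,f_n)) \to \Spec(A/(g_1,\dots,g_m))$ making the triangle over $\Spec A$ commute, i.e.\ $\Spec(q_g) \circ \varphi = \Spec(q_f)$. Uniqueness is immediate because $\Spec(q_g)$ is an embedding (its image is the subtype $V(g_1,\dots,g_m)$).

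Next, by \Cref{spec-embedding} applied to the finitely presented $R$-algebras $A/(f_1,\dots,f_n)$ and $A/(g_1,\dots,g_m)$, such a map $\varphi$ over $\Spec A$ corresponds bijectively to an $R$-algebra homomorphism $\psi : A/(g_1,\dots,g_m) \to A/(f_1,\dots,f_n)$ satisfying $\psi \circ q_g = q_f$. By the universal property of the quotient $q_g$, such a $\psi$ exists (and is then automatically unique) if and only if $q_f$ vanishes on each $g_j$, i.e.\ $g_j \in (f_1,\dots,f_n)$ for every $j$, which is precisely $(g_1,\dots,g_m) \subseteq (f_1,\dots,f_n)$.

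The only subtle point, and the one I would be most careful about, is matching up the geometric inclusion with the algebraic triangle: one needs to know that the embedding $V(f_1,\dots,f_n) \hookrightarrow \Spec A$ agrees (as a map of types, not just up to equivalence of images) with $\Spec(q_f)$, so that \Cref{spec-embedding} really translates the inclusion condition into the equation $\psi \circ q_g = q_f$ rather than something weaker. Once this identification is in place, both directions of the equivalence follow from the universal property of quotient algebras, and nothing beyond \axiomref{sqc} (via \Cref{spec-embedding}) is needed.
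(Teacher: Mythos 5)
Your proposal is correct and follows exactly the same route as the paper: rewrite the inclusion as a map $\Spec(A/(f_1,\dots,f_n)) \to \Spec(A/(g_1,\dots,g_m))$ over $\Spec A$, apply \Cref{spec-embedding} to translate this into an $R$-algebra homomorphism over $A$, and then read off the ideal inclusion from the universal property of the quotient. The paper's proof is terser but identical in substance; your extra care about identifying the embedding with $\Spec(q_f)$ is exactly the implicit step the paper glosses over.
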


\begin{proof}
  The inclusion $V(f_1, \dots, f_n) \subseteq V(g_1, \dots, g_m)$
  means a map $\Spec (A/(f_1, \dots, f_n)) \to \Spec (A/(g_1, \dots, g_m))$
  over $\Spec A$.
  By \Cref{spec-embedding}, this is equivalent to
  a homomorphism $A/(g_1, \dots, g_m) \to A/(f_1, \dots, f_n)$,
  which in turn means the stated inclusion of ideals.
\end{proof}

\begin{lemma}[using \axiomref{loc}, \axiomref{sqc}, \axiomref{Z-choice}]%
  \label{closed-subtype-affine}
  A closed subtype $C$ of an affine scheme $X=\Spec A$ is an affine scheme
  with $C=\Spec (A/I)$ for a finitely generated ideal $I\subseteq A$.
\end{lemma}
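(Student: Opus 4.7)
The plan is to reduce to a local problem via Zariski-local choice, extract a uniform polynomial presentation of $C$ on each piece using boundedness and \axiomref{sqc}, and then glue the resulting ideals using \Cref{fg-ideal-local-global}.

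Since the goal is a proposition (by the uniqueness of $A$ with $X = \Spec A$ combined with the fact that $I$ is determined by $V(I)$ via \Cref{ideals-embed-into-closed-subsets}), I may work with mere existence throughout. For each $x : \Spec A$, the closedness of $C(x)$ means that the type
\[ T(x) \colonequiv \sum_{n : \N} \sum_{y : R^n} \bigl(C(x) \Leftrightarrow (y_1 = 0 \land \dots \land y_n = 0)\bigr) \]
is merely inhabited. By \axiomref{Z-choice}, I obtain a unimodular $f_1, \dots, f_k : A$ together with dependent functions $s_i : \prod_{x : D(f_i)} T(x)$.

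Writing $n_i(x)$ for the length component of $s_i(x)$ and $y_i(x) \in R^{n_i(x)}$ for the tuple, I extend each $y_i(x)$ by zeros to a sequence in $\N \to R$ which eventually vanishes (from $n_i(x)$ onwards). Applying \Cref{eventually-vanishing-sequence-on-affine} on $D(f_i) = \Spec A_{f_i}$, I get a single bound $N_i : \N$ so that all these sequences vanish beyond index $N_i$. This gives a function $\widetilde{y}_i : D(f_i) \to R^{N_i}$ with $C(x) \Leftrightarrow \widetilde{y}_i(x) = 0$ for every $x : D(f_i)$. By \Cref{algebra-from-affine-scheme} applied componentwise (equivalently \Cref{algebra-valued-functions-on-affine} with $B = R^{N_i}$), this function is induced by $N_i$ elements $g_{i,1}, \dots, g_{i,N_i} \in A_{f_i}$; setting $I_i \colonequiv (g_{i,1}, \dots, g_{i,N_i}) \subseteq A_{f_i}$ gives
\[ C \cap D(f_i) = V(I_i) \subseteq \Spec A_{f_i} \rlap{.} \]

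On overlaps $D(f_i f_j) = \Spec A_{f_i f_j}$, the ideals $A_{f_i f_j} \cdot I_i$ and $A_{f_i f_j} \cdot I_j$ cut out the same closed subtype, namely $C \cap D(f_i f_j)$. By \Cref{ideals-embed-into-closed-subsets}, this forces $A_{f_i f_j} \cdot I_i = A_{f_i f_j} \cdot I_j$. Now \Cref{fg-ideal-local-global} produces a finitely generated ideal $I \subseteq A$ with $A_{f_i} \cdot I = I_i$ for all $i$. Then $V(I) \cap D(f_i) = V(A_{f_i} \cdot I) = V(I_i) = C \cap D(f_i)$, and since the $D(f_i)$ cover $\Spec A$ we conclude $C = V(I) = \Spec(A/I)$, which is affine since $I$ is finitely generated.

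The main obstacle is the middle step: converting the pointwise, unquantified data ``$C(x)$ is of the form $y = 0$'' into a uniform, honest presentation by elements of $A_{f_i}$. This requires chaining \axiomref{Z-choice} (to get sections at all), \Cref{eventually-vanishing-sequence-on-affine} (to uniformise the length of the presentation), and \axiomref{sqc} (to view the resulting $R^{N_i}$-valued function algebraically). Once these local ideals are in hand, the gluing is routine because \Cref{ideals-embed-into-closed-subsets} makes the correspondence between closed subtypes and finitely generated ideals fully faithful, so the compatibility hypothesis of \Cref{fg-ideal-local-global} is automatic.
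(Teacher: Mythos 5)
Your proof is correct and follows essentially the same route as the paper: apply Zariski-local choice plus a boundedness argument to get a uniform local presentation of $C$ by finitely many elements of each $A_{f_i}$, use \Cref{ideals-embed-into-closed-subsets} to see the local ideals agree on overlaps, and glue with \Cref{fg-ideal-local-global}. The paper's own proof is very terse (it just says ``by Z-choice and boundedness'' and leaves the rest implicit); you fill in the right details, including the observation that the goal is a proposition so truncations may be unhidden, and the uniformisation of the length of the presentation via \Cref{eventually-vanishing-sequence-on-affine} (the paper would more likely invoke \Cref{boundedness} directly on the length function, but the two are essentially interchangeable here).

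One small remark: when you invoke \Cref{algebra-valued-functions-on-affine} with $B = R^{N_i}$ you are implicitly using that the finite product $R^{N_i}$ is a finitely presented $R$-algebra; the componentwise appeal to (\axiomref{sqc}) that you mention parenthetically is the cleaner way to state it, since only the $R$-module structure is relevant.
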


\begin{proof}
  By \axiomref{Z-choice} and boundedness,
  there is a cover $D(f_1),\dots,D(f_l)$, such that
  on each $D(f_i)$, $C$ is the vanishing set of functions
  \[ g_1,\dots,g_n:D(f_i)\to R\rlap{.} \]
  By \Cref{ideals-embed-into-closed-subsets},
  the ideals generated by these functions
  agree in $A_{f_i f_j}$,
  so by \Cref{fg-ideal-local-global},
  there is a finitely generated ideal $I\subseteq A$,
  such that $A_{f_i}\cdot I$ is $(g_1,\dots,g_n)$
  and $C=\Spec A/I$.
\end{proof}

\subsection{Open subtypes}

While we usually drop the prefix ``qc'' in the definition below,
one should keep in mind, that we only use a definition of quasi compact open subsets.
The difference to general opens does not play a role so far,
since we also only consider quasi compact schemes later.

\begin{definition}%
  \label{qc-open}
  \begin{enumerate}[(a)]
  \item A proposition $P$ is \notion{(qc-)open}, if there merely are $f_1,\dots,f_n:R$,
    such that $P$ is equivalent to one of the $f_i$ being invertible.
  \item Let $X$ be a type.
    A subtype $U:X\to\Prop$ is \notion{(qc-)open}, if $U(x)$ is an open proposition for all $x:X$.
  \end{enumerate}
\end{definition}

\begin{proposition}[using \axiomref{loc}, \axiomref{sqc}]%
  \label{open-iff-negation-of-closed}
  A proposition $P$ is open
  if and only if
  it is the negation of some closed proposition
  (\Cref{closed-proposition}).
\end{proposition}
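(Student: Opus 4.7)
The plan is to use \Cref{non-zero-invertible}(b) as the bridge between the two notions. That proposition, which already combines \axiomref{loc} and \axiomref{sqc}, asserts that a vector $(x_1, \dots, x_n) : R^n$ is non-zero if and only if one of its entries is invertible. Both sides of our biconditional are naturally statements about finite tuples in $R$: the definition of open unfolds $P$ to a disjunction $\inv(f_1) \lor \dots \lor \inv(f_n)$, while the negation of a closed proposition of the form $x_1 = 0 \land \dots \land x_n = 0$ is exactly the assertion that $(x_1, \dots, x_n)$ is non-zero. Proposition \ref{non-zero-invertible}(b) identifies these two assertions.

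For the forward direction, I would take an open $P$, unfold the definition to produce (merely) $f_1, \dots, f_n : R$ with $P \iff \inv(f_1) \lor \dots \lor \inv(f_n)$, and then define the closed proposition $Q \colonequiv (f_1 = 0 \land \dots \land f_n = 0)$. By \Cref{non-zero-invertible}(b), $\neg Q$ is equivalent to some $f_i$ being invertible, hence to $P$. For the backward direction, I would start from a closed $Q$ with a chosen representation $x_1 = 0 \land \dots \land x_n = 0$ and apply the same lemma in the opposite direction to rewrite $\neg Q$ as the disjunction $\inv(x_1) \lor \dots \lor \inv(x_n)$, which exhibits $\neg Q$ as open by \Cref{qc-open}(a).

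Since the target is a proposition, the mere witnesses for the representations of $P$ and $Q$ can be freely extracted in each direction, so there is no truncation subtlety to manage. The one minor point to keep track of is that in the forward direction the case $n = 0$ gives $Q = \top$ (the empty conjunction) and $P \iff \bot$, and in the backward direction analogously; both cases are handled uniformly by the lemma. There is no genuine obstacle here — all of the real work is already concentrated in \Cref{non-zero-invertible}(b), where \axiomref{sqc} (via the weak Nullstellensatz) and \axiomref{loc} (to pass from a sum ideal being the unit ideal to invertibility of an individual summand) are doing the heavy lifting.
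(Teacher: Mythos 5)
Your proof is correct and follows exactly the paper's argument: the paper's own one-line proof cites \Cref{generalized-field-property} (the same proposition as \Cref{non-zero-invertible}) to identify $\inv(f_1) \lor \dots \lor \inv(f_n)$ with $\lnot(f_1 = 0 \land \dots \land f_n = 0)$, which is precisely the bridge you describe. Your write-up is just a more explicit unfolding of the two directions, including the empty-list edge case, with no change of substance.
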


\begin{proof}
  Indeed, by \Cref{generalized-field-property},
  the proposition $\inv(f_1) \lor \dots \lor \inv(f_n)$
  is the negation of ${f_1 = 0} \land \dots \land {f_n = 0}$.
\end{proof}

\begin{proposition}[using \axiomref{loc}, \axiomref{sqc}]%
  \label{open-union-intersection}
  Let $X$ be a type.
  \begin{enumerate}[(a)]
  \item The empty subtype is open in $X$.
  \item $X$ is open in $X$.
  \item Finite intersections of open subtypes of $X$ are open subtypes of $X$.
  \item Finite unions of open subtypes of $X$ are open subtypes of $X$.
  \item Open subtypes are invariant under pointwise double-negation.
  \end{enumerate}
  Axioms are only needed for the last statement.
\end{proposition}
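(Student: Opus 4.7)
The plan is to handle parts (a)--(d) by direct constructive manipulations on the defining lists of elements $f_1,\dots,f_n:R$, and to handle (e) via the generalized field property from \Cref{non-zero-invertible}.

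For (a), I would observe that the empty proposition is open by taking the empty list $n=0$: the statement ``one of the $f_i$ is invertible'' becomes an empty disjunction, i.e.\ $\bot$. For (b), the list with the single element $f_1 \colonequiv 1$ exhibits $\top$ as open. For (c), given opens $P \Leftrightarrow \exists_i \inv(f_i)$ and $Q \Leftrightarrow \exists_j \inv(g_j)$, the conjunction $P \wedge Q$ is equivalent to $\exists_{i,j} \inv(f_i g_j)$, using that a product in a ring is invertible iff both factors are; this presents $P\wedge Q$ as open, and induction gives finite intersections. For (d), the concatenation of the two lists exhibits $P \vee Q$ as open, and again induction extends this to finite unions. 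None of (a)--(d) require any axioms.

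For (e), which is where the axioms enter, the point is that open propositions are double-negation stable. Given an open proposition $P \Leftrightarrow \exists_i \inv(f_i)$, by \Cref{non-zero-invertible}(b) (which uses \axiomref{loc} and \axiomref{sqc}) this is equivalent to the vector $(f_1,\dots,f_n):R^n$ being non-zero, i.e.\ to the negation $\neg\bigl((f_1,\dots,f_n)=0\bigr)$. Negations are always double-negation stable in intuitionistic logic, so $\neg\neg P \Leftrightarrow P$. Pointwise this gives $\neg\neg U = U$ for any open subtype $U$.

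The only nontrivial step is (e), and it reduces essentially immediately to \Cref{non-zero-invertible}(b); the main ``obstacle'' is just remembering that this is where \axiomref{loc} and \axiomref{sqc} get invoked, via the equivalence between ``some entry is invertible'' and ``the vector is non-zero''. The rest is a matter of carefully presenting the opens in terms of their witnessing lists and performing the obvious list operations (products of pairs for intersection, concatenation for union, empty list for $\emptyset$, singleton $[1]$ for $X$).
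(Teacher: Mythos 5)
Your proof is correct and takes essentially the same route as the paper: (a)--(d) by the obvious list manipulations (empty list, singleton $[1]$, pairwise products, concatenation), and (e) by recognizing open propositions as negations. The only cosmetic difference is that the paper cites \Cref{open-iff-negation-of-closed} for (e), whereas you inline its content by appealing directly to \Cref{non-zero-invertible}(b) — the same underlying fact.
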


In \Cref{open-subscheme} we will see that open subtypes of open subtypes of a scheme are open in that scheme.
Which is equivalent to open propositions being closed under dependent sums.

\begin{proof}[of \Cref{open-union-intersection}]
  For unions, we can just append lists.
  For intersections, we note that invertibility of a product
  is equivalent to invertibility of both factors.
  Double-negation stability
  follows from \Cref{open-iff-negation-of-closed}.
\end{proof}

\begin{lemma}%
  \label{preimage-open}
  Let $f:X\to Y$ and $U:Y\to\Prop$ open,
  then the \notion{preimage} $U\circ f:X\to\Prop$ is open.
\end{lemma}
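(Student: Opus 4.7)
The plan is to unfold the definitions and observe that the statement is almost immediate. Recall from \Cref{qc-open} that a subtype $U : Y \to \Prop$ is qc-open precisely when, for every $y : Y$, the proposition $U(y)$ is qc-open, i.e.\ there merely exist $g_1, \dots, g_n : R$ (possibly depending on $y$) such that $U(y)$ is equivalent to $\inv(g_1) \lor \dots \lor \inv(g_n)$.

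To show that $U \circ f : X \to \Prop$ is qc-open, I need to verify, for each $x : X$, that $(U \circ f)(x) \equiv U(f(x))$ is a qc-open proposition. But this is just the hypothesis applied at the point $y \colonequiv f(x) : Y$: the witnesses $g_1, \dots, g_n : R$ making $U(f(x))$ equivalent to a finite disjunction of invertibility statements serve as the required witnesses for $(U \circ f)(x)$.

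There is no obstacle here; the lemma is essentially a restatement of the fact that qc-openness of a subtype is defined pointwise, so it is trivially stable under precomposition with any map. No axioms are required. A one-line proof suffices: \emph{By definition, for every $x : X$ the proposition $(U \circ f)(x) = U(f(x))$ is open, since $U$ is open at $f(x)$.}
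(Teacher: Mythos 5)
Your proof is correct and matches the paper's own argument, which likewise just observes that since $U(y)$ is an open proposition for all $y:Y$, so is $U(f(x))$ for all $x:X$. Nothing more to add.
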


\begin{proof}
  If $U(y)$ is an open proposition for all $y : Y$,
  then $U(f(x))$ is an open proposition for all $x : X$.
\end{proof}

\begin{lemma}[using \axiomref{loc}, \axiomref{sqc}]%
  \label{open-inequality-subtype}
  Let $X$ be affine and $x:X$, then the proposition
  \[ x\neq y \]
  is open for all $y:X$.
\end{lemma}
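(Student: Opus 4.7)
The plan is to reduce the openness of $x \neq y$ to the characterization of non-zero vectors in $R^n$ supplied by \Cref{non-zero-invertible}(b). Since ``being open'' is itself a mere proposition (by the ``merely'' in \Cref{qc-open}(a)), I may work under the assumption of a chosen finite generating family: because $X = \Spec A$ is affine with $A$ finitely presented, there merely exist $a_1,\dots,a_n : A$ generating $A$ as an $R$-algebra.

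Next, I will observe that for points $x, y : \Spec A$, viewed as $R$-algebra homomorphisms $A \to R$, the equality $x = y$ is equivalent to $x(a_i) = y(a_i)$ holding for every $i$. One direction is immediate, and the converse uses only that any $R$-algebra homomorphism out of $A$ is determined by its values on a generating family, which is the universal property of the presentation $R[X_1,\dots,X_n] \twoheadrightarrow A$. Consequently $x \neq y$ is equivalent to the vector
\[ v \colonequiv (x(a_1) - y(a_1),\dots,x(a_n) - y(a_n)) : R^n \]
being non-zero.

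Now \Cref{non-zero-invertible}(b) says that a vector in $R^n$ is non-zero precisely when one of its entries is invertible. Applying this to $v$, I conclude that $x \neq y$ is equivalent to the disjunction ``some $x(a_i) - y(a_i) : R$ is invertible'', which is exactly the shape required by \Cref{qc-open}(a), witnessed by the list $x(a_1) - y(a_1),\dots,x(a_n) - y(a_n)$.

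No step should pose a real obstacle: the only mildly delicate point is the reduction of the equality of points to agreement on finitely many generators, which needs finite generation of $A$ (a consequence of finite presentation) and the fact that we are looking at homomorphisms, not arbitrary functions. Once this reduction is in hand, \Cref{non-zero-invertible}(b) does the rest.
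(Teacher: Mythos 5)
Your proof is correct and follows essentially the same approach as the paper: the paper also reduces to a (mere) embedding $\iota : X \to \A^n$ (your choice of a finite generating family $a_1,\dots,a_n$ is exactly this embedding, sending $x$ to $(x(a_1),\dots,x(a_n))$), observes that $x \neq y$ becomes $\iota(x) - \iota(y) \neq 0$, and concludes by the fact that non-vanishing of a vector in $R^n$ is an open proposition. You simply spell out the last step via \Cref{non-zero-invertible}(b), which the paper leaves implicit.
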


\begin{proof}
  We show a proposition, so we can assume $\iota: X\to \A^n$ is a subtype.
  Then for $x,y:X$, $x\neq y$ is equivalent to $\iota(x)\neq\iota(y)$.
  But for $x,y:\A^n$, $x\neq y$ is the open proposition that $x-y\neq 0$.
\end{proof}

The intersection of all open neighborhoods of a point in an affine scheme,
is the formal neighborhood of the point.
We will see in \Cref{intersection-of-all-opens}, that this also holds for schemes.

\begin{lemma}[using \axiomref{loc}, \axiomref{sqc}]%
  \label{affine-intersection-of-all-opens}
  Let $X$ be affine and $x:X$, then the proposition
  \[ \prod_{U:X\to \Open}U(x)\to U(y) \]
  is equivalent to $\neg\neg (x=y)$.
\end{lemma}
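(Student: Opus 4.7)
The plan is to prove the two implications separately, leaning on two facts already established in this subsection: open propositions are closed under pointwise double negation (\Cref{open-union-intersection}(e)), and on an affine scheme the subtype $\{z : X \mid z \neq y\}$ is itself open (\Cref{open-inequality-subtype}).

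For the direction $\neg\neg(x = y) \Rightarrow \prod_{U} U(x) \to U(y)$, I would fix an open $U : X \to \Open$, assume $U(x)$, and aim at $U(y)$. Since $x = y$ trivially turns $U(x)$ into $U(y)$, we get $\neg\neg(x = y) \to \neg\neg U(y)$, and then double-negation stability of the open proposition $U(y)$ yields $U(y)$.

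For the converse direction, I would argue by contradiction. Assume $\prod_{U} U(x) \to U(y)$; to show $\neg\neg(x=y)$, suppose $\neg(x = y)$. By \Cref{open-inequality-subtype} applied to the affine $X$, the subtype $U_y(z) \colonequiv (z \neq y)$ is open. Our hypothesis $x \neq y$ is precisely $U_y(x)$, so the assumption forces $U_y(y)$, i.e.\ $y \neq y$, which is absurd.

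Neither step involves a deep calculation; the only point that takes a moment is picking the right separating open, and this is exactly what \Cref{open-inequality-subtype} supplies — without affineness of $X$ there is no obvious way to turn an abstract witness of $\neg(x = y)$ into a concrete open neighborhood of $x$ avoiding $y$, so that lemma is the load-bearing input.
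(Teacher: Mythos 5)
Your proof is correct and follows the same approach as the paper: the forward direction uses double-negation stability of open propositions (\Cref{open-union-intersection}), and the converse uses the openness of the inequality subtype (\Cref{open-inequality-subtype}) as the separating open. The paper's version is just more terse; the content is identical.
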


\begin{proof}
  By \Cref{open-union-intersection}, $\neg\neg (x=y)$ implies $\prod_{U:X\to \Open}U(x)\to U(y)$.
  For the other implication,
  $\neg (x=y)$ is open by \Cref{open-inequality-subtype}, so we get a contradiction.
\end{proof}

We now show that our two definitions (\Cref{affine-open}, \Cref{qc-open})
of open subtypes of an affine scheme are equivalent.

\begin{theorem}[using \axiomref{loc}, \axiomref{sqc}, \axiomref{Z-choice}]%
  \label{qc-open-affine-open}
  Let $X=\Spec A$ and $U:X\to\Prop$ be an open subtype,
  then $U$ is affine open, i.e. there merely are $h_1,\dots,h_n:X\to R$ such that
  $U=D(h_1,\dots,h_n)$.
\end{theorem}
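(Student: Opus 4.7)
The plan is to apply \axiomref{Z-choice} to choose, Zariski-locally, a finite list of elements of $R$ that pointwise witnesses openness of $U$, and then to patch these local affine-open descriptions into a global one using transitivity of affine-openness.

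More concretely, for each $x:X$ the openness of $U(x)$ states that the type
\[ B(x) \colonequiv \sum_{n:\N} \sum_{f:R^n}\left( U(x) \Leftrightarrow \exists_i \inv(f_i) \right) \]
is inhabited. Applying \axiomref{Z-choice} to this family will yield a unimodular $(g_1,\dots,g_m):\Um(A)$ together with, on each $D(g_j)$, a section $x \mapsto (n_j(x), f_{j,\bullet}(x))$. The next step is to eliminate the $x$-dependence of $n_j$: I would pad each sequence with zeros to get a family of eventually vanishing sequences $D(g_j) \to (\N \to R)$, which preserves the witnessing equivalence since $0$ is not invertible by \axiomref{loc}. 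Because $D(g_j) = \Spec A[g_j^{-1}]$ is affine, \Cref{eventually-vanishing-sequence-on-affine} then provides a uniform bound $N_j$, giving functions $f_{j,1},\dots,f_{j,N_j} : D(g_j) \to R$ with $U \cap D(g_j) = D(f_{j,1},\dots,f_{j,N_j})$ as subsets of the affine scheme $D(g_j)$.

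By \Cref{affine-open-trans}, each $U \cap D(g_j)$ is then affine-open in $X$, i.e.\ of the form $D(h_{j,1},\dots,h_{j,M_j})$ for some $h_{j,k}:A$. Since the $D(g_j)$ cover $X$ we have $U = \bigcup_j (U \cap D(g_j))$, which is a finite union of standard opens of $X$ and hence itself of the desired form $D(h_1,\dots,h_N)$. The main obstacle I expect is the middle step, passing from the $x$-dependent sequence length produced by Z-choice to a uniform length on each $D(g_j)$; this is precisely what \Cref{eventually-vanishing-sequence-on-affine} takes care of, so with that lemma in hand the remainder is essentially bookkeeping around \axiomref{sqc} and \Cref{affine-open-trans}.
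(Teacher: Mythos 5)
Your proof is correct and matches the paper's argument: apply \axiomref{Z-choice} to obtain local lists witnessing openness on a cover by standard opens, use \axiomref{sqc}-based boundedness on each affine piece to make the list lengths uniform, pad with zeros, and finish with \Cref{affine-open-trans}. The one cosmetic difference is that you pad first and then invoke \Cref{eventually-vanishing-sequence-on-affine} directly, whereas the paper first takes lengths and applies \Cref{boundedness} (which is itself a corollary of that lemma) before padding -- the underlying idea is the same.
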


\begin{proof}
  Let $L(x)$ be the type of finite lists of elements of $R$,
  such that one of them being invertible is equivalent to $U(x)$.
  By assumption, we know
  \[\prod_{x:X}\propTrunc{L(x)}\rlap{.}\]
  So by \axiomref{Z-choice}, we have $s_i:\prod_{x:D(f_i)}L(x)$.
  We compose with the length function for lists to get functions $l_i:D(f_i)\to\N$.
  By \Cref{boundedness}, the $l_i$ are bounded.
  Since we are proving a proposition, we can assume we have actual bounds $b_i:\N$.
  So we get functions $\tilde{s_i}:D(f_i)\to R^{b_i}$,
  by append zeros to lists which are too short,
  i.e. $\widetilde{s}_i(x)$ is $s_i(x)$ with $b_i-l_i(x)$ zeros appended.

  Then one of the entries of $\widetilde{s}_i(x)$ being invertible,
  is still equivalent to $U(x)$.
  So if we define $g_{ij}(x)\colonequiv \pi_j(\widetilde{s}_i(x))$,
  we have functions on $D(f_i)$, such that
  \[
    D(g_{i1},\dots,g_{ib_i})=U\cap D(f_i)
    \rlap{.}
  \]
  By \Cref{affine-open-trans}, this is enough to solve the problem on all of $X$.
\end{proof}

This allows us to transfer one important lemma from affine-opens to qc-opens.
The subtlety of the following is that while it is clear that the intersection of two
qc-opens on a type, which are \emph{globally} defined is open again, it is not clear,
that the same holds, if one qc-open is only defined on the other.

\begin{lemma}[using \axiomref{loc}, \axiomref{sqc}, \axiomref{Z-choice}]%
  \label{qc-open-trans}
  Let $X$ be a scheme, $U\subseteq X$ qc-open in $X$ and $V\subseteq U$ qc-open in $U$,
  then $V$ is qc-open in $X$.
\end{lemma}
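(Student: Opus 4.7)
The plan is to reduce to the affine case by using that a scheme $X$ is locally affine: we may cover $X$ by finitely many affine-open subtypes $W_1,\dots,W_m$ with $W_i=\Spec A_i$. Since qc-openness of a subtype of $X$ is pointwise — it asks that the proposition assigned to each point be qc-open — it suffices to show that the subtype of $X$ obtained from $V$ via the composite inclusion $V\hookrightarrow U\hookrightarrow X$, when restricted to each $W_i$, is qc-open in $W_i$. So the whole argument takes place inside a single affine $W_i=\Spec A_i$.

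Fix such an affine $W_i$. The restriction $U\cap W_i$ is qc-open in $W_i$ (by \Cref{preimage-open}, applied to the inclusion $W_i\hookrightarrow X$). Applying \Cref{qc-open-affine-open} to the affine scheme $W_i$, we merely obtain elements $h_1,\dots,h_n:A_i$ with
\[ U\cap W_i \;=\; D(h_1,\dots,h_n) \;=\; D(h_1)\cup\dots\cup D(h_n) \rlap{.}\]
This is the first crucial use of \Cref{qc-open-affine-open}: it turns the restricted open $U\cap W_i$, which we only know pointwise, into a genuinely affine datum, a finite union of standard opens.

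Now each $D(h_k)$ is an affine scheme, namely $\Spec (A_i)_{h_k}$, and it sits inside $U$. Restricting $V$ (viewed as a subtype of $U$) along the inclusion $D(h_k)\hookrightarrow U$ again gives a qc-open subtype of $D(h_k)$ by \Cref{preimage-open}. Applying \Cref{qc-open-affine-open} to this affine scheme a second time, we merely obtain $g_{k1},\dots,g_{k,n_k}:(A_i)_{h_k}$ with $V\cap D(h_k)=D(g_{k1},\dots,g_{k,n_k})$ inside $D(h_k)$. By \Cref{affine-open-trans}, each such $V\cap D(h_k)$ is then affine-open already in $W_i$. Taking the finite union over $k$, we conclude that $V\cap W_i$ is affine-open in $W_i$, hence qc-open in $W_i$, as required.

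The main obstacle is exactly the subtlety flagged in the excerpt: $V$ is only defined on $U$, not globally on $X$, so we cannot simply intersect two global qc-opens via \Cref{open-union-intersection}. The reason the above works is that \Cref{qc-open-affine-open} (which itself depends on \axiomref{loc}, \axiomref{sqc} and \axiomref{Z-choice}) lets us replace the ``dependent'' qc-open on $U$ by honest global generators $g_{kl}$ on each affine piece $D(h_k)$, after which \Cref{affine-open-trans} transports the description back to the ambient $W_i$. Without the ability to replace pointwise qc-open data by finite lists of generators on affines, this transitivity would not be available.
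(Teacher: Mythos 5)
Your proof is correct and follows essentially the same route as the paper's: cover $X$ by affines, apply \Cref{qc-open-affine-open} to write $U$ on each piece as a finite union of standard opens, apply it again to describe $V$ on each standard open, and then use \Cref{affine-open-trans} to transport the description back to the ambient affine before taking finite unions. Your version is slightly more explicit — you spell out the second invocation of \Cref{qc-open-affine-open} and the role of \Cref{preimage-open} where the paper's proof treats these as implicit — but the structure and the key lemmas are the same.
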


\begin{proof}
  Let $X_i=\Spec A_i$ be a finite affine cover of $X$.
  It is enough to show, that the restriction $V_i$ of $V$ to $X_i$ is qc-open.
  $U_i\colonequiv X_i\cap U$ is qc-open in $X_i$, since $X_i$ is qc-open.
  By \Cref{qc-open-affine-open}, $U_i$ is affine-open in $X_i$,
  so $U_i=D(f_1,\dots,f_n)$.
  $V_i\cap D(f_j)$ is affine-open in $D(f_j)$, so by \Cref{affine-open-trans},
  $V_i\cap D(f_j)$ is affine-open in $X_i$.
  This implies $V_i\cap D(f_j)$ is qc-open in $X_i$ and so is $V_i=\bigcup_{j}V_i\cap D(f_j)$.
\end{proof}

\begin{lemma}[using \axiomref{loc}, \axiomref{sqc}, \axiomref{Z-choice}]%
  \label{qc-open-sigma-closed}
  \begin{enumerate}[(a)]
  \item qc-open propositions are closed under dependent sums:
    if $P : \Open$ and $U : P \to \Open$,
    then the proposition $\sum_{x : P} U(x)$ is also open.
  \item Let $X$ be a type. Any open subtype of an open subtype of $X$ is an open subtype of $X$.
  \end{enumerate}
\end{lemma}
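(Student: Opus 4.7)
The plan is to prove part (a) first and obtain part (b) by a routine unfolding.

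For (b): given $V\subseteq U\subseteq X$ with $U$ open in $X$ and $V$ open in $U$, the subtype of $X$ corresponding to $V$ evaluates at $x:X$ to the proposition $\sum_{p:U(x)}V(x,p)$. The hypotheses say $U(x)$ is open and $V(x,-):U(x)\to\Open$ is a family of opens, so part (a) applies and yields that $V(x)$ is open for every $x$, as required.

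The key observation for (a) is that an open proposition is precisely an open subtype of the singleton scheme $\Spec R$, via the canonical equivalence $\mathbf{1}\simeq\Spec R$. This makes the affine-open machinery available. Given $P:\Open$ and $U:P\to\Open$, I proceed as follows. First, view $P$ as an open subtype of $\Spec R$: by \Cref{qc-open-affine-open}, there are $f_1,\dots,f_n:R$ with $P=D(f_1,\dots,f_n)=D(f_1)\cup\cdots\cup D(f_n)$. Second, for each $i$, restrict $U$ to $D(f_i)\subseteq P$; this is an open subtype of the affine scheme $D(f_i)=\Spec R_{f_i}$, so \Cref{qc-open-affine-open} once more supplies elements $h_{i,1},\dots,h_{i,k_i}:R_{f_i}$ with $U|_{D(f_i)}=D(h_{i,1},\dots,h_{i,k_i})$. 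Third, by \Cref{affine-open-trans}, each such affine-open of $D(f_i)$ is already affine-open in $\Spec R$, hence is expressible as $D(\tilde h_{i,j})$ for some $\tilde h_{i,j}:R$. Finally, the subtype $\sum_{x:P}U(x)\subseteq\Spec R$ is the union $\bigcup_{i,j}D(\tilde h_{i,j})=D(\{\tilde h_{i,j}\}_{i,j})$, which is affine-open in $\Spec R$; evaluating at the unique point of $\Spec R$ shows the proposition $\sum_{x:P}U(x)$ is open.

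The main conceptual step is recognizing that open propositions can be treated as open subtypes of $\Spec R=\mathbf{1}$, after which the proof is essentially bookkeeping: the two-stage reduction (first describe $P$ as a union of standard opens in $\Spec R$, then describe $U$ on each piece) mirrors the usual way of locally straightening out a dependently-defined open. The nontrivial content (including the use of \axiomref{Z-choice} and boundedness) is entirely absorbed into \Cref{qc-open-affine-open} and \Cref{affine-open-trans}; so no additional obstacle is expected once the reformulation via $\Spec R$ is in place.
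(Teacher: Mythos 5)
Your proof is correct and takes essentially the same route as the paper: the paper proves (a) by applying \Cref{qc-open-trans} to the affine scheme $\Spec R$ --- whose proof you have in effect inlined, using \Cref{qc-open-affine-open} and \Cref{affine-open-trans} exactly as that lemma's proof does --- and proves (b) by applying (a) pointwise, just as you do.
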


\begin{proof}
  \begin{enumerate}[(a)]
  \item Apply \Cref{qc-open-trans} to the point $\Spec R$.
  \item Apply the above pointwise.
  \end{enumerate}
\end{proof}

\begin{remark}
  \Cref{qc-open-sigma-closed} means that
  the (qc-) open propositions constitute a \notion{dominance}
  in the sense of~\cite{rosolini-phd-thesis}.
\end{remark}

The following fact about the interaction of closed and open propositions
is due to David Wärn.

\begin{lemma}%
  \label{implication-from-closed-to-open}
  Let $P$ and $Q$ be propositions
  with $P$ closed and $Q$ open.
  Then $P \to Q$ is equivalent to $\lnot P \lor Q$.
\end{lemma}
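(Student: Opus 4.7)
My plan is to prove the two directions separately. The easy direction, $\lnot P \lor Q \Rightarrow P \to Q$, is pure propositional logic: assuming $P$, dispatch the two cases of the disjunction — a contradiction in the first case, $Q$ directly in the second. This needs no axioms and I would dispose of it in one line.

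For the forward direction $(P \to Q) \Rightarrow (\lnot P \lor Q)$, I would begin by reducing to concrete representatives. Since the goal $\lnot P \lor Q$ is a proposition, I can eliminate the propositional truncations hidden in the definitions of ``closed'' (\Cref{closed-proposition}) and ``open'' (\Cref{qc-open}) and fix $x_1, \dots, x_n : R$ with $P \Leftrightarrow (x_1 = 0 \land \dots \land x_n = 0)$, together with $f_1, \dots, f_m : R$ with $Q \Leftrightarrow (\inv(f_1) \lor \dots \lor \inv(f_m))$. All subsequent reasoning is with these fixed witnesses.

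The key step is to consider the concatenated vector $v \colonequiv (x_1, \dots, x_n, f_1, \dots, f_m) : R^{n+m}$ and show $v \neq 0$. If $v$ were zero, then all $x_i$ would vanish, so $P$ would hold, so $Q$ would hold by hypothesis, and hence some $f_j$ would be invertible — but $f_j = 0$ contradicts $0 \neq 1$ from \axiomref{loc}. By \Cref{non-zero-invertible}(b) (which uses \axiomref{loc} and \axiomref{sqc}), some entry of $v$ is therefore invertible. If it is an $x_i$, then $x_i \neq 0$ gives $\lnot P$; if it is an $f_j$, we already have $Q$. In either case $\lnot P \lor Q$ holds.

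The only real obstacle is spotting the right packaging: it is not obvious a priori that one should concatenate the closed-side data $x_i$ and the open-side data $f_j$ into a single vector and let the generalized field property do the case split. Once that move is in hand the rest is a short contradiction, and no machinery beyond \Cref{non-zero-invertible} is needed.
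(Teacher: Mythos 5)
Your proof is correct and rests on the same key idea as the paper's: concatenate the defining elements of $P$ and $Q$ and invoke the generalized field property (\Cref{non-zero-invertible}) to force one entry to be invertible. The paper packages this as a chain of propositional equalities, rewriting $Q$ as a negation, merging, and unfolding back via \Cref{generalized-field-property} applied three times, whereas you split the two implications and apply the lemma once to the combined vector after a short contradiction argument — a presentational difference only, with the same mathematical content.
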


\begin{proof}
  We can assume $P = (f_1 = \dots = f_n = 0)$
  and $Q = (\inv(g_1) \lor \dots \lor \inv(g_m))$.
  Then we have:
  \begin{align*}
    (P \to Q) &= \qquad
    \text{\Cref{generalized-field-property} for $g_1, \dots, g_m$}\\
    (P \to \lnot (g_1 = \dots = g_m = 0)) &= \\
    \lnot (f_1 = \dots = f_n = g_1 = \dots = g_m = 0) &= \qquad
    \text{\Cref{generalized-field-property} for $f_1, \dots, f_n, g_1, \dots, g_m$}\\
    (\inv(f_1) \lor \dots \lor \inv(f_n) \lor \inv(g_1) \lor \dots \lor \inv(g_m) &= \qquad
    \text{\Cref{generalized-field-property} for $f_1, \dots, f_n$}\\
    \lnot P \lor Q &
  \end{align*}
\end{proof}

\section{Schemes}

\subsection{Definition of schemes}

In our internal setting, schemes are just types satisfying a property and morphisms of schemes are type theoretic functions.
The following definition \emph{does not} define schemes in general,
but something which is expected to correspond to quasi-compact, quasi separated schemes,
locally of finite type externally.

\begin{definition}%
  \label{schemes}
  A type $X$ is a \notion{(qc-)scheme}\index{scheme}
  if there merely is a cover by finitely many open subtypes $U_i:X\to\Prop$,
  such that each of the $U_i$ is affine.
\end{definition}

\begin{definition}
  \label{type-of-schemes}
  We denote the \notion{type of schemes} with $\Sch$\index{$\Sch$}.
\end{definition}

Zariski-choice \axiomref{Z-choice} extends to schemes:

\begin{proposition}[using \axiomref{Z-choice}]%
  \label{zariski-choice-scheme}
  Let $X$ be a scheme and $P:X\to \Type$ with $\prod_{x:X}\propTrunc{P(x)}$,
  then there merely is a cover $U_i$ by standard opens of the affine parts of $X$,
  such that there are $s_i:\prod_{x:U_i}P(x)$ for all $i$.
\end{proposition}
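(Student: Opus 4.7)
The plan is to reduce the statement to the affine case by using the finite affine-open cover that comes with $X$ being a scheme, and then to apply the axiom \axiomref{Z-choice} on each affine piece separately. Since the conclusion is propositionally truncated, we may freely strip $\propTrunc{-}$ throughout the argument.

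First, because $X$ is a scheme, there merely exists a finite family of affine-open subtypes $V_1,\dots,V_n:X\to\Prop$ covering $X$, with each $V_k=\Spec A_k$ for some finitely presented $R$-algebra $A_k$. Since our goal is a proposition, we may assume such a cover is given. Restricting the hypothesis along the inclusion $V_k\hookrightarrow X$ yields, for each $k$, a dependent function $\prod_{x:V_k}\propTrunc{P(x)}$.

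Second, I apply \axiomref{Z-choice} to $A_k$ and the family $P\restriction V_k:\Spec A_k\to\mU$. This gives, for each $k$, unimodular elements $f_{k,1},\dots,f_{k,m_k}:A_k$ together with dependent functions
\[ s_{k,j}:\prod_{x:D(f_{k,j})}P(x) \]
for $j=1,\dots,m_k$. Each $D(f_{k,j})$ is a standard open of the affine part $V_k$ of $X$, as required by the statement.

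Finally, I combine these into one family indexed by pairs $(k,j)$: the collection $(D(f_{k,j}))_{k,j}$ is a cover of $X$, because the $V_k$ cover $X$ and, for each $k$, unimodularity of $f_{k,1},\dots,f_{k,m_k}$ together with the definition of $\Um(A_k)$ implies $\bigcup_j D(f_{k,j})=V_k$. Hence for every $x:X$ there merely exist $k$ and $j$ with $x\in D(f_{k,j})$, and the $s_{k,j}$ provide the required sections. The only mild point of care is to keep track of truncations: we use that the goal is merely, so the initial choice of affine cover and the outputs of \axiomref{Z-choice} can be extracted without extra machinery; there is no real obstacle beyond this bookkeeping.
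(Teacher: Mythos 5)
Your proof is correct and is the natural way to establish the claim (the paper itself gives no proof for this proposition). The decomposition is exactly right: strip the truncation on the finite affine cover since the goal is a proposition, restrict the hypothesis to each $V_k$, apply \axiomref{Z-choice} on each $\Spec A_k$, and reindex by pairs $(k,j)$.

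One small point worth flagging: in the final step, when you argue that the $D(f_{k,j})$ actually cover $X$ (i.e.\ for every $x:V_k$ one of the $x(f_{k,j})$ is invertible), you are implicitly using \axiomref{loc}. Unimodularity of $(f_{k,1},\dots,f_{k,m_k})$ in $A_k$ only yields that $(x(f_{k,1}),\dots,x(f_{k,m_k}))$ generates the unit ideal in $R$; passing from there to ``one of these is a unit'' is precisely locality of $R$. The paper's axiom label lists only \axiomref{Z-choice}, so either the paper reads ``cover'' formally as ``unimodular list of standard opens per affine piece'' (in which case your last sentence proves more than is asked, at the cost of invoking an extra axiom), or the paper's attribution is slightly incomplete. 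Either way, your reasoning is sound; it is just worth being explicit that the pointwise-covering conclusion uses \axiomref{loc}.
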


\subsection{General Properties}

\begin{lemma}[using \axiomref{loc}, \axiomref{sqc}]%
  \label{intersection-of-all-opens}
  Let $X$ be a scheme and $x:X$, then for all $y:X$ the proposition
  \[ \prod_{U:X\to \Open}U(x)\to U(y) \]
  is equivalent to $\neg\neg (x=y)$.
\end{lemma}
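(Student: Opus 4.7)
The plan is to reduce to the already-proven affine case \Cref{affine-intersection-of-all-opens} by passing to an affine open in the cover of $X$. The easy direction $\neg\neg(x=y) \Rightarrow \prod_{U} U(x)\to U(y)$ goes verbatim as in the affine proof: given $U(x)$ and $\neg\neg(x=y)$, transport yields $\neg\neg U(y)$, and this collapses to $U(y)$ because open propositions are double-negation stable by \Cref{open-union-intersection}(e).

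For the converse, I assume $\prod_U U(x)\to U(y)$ and aim at $\neg\neg(x=y)$. Since $X$ is a scheme, it merely admits a finite cover $X = U_1\cup\dots\cup U_n$ by affine opens, and each $U_i$ is in particular an open subtype of $X$. By the cover condition there merely exists an index $i$ with $U_i(x)$, and feeding this $U_i$ into the hypothesis yields $U_i(y)$ as well. The goal $\neg\neg(x=y)$ is a proposition, so I may fix one such $i$ once and for all. Now $x,y$ both lie in the affine $U_i$, and \Cref{affine-intersection-of-all-opens} applied there reduces the task to checking $\prod_{V : U_i\to\Open} V(x)\to V(y)$.

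The crux is then to promote an arbitrary open subtype $V$ of $U_i$ to an open subtype of the ambient scheme $X$. My plan is to define the extension $\tilde V : X\to\Prop$ by $\tilde V(z)\colonequiv \sum_{p:U_i(z)} V(z,p)$, which literally equals $V$ on $U_i$ and is false off $U_i$. The point is that $\tilde V$ is a $\Sigma$ of open propositions over an open proposition, hence open by \Cref{qc-open-sigma-closed}(a), so $\tilde V : X\to \Open$. Specialising the main hypothesis to $\tilde V$ then gives $V(x)\to V(y)$, and running this for every $V$ completes the reduction to the affine case.

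The main obstacle is exactly this extension step: transferring qc-openness out of an affine patch into the whole scheme relies on $\Sigma$-closedness of opens (\Cref{qc-open-sigma-closed}), which in the present development is established via \Cref{qc-open-trans} and therefore uses \axiomref{Z-choice}. So although the easy direction is free, the hard direction quietly leans on machinery beyond \axiomref{loc} and \axiomref{sqc}; either the axiom list on the lemma should be augmented with \axiomref{Z-choice}, or a genuinely different argument (one that does not need to expand an arbitrary open of $U_i$ into an open of $X$) would be required.
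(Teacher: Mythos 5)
Your proposal follows the same route as the paper: one direction from double-negation stability of opens (\Cref{open-union-intersection}), the other by passing to an affine open $U_i$ containing $x$ (hence $y$, by feeding $U_i$ itself into the hypothesis) and invoking \Cref{affine-intersection-of-all-opens}. The paper's proof is exactly this, stated in one sentence.

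You have, however, correctly identified a real subtlety that the paper glosses over. To apply \Cref{affine-intersection-of-all-opens} on $U_i$ one needs $\prod_{V : U_i \to \Open} V(x) \to V(y)$, whereas the hypothesis only quantifies over $U : X \to \Open$; bridging this gap requires extending an open subtype of $U_i$ to an open subtype of $X$. Your extension $\tilde V(z) \colonequiv \sum_{p : U_i(z)} V(z,p)$ is the natural one, and its openness is precisely $\Sigma$-closedness of open propositions, i.e.\ \Cref{qc-open-sigma-closed}/\Cref{qc-open-trans}, which the paper proves only with \axiomref{Z-choice}. Note also that one cannot sidestep this by restricting attention to the single open actually used in the affine proof, namely $z \mapsto (z \neq y)$: that subtype is not globally open on a non-separated scheme, so one must again intersect it with $U_i$, and showing that $z \mapsto U_i(z) \wedge (z \neq_X y)$ is open is the very same $\Sigma$-closedness instance (an open proposition conjoined with a proposition known to be open only conditionally on the first). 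So unless the authors had a genuinely different argument in mind, the axiom list on this lemma should include \axiomref{Z-choice}. Your easy direction is fine and matches the paper; your diagnosis of the hard direction is a legitimate and useful catch rather than a flaw in your own reasoning.
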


\begin{proof}
  By \Cref{open-union-intersection},
  open proposition are always double-negation stable,
  which settles one implication.
  For the implication
  \[ \left(\prod_{U:X\to \Open}U(x)\to U(y)\right) \Rightarrow \neg\neg (x=y) \]
  we can assume that $x$ and $y$ are both inside an open affine $U$
  and use that the statement holds for affine schemes by \Cref{affine-intersection-of-all-opens}.
\end{proof}

\subsection{Glueing}

\begin{proposition}[using \axiomref{loc}, \axiomref{sqc}, \axiomref{Z-choice}]%
  Let $X,Y$ be schemes and $f:U\to X$, $g:U\to Y$ be embeddings with open images in $X$ and $Y$,
  then the pushout of $f$ and $g$ is a scheme.
\end{proposition}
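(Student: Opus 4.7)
Write $Z \colonequiv X \sqcup_U Y$ for the pushout, with canonical maps $i_X : X \to Z$ and $i_Y : Y \to Z$. The plan is to cover $Z$ by finitely many affine opens obtained by pushing forward affine open covers of $X$ and $Y$. For this to work, two preparatory facts are needed: first, that the images of $i_X$ and $i_Y$ are open subtypes of $Z$; second, that $i_X$ and $i_Y$ are themselves embeddings.

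For the openness of $i_X(X) \subseteq Z$, I would build a characteristic map $\chi_X : Z \to \Open$ by the universal property of the pushout: send $x : X$ to $\top$, and send $y : Y$ to the open proposition $g(U)(y)$ (open by hypothesis that $g$ has open image). These agree on $U$ since $g(u)$ always lies in $g(U)$. A pushout induction then shows $\chi_X(z)$ is logically equivalent to $\propTrunc{\sum_{x : X} i_X(x) = z}$: one direction is immediate from the constant $\top$ clause, the other from the fact that any witness $u : U$ with $g(u) = y$ produces $i_X(f(u)) = i_Y(g(u)) = i_Y(y)$ via the pushout coherence path. So the image of $i_X$ is open, and symmetrically for $i_Y$.

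That $i_X$ and $i_Y$ are embeddings is the standard fact that a pushout of embeddings along a common embedded domain yields embeddings, with the pushout square also being a pullback — a van Kampen style consequence in the $\infty$-topos of types. I expect this to be the main obstacle to writing out cleanly, as it amounts to analyzing path spaces in the pushout: a path $i_X(x) = i_X(x')$ in $Z$ must, by a descent argument, factor through $U$, and then embeddedness of $f$ and $g$ forces $x = x'$. Once this is in hand, the subtypes $i_X(X), i_Y(Y) \subseteq Z$ are genuinely equivalent to $X$ and $Y$ respectively.

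With both preparations, choose finite affine open covers $X = \bigcup_k V_k$ and $Y = \bigcup_l W_l$. Each $V_k$ is an open subtype of the open subtype $i_X(X) \subseteq Z$, so open in $Z$ by \Cref{qc-open-sigma-closed}(b); analogously for the $W_l$. Because $i_X$ is an embedding, $V_k$ viewed as a subtype of $Z$ remains equivalent to the affine scheme $V_k$, and similarly for $W_l$. Finally, the family $\{V_k\} \cup \{W_l\}$ is finite, consists of affine opens, and covers $Z$ since the pushout inclusions $i_X, i_Y$ are jointly surjective. This exhibits $Z$ as a scheme.
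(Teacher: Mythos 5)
Your proof is correct, though organized differently from the paper's. The paper builds the affine open cover directly: for each affine open $U_i \subseteq X$, pushout recursion defines $\tilde U_i : Z \to \Open$ by $\tilde U_i(i_X(x)) = U_i(x)$ and $\tilde U_i(i_Y(y)) = (U_i \cap U)(y)$ (open in $Y$ by \Cref{qc-open-trans}), and the flattening lemma for pushouts then identifies $\sum\tilde U_i$ with $U_i \sqcup_{U_i \cap U}(U_i \cap U) \simeq U_i$, so each piece is already affine. You instead first show the entire image $i_X(X)$ is open --- your $\chi_X$ is essentially the same pushout-recursion construction run once with $U_i = X$ --- then invoke the van Kampen/Kraus--von Raumer fact that $i_X, i_Y$ are embeddings with the pushout square a pullback, and finally restrict a chosen affine open cover of $X$ across the equivalence $X \simeq i_X(X)$ via \Cref{qc-open-sigma-closed}(b). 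You rightly flag the embedding and pullback step as the main nontrivial input; the paper's piece-by-piece construction sidesteps it, since the flattening lemma applied to each $\tilde U_i$ separately already identifies $\sum\tilde U_i$ with $U_i$ without ever needing $i_X$ itself to be an embedding --- but the paper leaves that flattening step implicit, so your version is more explicit about where the real work lies. A minor observation: the paper opens by noting $Z$ is $0$-truncated, but neither proof actually needs this as a separate hypothesis once the cover is built, because a type with a finite cover by subtypes whose total spaces are sets is itself a set (the subtype embeddings identify loop spaces), so $0$-truncation of $Z$ falls out as a byproduct.
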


\begin{proof}
  As is shown for example \href{https://github.com/agda/cubical/blob/310a0956bb45ea49a5f0aede0e10245292ae41e0/Cubical/HITs/Pushout/KrausVonRaumer.agda#L170-L176}{here},
  such a pushout is always 0-truncated.
  Let $U_1,\dots,U_n$ be a cover of $X$ and $V_1,\dots,V_m$ be a cover of $Y$.
  By \Cref{qc-open-trans}, $U_i\cap U$ is open in $Y$,
  so we can use (large) pushout-recursion to construct a subtype $\tilde{U_i}$,
  which is open in the pushout and restricts to $U_i$ on $X$ and $U_i\cap U$ on $Y$.
  Symmetrically we define $\tilde{V_i}$ and in total get an open finite cover of the pushout.
  The pieces of this new cover are equivalent to their counterparts in the covers of $X$ and $Y$,
  so they are affine as well.
\end{proof}

\subsection{Subschemes}

\begin{definition}
  Let $X$ be a scheme.
  A \notion{subscheme} of $X$ is a subtype $Y:X\to\Prop$,
  such that $\sum Y$ is a scheme.
\end{definition}

\begin{proposition}[using \axiomref{loc}, \axiomref{sqc}, \axiomref{Z-choice}]%
  \label{open-subscheme}
  Any open subtype of a scheme is a scheme.
\end{proposition}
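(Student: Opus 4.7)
The plan is to take a finite open affine cover of $X$, intersect it with $U$, and then invoke \Cref{qc-open-affine-open} to refine each piece into standard opens, which are affine.

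Concretely, first I would unfold the hypothesis: by \Cref{schemes}, there merely exists a finite cover $X = \bigcup_{i=1}^{k} X_i$ with each $X_i \subseteq X$ open and each $X_i = \Spec A_i$ affine. Since the goal (being a scheme) is a proposition, I can assume these data are given. Next, for each $i$ consider $U \cap X_i$ as a subtype of $X_i$. At any point $x : X_i$, the proposition $(U \cap X_i)(x)$ is just $U(x)$, which is open by hypothesis. Hence $U \cap X_i$ is a qc-open subtype of the affine scheme $X_i$.

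Then I would apply \Cref{qc-open-affine-open} to obtain, for each $i$, elements $f_{i,1}, \dots, f_{i,n_i} : A_i$ with
\[ U \cap X_i \;=\; D(f_{i,1}) \cup \dots \cup D(f_{i,n_i}) \]
as subtypes of $X_i$. Each $D(f_{i,j})$ is affine, being $\Spec A_i[f_{i,j}^{-1}]$, and it is qc-open in $X_i$ hence qc-open in $X$ by \Cref{qc-open-trans} (using that $X_i$ is open in $X$). Consequently each $D(f_{i,j})$, viewed as a subtype of $U$, is qc-open in $U$, because openness of a subtype is a pointwise condition on the underlying proposition and the restriction to $U$ does not alter the propositions at points of $D(f_{i,j})$.

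Finally I would observe that the finite family $\{ D(f_{i,j}) \}_{i,j}$ covers $U$: given $x : U$, there merely is some $i$ with $x : X_i$, and then $x : U \cap X_i$, so $x$ lies in some $D(f_{i,j})$. Thus $U$ carries a finite cover by qc-open affine subtypes, witnessing that $U$ is a scheme. The only nontrivial ingredient is \Cref{qc-open-affine-open} (which itself needs \axiomref{Z-choice}); everything else is bookkeeping about the pointwise nature of openness and the transitivity result \Cref{qc-open-trans}, so there is no real obstacle beyond correctly tracking which subtype-of-which-type one is working with.
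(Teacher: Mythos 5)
Your proof is correct and is precisely the argument the paper leaves implicit; the paper's proof consists of the single line ``Using \Cref{qc-open-affine-open}'', and your write-up is the natural expansion of that remark: restrict $U$ to each affine $X_i$, apply \Cref{qc-open-affine-open} there, and then use \Cref{qc-open-trans} and the pointwise nature of openness to see that the resulting $D(f_{i,j})$ form a finite affine open cover of $U$.
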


\begin{proof}
  Using \Cref{qc-open-affine-open}.
\end{proof}

\begin{proposition}[using \axiomref{loc}, \axiomref{sqc}, \axiomref{Z-choice}]%
  \label{closed-subscheme}
  Any closed subtype $A:X\to \Prop$ of a scheme $X$ is a scheme.
\end{proposition}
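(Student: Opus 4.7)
The plan is to cover the closed subscheme $A$ by the intersections of $A$ with the affine open pieces of a cover of $X$, and then invoke \Cref{closed-subtype-affine} to see that each intersection is affine.

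More precisely, since $X$ is a scheme, we merely have a finite open cover $X = \bigcup_{i=1}^n U_i$ with each $U_i$ affine, say $U_i = \Spec B_i$. First I would consider $A_i \colonequiv A \cap U_i$ as a subtype of $U_i$. By definition, $A_i(x)$ is a conjunction of the closed proposition $A(x)$ (viewed on $x : U_i$ via the inclusion), so $A_i$ is a closed subtype of the affine scheme $U_i$. By \Cref{closed-subtype-affine}, $A_i$ is itself affine, namely $A_i = \Spec(B_i/I_i)$ for some finitely generated ideal $I_i \subseteq B_i$.

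Next I would interpret the $A_i$ as subtypes of $A$ rather than of $U_i$, via $A_i(x) = U_i(x)$ for $x : A$. As a subtype of $A$, each $A_i$ is the preimage of the open subtype $U_i \subseteq X$ under the inclusion $A \hookrightarrow X$, and hence is open in $A$ by \Cref{preimage-open}. The fact that the $U_i$ jointly cover $X$ immediately gives that the $A_i$ jointly cover $A$: for any $x : A$, mere existence of $i$ with $U_i(x)$ produces mere existence of $i$ with $A_i(x)$. So $A$ is covered by finitely many open affine subtypes and is therefore a scheme by \Cref{schemes}.

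There is no real obstacle here; the only subtlety is to notice that \Cref{closed-subtype-affine} is exactly what bridges the gap between ``closed in the ambient affine'' and ``affine'', and that \Cref{preimage-open} turns openness of $U_i$ in $X$ into openness of $A_i$ in $A$. Both steps use the same triple of axioms (\axiomref{loc}, \axiomref{sqc}, \axiomref{Z-choice}) as are listed for the statement, via the closed-subtype-of-affine lemma.
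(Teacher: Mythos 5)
Your proof is correct and follows essentially the same strategy as the paper: restrict to an affine open cover $U_i$ of $X$, note that each $U_i \cap A$ is open in $A$ (which the paper states implicitly and you justify via \Cref{preimage-open}) and closed in $U_i$, and apply \Cref{closed-subtype-affine} to conclude each $U_i \cap A$ is affine. You spell out more detail than the paper's three-sentence proof, but the decomposition, the key lemma, and the logical flow are the same.
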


\begin{proof}
  Any open subtype of $X$ is also open in $A$.
  So it is enough to show,
  that any affine open $U_i$ of $X$,
  has affine intersection with $A$.
  But $U_i\cap A$ is closed in $U_i$ and therefore affine by \Cref{closed-subtype-affine}.
\end{proof}

\subsection{Equality types}

\begin{lemma}%
  \label{affine-equality-closed}
  Let $X$ be an affine scheme and $x,y:X$,
  then $x=_Xy$ is an affine scheme
  and $((x,y):X\times X)\mapsto x=_Xy$
  is a closed subtype of $X\times X$.
\end{lemma}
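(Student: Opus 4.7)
I would prove both statements simultaneously by producing, for each pair $x, y : X$, a finite list of elements of $R$ whose simultaneous vanishing is equivalent to $x =_X y$. This immediately yields both the closedness of the diagonal subtype and a presentation of each equality type as $\Spec$ of a finitely presented $R$-algebra.

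Since both conclusions are propositions, I may destruct the mere presentation of $A$ as $R[X_1, \dots, X_n]/(p_1, \dots, p_m)$, writing $X = \Spec A$ under this presentation. A point of $X$ is then an $R$-algebra homomorphism $A \to R$, and is determined uniquely by the images of the generators $X_1, \dots, X_n$. Because $X$ is a set, two points $x, y : X$ coincide exactly when they agree on each generator, so
\[
  (x =_X y) \;\Longleftrightarrow\; \bigwedge_{i=1}^{n} \bigl( x(X_i) - y(X_i) = 0 \bigr).
\]
By \Cref{closed-proposition}~(a), the right-hand side is a closed proposition with parameters $x(X_i) - y(X_i) : R$. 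Applied pointwise, this shows that the subtype $((x,y):X\times X)\mapsto x=_Xy$ of $X\times X$ is closed.

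For the affine scheme assertion, I use that a closed proposition of the form $\bigwedge_i (f_i = 0)$ with $f_i : R$ is canonically equivalent to $\Spec(R/(f_1, \dots, f_n))$: by the universal property of the quotient, giving an $R$-algebra homomorphism $R/(f_1, \dots, f_n) \to R$ is the same as giving the identity homomorphism of $R$ together with proofs that each $f_i$ vanishes in $R$. Applied with $f_i \colonequiv x(X_i) - y(X_i)$, this exhibits the equality type $x =_X y$ as $\Spec$ of the finitely presented $R$-algebra $R/(f_1, \dots, f_n)$, as required.

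There is no real obstacle here: the argument uses no axioms from \Cref{statement-of-axioms}, only the universal properties of polynomial rings and quotients together with the fact that $X$ is a set. The only point to keep in mind is that the destruction of the existential presentation of $A$ is justified precisely because both target claims are propositions.
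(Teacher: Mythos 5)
Your proof is correct and takes essentially the same approach as the paper: the paper phrases it as merely embedding $X$ into $\A^n$ and observing that $x=y$ becomes the vanishing of the vector $x-y$, which is exactly your coordinate-wise condition $\bigwedge_i (x(X_i)-y(X_i)=0)$ arising from the presentation. You are slightly more explicit in spelling out why the equality type is itself an affine scheme (as $\Spec R/(f_1,\dots,f_n)$), a step the paper leaves implicit; this is a welcome addition but not a different argument.
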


\begin{proof}
  Any affine scheme is merely embedded into $\A^n$ for some $n:\N$.
  The proposition $x=y$ for elements $x,y:\A^n$ is equivalent to $x-y=0$,
  which is equivalent to all entries of this vector being zero.
  The latter is a closed proposition.
\end{proof}

\begin{proposition}[using \axiomref{loc}, \axiomref{sqc}, \axiomref{Z-choice}]%
  \label{equality-scheme}
  Let $X$ be a scheme.
  The equality type $x=_Xy$ is a scheme for all $x,y:X$.
\end{proposition}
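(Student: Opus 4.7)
The plan is to pick an affine open $U_j$ in the cover of $X$ that contains $y$ --- permissible since ``is a scheme'' is a proposition --- and to express $x =_X y$ as a closed subtype of an open subtype of $\Spec R$, so that the lemmas \Cref{open-subscheme} and \Cref{closed-subscheme} give the result at once.

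The first step is to establish the logical equivalence
\[
  (x =_X y)
  \;\Longleftrightarrow\;
  \sum_{p\,:\,(x \in U_j)} (x =_{U_j} y)
  \rlap{,}
\]
where the right-hand side is written as a dependent sum so that the closed equation $x =_{U_j} y$ is meaningful only once the proof $p$ of $x \in U_j$ has been supplied. The forward direction observes that $x = y$ together with $y \in U_j$ forces $x \in U_j$, after which $x =_{U_j} y$ follows because $U_j \hookrightarrow X$ is an embedding; the reverse direction uses the same embedding in the opposite direction.

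The second step invokes the two subscheme results. The proposition $P \colonequiv (x \in U_j)$ is open because $U_j$ is an open subtype of $X$, so by \Cref{open-subscheme} applied to the affine scheme $\Spec R$, the subtype $P \subseteq \Spec R$ is a scheme. Given any $p : P$ we may view $x$ as an element of the affine scheme $U_j$ alongside $y$, and then \Cref{affine-equality-closed} tells us that the proposition $x =_{U_j} y$ is closed. Hence the dependent subtype $Q \colon P \to \Prop$ defined by $Q(p) \colonequiv (x =_{U_j} y)$ is a closed subtype of the scheme $P$, and \Cref{closed-subscheme} yields that $\sum_{p\,:\,P} Q(p)$ is a scheme. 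Combined with the equivalence above, $x =_X y$ is a scheme.

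The only real bookkeeping subtlety is that $x =_{U_j} y$ is a proposition only once the witness $p : x \in U_j$ has been supplied; phrasing it as a closed dependent subtype of the open subscheme $P$ is what lets us apply \Cref{closed-subscheme} directly, instead of trying to combine an open and a closed proposition on $\Spec R$ from scratch. No axiom beyond those already used in the two subscheme lemmas is needed.
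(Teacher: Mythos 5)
Your proof is correct and follows essentially the same route as the paper's: pick an affine open from the cover containing one of the two points, rewrite $x=_Xy$ as a closed proposition sitting over the open proposition ``the other point lies in that affine open,'' then apply \Cref{open-subscheme} (to the point $\Spec R$) followed by \Cref{affine-equality-closed} and \Cref{closed-subscheme}. The only cosmetic difference is that you choose an affine open containing $y$ and keep $x$ free, whereas the paper chooses one containing $x$ and tests $y$ for membership; and you make explicit the (correct) remark that ``is a scheme'' is a proposition, which licenses extracting a specific chart from the merely-existing cover.
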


\begin{proof}
  Let $x,y:X$ and
  $U\subseteq X$ be an affine open containing $x$.
  Then $U(y)\wedge x=y$ is equivalent to $x=y$, so it is enough to show that $U(y)\wedge x=y$ is a scheme.
  As a open subscheme of the point, $U(y)$ is a scheme and $(x:U(y))\mapsto x=y$ defines a closed subtype by \Cref{affine-equality-closed}.
  But this closed subtype is a scheme by \Cref{closed-subscheme}.
\end{proof}

\subsection{Dependent sums}

\begin{theorem}[using \axiomref{loc}, \axiomref{sqc}, \axiomref{Z-choice}]%
  \label{sigma-scheme}
  Let $X$ be a scheme and for any $x:X$, let $Y_x$ be a scheme.
  Then the dependent sum
  \[ \left((x:X)\times Y_x\right)\equiv \sum_{x:X}Y_x\]
  is a scheme.
\end{theorem}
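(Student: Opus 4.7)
The plan is to reduce to the case where $X$ is affine, then use \axiomref{Z-choice} together with \Cref{boundedness} to extract uniform polynomial presentations of the fibres $Y_x$ on a Zariski cover of the base, and finally use \Cref{algebra-valued-functions-on-affine} to bundle the fibrewise presentations into honest affine schemes over each piece of the cover.

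First I would cover $X$ by finitely many open affines $U_1, \dots, U_k$. Each subtype $\sum_{x : U_i} Y_x \subseteq \sum_{x : X} Y_x$ is qc-open, because it is the preimage of $U_i$ under the first projection (\Cref{preimage-open}). By \Cref{qc-open-trans}, any affine open cover of $\sum_{x : U_i} Y_x$ then yields opens in $\sum_{x : X} Y_x$, so it suffices to treat the case $X = \Spec A$ affine.

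For $X = \Spec A$, I would apply \axiomref{Z-choice} to the inhabited family $B(x)$ whose elements consist of subtypes $V_1, \dots, V_N \subseteq Y_x$ covering $Y_x$, natural numbers $n_j, m_j$, polynomials $p_{j, 1}, \dots, p_{j, m_j} \in R[X_1, \dots, X_{n_j}]$, and equivalences $V_j = \Spec(R[X_1, \dots, X_{n_j}] / (p_{j, 1}, \dots, p_{j, m_j}))$. This produces unimodular $g_1, \dots, g_l : A$ and, on each $D(g_i)$, a function assigning such data to every $x$. Since $N$, $n_j$ and $m_j$ are bounded on $D(g_i)$ by \Cref{boundedness}, I can pad: the list of pieces with the empty affine $\Spec(R/(1))$ to fix a common $N_i$; each presentation with dummy variables killed by relations of the form $X_{n_j+s} = 0$ to fix a common $n_{ij}$; and with extra zero polynomials to fix a common $m_{ij}$. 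Then for each $i, j, k$, the family $x \mapsto p_{j, k, x}$ is a function $D(g_i) \to R[X_1, \dots, X_{n_{ij}}]$, which by \Cref{algebra-valued-functions-on-affine} corresponds to a single polynomial $\tilde p^{(i, j, k)} \in A_{g_i}[X_1, \dots, X_{n_{ij}}]$. Setting $C_{ij} \colonequiv A_{g_i}[X_1, \dots, X_{n_{ij}}] / (\tilde p^{(i, j, 1)}, \dots, \tilde p^{(i, j, m_{ij})})$, a direct unfolding of $\Hom_{\Alg{R}}(C_{ij}, R)$ identifies $\Spec C_{ij}$ with $W_{ij} \colonequiv \sum_{x : D(g_i)} V_{ij}(x) \subseteq \sum_{x : X} Y_x$.

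To finish, the finitely many $W_{ij}$ cover $\sum_{x : X} Y_x$ because the $D(g_i)$ cover $X$ and the list $V_{i, 1}(x), \dots, V_{i, N_i}(x)$ covers each fibre $Y_x$. Each $W_{ij}$ is qc-open: its defining proposition at $(x, y)$ is the dependent sum $\sum_{p : D(g_i)(x)} V_{ij}(x, p)(y)$ of qc-open propositions, hence qc-open by \Cref{qc-open-sigma-closed}. The main obstacle is the bookkeeping in the middle paragraph: after applying \axiomref{Z-choice} and \Cref{boundedness}, performing the padding carefully so that all the numerical data genuinely become uniform over each $D(g_i)$ without breaking the affine-cover property of the fibres. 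Once uniformity is in place, the algebraic assembly via \Cref{algebra-valued-functions-on-affine} is essentially mechanical.
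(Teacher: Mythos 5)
Your argument is correct and structurally close to the paper's: both cover the base by affines, apply (\axiomref{Z-choice}) together with \Cref{boundedness} to uniformize fibre presentation data over a further Zariski cover, construct affine pieces, and establish openness via \Cref{qc-open-sigma-closed}. Two points are worth flagging. First, a citation slip: in the reduction to $X$ affine you want \Cref{qc-open-sigma-closed}(b) (open-in-open is open, for an \emph{arbitrary} type), not \Cref{qc-open-trans}, which presupposes the ambient type is already a scheme — exactly what is being proved. Second, the organization differs from the paper's in a way that actually helps you: the paper first proves a preliminary ``doubly affine'' case ($X = \Spec A$, $Y_x = \Spec B_x$) via \Cref{affine-fiber-product} and closed subtypes, and then in the general step invokes it to claim each $W_{i,j}$ is affine — but that preliminary step only yields a scheme, not an affine scheme, so the paper leaves an implicit further refinement to the reader. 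Your one-shot approach, feeding the full fibre-cover-plus-presentation data into one application of (\axiomref{Z-choice}), padding with empty affines, dummy variables and zero relations to make all parameters uniform, and then building the algebras $C_{ij}$ directly via \Cref{algebra-valued-functions-on-affine}, produces honest affine pieces on the nose and avoids that wrinkle. Both arguments are sound; yours is somewhat more explicit in the bookkeeping and a little cleaner in the assembly of the affine pieces.
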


\begin{proof}
  We start with an affine $X=\Spec A$ and $Y_x=\Spec B_x$.
  Locally on $U_i = D(f_i)$, for a Zariski-cover $f_1,\dots,f_l$ of $X$,
  we have $B_x=\Spec R[X_1,\dots,X_{n_i}]/(g_{i,x,1},\dots,g_{i,x,m_i})$
  with polynomials $g_{i,x,j}$.
  In other words, $B_x$ is the closed subtype of $\A^{n_i}$
  where the functions $g_{i,x,1},\dots,g_{i,x,m_i}$ vanish.
  By \Cref{affine-fiber-product}, the product
  \[ V_i\colonequiv U_i\times \A^{n_i}\]
  is affine.
  The type $(x:U_i)\times \Spec B_x\subseteq V_i$ is affine,
  since it is the zero set of the functions
  \[ ((x,y):V_i)\mapsto g_{i,x,j}(y) \]
  Furthermore, $W_i\colonequiv (x:U_i)\times \Spec B_x$
  is open in $(x:X)\times Y_x$,
  since $W_i(x)$ is equivalent to $U_i(\pi_1(x))$,
  which is an open proposition.

  This settles the affine case.
  We will now assume, that
  $X$ and all $Y_x$ are general schemes.
  We pass again to a cover of $X$ by affine open $U_1,\dots,U_n$.
  We can choose the latter cover,
  such that for each $i$ and $x:U_i$, the $Y_{\pi_1(x)}$
  are covered by $l_i$ many open affine pieces $V_{i,x,1},\dots,V_{i,x,l_i}$
  (by \Cref{boundedness}).
  Then $W_{i,j}\colonequiv(x:U_i)\times V_{i,x,j}$ is affine by what we established above.
  It is also open.
  To see this, let $(x,y):((x:X)\times Y_x)$.
  We want to show, that $(x,y)$ being in $W_{i,j}$ is an open proposition.
  We have to be a bit careful, since the open proposition
  $V_{i,x,j}$ is only defined, for $x:U_i$.
  So the proposition we are after is $(z:U_i(x,y))\times V_{i,z,j}(y)$.
  But this proposition is open by \Cref{qc-open-sigma-closed}.
\end{proof}

It can be shown, that if $X$ is affine and for $Y:X\to\Sch$,
$Y_x$ is affine for all $x:X$,
then $(x:X)\times Y_x$ is affine.
An easy proof using cohomology is \href{https://felix-cherubini.de/random.pdf}{here}. 

\begin{corollary}
  \label{scheme-map-classification}
  Let $X$ be a scheme.
  For any other scheme $Y$ and any map $f:Y\to X$,
  the fiber map
  $(x:X)\mapsto \fib_f(x)$
  has values in the type of schemes $\Sch$.
  Mapping maps of schemes to their fiber maps,
  is an equivalence of types
  \[ \left(\sum_{Y:\Sch}(Y\to X)\right)\simeq (X\to \Sch)\rlap{.}\]
\end{corollary}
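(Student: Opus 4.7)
The plan is to reduce the statement to the standard type-theoretic equivalence
\[ \left(\sum_{Y:\mU}(Y\to X)\right)\simeq (X\to \mU) \]
which sends $(Y,f)$ to the fiber family $x\mapsto \fib_f(x)$, with inverse sending a family $P:X\to\mU$ to $(\sum_{x:X}P(x),\pi_1)$. The content of our corollary is that both sides of this equivalence restrict along the inclusion $\Sch\hookrightarrow\mU$, so the main work is to verify closure of the type of schemes under the two operations involved.

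First I would show that for any map $f:Y\to X$ between schemes, each fiber $\fib_f(x)\equiv \sum_{y:Y}(f(y)=x)$ is itself a scheme. By \Cref{equality-scheme}, each equality type $f(y)=_Xx$ is a scheme, so the family $y\mapsto (f(y)=x)$ is a scheme-valued family on the scheme $Y$. Then \Cref{sigma-scheme} applies to give that $\fib_f(x)$ is a scheme, which shows the fiber map lands in $\Sch$. This establishes the forward map
\[ \left(\sum_{Y:\Sch}(Y\to X)\right)\to (X\to \Sch),\quad (Y,f)\mapsto (x\mapsto \fib_f(x)). \]

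For the inverse direction, given $P:X\to\Sch$, the type $\sum_{x:X}P(x)$ is a scheme by another direct application of \Cref{sigma-scheme}, so the projection $\pi_1:\sum_{x:X}P(x)\to X$ defines an element of the left-hand side. Both compositions then agree with the two composites of the universal equivalence above, which are known to be homotopic to the respective identities in plain homotopy type theory. Since the type of schemes is a subtype (well, a property-closed subcollection) of the universe, these homotopies transport without change. I expect no real obstacle beyond invoking \Cref{sigma-scheme} and \Cref{equality-scheme} correctly; the work is essentially bookkeeping on top of the standard HoTT fact.
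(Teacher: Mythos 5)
Your proof is correct and takes essentially the same approach as the paper: reduce to the universal HoTT equivalence between $\sum_{Y:\mU}(Y\to X)$ and $X\to\mU$, then check that both directions preserve schemehood. The only cosmetic difference is that the paper cites \Cref{fiber-product-scheme} for the fiber direction, whereas you inline that lemma's proof by invoking \Cref{equality-scheme} and \Cref{sigma-scheme} directly.
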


\begin{proof}
  By univalence, there is an equivalence
  \[ \left(\sum_{Y:\Type}(Y\to X)\right)\simeq (X\to \Type)\rlap{.} \]
  From left to right, the equivalence is given by turning a $f:Y\to X$ into $x\mapsto \fib_f(x)$,
  from right to left is given by taking the dependent sum.
  So we just have to note, that both constructions preserve schemes.
  From left to right, this is \Cref{fiber-product-scheme}, from right to left,
  this is \Cref{sigma-scheme}.
\end{proof}

Subschemes are classified by propositional schemes:

\begin{corollary}
  Let $X$ be a scheme.
  $Y:X\to\Prop$ is a subscheme,
  if and only if $Y_x$ is a scheme for all $x:X$.
\end{corollary}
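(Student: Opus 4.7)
The plan is to show that both directions follow essentially formally from \Cref{scheme-map-classification} and \Cref{sigma-scheme}, once we identify the fibers of the first projection $\pi_1 : \sum_{x:X} Y(x) \to X$ with the propositions $Y(x)$ themselves.

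For the backward direction, assume $Y(x)$ is a scheme for every $x:X$. Then $Y$, viewed as a family of types $X \to \Sch$, satisfies the hypotheses of \Cref{sigma-scheme}, so $\sum_{x:X} Y(x)$ is a scheme, which is exactly the condition for $Y$ to be a subscheme.

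For the forward direction, suppose $Y$ is a subscheme, so that the total space $\sum_{x:X} Y(x)$ is a scheme and the first projection $\pi_1 : \sum_{x:X} Y(x) \to X$ is a map between schemes. By \Cref{scheme-map-classification}, the fiber map $x \mapsto \fib_{\pi_1}(x)$ takes values in $\Sch$. For each $x:X$, the fiber unfolds to $\sum_{(x',y) : \sum Y} (x' = x)$, and since $Y(x')$ is a proposition this is equivalent (by a standard path-induction argument) to $Y(x)$ itself. Hence $Y(x)$ is a scheme for all $x:X$.

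The only non-trivial ingredient is the fiber identification $\fib_{\pi_1}(x) \simeq Y(x)$, but this is entirely routine and uses nothing beyond the fact that $Y(x)$ is a proposition. Everything else is a direct appeal to the two results cited. I do not anticipate any genuine obstacle; the corollary is essentially a restatement of \Cref{scheme-map-classification} combined with \Cref{sigma-scheme} in the special case where the map has propositional fibers, i.e.\ is an embedding.
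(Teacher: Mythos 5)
Your proof is correct and takes essentially the same route as the paper, which simply says ``restriction of \Cref{scheme-map-classification}''; you unfold that restriction into its two directions, using $\fib_{\pi_1}(x)\simeq Y_x$ for the forward direction and \Cref{sigma-scheme} (which is already one half of \Cref{scheme-map-classification}) for the backward one.
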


\begin{proof}
  Restriction of \Cref{scheme-map-classification}.
\end{proof}

We will conclude now,
that the pullback of a cospan of schemes is a scheme.

\begin{theorem}[using \axiomref{loc}, \axiomref{sqc}, \axiomref{Z-choice}]%
  \label{fiber-product-scheme}
  Let
  \[
    \begin{tikzcd}
      X\ar[r,"f"] & Z & Y\ar[l,swap,"g"]
    \end{tikzcd}
  \]
  be schemes, then the \notion{pullback} $X\times_Z Y$ is also a scheme.
\end{theorem}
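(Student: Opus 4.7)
The plan is to exhibit the pullback as an iterated dependent sum of schemes, so that the result follows immediately from the two preceding theorems in this subsection.

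First I would recall the standard type-theoretic description of the pullback:
\[
  X \times_Z Y \;\simeq\; \sum_{x:X}\sum_{y:Y} \bigl(f(x) =_Z g(y)\bigr)
  \rlap{.}
\]
This is just an instance of the universal property of the $\Sigma$-type together with the characterization of path spaces; no axioms are required for the equivalence itself.

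Next I would check that each layer of this iterated sum is a scheme. The innermost type $f(x) =_Z g(y)$ is an equality type in the scheme $Z$, hence a scheme by \Cref{equality-scheme}. Therefore, for a fixed $x:X$, the type $\sum_{y:Y}(f(x) =_Z g(y))$ is a dependent sum of schemes indexed by the scheme $Y$, and so it is a scheme by \Cref{sigma-scheme}. Applying \Cref{sigma-scheme} once more with the scheme $X$ as base, the full iterated sum is a scheme, and transporting along the equivalence above concludes the proof.

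There is essentially no obstacle here, since all the real work has already been done: \Cref{sigma-scheme} handled the affine-local construction using \axiomref{Z-choice}, and \Cref{equality-scheme} reduced equality types to closed subschemes of affine opens. The only mild subtlety is that \Cref{sigma-scheme} is phrased for a family $Y_x$ of schemes, so I would just note explicitly that the assignment $x \mapsto \sum_{y:Y}(f(x) =_Z g(y))$ is a family of schemes in order to apply it cleanly.
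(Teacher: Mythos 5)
Your proof is correct and follows exactly the same route as the paper: write $X\times_Z Y$ as the iterated dependent sum $\sum_{x:X}\sum_{y:Y}(f(x)=_Z g(y))$, invoke \Cref{equality-scheme} for the innermost equality type, and apply \Cref{sigma-scheme} twice. The extra remark about verifying that $x\mapsto \sum_{y:Y}(f(x)=_Z g(y))$ is a family of schemes is a reasonable clarification but matches what the paper's proof does implicitly.
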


\begin{proof}
  The type $X\times_Z Y$ is given as the following iterated dependent sum:
  \[ \sum_{x:X}\sum_{y:Y}f(x)=g(y)\rlap{.}\]
  The innermost type, $f(x)=g(y)$
  is the equality type in the scheme $Z$ and by \Cref{equality-scheme} a scheme.
  By applying \Cref{sigma-scheme} twice, we prove that the iterated dependent sum is a scheme.
\end{proof}

\section{Projective space}

\subsection{Construction of projective spaces}
We give two definitions of projective space, which differ only in size.
First, we will define $n$-dimensional projective space,
as the type of lines in a $(n+1)$-dimensional vector space $V$.
This gives a good mapping-in property --
maps from a type $X$ into projective space are then just families of lines in $V$ on $X$.
Or in the words of traditional algebraic geometry:
projective $n$-space is a fine moduli space for lines in $V$.

The second construction is closer to what can be found in a typical
introductory textbook on algebraic geometry
(see for example \cite[Section I.2]{Hartshorne}),
i.e.\ projective $n$-space is constructed as a quotient of $\A^{n+1}\setminus\{0\}$.
We will show that this quotient is a scheme,
again analogous to what can be found in textbooks.
In both, construction and proof,
we do not have to pass to an algebraic representation
and can work directly with the types of interest.
Finally, in \Cref{space-of-lines-is-projective-space}
we show that the two constructions are equivalent.

\begin{definition}%
  \begin{enumerate}[(a)]
  \item An $n$-dimensional $R$-\notion{vector space} is an $R$-module $V$,
    such that $\propTrunc{ V = R^n }$. 
  \item We write $\Vect{R}{n}$ for the type of these vector spaces and $V\setminus\{0\}$ for the type
    \[ \sum_{x:V}x\neq 0\]
  \item A \notion{vector bundle} on a type $X$ is a map $V:X\to \Vect{R}{n}$. 
  \end{enumerate}
\end{definition}

The following defines projective space as the space of lines in a vector space.
This is a large type.
We will see below, that the second, equivalent definition is small.

\begin{definition}%
  \label{projective-space-as-space-of-lines}
  \begin{enumerate}[(a)]
  \item A \notion{line} in an $R$-vector space $V$ is a subtype $L:V\to \Prop$,
    such that there exists an $x:V\setminus\{0\}$ with
    \[ \prod_{y:V}\left(L (y) \Leftrightarrow \exists c:R.y=c\cdot x\right)\]
  \item The space of all lines in a fixed $n$-dimensional vector space $V$ is the \notion{projectivization} of $V$:
    \[ \bP(V)\colonequiv \sum_{L:V\to \Prop} L \text{ is a line}  \]
  \item \notion{Projective $n$-space} $\bP^n \colonequiv \bP(\A^{n+1})$ is the projectivization of $\A^{n+1}$.
  \end{enumerate}
\end{definition}

\begin{proposition}%
  \label{lines-are-one-dimensional}
  For any vector space $V$ and line $L\subseteq V$,
  $L$ is 1-dimensional in the sense that $\propTrunc{L=_{\Mod{R}}R}$.
\end{proposition}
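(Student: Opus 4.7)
The plan is to exhibit an explicit isomorphism $L \cong R$ of $R$-modules; univalence then gives the required path $L =_{\Mod{R}} R$, and the propositional truncation swallows any choices I make along the way. Because the goal is a proposition, I may unpack the mere existentials from the definition of line and from $V$ being a vector space, fixing once and for all some $x : V \setminus \{0\}$ witnessing $L(y) \Leftrightarrow \exists c : R.\, y = c \cdot x$ for every $y : V$, together with an identification $V = R^n$. The natural candidate is $\varphi : R \to L$, $c \mapsto c \cdot x$, which is visibly $R$-linear and lands in $L$ by the very characterisation of $L$.

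The main work is producing an inverse $\psi : L \to R$, which amounts to showing the scalar in $y = c \cdot x$ is unique. If $c_1 \cdot x = c_2 \cdot x$ in $R^n$, then $(c_1 - c_2) \cdot x_i = 0$ for every coordinate $i$. Since $x$ is nonzero, \Cref{non-zero-invertible} (using \axiomref{loc} and \axiomref{sqc}) delivers some coordinate $x_i$ that is invertible, so $c_1 = c_2$. Uniqueness makes the sigma type $\sum_{c : R} y = c \cdot x$ a proposition, hence equivalent to its propositional truncation, and this lets me extract a genuine $\psi(y) : R$ from the mere existence provided by $L(y)$.

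Checking that $\varphi$ and $\psi$ are mutually inverse $R$-linear bijections is then a short calculation that I will not belabour. The only nontrivial ingredient is the upgrade from nonzeroness of $x$ to invertibility of at least one of its coordinates, which is precisely what \Cref{non-zero-invertible} supplies; so the hard part of the argument is really located upstream, in the weak Nullstellensatz, and here it only needs to be invoked.
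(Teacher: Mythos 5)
Your proof is correct and takes essentially the same route as the paper's: eliminate the truncations (legitimate because the goal is a proposition), then show that $\alpha \mapsto \alpha \cdot x : R \to L$ is an equivalence because the witnessing scalar $c$ is uniquely determined, which makes $\sum_{c:R} y = c\cdot x$ propositional. You additionally spell out the uniqueness argument via \Cref{non-zero-invertible} (an invertible coordinate of $x$ cancels), a step the paper merely asserts with ``since $c$ is uniquely determined.''
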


\begin{proof}
  Let $L$ be a line.
  We merely have $x:V\setminus\{0\}$ such that 
  \[ \prod_{y:V}\left(L (y) \Leftrightarrow \exists c:R.y=c\cdot x\right)\]
  We may replace the ``$\exists$'' with a ``$\sum$'',
  since $c$ is uniquely determined for any $x,y$.
  This means we can construct the map $\alpha\mapsto \alpha\cdot x:R\to L$ and it is an equivalence.
\end{proof}

We now give the small construction:

\begin{definition}[using \axiomref{loc}, \axiomref{sqc}]%
  \label{projective-space-hit}
  Let $n:\N$.
  \notion{Projective $n$-space}\index{$\bP^n$} $\bP^n$ is the set quotient of the type $\A^{n+1}\setminus\{0\}$ by the relation
  \[
    x \sim y \colonequiv \sum_{\lambda : R} \lambda x=y\rlap{.}
  \]
  By \Cref{generalized-field-property}, the non-zero vector $y$ has an invertible entry,
  so that the right hand side is a proposition and $\lambda$ is a unit.
  We write $[x_0:\dots:x_n]:\bP^n$ for the equivalence class of $(x_0,\dots,x_n):\A^{n+1}\setminus\{0\}$.
\end{definition}

\begin{theorem}[using \axiomref{loc}, \axiomref{sqc}]%
  \label{projective-space-is-scheme}
  $\bP^n$ is a scheme.
\end{theorem}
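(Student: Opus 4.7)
The plan is to exhibit $\bP^n$ as a union of $n+1$ qc-open subtypes, each equivalent to the affine scheme $\A^n$, following the classical construction by standard charts.

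I would begin by defining, for each $i \in \{0, \dots, n\}$, the subtype
\[ U_i \colonequiv \{\, [x_0 : \dots : x_n] : \bP^n \mid \inv(x_i) \,\} \rlap{.} \]
This is well-defined on the quotient: if $(x_0, \dots, x_n) \sim (y_0, \dots, y_n)$ with $\lambda x = y$, then since $y$ is nonzero by construction, $\lambda$ is a unit by \Cref{non-zero-invertible}, and so $\inv(x_i) \Leftrightarrow \inv(y_i)$. Each $U_i$ is qc-open directly from \Cref{qc-open}. The $U_i$ jointly cover $\bP^n$ because any representative of a point in $\bP^n$ is a nonzero vector in $R^{n+1}$, so by \Cref{non-zero-invertible} one of its entries is merely invertible.

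The main step is to show each $U_i$ is affine. I would construct an equivalence $\varphi_i : \A^n \simeq U_i$ by
\[ \varphi_i(y_1, \dots, y_n) \colonequiv [y_1 : \dots : y_{i-1} : 1 : y_i : \dots : y_n] \rlap{,} \]
whose image is clearly contained in $U_i$. For the inverse, one takes the map on representatives sending $(x_0, \dots, x_n)$ with $x_i$ invertible to $(x_0 x_i^{-1}, \dots, \widehat{x_i x_i^{-1}}, \dots, x_n x_i^{-1}) : \A^n$; scaling the representative by a unit $\lambda$ scales $x_i^{-1}$ by $\lambda^{-1}$, so these ratios are invariant, and the map descends to $U_i \to \A^n$ by the universal property of the set quotient. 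The two composites reduce to identities by direct calculation on representatives, using that the nontrivial composite on $U_i$ replaces $(x_0, \dots, x_n)$ with $x_i^{-1} \cdot (x_0, \dots, x_n)$, which represents the same equivalence class.

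The main obstacle is the bookkeeping around the set quotient: verifying that the subtypes $U_i$ are invariant under $\sim$, that the chart inverses descend to them, and that the two composites give the identity at the level of equivalence classes. All the geometric content (invertibility of the scaling factor $\lambda$ between equivalent nonzero representatives, and the existence of an invertible coordinate in a nonzero vector) is packaged in \Cref{non-zero-invertible}, which already uses \axiomref{loc} and \axiomref{sqc}; no further invocation of the axioms is needed, matching the hypotheses of the statement.
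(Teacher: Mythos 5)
Your proof is correct and is essentially the paper's argument: cover $\bP^n$ by the $n+1$ standard charts $U_i$ where the $i$-th coordinate is invertible (well-defined by unit-invariance, jointly covering by \Cref{generalized-field-property}), then identify each $U_i$ with $\A^n$ by normalizing the $i$-th coordinate to $1$. The only cosmetic difference is that the paper factors the chart equivalence through the intermediate set $\{(x_0,\dots,x_n):\A^{n+1}\mid x_i=1\}$ before dropping the $i$-th coordinate, whereas you compose these two steps directly.
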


\begin{proof}
  Let $U_i([x_0:\dots:x_n])\colonequiv (x_i\neq 0)$.
  This is well-defined, since the proposition is invariant under multiplication by a unit.
  Furthermore, $U_i$ is open and the $U_i$ form a cover,
  by the generalized field property
  (\Cref{generalized-field-property}).

  So what remains to be shown, is that the $U_i$ are affine.
  We will show that $U_i=\A^n$.
  As an intermediate step, we have:
  \[
    U_i=\{(x_0,\dots,x_n):\A^{n+1}\mid x_i=1\}
  \]
  by mapping $[x_0:\dots:x_n]$ with $x_i\neq 0$
  to $\left(\frac{x_0}{x_i},\dots,\frac{x_n}{x_i}\right)$
  and conversely, $(x_0,\dots,x_n)$ with $x_i=1$ to $[x_0:\dots:x_{i-1}:1:x_{i+1}:\dots:x_n]\in U_i$.

  But then, $\{(x_0,\dots,x_n):\A^{n+1}\mid x_i=1\}$
  is equivalent to $\A^n$ by leaving out the $i$-th component,
  so the $U_i$ are affine.
\end{proof}

To conclude with the constructions of projective space,
we show that our two constructions are equivalent:

\begin{proposition}[using \axiomref{loc}, \axiomref{sqc}]
  \label{space-of-lines-is-projective-space}
  For all $n:\N$, the scheme $\bP^n$ as defined in \Cref{projective-space-hit},
  is equivalent to $\bP(\A^{n+1})$ as defined in \Cref{projective-space-as-space-of-lines}.
\end{proposition}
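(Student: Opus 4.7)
The plan is to construct explicit maps in both directions and verify they are mutually inverse. In one direction, I would define $\phi : \bP^n \to \bP(\A^{n+1})$ by sending the class $[x_0:\dots:x_n]$ to the subtype $L_x(y) \colonequiv \exists c : R.\; y = c \cdot x$. To see this is well-defined on the quotient, note that if $x \sim x'$, i.e.\ $x' = \lambda x$ for some $\lambda : R$, then since $x'$ is nonzero, $\lambda$ is a unit (as already observed in \Cref{projective-space-hit}), and hence the subtypes generated by $x$ and $x'$ coincide. One also checks that $L_x$ is indeed a line, using the generator $x : \A^{n+1} \setminus \{0\}$.

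For the reverse direction $\psi : \bP(\A^{n+1}) \to \bP^n$, the issue is that a line $L$ only \emph{merely} has a generator, whereas we must produce an actual element of the set $\bP^n$. The strategy is to define the map $(x : \A^{n+1} \setminus \{0\}) \to \bP^n$ sending $x \mapsto [x_0:\dots:x_n]$, restrict to the subtype of generators of $L$, and observe that on this subtype the assignment is constant. Indeed, by \Cref{lines-are-one-dimensional} any two generators of $L$ differ by a unit scalar, and such scalings are identified in the quotient. Since $\bP^n$ is a set, a constant map out of an inhabited type factors through the propositional truncation, giving a well-defined element of $\bP^n$ for each line.

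It then remains to verify $\phi \circ \psi = \id$ and $\psi \circ \phi = \id$. Both are equalities in sets, so I can freely choose representatives. Starting from $[x] : \bP^n$, we compute $\psi(\phi([x]))$ by choosing the generator $x$ of $L_x$, which gives back $[x]$. Conversely, starting from a line $L$ with (merely existing) generator $x$, we have $\phi(\psi(L)) = L_x$, and the defining property of $L$ as a line with generator $x$ says exactly that $L = L_x$ as subtypes of $\A^{n+1}$.

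The main obstacle is the construction of $\psi$: one must carefully handle the propositional truncation hidden in the definition of a line, using that $\bP^n$ is a set together with the rigidity of generators up to units. Everything else is bookkeeping once the two constructions from \Cref{projective-space-as-space-of-lines} and \Cref{projective-space-hit} are unfolded. Note that (\axiomref{loc}) and (\axiomref{sqc}) enter implicitly through the identification of nonzero vectors with those having an invertible entry (\Cref{generalized-field-property}), which is what makes the scaling scalar $\lambda$ a unit and thus makes the two notions of ``scalar multiple'' match.
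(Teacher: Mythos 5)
Your proof is correct, and it establishes the same equivalence as the paper's, but by a different route. The paper defines the forward map $\varphi : \bP^n \to \bP(\A^{n+1})$ exactly as you do, but then concludes by showing $\varphi$ is both surjective (a line merely has a nonzero generator, which serves as a preimage) and an embedding (if $\langle x\rangle = \langle y\rangle$ then $x = \lambda y$ for a unit $\lambda$, so $[x]=[y]$); since both sides are sets, surjective embedding gives equivalence directly. You instead construct an explicit inverse $\psi$ by observing that the assignment from generators to equivalence classes is weakly constant into the set $\bP^n$ and therefore factors through the propositional truncation of the (merely inhabited) type of generators. The two arguments rest on the same mathematical facts — generators merely exist, and two generators of the same line differ by a unit — but your version needs the Hedberg/Kraus-style factoring lemma for constant maps into sets, while the paper avoids that by never escaping the truncation: it just checks surjectivity and injectivity directly. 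The paper's approach is slightly more economical; yours makes the inverse explicit, which can be useful if one later wants to compute with it.
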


\begin{proof}
  Let $\varphi:\bP^n\to \{\text{lines in $\A^{n+1}$}\}$
  be given by mapping $[x_0:\dots:x_n]$ to $\langle (x_0,\dots,x_n)\rangle\subseteq \A^{n+1}$,
  i.e.\ the line generated by the vector $x\colonequiv (x_0,\dots,x_n)$.
  The map is well-defined, since multiples of $x$ generate the same line.

  Then $\varphi$ is surjective,
  since for any line $L\subseteq \A^{n+1}$,
  there merely is a non-zero $x\in L$,
  that we can take as a preimage.
  To conclude, we note that $\varphi$ is also an embedding.
  So let $\varphi([x])=\varphi([y])$.
  Then, since $\langle x\rangle=\langle y\rangle$, there is a $\lambda\in R^\times$,
  such that $x=\lambda y$, so $[x]=[y]$.
\end{proof}

Let us prove some basic facts about equality of points in $\bP^n$.

\begin{lemma}[using \axiomref{loc}, \axiomref{sqc}]
  \label{equality-2-by-2-minor}
  For two points $[x_0:\dots:x_n],[y_0:\dots:y_n]:\bP^n$ we have:
  \[
    [x]=[y] \Leftrightarrow \prod_{i\neq j}x_iy_j=y_ix_j
    \rlap{.}
  \]
  And dually:
  \[
    [x]\neq [y] \Leftrightarrow \bigvee_{i\neq j}x_iy_j\neq y_ix_j
    \rlap{.}
  \]
  As a consequence, $[x]=[y]$ is closed and $[x]\neq [y]$ is open.
\end{lemma}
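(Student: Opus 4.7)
The plan is to prove the first biconditional first, then derive the dual and the topological consequences.

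For well-definedness, I first observe that both the statement $\prod_{i\neq j} x_iy_j = y_ix_j$ and $\bigvee_{i\neq j} x_iy_j \neq y_ix_j$ are invariant under rescaling $x$ or $y$ by a unit: multiplying $x$ by $\lambda$ and $y$ by $\mu$ multiplies each expression $x_iy_j - y_ix_j$ by $\lambda\mu$, and since representatives of points of $\bP^n$ differ by units, equality/non-equality is preserved. So the right-hand side descends to a predicate on $\bP^n \times \bP^n$.

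The forward direction of the first equivalence is a direct computation: if $[x] = [y]$, a representative equality $y = \lambda x$ gives $x_iy_j = \lambda x_ix_j = y_ix_j$. For the reverse direction, since the goal is a proposition I may use \Cref{generalized-field-property} to merely pick an index $k$ with $x_k$ invertible (possible because $x \neq 0$). Set $\lambda \colonequiv y_k/x_k$. Then for $i \neq k$ the hypothesis $x_ky_i = x_iy_k$ gives $y_i = \lambda x_i$, while for $i = k$ this holds by definition. Hence $y = \lambda x$ and $[x] = [y]$.

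The dual equivalence follows by combining the first equivalence with \Cref{non-zero-invertible}: the negation of the conjunction $\bigwedge_{i\neq j} (x_iy_j - y_ix_j) = 0$ says that the vector with entries $x_iy_j - y_ix_j$ (indexed by pairs $i\neq j$) is nonzero, which by \Cref{non-zero-invertible} is equivalent to some entry being invertible, i.e.\ $\bigvee_{i\neq j} x_iy_j \neq y_ix_j$. Finally, $[x] = [y]$ is closed because it is characterized by a finite conjunction of equations $x_iy_j = y_ix_j$ in $R$, matching \Cref{closed-proposition}; and $[x] \neq [y]$ is open because it is a finite disjunction of invertibility conditions on $x_iy_j - y_ix_j$, matching \Cref{qc-open}.

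The only delicate point, which I would foreground as the main obstacle, is the reverse direction of the first equivalence: one must be careful that the $2\times 2$ minor conditions suffice to produce a global scalar $\lambda$. This is handled by the fact that the proof lives under propositional truncation, letting us pick an invertible coordinate of $x$; without \Cref{generalized-field-property} one could not make the choice.
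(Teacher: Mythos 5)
Your proof is correct and follows essentially the same route as the paper's: use \Cref{generalized-field-property} to extract an invertible coordinate, define the scalar $\lambda$ as a ratio, and verify the scaling relation from the $2\times 2$ minor equations, then get the dual statement and the closedness/openness by the same references. Your version is in fact slightly cleaner than the paper's, which picks invertible entries $x_i$ and $y_j$ from \emph{both} vectors and then splits into the cases $i=j$ and $i\neq j$, whereas you observe that a single invertible coordinate $x_k$ together with the minors $x_k y_i = x_i y_k$ already yields $\lambda x = y$ without any case analysis.
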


\begin{proof}
  $[x]$ and $[y]$ are equal,
  if and only if there merely is a $\lambda:R^\times$,
  such that $\lambda x = y$.
  By calculation, if there is such a $\lambda$,
  we always have $x_iy_j=y_ix_j$.

  So let $x_iy_j=y_ix_j$ for all $i\neq j$.
  Then, in particular, there are $i,j$ such that $x_i\neq 0$ and $y_j\neq 0$.
  If $i=j$, we define $\lambda \colonequiv \frac{x_i}{y_i}$.
  If $i\neq j$, we have $x_iy_j=y_ix_j$ and therefore $y_i\neq 0$ and $x_j\neq 0$,
  so we can also set $\lambda \colonequiv \frac{x_i}{y_i}$.
  By calculation, we have $\lambda y=x$.

  The dual statement follows by \Cref{generalized-field-property}.
\end{proof}

\begin{lemma}[using \axiomref{loc}, \axiomref{sqc}]%
  \label{projective-space-apartness-relation}
  Inequality of points of $\bP^n$ is an \notion{apartness relation}.
  That means the following holds:
  \begin{enumerate}[(i)]
  \item $\forall x:\bP^n.\; \lnot (x\neq x)$.
  \item $\forall x,y:\bP^n.\; x\neq y\Rightarrow y\neq x$.
  \item If $x\neq y$, we have $\forall z:\bP^n.\; x\neq z \vee z\neq y$.
  \end{enumerate}
\end{lemma}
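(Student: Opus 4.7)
The plan is to address the three clauses in turn. Clauses (i) and (ii) are essentially formal: reflexivity of $=$ yields $\lnot(x \neq x)$, and symmetry of $=$ yields $x \neq y \Rightarrow y \neq x$. So the substance of the proposition lies entirely in clause (iii), the \emph{co-transitivity} property.

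For clause (iii), I would combine the minor-characterization of equality from \Cref{equality-2-by-2-minor} with a Plücker-style algebraic identity. Given $[x]\neq [y]$, \Cref{equality-2-by-2-minor} furnishes indices $i\neq j$ with $x_iy_j - x_jy_i \neq 0$, hence invertible by \Cref{non-zero-invertible}. For the point $[z]$, its representative $z\in\A^{n+1}\setminus\{0\}$ has some invertible coordinate $z_k$ by \Cref{generalized-field-property}. The key observation is the identity
\[
  (x_i y_j - x_j y_i)\,z_k \;=\; (x_i z_k - x_k z_i)\,y_j \;-\; (x_j z_k - x_k z_j)\,y_i \;+\; x_k\,(y_j z_i - y_i z_j),
\]
which one verifies by direct expansion of both sides.

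The left-hand side is a product of two invertible elements, hence invertible. The right-hand side is then a sum of three terms whose total is invertible, so the triple of summands is non-zero in $R^3$; \Cref{generalized-field-property} forces at least one summand to be invertible. If it is one of the first two summands, then the factor $x_i z_k - x_k z_i$ (respectively $x_j z_k - x_k z_j$) is itself invertible (a factor of an invertible element is invertible), and in particular $i\neq k$ (resp.\ $j\neq k$), so the dual part of \Cref{equality-2-by-2-minor} gives $[x]\neq [z]$. If instead the third summand is invertible, then $y_jz_i - y_iz_j$ is invertible, so $[y]\neq[z]$ and hence $[z]\neq[y]$ by clause~(ii). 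In either case we have $[x]\neq[z]\vee[z]\neq[y]$, as required.

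The main obstacle is spotting the Plücker-type identity above; once written down, the argument reduces to routine applications of the generalized field property provided by locality of $R$ (\axiomref{loc}) together with synthetic quasi-coherence. No use of \axiomref{Z-choice} should be necessary, as the argument proceeds entirely pointwise.
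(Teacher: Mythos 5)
Your proof is correct, but it takes a genuinely different route from the paper's. The paper argues by contraposition: from $x=z \wedge z=y \Rightarrow x=y$ one gets $\lnot(x=y) \Rightarrow \lnot(x=z \wedge z=y)$, and then the key move is that \emph{all} the equalities involved are, by \Cref{equality-2-by-2-minor}, conjunctions of statements of the form ``this element of $R$ is zero.'' So both sides can be repackaged via \Cref{generalized-field-property} as ``some entry of a vector is invertible,'' which turns $\lnot(\text{conjunction})$ directly into a disjunction of apartness statements and yields cotransitivity without any computation. Your approach instead bypasses the negation-pushing entirely and manufactures an explicit witness of the disjunction from the Plücker-type identity
\[
  (x_i y_j - x_j y_i)\,z_k \;=\; (x_i z_k - x_k z_i)\,y_j \;-\; (x_j z_k - x_k z_j)\,y_i \;+\; x_k\,(y_j z_i - y_i z_j),
\]
together with the fact that an invertible sum has an invertible summand. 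Both arguments rest on the generalized field property; the paper's is the shorter and more ``logical'' argument (no clever identity needs to be spotted, and the same pattern would work for any relation definable by conjunctions of equalities), while yours is more concrete and explicitly constructs the certificate of apartness, which can be appealing if one wants to see where the witness comes from. Either proof is acceptable, and you are right that \axiomref{Z-choice} plays no role here.

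Two minor points worth noting. First, when you extract a \emph{specific} pair $(i,j)$ from the disjunction $\bigvee_{i\neq j} x_i y_j \neq x_j y_i$, and a specific invertible coordinate $z_k$, you are implicitly using that your goal $[x]\neq[z] \vee [z]\neq[y]$ is a proposition, so you may strip the truncations; this is fine but should be said. Second, your observation that invertibility of the first or second summand forces $i\neq k$ (resp.\ $j\neq k$), since the corresponding minor would otherwise vanish identically, is exactly the right check and is needed before applying \Cref{equality-2-by-2-minor}.
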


\begin{proof}
  The first two statements hold in general for inequality.
  For the third statement, let $x,y,z:\bP^n$.
  Note that if $x=z$ and $z=y$, it follows that $x=y$.
  So we have $\neg (x=y)\Rightarrow \neg (x=z\wedge z=y)$.
  By \Cref{equality-2-by-2-minor}, $x=y$ and $x=z\wedge z=y$
  are both equivalent to the statement that some vector with components in $R$ is zero,
  so we can replace negated equality, with existence of a non-zero element,
  or more explicitly, the following are equivalent:
  \begin{align*}
    &\neg (x=y)\Rightarrow \neg (x=z\wedge z=y) \\
    &\neg \left(\prod_{i\neq j}x_iy_j=y_ix_j\right)
       \Rightarrow \neg \left(\prod_{i\neq j}x_iz_j=z_ix_j \wedge \prod_{i\neq j}y_iz_j=z_iy_j \right) \\
    & \left(\bigvee_{i\neq j}x_iy_j\neq y_ix_j\right) \Rightarrow \left(\bigvee_{i\neq j}x_iz_j\neq z_ix_j
       \vee \bigvee_{i\neq j}z_iy_j\neq y_iz_j\right) \\
    & (x\neq y) \Rightarrow (x\neq z) \vee (z\neq y)
  \end{align*}
\end{proof}

\begin{example}[using \axiomref{loc}, \axiomref{sqc}]
  Let $s:\bP^1\to \bP^1$ be given by $s([x:y])\colonequiv [x^2:y^2]$
  (see \Cref{projective-space-hit} for notation).
  Let us compute some fibers of $s$. The fiber $\fib_s([0:1])$ is
  by definition the type
  \[
    \sum_{[x:y]:\bP^1}[x^2:y^2]=[0:1]\rlap{.}
  \]
  So for any $x:R$ with $x^2=0$, $[x:1]:\fib_s([0:1])$  and
  any other point $(x,y)$ such that $[x:y]$ is in $\fib_s([0:1])$,
  already yields an equivalent point, since $y$ has to be invertible.

  This shows that the fiber over $[0:1]$ is a first order disk, i.e. $\D(1)=\{x:R|x^2=0\}$.
  The same applies to the point $[1:0]$.
  To analyze $\fib_s([1:1])$, let us assume $2\neq 0$ (in $R$).
  Then we know, the two points $[1:-1]$ and $[1:1]$ are in $\fib_s([1:1])$ and they are different.
  It will turn out, that any point in $\fib_s([1:1])$ is equal to one of those two.
  For any $[x':y']:\fib_s([1:1])$, we can assume $[x':y']=[x:1]$ and $x^2=1$, or equivalently $(x-1)(x+1)=0$.
  By \Cref{projective-space-apartness-relation}, inequality in $\bP^n$ is an apartness relation.
  So for each $x:R$, we know $x-1$ is invertible or $x+1$ is invertible.
  But this means that for any $x:R$ with $(x-1)(x+1)=0$, that $x=1$ or $x=-1$.

  While the fibers are not the same in general,
  they are all affine and have the same size in the sense that for each $\Spec A_x\colonequiv \fib_s(x)$,
  we have that $A_x$ is free of rank 2 as an $R$-module.
  To see this, let us first note,
  that $\fib_s([x:y])$ is completely contained in an affine subset of $\bP^1$.
  This is a proposition, so we can use that either $x$ or $y$ is invertible.
  Let us assume without loss of generality, that $y$ is invertible,
  then
  \[
    \fib_s([x:y])=\fib_s([\frac{x}{y}:1])
    \rlap{.}
  \]
  The second component of each element in the fiber has to be invertible,
  so it is contained in an affine subset, which we identify with $\A^1$.
  Let us rewrite with $z\colonequiv \frac{x}{y}$.
  Then
  \[
    \fib_s([z:1])=\sum_{a:\A^1}(a^2=z)=\Spec R[X]/(X^2-z)
  \]
  and $R[X]/(X^2-z)$ is free of rank 2 as an $R$-module.
\end{example}

\subsection{Functions on $\bP^n$}

Here we prove the classical fact that all functions $\bP^n\to R$ are constant.
We start with the case $n = 1$.

\begin{lemma}[using \axiomref{loc}, \axiomref{sqc}]%
  \label{functions-on-projective-line-constant}
  All functions $\bP^1 \to R$ are constant.
\end{lemma}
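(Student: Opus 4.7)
The plan is to use the standard affine cover of $\bP^1$ by the two opens $U_0 \colonequiv \{[x:y] \mid y \neq 0\}$ and $U_1 \colonequiv \{[x:y] \mid x \neq 0\}$, each identified with $\A^1$ as in the proof of \Cref{projective-space-is-scheme}. Given $f : \bP^1 \to R$, I restrict to each patch and invoke \axiomref{sqc} to obtain polynomial representations.

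Concretely, the composite $t \mapsto f([t : 1]) : \A^1 \to R$ is, by \axiomref{sqc} applied to $R[T]$, given by a unique polynomial $p(T) = \sum_{i=0}^n a_i T^i \in R[T]$. Symmetrically, $s \mapsto f([1 : s])$ comes from a unique polynomial $q(S) = \sum_{j=0}^m b_j S^j \in R[S]$. The key step is to compare these on the overlap $U_0 \cap U_1$. Inside $U_0 \cong \A^1$, this overlap is the standard open $D(T) = \Spec R[T, T^{-1}]$, and the transition to $U_1$ sends $[t:1] = [1 : t^{-1}]$. Both $p(T)$ and $q(T^{-1})$ then represent the restriction $f|_{D(T)} : \Spec R[T, T^{-1}] \to R$, so by another application of \axiomref{sqc}, now to the finitely presented algebra $R[T, T^{-1}]$, we get the equation
\[ p(T) = q(T^{-1}) \quad \text{in } R[T, T^{-1}] \rlap{.} \]

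Since $R[T, T^{-1}]$ is free as an $R$-module on the basis $\{T^k \mid k \in \Z\}$, comparing coefficients of $T^k$ on both sides forces $a_i = 0$ for $i \geq 1$, $b_j = 0$ for $j \geq 1$, and $a_0 = b_0$. Thus $f$ takes the constant value $a_0$ on $U_0$ and on $U_1$. Since these opens cover $\bP^1$ (by \Cref{generalized-field-property}), $f$ is constant.

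The only subtle point, and the one I would be most careful about, is the transition on the overlap: we must verify that $p(T)$ and $q(T^{-1})$ really do define the \emph{same} function $D(T) \to R$, so that \axiomref{sqc} for $R[T, T^{-1}]$ can be invoked to equate them as Laurent polynomials. This is just a matter of chasing the definition of the quotient $\bP^1$ and the identification $[t:1] = [1 : t^{-1}]$ for invertible $t$, which holds by \Cref{projective-space-hit}; there is no need for \axiomref{Z-choice} here since the cover is already explicit and finite.
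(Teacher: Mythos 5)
Your proof is correct and follows essentially the same route as the paper: cover $\bP^1$ by the two affine charts, apply (\axiomref{sqc}) on each to get polynomial representatives, and compare them on the overlap $\Spec R[T,T^{-1}]$. The paper phrases the final step abstractly as a pullback of $R$-algebras whose computation ``is an algebraic computation, understanding the elements of $R[X,Y]/(1-XY)$ as Laurent polynomials''; your coefficient comparison $p(T)=q(T^{-1})$ is exactly that computation carried out explicitly.
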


\begin{proof}
  Consider the affine cover of $\bP^1 = U_0 \cup U_1$
  as in the proof of \Cref{projective-space-is-scheme}.
  Both $U_0$ and $U_1$ are isomorphic to $\A^1$
  and the intersection $U_0 \cap U_1$ is $\A^1 \setminus \{0\}$,
  embedded in $U_0$ by $x \mapsto x$
  and in $U_1$ by $x \mapsto \frac{1}{x}$.
  So we have a pushout square as follows.
  \[ \begin{tikzcd}
    \A^1 \setminus \{0\} \ar[r] \ar[d]
    \ar[dr, phantom, very near end, "\ulcorner"] &
    \A^1 \ar[d] \\
    \A^1 \ar[r] &
    \bP^1
  \end{tikzcd} \]
  If we apply the functor $X \mapsto R^X$ to this diagram,
  we obtain a pullback square of $R$ algebras,
  and we can insert the known $R$ algebras for the affine schemes involved.
  \[ \begin{tikzcd}
    R[X, Y]/(1 - XY) &
    R[Y] \ar[l] \\
    R[X] \ar[u] &
    R^{\bP^1} \ar[l] \ar[u]
    \ar[lu, phantom, very near start, "\ulcorner"]
  \end{tikzcd} \]
  Here, the different variable names $X$ and $Y$ indicate
  the resulting homomorphisms.
  Now it is an algebraic computation,
  understanding the elements of $R[X, Y]/(1 - XY)$ as Laurent polynomials,
  to see that the pullback is the algebra $R$,
  so we have $R^{\bP^1} = R$ as desired.
\end{proof}

\begin{lemma}[using \axiomref{loc}, \axiomref{sqc}]%
  \label{parametrized-line-through-two-points-in-projective-space}
  Let $p \neq q \in \bP^n$ be given.
  Then there exists a map $f : \bP^1 \to \bP^n$
  such that $f([0 : 1]) = p$, $f([1 : 0]) = q$.
\end{lemma}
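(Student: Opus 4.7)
The strategy is to realize $f$ via the formula $f([s:t]) \colonequiv [s\tilde{p} + t\tilde{q}]$, for chosen representatives $\tilde{p}, \tilde{q} \in \A^{n+1}\setminus\{0\}$ of $p$ and $q$. Since the conclusion is a mere existence statement, hence a proposition, I may untruncate the existence of representatives and work with fixed lifts. The plan is to first construct an auxiliary map $\tilde{f} : \A^2\setminus\{0\} \to \bP^n$ by $\tilde{f}(s,t) \colonequiv [s\tilde{p} + t\tilde{q}]$ and then descend it along the set-quotient $\A^2\setminus\{0\} \to \bP^1$ from \Cref{projective-space-hit}.

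The main technical point, and the only genuinely non-formal step, is verifying that the vector $s\tilde{p} + t\tilde{q}$ is non-zero whenever $(s,t) \neq 0$, so that $\tilde{f}$ lands in $\bP^n$. By \Cref{generalized-field-property} at least one of $s, t$ is invertible; assume $s \in R^\times$, the case $t \in R^\times$ being symmetric. If one had $s\tilde{p} + t\tilde{q} = 0$, then $\tilde{p} = -(t/s)\,\tilde{q}$, so $\tilde{p}$ and $\tilde{q}$ would represent the same point of $\bP^n$, contradicting $p \neq q$. Hence $\tilde{f}$ is well-defined.

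For the descent, suppose $(s',t') = \lambda(s,t)$ for some $\lambda : R$, which is forced to be a unit as noted after \Cref{projective-space-hit}. Then
\[
  \tilde{f}(s',t') = [\lambda s\, \tilde{p} + \lambda t\, \tilde{q}] = [\lambda(s\tilde{p} + t\tilde{q})] = [s\tilde{p} + t\tilde{q}] = \tilde{f}(s,t)
  \rlap{,}
\]
so $\tilde{f}$ factors through the set-quotient, yielding the desired $f : \bP^1 \to \bP^n$. Finally, $f([1:0]) = [\tilde{p}] = p$ and $f([0:1]) = [\tilde{q}] = q$ follow by direct computation.
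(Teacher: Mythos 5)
Your proof is correct and takes essentially the same route as the paper's: choose representatives, form the linear combination, argue that it is non-zero by combining \Cref{generalized-field-property} with $p \neq q$, and descend along the set-quotient. One small slip: your formula $f([s:t]) = [s\tilde p + t\tilde q]$ yields $f([0:1]) = q$ and $f([1:0]) = p$, the reverse of what the lemma states, so you should swap $\tilde p$ and $\tilde q$ in the formula (the paper's own formula has the same mismatch); also, when you conclude from $\tilde p = -(t/s)\tilde q$ that $[\tilde p]=[\tilde q]$, it is worth saying explicitly that the scalar $-(t/s)$ must be a unit because $\tilde p$ has an invertible entry.
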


\begin{proof}
  What we want to prove is a proposition,
  so we can assume chosen $a, b \in \A^{n+1} \setminus \{0\}$
  with $p = [a]$, $q = [b]$.
  Then we set
  \[ f([x, y]) \colonequiv [xa + yb] \rlap{.}\]
  Let us check that $xa + yb \neq 0$.
  By \Cref{generalized-field-property},
  we have that $x$ or $y$ is invertible
  and both $a$ and $b$ have at least one invertible entry.
  If $xa = - yb$
  then it follows that $x$ and $y$ are both invertible
  and therefore $a$ and $b$ would be linearly equivalent,
  contradicting the assumption $p \neq q$.
  Of course $f$ is also well-defined
  with respect to linear equivalence in the pair $(x, y)$.
\end{proof}

\begin{lemma}[using \axiomref{loc}, \axiomref{sqc}]%
  \label{point-in-projective-space-apart-from-two-standard-points}
  Let $n \geq 1$.
  For every point $p \in \bP^n$,
  we have $p \neq [1 : 0 : 0 : \dots]$
  or $p \neq [0 : 1 : 0 : \dots]$.
\end{lemma}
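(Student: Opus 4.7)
The plan is to reduce this to the apartness property (iii) of \Cref{projective-space-apartness-relation}, applied to the two standard points $e_0 \colonequiv [1 : 0 : 0 : \dots]$ and $e_1 \colonequiv [0 : 1 : 0 : \dots]$.

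First I would verify that $e_0 \neq e_1$ in $\bP^n$ (using $n \geq 1$ so that both points are defined). By \Cref{equality-2-by-2-minor}, inequality is equivalent to the existence of indices $i \neq j$ such that $x_i y_j \neq y_i x_j$. Taking $i = 0$, $j = 1$ with $x = (1, 0, 0, \dots)$ and $y = (0, 1, 0, \dots)$, we get $x_0 y_1 - y_0 x_1 = 1 \neq 0$, so indeed $e_0 \neq e_1$.

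Then I would apply part (iii) of \Cref{projective-space-apartness-relation}, which, given $e_0 \neq e_1$, yields for any $p : \bP^n$ that $e_0 \neq p \lor p \neq e_1$. Using symmetry of inequality (part (ii) of the same lemma) to rewrite $e_0 \neq p$ as $p \neq e_0$, we obtain $p \neq e_0 \lor p \neq e_1$, which is exactly the statement to be proved.

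There is no real obstacle here: the entire argument is an immediate application of the two previously established lemmas (\Cref{equality-2-by-2-minor} and \Cref{projective-space-apartness-relation}), with the only small calculation being the $2 \times 2$ minor showing $e_0 \neq e_1$.
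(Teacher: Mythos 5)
Your proof is correct, and it is in fact the approach that the paper itself acknowledges in its opening sentence of the proof (``this is a special case of \Cref{projective-space-apartness-relation}''), before going on to give a more elementary argument. The paper's ``very direct proof'' avoids the apartness lemma entirely: writing $p = [a]$, it invokes \Cref{generalized-field-property} to find an index $i$ with $a_i \neq 0$, and then concludes $p \neq [0:1:0:\dots]$ if $i = 0$ and $p \neq [1:0:0:\dots]$ if $i \geq 1$. Your route buys nothing extra in terms of hypotheses (both use only \axiomref{loc} and \axiomref{sqc}), but it does channel the argument through the machinery of \Cref{equality-2-by-2-minor} and the co-transitivity of apartness, whereas the paper's version makes it plain that the result is really just the field property of the representing vector $a$ read off directly. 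Either is perfectly acceptable; the paper gives both because the direct calculation is illuminating.
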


\begin{proof}
  This is a special case of \Cref{projective-space-apartness-relation},
  but we can also give a very direct proof:
  Let $p = [a]$ with $a \in \A^{n+1} \setminus \{0\}$.
  By \Cref{generalized-field-property},
  there is an $i \in \{0, \dots, n\}$ with $a_i \neq 0$.
  If $i = 0$ then $p \neq [0 : 1 : 0 : \dots]$,
  if $i \geq 1$ then $p \neq [1 : 0 : 0 : \dots]$.
\end{proof}

\begin{theorem}[using \axiomref{loc}, \axiomref{sqc}]%
  \label{functions-on-projective-space-constant}
  All functions $\bP^n \to R$ are constant,
  that is,
  \[ H^0(\bP^n, R) \colonequiv (\bP^n \to R) = R \rlap{.} \]
\end{theorem}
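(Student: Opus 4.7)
The plan is to reduce the general statement to the $n=1$ case of \Cref{functions-on-projective-line-constant} by using the parametrized-line lemma \Cref{parametrized-line-through-two-points-in-projective-space}. The case $n=0$ is trivial since $\bP^0$ is contractible. So fix $n \geq 1$ and a function $g : \bP^n \to R$; the aim is to show $g$ is constant, from which the equivalence $R \simeq (\bP^n \to R)$ (evaluating at any fixed point as inverse to the constant-function map) follows immediately.

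Choose the two ``basepoints'' $a \colonequiv [1:0:\dots:0]$ and $b \colonequiv [0:1:0:\dots:0]$ in $\bP^n$. First I would verify $a \neq b$, which is immediate from \Cref{equality-2-by-2-minor} since $a_0 b_1 - b_0 a_1 = 1 \neq 0$. Then \Cref{parametrized-line-through-two-points-in-projective-space} gives a map $f : \bP^1 \to \bP^n$ with $f([0:1]) = a$ and $f([1:0]) = b$. Applying \Cref{functions-on-projective-line-constant} to $g \circ f : \bP^1 \to R$ shows this composite is constant, so in particular $g(a) = g(b)$.

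Now for an arbitrary point $p : \bP^n$, I want to show $g(p) = g(a)$. This is a proposition (since $R$ is a set), so I can eliminate from the merely-disjunction supplied by \Cref{point-in-projective-space-apart-from-two-standard-points}, which gives $p \neq a$ or $p \neq b$. In the first case, the parametrized-line lemma applied to the pair $(a,p)$ yields a map $\bP^1 \to \bP^n$ hitting both $a$ and $p$, and constancy on $\bP^1$ gives $g(p) = g(a)$. In the second case we similarly get $g(p) = g(b)$, and then $g(b) = g(a)$ from the previous paragraph. Either way $g(p) = g(a)$, so $g$ is the constant function with value $g(a)$.

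The only subtlety is making sure the case split is permitted: it is, because the target equation $g(p) = g(a)$ is a proposition. There are no real obstacles — the hard work has been packaged into the preceding three lemmas (the pushout computation for $\bP^1$, the line-through-two-points construction, and apartness from a basepoint), and this theorem is the clean synthesis of them.
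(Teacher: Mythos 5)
Your proposal is correct and follows essentially the same route as the paper: use \Cref{parametrized-line-through-two-points-in-projective-space} together with \Cref{functions-on-projective-line-constant} to show values agree at apart points, establish $g([1:0:\dots:0]) = g([0:1:0:\dots:0])$, and then invoke \Cref{point-in-projective-space-apart-from-two-standard-points} to conclude for all $p$. The only cosmetic differences are that you treat $n=0$ separately (the paper leaves it implicit) and that the paper phrases the first step for an arbitrary pair of apart points before specializing to the two standard basepoints.
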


\begin{proof}
  Let $f : \bP^n \to R$ be given.
  For any two distinct points $p \neq q : \bP^n$,
  we can apply \Cref{parametrized-line-through-two-points-in-projective-space}
  and (merely) find a map $\widetilde{f} : \bP^1 \to R$
  with $\widetilde{f}([0 : 1]) = f(p)$
  and $\widetilde{f}([1 : 0]) = f(q)$.
  Then we see $f(p) = f(q)$
  by \Cref{functions-on-projective-line-constant}.
  In particular, we have $f([1 : 0 : 0 : \dots]) = f([0 : 1 : 0 : \dots])$.
  And then, by \Cref{point-in-projective-space-apart-from-two-standard-points},
  we get $f(p) = f([1 : 0 : 0 : \dots])$ for every $p : \bP^n$.
\end{proof}

\begin{remark}
  Another proof of \Cref{functions-on-projective-space-constant}
  goes as follows:
  A function $f : \bP^n \to R$
  is by definition of $\bP^n$
  (\Cref{projective-space-hit})
  given by an $R^{\times}$-invariant function
  $g : \A^{n+1}\setminus\{0\} \to R$.
  But it is possible to show that the restriction function
  \[ (\A^{n+1} \to R) \xrightarrow{\sim} (\A^{n+1}\setminus\{0\} \to R) \]
  is bijective
  (as long as $n \geq 1$),
  so $g$ corresponds to a function $\widetilde{g} : \A^{n+1} \to R$
  which is constant on every subset of the form
  $\{\, rx \mid r : R^{\times} \,\}$
  for $x : \A^{n+1} \setminus \{0\}$.
  But then it is constant on the whole line
  $\{\, rx \mid r : R \,\}$,
  since the restriction function
  $(\A^1 \to R) \hookrightarrow (\A^1\setminus\{0\} \to R)$
  is injective.
  From this it follows that $f$ is constant with value $\widetilde{g}(0)$.

  A third possibility is to directly generalize the proof of
  \Cref{functions-on-projective-line-constant} to arbitrary $n$:
  The set $\bP^n$ is covered by the subsets $U_0, \dots, U_n$,
  so it is the colimit (in the category of sets) of a diagram
  of finite intersections of them,
  which are all affine schemes.
  The set of functions $\bP^n \to R$
  is thus the limit of a corresponding diagram of algebras.
  These algebras are most conveniently described as sub-algebras
  of the degree $0$ part of the graded algebra
  ${R[X_0, \dots, X_n]}_{X_0 \dots X_n}$,
  for example
  $(U_0 \to R) = R[\frac{X_1}{X_0}, \dots, \frac{X_n}{X_0}]$.
  Then the limit can be computed to be $R$.
\end{remark}

\subsection{Line Bundles}

We will construct Serre's twisting sheaves in this section,
starting with the ``minus first''.
The following works because of \Cref{lines-are-one-dimensional}.

We will also give some indication on which line bundles exist in general.

\begin{definition}%
  Let $X$ be a type.
  A \notion{line bundle} is a map $\mathcal L : X\to \Mod{R}$,
  such that
  \[ \prod_{x:X} \propTrunc{\mathcal L_x=_{\Mod{R}}R} \rlap{.}\]
  The \notion{trivial line bundle} on $X$ is the line bundle
  $X \to \Mod{R}, x \mapsto R$,
  and when we say that a line bundle $\mathcal{L}$ is trivial
  we mean that $\mathcal{L}$ is equal to the trivial line bundle,
  or equivalently $\propTrunc{\prod_{x:X} \mathcal L_x=_{\Mod{R}}R}$.
\end{definition}

\begin{definition}
  \begin{enumerate}[(a)]
  \item The \notion{tautological bundle} is the line bundle $\mathcal O_{\bP^n}(-1):\bP^n\to \Mod{R}$,
    given by
    \[ (L:\bP^n)\mapsto L\rlap{.}\]
  \item The \notion{dual}\index{$\mathcal L^\vee$} $\mathcal L^\vee$ of a line bundle $\mathcal L:\bP^n\to \Mod{R}$,
    is the line bundle given by
    \[ (x:\bP^n)\mapsto \Hom_{\Mod{R}}(\mathcal L_x,R)\rlap{.}\]
  \item The \notion{tensor product} of $R$-module bundles $\mathcal F\otimes \mathcal G$\index{$\mathcal F\otimes \mathcal G$}
    on a scheme $X$
    is given by pointwise taking the tensor product of $R$-modules.
  \item For $k:\Z$, the $k$-th \notion{Serre twisting sheaf} $\mathcal{O}_{\bP^n}(k)$ on $\bP^n$
    is given by taking the $-k$-th tensor power of $\mathcal O_{\bP^n}(-1)$ for negative $k$
    and the $k$-th tensor power of $\mathcal O_{\bP^n}(-1)^{\vee}$ otherwise.
  \end{enumerate}
\end{definition}

With this, we expect that many classical results can be reproduced.
It is however, far from clear, in what sense we can expect that every line bundle on $\bP^n$
is a Serre twisting sheaf.
The expectation is, that this is not the case,
since even on $\A^1$, we should not expect that all line bundles are trivial.
The background of this expectation is the external fact, that the relative Picard group of
the affine line over the spectrum of the base ring is not always trivial.

We will proceed by showing the claim about line bundles on $\A^1$,
which will require some preparation.

\begin{lemma}[using \axiomref{loc}, \axiomref{sqc}, \axiomref{Z-choice}]
  For every open subset $U : \A^1 \to \Prop$ of $\A^1$
  we have not not:
  either $U = \emptyset$
  or $U = D((X - a_1)\dots(X - a_n)) = \A^1 \setminus \{ a_1, \dots, a_n \}$
  for pairwise distinct numbers $a_1, \dots, a_n : R$.
\end{lemma}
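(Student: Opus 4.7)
The plan is to combine \Cref{qc-open-affine-open}, which writes any open subset of an affine scheme as a standard open $D(f_1, \dots, f_n)$, with \Cref{polynomials-notnot-decompose}, which $\neg\neg$-factors every polynomial in $R[X]$ into linear factors. Since the goal is double-negation stable, I will freely case-split on $P \lor \lnot P$ for any specific proposition $P$, via the usual trick of pushing the split under $\neg\neg$.

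First, by \Cref{qc-open-affine-open}, pick polynomials $f_1, \dots, f_n \in R[X]$ with $U = D(f_1, \dots, f_n) = \bigcup_i D(f_i)$. Applying \Cref{polynomials-notnot-decompose} to each $f_i$, we may assume each $f_i$ is either zero (contributing $D(f_i) = \emptyset$ to the union) or of the form $\alpha_i \prod_j (X - a_{ij})^{e_{ij}}$ with $\alpha_i \in R^\times$, each $e_{ij} \geq 1$, and the $a_{ij}$ pairwise distinct within $f_i$. Discard the zero $f_i$; if none remain then $U = \emptyset$ and we are done. Otherwise, since units and positive powers do not affect invertibility, $D(f_i) = \A^1 \setminus S_i$ where $S_i \colonequiv \{a_{i1}, \dots, a_{ik_i}\}$.

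Using excluded middle to decide each equality $a_{ij} = a_{kl}$ among the finitely many roots, reorganize the combined list as pairwise distinct values $b_1, \dots, b_N \in R$ together with index subsets $T_i \subseteq \{1, \dots, N\}$ satisfying $S_i = \{b_k : k \in T_i\}$. Define $T \colonequiv \bigcap_i T_i$ and enumerate $A \colonequiv \{b_k : k \in T\}$ as $\{a_1, \dots, a_m\}$ (still pairwise distinct). The candidate answer is then $U = \A^1 \setminus A = D((X - a_1) \cdots (X - a_m))$. The inclusion $U \subseteq \A^1 \setminus A$ is immediate: if $x \in U$ then $x \notin S_i$ for some $i$, and $A \subseteq S_i$ forces $x \notin A$.

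The main obstacle is the reverse inclusion. Given $x \in \A^1$ with $x \neq b_k$ for every $k \in T$, use excluded middle on each of the finitely many propositions $x = b_k$. Since the $b_k$ are pairwise distinct, the index set $K \colonequiv \{k : x = b_k\}$ has cardinality at most one: if $x = b_k$ and $x = b_{k'}$ for $k \neq k'$ then $b_k = b_{k'}$, contradicting distinctness. If $K = \emptyset$ then $x \notin S_i$ for every $i$. Otherwise $K = \{k_0\}$, and the hypothesis $x \neq b_k$ for $k \in T$ forces $k_0 \notin T$; by definition of $T$ there exists $i$ with $k_0 \notin T_i$, giving $K \cap T_i = \emptyset$ and therefore $x \notin S_i$. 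Either way $x \in U$, completing the proof.
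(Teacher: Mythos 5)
Your proof is correct and follows the same overall route as the paper's: reduce to standard opens via \Cref{qc-open-affine-open}, factor each $f_i$ via \Cref{polynomials-notnot-decompose}, and clean up under $\neg\neg$. The one place where your write-up is genuinely more informative than the paper's is the combining step: the paper compresses everything into ``so we do not not get a list $a_1,\dots,a_n$ such that $U = \A^1\setminus\{a_1,\dots,a_n\}$; then get rid of duplicates,'' which elides the fact that $\bigcup_i(\A^1\setminus S_i)$ is the complement of the \emph{intersection} $\bigcap_i S_i$ rather than of the union, so that the final list is not just a deduplicated concatenation of the individual root lists. Your introduction of the index sets $T_i$ and $T = \bigcap_i T_i$ makes this explicit, and the two-sided verification $U = \A^1\setminus A$ is done carefully. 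One small presentational point worth tightening: the per-$x$ case splits on $x = b_k$ in the reverse inclusion are not licensed solely by the outer $\neg\neg$ (since you are then under a $\forall x$ with the outer $\neg\neg$ already spent), but by the fact that $U(x)$ is an open proposition and hence $\neg\neg$-stable (\Cref{open-union-intersection}), so that proving $\neg\neg\,U(x)$ fiberwise suffices. With that remark in place, the argument is airtight.
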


\begin{proof}
  For $U = D(f)$,
  this follows from \Cref{polynomials-notnot-decompose}
  because $D(\alpha \cdot {(X - a_1)}^{e_1} \dots {(X - a_n)}^{e_n})
  = D((X - a_1) \dots (X - a_n))$.
  In general,
  we have $U = D(f_1) \cup \dots \cup D(f_n)$
  by \Cref{qc-open-affine-open},
  so we do not not get
  (that $U = \emptyset$ or)
  a list of elements $a_1, \dots, a_n : R$
  such that $U = \A^1 \setminus \{ a_1, \dots, a_n \}$.
  Then we can not not get rid of any duplicates in the list.
\end{proof}

\begin{lemma}[using \axiomref{loc}, \axiomref{sqc}, \axiomref{Z-choice}]%
  \label{decompose-invertible-function-on-intersection-in-A1}
  Let $U, V : \A^1 \to \Prop$ be two open subsets
  and let $f : U \cap V \to R^\times$ be a function.
  Then there do not not exist functions
  $g : U \to R^\times$ and
  $h : V \to R^\times$
  such that $f(x) = g(x)h(x)$ for all $x : U \cap V$.
\end{lemma}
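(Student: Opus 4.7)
The conclusion is double-negation stable, so I may freely perform case distinctions that are only not-not decidable. First, by the preceding lemma applied separately to $U$ and $V$, I can not-not assume $U = \A^1 \setminus \{a_1, \ldots, a_n\}$ and $V = \A^1 \setminus \{b_1, \ldots, b_m\}$ for pairwise distinct $a_i$ and pairwise distinct $b_j$ (the degenerate case where $U$ or $V$ is empty makes $U \cap V$ empty, so any $g, h$ work). Writing $p_A \colonequiv \prod_i (X-a_i)$, $p_B \colonequiv \prod_j (X-b_j)$, and $p \colonequiv p_A p_B$, we have $U \cap V = \Spec R[X]_p$.

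By \axiomref{sqc}, the function $f$ corresponds to a unit of $R[X]_p$, which I can write as $q/p^N$ for some polynomial $q \in R[X]$ whose image in $R[X]_p$ is a unit. Applying \Cref{polynomials-notnot-decompose} to $q$ and discarding the case $q = 0$ (which would make $q$ non-invertible in $R[X]_p$), I not-not obtain a factorization $q = \beta \cdot \prod_{i=1}^k (X-c_i)^{e_i}$ with $\beta \in R^\times$ and pairwise distinct $c_i : R$. Each $(X-c_i)$ is then a unit of $R[X]_p = R^{U \cap V}$, so by \axiomref{sqc} it is invertible-valued on $U \cap V$, forcing $c_i \notin U \cap V$. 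Unwinding this with \Cref{generalized-field-property} gives, not-not, $c_i = a_j$ for some $j$ or $c_i = b_l$ for some $l$.

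The key step is to split these roots between the two sides. For each $i$, I not-not decide whether $c_i$ coincides with some $a_j$: those that do go into $I_A$, and the rest (for which then not-not $c_i = b_l$ for some $l$) into $I_B$. Setting $q_A \colonequiv \prod_{i \in I_A}(X-c_i)^{e_i}$ and $q_B \colonequiv \prod_{i \in I_B}(X-c_i)^{e_i}$, I define
\[ g \colonequiv \beta \cdot q_A / p_A^N \in R[X]_{p_A} \quad\text{and}\quad h \colonequiv q_B / p_B^N \in R[X]_{p_B}. \]
Each factor of $q_A$ is not-not of the form $(X - a_j)$, and $(X-a_j)$ is a unit of $R[X]_{p_A}$ since it divides the unit $p_A$; hence $g$ is a unit of $R[X]_{p_A}$, and symmetrically $h$ is a unit of $R[X]_{p_B}$. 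Via \axiomref{sqc}, $g$ and $h$ yield invertible-valued functions $U \to R^\times$ and $V \to R^\times$, and the identity $gh = \beta q_A q_B / (p_A^N p_B^N) = q/p^N = f$ holds in $R[X]_p$, hence pointwise on $U \cap V$.

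The main obstacle is the bookkeeping of the nested not-not case distinctions: the standard forms of $U$ and $V$, the factorization of $q$, and the partition of the roots all need to live inside the outer double-negation of the conclusion. Once these are in place, the construction of $g$ and $h$ and the verification $f = gh$ are routine algebra in the localizations of $R[X]$.
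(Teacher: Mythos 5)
Your proof is correct and takes essentially the same route as the paper's: reduce to standard opens given by products of distinct linear factors, factor $f$ via \Cref{polynomials-notnot-decompose}, and assign each linear factor to $g$ or $h$ according to whether its root is avoided by $U$ or by $V$. The paper arranges the decompositions of $U$, $V$, and $U \cup V$ with nested, mutually distinct root sets so that the factors of $f$ come pre-labelled, whereas you factor the numerator $q$ separately and perform an extra not-not case split to match its roots $c_i$ to the avoided points --- this is exactly what the paper compresses into ``comparing linear factors,'' so the difference is purely presentational.
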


\begin{proof}
  By \Cref{polynomials-notnot-decompose},
  we can assume
  \begin{align*}
    U \cup V &= D((X - a_1) \dots (X - a_k)) \rlap{,}\\
    U &= D((X - a_1) \dots (X - a_k) (X - b_1) \dots (X - b_l)) \rlap{,}\\
    V &= D((X - a_1) \dots (X - a_k) (X - c_1) \dots (X - c_m)) \rlap{,}\\
    U \cap V &= D((X - a_1) \dots (X - a_k) (X - b_1) \dots (X - b_l) (X - c_1) \dots (X - c_m))
    \rlap{,}
  \end{align*}
  where all linear factors are distinct.
  Then every function $f : U \cap V \to R^\times$ can
  by (\axiomref{sqc}), \Cref{polynomials-notnot-decompose}
  and comparing linear factors
  not not be written in the form
  \[ f = \alpha \cdot
     {(X - a_1)}^{e_1} \dots {(X - a_k)}^{e_k}
     {(X - b_1)}^{e'_1} \dots {(X - b_l)}^{e'_l}
     {(X - c_1)}^{e''_1} \dots {(X - c_m)}^{e''_m}
  \]
  with $\alpha : R^\times$, $e_i, e'_i, e''_i : \Z$.
  Other linear factors can not appear,
  since they do not represent invertible functions on $U \cap V$.
  Now we can write $f = gh$ as desired,
  for example with
  \begin{align*}
    g &= \alpha \cdot
    {(X - a_1)}^{e_1} \dots {(X - a_k)}^{e_k}
    {(X - b_1)}^{e'_1} \dots {(X - b_l)}^{e'_l} \rlap{,}\\
    h &=
    {(X - c_1)}^{e''_1} \dots {(X - c_m)}^{e''_m} \rlap{.}
  \end{align*}
\end{proof}

\begin{theorem}[using \axiomref{loc}, \axiomref{sqc}, \axiomref{Z-choice}]%
  \label{Gm-torsors-on-A1}
  Every $R^\times$-torsor on $\A^1$ (\Cref{torsor})
  does not not have a global section.
\end{theorem}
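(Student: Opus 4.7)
The plan is to carry out a Čech-cohomology-style argument: trivialize the torsor on a finite open cover, then inductively reduce the number of patches by applying \Cref{decompose-invertible-function-on-intersection-in-A1}. Since the goal is double-negation stable, the \emph{not not} decompositions supplied by that lemma may be chained freely.

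First I would apply Zariski-local choice (\axiomref{Z-choice}) to the affine scheme $\A^1 = \Spec R[X]$ and the family of torsor fibers, each of which is merely inhabited. This yields a unimodular sequence $f_1, \dots, f_n : R[X]$ together with local sections $s_i : \prod_{x : D(f_i)} T(x)$, so that $\A^1 = D(f_1) \cup \dots \cup D(f_n)$. Each pair of local sections determines a transition function $\tau_{ij} : D(f_i) \cap D(f_j) \to R^\times$ with $s_j = \tau_{ij} \cdot s_i$, and these satisfy the usual cocycle identity.

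Next I would induct on the number of opens in the trivializing cover, allowing arbitrary (not necessarily standard) open subsets of $\A^1$. The base case $n = 1$ is immediate. For the inductive step with opens $U_1, \dots, U_n$ and compatible sections, apply \Cref{decompose-invertible-function-on-intersection-in-A1} to the transition $\tau_{12} : U_1 \cap U_2 \to R^\times$ to obtain (under $\neg\neg$) functions $g_1 : U_1 \to R^\times$ and $g_2 : U_2 \to R^\times$ with $\tau_{12}(x) = g_1(x) g_2(x)$ on $U_1 \cap U_2$. Replacing $s_1$ by $g_1 \cdot s_1$ on $U_1$ and $s_2$ by $g_2^{-1} \cdot s_2$ on $U_2$ produces two sections that coincide on $U_1 \cap U_2$; by \Cref{kraus-glueing}, applicable since the total space of a torsor is a set, they patch to a single section on the open $U_1 \cup U_2$. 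The transition functions against the remaining $s_3, \dots, s_n$ are the original $\tau_{1k}$ and $\tau_{2k}$ corrected by $g_1^{-1}$ and $g_2$ respectively, and they agree on $U_1 \cap U_2 \cap U_k$ by the original cocycle condition. This reduces the cover to $n - 1$ opens, and the induction concludes.

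The main obstacle will be the bookkeeping: keeping the whole argument inside the double-negation monad and verifying that the transition data produced after each merging step are again invertible functions on opens of $\A^1$ to which \Cref{decompose-invertible-function-on-intersection-in-A1} can be re-applied. The latter is automatic because the sections one produces by gluing continue to be compared by invertible functions on intersections, and each intersection is an open subset of $\A^1$. All intermediate assertions are propositional, so the accumulated $\neg\neg$ layers collapse to yield the claimed not-not existence of a global section.
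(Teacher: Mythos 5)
Your proposal is correct and follows essentially the same route as the paper's proof: apply Zariski-local choice to trivialize the torsor on a finite open cover, then induct on the number of patches, merging two at a time by applying \Cref{decompose-invertible-function-on-intersection-in-A1} to the transition function and gluing with \Cref{kraus-glueing}. The only difference is that you make explicit the bookkeeping about how the remaining transition functions transform after a merge, which the paper leaves implicit but which is in any case automatic since the transition function between two given sections is uniquely determined.
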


\begin{proof}
  Let $T$ be an $R^\times$-torsor on $\A^1$,
  that is,
  for every $x : \A^1$,
  $T_x$ is a set with a free and transitive $R^\times$ action
  and $\propTrunc{T_x}$.
  By (\axiomref{Z-choice}),
  we get a cover of $\A^1$ by open subsets $\A^1 = \bigcup_{i = 1}^n U_i$
  and local sections $s_i : (x : U_i) \to T_x$ of the bundle $T$.
  From this we can not not construct a global section by induction on $n$:
  Given any two local sections $s_i, s_j$ defined on $U_i, U_j$,
  let $f : U_i \cap U_j \to R^\times$ be the unique function with
  $f(x)s_i(x) = s_j(x)$ for all $x : U_i \cap U_j$.
  Then by \Cref{decompose-invertible-function-on-intersection-in-A1},
  we not not find $g : U_i \to R^\times$, $h : U_j \to R^\times$
  such that the sections
  $x \mapsto g(x)s_i(x)$ and $x \mapsto {h(x)}^{-1}s_j(x)$,
  defined on $U_i$ respectively $U_j$,
  agree on $U_i \cap U_j$.
  This yields a section $\widetilde{s} : (x : U_i \cup U_j) \to T_x$
  by \Cref{kraus-glueing}
  and we can replace $U$ and $V$ by $U \cup V$ in the cover.
  Finally, when we get to $n = 1$,
  we have $U_1 = \A^1$
  and the global section $s_1 : (x : X) \to T_x$.
\end{proof}

\begin{corollary}[using \axiomref{loc}, \axiomref{sqc}, \axiomref{Z-choice}]
  \label{line-bundle-A1-notnot-trivial}
  Every line bundle on $\A^1$ is not not trivial.
\end{corollary}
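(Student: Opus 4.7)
The plan is to reduce the statement to \Cref{Gm-torsors-on-A1} by associating to every line bundle $\mathcal{L}$ on $\A^1$ a canonical $R^\times$-torsor, the \emph{frame torsor}, whose fiber over $x$ records the trivializations of $\mathcal{L}_x$.

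Explicitly, given a line bundle $\mathcal{L} : \A^1 \to \Mod{R}$, I would define
\[ T_x \colonequiv (R =_{\Mod{R}} \mathcal{L}_x) \rlap{,} \]
or equivalently, via univalence, the type of $R$-module isomorphisms $R \xrightarrow{\sim} \mathcal{L}_x$. Since $\mathcal{L}_x$ is a set and isomorphisms form a subtype of the function type, each $T_x$ is a set; by definition of a line bundle $T_x$ is merely inhabited. The group $R^\times$ acts on $T_x$ by precomposition with the multiplication map $\lambda \cdot {-} : R \to R$, and this action is free and transitive because every $R$-linear endomorphism of $R$ is multiplication by its value at $1$, so the automorphisms of $R$ as an $R$-module are exactly multiplications by units.

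Applying \Cref{Gm-torsors-on-A1} to $T$ yields not not a global section $s : (x : \A^1) \to T_x$, which is exactly a pointwise family of equalities $R = \mathcal{L}_x$ in $\Mod{R}$. By function extensionality this assembles into a single equality $(x \mapsto R) = \mathcal{L}$, i.e. a trivialization of $\mathcal{L}$. Since triviality of a line bundle is a proposition, the double negation commutes with this construction and we obtain the corollary.

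The main obstacle I anticipate is the bookkeeping required to identify the automorphism group of $R$ as an $R$-module with $R^\times$ and to verify that the resulting action on $T$ satisfies the torsor conditions of \Cref{torsor}; once that is in place, everything else is a direct application of the previous theorem together with univalence and function extensionality.
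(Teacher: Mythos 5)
Your proof is correct and takes essentially the same approach as the paper: build an $R^\times$-torsor from $\mathcal{L}$ and apply \Cref{Gm-torsors-on-A1}. The paper takes the torsor to be $x \mapsto \mathcal{L}_x \setminus \{0\}$ rather than your $x \mapsto (R =_{\Mod{R}} \mathcal{L}_x)$, but these are canonically equivalent, so the difference is only in packaging.
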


\begin{proof}
  Given a line bundle $\mathcal{L}$,
  we can construct an $R^\times$ torsor
  \[ x \mapsto \mathcal{L}_x \setminus \{0\} \rlap{.} \]
  Note that there is a well-defined $R^\times$ action on $M \setminus \{0\}$
  for every $R$ module $M$,
  and the action on $\mathcal{L}_x \setminus \{0\}$ is free and transitive
  and we have $\propTrunc{\mathcal{L}_x \setminus \{0\}}$
  since we merely have $\mathcal{L}_x = R$ as $R$ modules.
  By \Cref{Gm-torsors-on-A1},
  there not not is a global section of this torsor,
  so we have a section $s : (x : \A^1) \to \mathcal{L}_x$
  with $s(x) \neq 0$ for all $x : \A^1$.
  But this means that the line bundle $\mathcal{L}$ is trivial,
  since we can build an identification $\mathcal{L}_x = R$
  by sending $s(x)$ to $1$.
\end{proof}

We now transfer this result to line bundles on $\bP^1$.

\begin{lemma}[using \axiomref{loc}, \axiomref{sqc}]
  \label{invertible-laurent-polynomials}
  Every invertible element of the ring of Laurent polynomials $R[X]_X$
  is not not of the form $\alpha X^n$
  for some $\alpha : R^{\times}$ and $n : \Z$.
\end{lemma}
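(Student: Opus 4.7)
The plan is to reduce the statement to the structure theorem for polynomials, \Cref{polynomials-notnot-decompose}. First I would write an arbitrary element $f \in R[X]_X$ as $f = X^{-k} p$ with $p \in R[X]$ and $k \geq 0$; then $f$ is invertible if and only if $p$ is invertible in $R[X]_X$, which by (\axiomref{sqc}) together with $R[X]_X = (D(X) \to R)$ is equivalent to $p(x) \in R^\times$ for every invertible $x : R$.

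Since the goal is a $\lnot\lnot$-statement and therefore double-negation stable, I may invoke \Cref{polynomials-notnot-decompose} to assume that either $p = 0$ or $p = \alpha (X - a_1)^{e_1} \cdots (X - a_m)^{e_m}$ with $\alpha : R^\times$, $e_i \geq 1$ and pairwise distinct $a_i : R$. The first case is excluded because $p$ is invertible on $D(X)$ and, e.g., $1 : D(X)$ while $p(1) = 0$.

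In the remaining case I claim every $a_i = 0$. Using double-negation stability again, I may decide for each $i$ whether $a_i = 0$ or $a_i \neq 0$. If some $a_i \neq 0$, then by \Cref{non-zero-invertible} the element $a_i$ is invertible, so $a_i : D(X)$, but $p(a_i) = 0$ contradicts the pointwise invertibility of $p$ on $D(X)$. Hence all $a_i$ vanish, and since they are pairwise distinct we must have $m \leq 1$; in either case $p = \alpha X^{e}$ for some $e \geq 0$, and therefore $f = \alpha X^{e - k}$, which is of the desired form with $n \colonequiv e - k : \Z$.

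The main difficulty I anticipate is not any single algebraic step but rather keeping careful track of the double-negation bookkeeping: the argument uses both the $\lnot\lnot$-decomposition of polynomials and pointwise case splits on $a_i = 0$ versus $a_i \neq 0$, and one must verify that each is legitimately available because the target proposition is itself double-negation stable. The invertibility reformulation via \axiomref{sqc} and the contradiction step with $p(a_i) = 0$ are then routine.
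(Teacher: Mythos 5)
Your proof is correct, but it takes a genuinely different route from the paper's. The paper argues purely algebraically: it writes $f = \sum_{i=m}^{m+n} a_i X^i$ with $n \geq 0$, uses double-negation stability to assume each $a_i$ is $0$ or invertible, and then normalizes so that the top and bottom coefficients $a_m, a_{m+n}$ are both invertible (or $f = 0$, which is excluded). Applying the same normalization to the inverse $g$, the degree of $fg$ spans $n + n'$ slots between two invertible coefficients; since $fg = 1$, this forces $n = n' = 0$, so $f$ is a monomial. Your argument instead passes through the geometric interpretation $R[X]_X = (D(X) \to R)$ via \axiomref{sqc}, appeals to the structural decomposition \Cref{polynomials-notnot-decompose} to factor $p$ into linear factors, and then rules out any nonzero root $a_i$ by evaluating at $a_i : D(X)$ (using \Cref{non-zero-invertible} to see $a_i$ is invertible). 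The paper's approach is more elementary and self-contained, while yours is more conceptual, reducing to the zero-set structure of polynomials; both use only (\axiomref{loc}) and (\axiomref{sqc}), so they are equally licit here. Your double-negation bookkeeping is handled correctly: the outer decomposition and the pointwise $a_i = 0$ vs.\ $a_i \neq 0$ case splits are each $\lnot\lnot$-available because the target is $\lnot\lnot$-stable.
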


\begin{proof}
  Every element $f : R[X]_X$ is of the form
  $f = \sum_{i = m}^{m + n} a_i X^i$
  for some $m : \Z$, $n \geq 0$ and $a_i : R$.
  Every $a_i$ is not not either $0$ or invertible.
  Thus we can assume that either $f = 0$ or
  both $a_m$ and $a_{m+n}$ are invertible.
  If $f$ is invertible, we can exclude $f = 0$,
  and it remains to show that $n = 0$.
  Applying the same reasoning to $g$,
  where $fg = 1$,
  we see that $n > 0$ is indeed impossible.
\end{proof}

\begin{theorem}[using \axiomref{loc}, \axiomref{sqc}, \axiomref{Z-choice}]
  For every line bundle $\mathcal{L}$ on $\bP^1$,
  there not not exists a $k : \Z$
  such that $\mathcal{L} = \mathcal{O}_{\bP^1}(k)$.
\end{theorem}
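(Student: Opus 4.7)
The plan is to use the standard affine cover $\bP^1 = U_0 \cup U_1$ from the proof of Theorem \ref{projective-space-is-scheme}, where $U_0 \cong U_1 \cong \A^1$ and $U_0 \cap U_1 \cong \A^1 \setminus \{0\} = \Spec R[X]_X$. Since the goal is already a double negation, I may argue freely under $\neg\neg$ throughout. First I would restrict $\mathcal{L}$ to each $U_i$ and invoke Corollary \ref{line-bundle-A1-notnot-trivial} to obtain nowhere-vanishing sections $s_i : (x : U_i) \to \mathcal{L}_x$, giving trivializations $\mathcal{L}|_{U_i} \cong R$. On the overlap there is then a unique $f : U_0 \cap U_1 \to R^\times$ with $s_1(x) = f(x) \cdot s_0(x)$.

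By synthetic quasi-coherence applied to $U_0 \cap U_1 = \Spec R[X]_X$, the function $f$ corresponds to a unit of the Laurent polynomial ring, and by Lemma \ref{invertible-laurent-polynomials} I may assume it has the form $\alpha X^k$ for some $\alpha : R^\times$ and $k : \Z$. Rescaling $s_0$ by $\alpha^{-1}$ reduces to the case $f = X^k$. Next I would unpack $\mathcal{O}_{\bP^1}(-1)$ in the same two charts: for $[x] \in U_j$ represented uniquely by $x$ with $x_j = 1$, the line $L = \langle x \rangle$ has the canonical trivialization $\lambda \mapsto \lambda \cdot x$. Computing how the two such trivializations are related on $U_0 \cap U_1$ yields a monomial transition function $X^{\pm 1}$, and tensoring/dualising gives transition function $X^{\pm k'}$ for $\mathcal{O}_{\bP^1}(k')$, producing the unique $k'$ matching our $k$.

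So $\mathcal{L}$ has the same transition cocycle as $\mathcal{O}_{\bP^1}(k')$. Since $\Mod{R}$ is a 1-type, the family $x \mapsto (\mathcal{L}_x = \mathcal{O}_{\bP^1}(k')_x)$ lands in sets, and the two trivializations provide local sections of this family that agree on the overlap precisely because the transition functions coincide. Lemma \ref{kraus-glueing} then assembles them into a global pointwise equality, and by function extensionality we conclude $\mathcal{L} = \mathcal{O}_{\bP^1}(k')$. The main obstacle is the sign/convention bookkeeping relating the monomial $X^k$ to the Serre twist index $k'$, i.e. carefully tracing how the tautological bundle transforms under each tensor and dualisation; the gluing step itself is immediate, since with only two charts there is no further cocycle condition beyond the pairwise agreement we have already established.
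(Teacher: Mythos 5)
Your proposal is correct and follows essentially the same route as the paper: trivialize $\mathcal{L}$ on the two standard charts via \Cref{line-bundle-A1-notnot-trivial}, extract the transition function on the overlap, reduce it to a monomial $\alpha X^k$ via \axiomref{sqc} and \Cref{invertible-laurent-polynomials}, normalize away $\alpha$, and match against the transition cocycle of the twisting sheaves. The only difference is one of packaging: the paper phrases the reconstruction of $\mathcal{L}$ via pushout recursion on the presentation of $\bP^1$ as a pushout, while you phrase it via \Cref{kraus-glueing} applied to the two-chart open cover; these are the same argument.
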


\begin{proof}
  Let $\mathcal{L}:\bP^1\to \Mod{R}$ be a line bundle on $\bP^1$.
  By pushout recursion, $\mathcal{L}$ is given by two line bundles $\mathcal L_0,\mathcal L_1:\A^1\to\Mod{R}$
  and a glueing function $g:(x:\A^1\setminus\{0\})\to \mathcal L_0(x)=\mathcal L_1(\frac{1}{x})$.
  Since we are proving a double negation, we can assume identifications $p_0:(x:\A^1)\to \mathcal L_0=R^1$
  and $p_1:(x:\A^1)\to \mathcal L_1=R^1$ by \Cref{line-bundle-A1-notnot-trivial}.

  Now we can define $g':(x:\A^1\setminus\{0\})\to R^1=R^1$ by $g'(x)\colonequiv p_0^{-1}(x)\cdot g(x)\cdot p_1(\frac{1}{x})$.
  By synthetic quasi-coherence, equivalently, $g'$ is an invertible element of $R[X]_X$
  and therefore by \Cref{invertible-laurent-polynomials} given by $\alpha X^n$ for some $\alpha:R^\times$ and $n:\Z$.
  We can assume $\alpha=1$, since this just amounts to concatenating
  our final equality with the automorphism of line bundles given by $\alpha^{-1}$ at all points.

  By explicit calculation, the tautological bundle $\mathcal O_{\bP^1}(-1)$ on $\bP^1$ is given by glueing trivial line bundles along
  a glueing function $g_{-1}:(x:\A^1\setminus\{0\})\to R^1=R^1$ with $g_{-1}(x)\colonequiv \lambda\mapsto x\cdot \lambda$.
  Note that an arbitrary choice of sign is involved, made by choosing the direction of the glueing function.
  Sticking with the same choice, calculation shows $g_1(x)\colonequiv \lambda\mapsto \frac{1}{x}\cdot \lambda$
  is a glueing function for the dual of the tautological bundle $\mathcal O_{\bP^1}(1)$
  and the tensor product of line bundles corresponds to multiplication.
\end{proof}

\section{Bundles and cohomology}
In non-synthetic algebraic geometry,
the structure sheaf~$\mathcal{O}_X$ is part of the data constituting a scheme~$X$.
In our internal setting,
a scheme is just a type satisfying a property.
When we want to consider the structure sheaf as an object in its own right,
we can represent it by the trivial bundle
that assigns to every point $x : X$ the set $R$.
Indeed, for an affine scheme $X = \Spec A$,
taking the sections of this bundle over a basic open $D(f) \subseteq X$
\[ \left(\prod_{x : D(f)} R\right) = (D(f) \to R) = A[f^{-1}] \]
yields the localizations of the ring $A$
expected from the structure sheaf $\mathcal{O}_X$.
More generally,
instead of sheaves of abelian groups, $\mathcal{O}_X$-modules, etc.,
we will consider bundles of abelian groups, $R$-modules, etc.,
in the form of maps from $X$ to the respective type of algebraic structures.

\subsection{Quasi-coherent bundles}

Sometimes we want to ``apply'' a bundle to a subtype,
like sheaves can be evaluated on open subspaces
and introduce the common notation ``$M(U)$'' for that below.
It is, however, not justified to expect, that this application
and the corresponding theory of ``sheaves'' is ``the same'' as the external one,
since the definition below uses the internal hom ``$\prod$''
-- where the corresponding external construction, would be the set of continuous sections of a bundle.

\begin{definition}
  \index{$M(U)$}
  \label{application-of-bundle-to-subtype}
  Let $X$ be a type and $M:X\to \Mod{R}$ a dependent module.
  Let $U\subseteq X$ be any subtype.
  \begin{enumerate}[(a)]
  \item We write:
    \[
      M(U)\colonequiv \prod_{x:U}M_x
      \rlap{.}
    \]
  \item With pointwise structure, $U\to R$ is an $R$-algebra
    and $M(U)$ is a $(U\to R)$-module.
  \end{enumerate}
\end{definition}

Somewhat surprisingly, localization of modules $M(U)$
can be done pointwise:

\begin{lemma}[using \axiomref{loc}, \axiomref{sqc}, \axiomref{Z-choice}]%
  \label{module-bundle-localization-pointwise}
  Let $X$ be a scheme and $M:X\to \Mod{R}$ a dependent module.
  For any $f:X\to R$, there is an equality
  \[
    M(X)_f=\prod_{x:X}(M_x)_{f(x)}
  \]
  of $(X\to R)$-modules.
\end{lemma}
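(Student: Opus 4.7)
The plan is to construct the natural homomorphism
\[
  \Phi : M(X)_f \to \prod_{x:X}(M_x)_{f(x)},
  \qquad \tfrac{s}{f^n} \mapsto \bigl(x \mapsto \tfrac{s(x)}{f(x)^n}\bigr),
\]
which is manifestly a $(X\to R)_f$-linear map, and show that it is both injective and surjective. Well-definedness is an immediate pointwise check on the defining relation of a localization.

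For surjectivity, I would start from a family $t : \prod_{x:X} (M_x)_{f(x)}$. Each $t(x)$ is merely of the form $[s/f(x)^n]$ for some $s:M_x$ and $n:\N$, so \Cref{zariski-choice-scheme} yields a finite cover of $X$ by standard opens $U_1,\dots,U_m$ (on the affine parts of $X$) together with local data $s_i : \prod_{x:U_i} M_x$ and $n_i : \prod_{x:U_i} \N$ representing $t$. Applying \Cref{boundedness} to each $n_i$ on the affine $U_i$, then multiplying by the appropriate power of $f$, I can replace all $n_i$ by one common natural number $n$, so that $[s_i(x)/f(x)^n]=t(x)$ for $x:U_i$.

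To glue the $s_i$, I observe that on each overlap $U_i\cap U_j$ the identity $[s_i(x)/f(x)^n]=[s_j(x)/f(x)^n]$ in $(M_x)_{f(x)}$ means that for every $x$ there merely exists $k:\N$ with $f(x)^k(s_i(x)-s_j(x))=0$. The sets $\{k : f(x)^k(s_i(x)-s_j(x))=0\}\subseteq\N$ are upwards closed and merely inhabited, so applying a scheme-level version of \Cref{strengthened-boundedness} — obtained by running that proposition on each piece of a finite affine cover of $U_i\cap U_j$ and taking the maximum across the finitely many pairs $(i,j)$ — produces one $K:\N$ working uniformly. The sections $f^K s_i \in M(U_i)$ then agree on overlaps, so \Cref{kraus-glueing} yields a global $\tilde s : M(X)$ with $\Phi\bigl(\tilde s / f^{n+K}\bigr) = t$.

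For injectivity, suppose $\Phi([s/f^n])=0$. Then for each $x$ the set $\{k : f(x)^k s(x)=0\}\subseteq\N$ is upwards closed and merely inhabited, and the same scheme-level strengthened boundedness argument produces a single $K:\N$ with $f^K s = 0$ in $M(X)$, hence $[s/f^n]=0$ in $M(X)_f$. The main obstacle throughout is thus purely quantitative: turning the pointwise existence of exponents into a single uniform bound, which is exactly what \Cref{strengthened-boundedness} (extended over a finite affine cover) delivers. Everything else — the $(X\to R)_f$-module structure, glueing via \Cref{kraus-glueing}, and verifying that the resulting section satisfies $\Phi(\tilde s/f^{n+K})=t$ — is a routine pointwise check.
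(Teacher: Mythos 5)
Your proposal is correct and follows essentially the same route as the paper's proof: define the pointwise map, check well-definedness by using one common exponent, then use \Cref{zariski-choice-scheme}, \Cref{strengthened-boundedness}, and \Cref{kraus-glueing} to obtain uniform exponents and glue local representatives for surjectivity, and a uniform exponent again for injectivity. One small remark: you are right to flag that \Cref{strengthened-boundedness} is only stated for $\Spec A$ and must be run over a finite affine cover, taking a maximum at the end; the paper invokes that proposition directly on the scheme $X$ (and on the overlaps $U_{ij}$), leaving this harmless but genuinely needed reduction implicit.
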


\begin{proof}
First we construct a map, by realizing that the following is well-defined:
\[
  \frac{m}{f^k}\mapsto\left(x\mapsto \frac{m(x)}{f(x)^k}\right)
\]
So let $\frac{m}{f^k}=\frac{m'}{f^{k'}}$,
i.e. let there be an $l:\N$ such that $f^l(mf^{k'}-m'f^k)=0$.
But then we can choose the same $l:\N$ for each $x:X$
and apply the equation to each $x:X$.

We will now show, that the map we defined is an embedding.
So let $g,h:M(X)_f$ such that $p:\prod_{x:X}g(x)=_{(M_x)_{f(x)}}h(x)$.
Let $m_g,m_h:\prod_{x:X} M_x$ and $k_g,k_h:\N$ such that
\[
  g=\frac{m_g}{f^{k_g}} \quad\text{and}\quad h=\frac{m_h}{f^{k_h}}
  \rlap{.}
\]
From $p$ we know $\prod_{x:X}\exists_{k_x:\N}f(x)^{k_x}(m_g(x)f(x)^{k_h}-m_h(x)f(x)^{k_g})=0$.
By \Cref{strengthened-boundedness},
we find one $k : \N$ with
\[
  \prod_{x:X}f(x)^{k}(m_g(x)f(x)^{k_h}-m_h(x)f(x)^{k_g})=0
\]
--- which shows $g=h$.

It remains to show that the map is surjective.
So let $\varphi:\prod_{x:X}(M_x)_{f(x)}$ and
note that
\[
  \prod_{x:X}
  \exists_{k_x:\N,m_x:M_x}.
  \varphi(x)=\frac{m_x}{f(x)^{k_x}}
  \rlap{.}
\]
By \Cref{strengthened-boundedness} and \Cref{zariski-choice-scheme},
we get $k:\N$, an affine open cover $U_1,\dots,U_n$ of $X$ and $m_i:(x : U_i)\to M_x$
such that for each $i$ and $x:U_i$ we have
\[
  \varphi(x)=\frac{m_i(x)}{f(x)^{k}}
  \rlap{.}
\]
The problem is now to construct a global $m:(x:X)\to M_x$ from the $m_i$.
We have
\[
    \prod_{x:U_{ij}}\frac{m_i(x)}{f(x)^k}=\varphi(x)=\frac{m_j(x)}{f(x)^k}
\]
meaning there is pointwise an exponent $t_x:\N$,
such that $f(x)^{t_x}m_i(x)=f(x)^{t_x}m_j(x)$.
By \Cref{strengthened-boundedness},
we can find a single $t:\N$ with this property and define
\[
  \tilde{m}_i(x) \colonequiv f(x)^t m_i(x)
  \rlap{.}
\]
Then we have $\tilde{m}_i(x)=\tilde{m}_j(x)$ on all intersections $U_{ij}$,
which is what we need to get a global $m:(x:X)\to M_x$ from \Cref{kraus-glueing}.
Since $\varphi(x)=\frac{f(x)^t m_i(x)}{f(x)^{t+k}}=\frac{\tilde{m}_i(x)}{f(x)^{t+k}}$
for all $i$ and $x : U_i$,
we have found a preimage of $\varphi$ in $M(X)_f$.
\end{proof}

We will need the following algebraic observation:

\begin{remark}%
  \label{localization-to-module-if-non-zero}
  Let $M$ be an $R$-module and $A$ a finitely presented $R$-algebra,
  then there is an $R$-linear map
  \[
    M\otimes A\to M^{\Spec A}
  \]
  induced by mapping $m\otimes f$ to $x\mapsto x(f)\cdot m$.
  In particular, for any $f:R$, there is a
  \[
    M_f\to M^{D(f)}
    \rlap{.}
  \]
  The map $M\otimes A\to M^{\Spec A}$ is natural in $M$.
\end{remark}

\begin{lemma}[using \axiomref{loc}, \axiomref{sqc}, \axiomref{Z-choice}]%
  \label{localization-to-restriction}                    
  Let $X$ be a scheme, $M:X\to\Mod{R}$, $U\subseteq X$ open and $f:A$.
  Then there is an $R$-linear map
  \[
    M(U)_f \to M(D(f)) 
    \rlap{.}
  \]
\end{lemma}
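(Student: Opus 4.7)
The plan is to apply the universal property of module localization. I read ``$f:A$'' in the statement as $f : U \to R$, so that $f$ is an element of the scalar ring $(U \to R)$ of the $(U\to R)$-module $M(U)$ from \Cref{application-of-bundle-to-subtype}, and $D(f) \subseteq U$ is the qc-open subtype on which $f$ is invertible. Under this reading, $M(D(f)) = \prod_{x : D(f)} M_x$ is naturally a $(D(f) \to R)$-module and, by restriction of scalars along the restriction homomorphism $(U \to R) \to (D(f) \to R)$, also a $(U \to R)$-module.

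First I would write down the plain restriction map of $(U \to R)$-modules
\[ \rho \colon M(U) \longrightarrow M(D(f)), \qquad (s_x)_{x : U} \longmapsto (s_x)_{x : D(f)}. \]
Next I would observe that the image of $f$ in $(D(f) \to R)$ is a unit: by definition of $D(f)$, for every $x : D(f)$ the element $f(x) : R$ is invertible, so $x \mapsto f(x)^{-1}$ is a well-defined element of $(D(f) \to R)$ which is a two-sided inverse of the restriction of $f$. Consequently multiplication by $f$ acts invertibly on the $(U \to R)$-module $M(D(f))$.

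By the universal property of the localization $M(U) \to M(U)_f$ in the category of $(U \to R)$-modules, the map $\rho$ factors uniquely through a $(U \to R)$-linear map $M(U)_f \to M(D(f))$. This is in particular $R$-linear, since the $R$-module structure on both sides is obtained by restriction of scalars along $R \to (U \to R)$, which gives the desired map.

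I do not expect any real obstacle here: the argument uses neither \axiomref{sqc}, \axiomref{loc}, nor \axiomref{Z-choice}, and the previous lemma \Cref{module-bundle-localization-pointwise} is not required for mere existence (it would however identify this map with the componentwise map $\tfrac{m}{f^k} \mapsto (x \mapsto \tfrac{m(x)}{f(x)^k})$ and show it is an isomorphism). The only care needed is in parsing the statement and keeping straight that ``$f$'' serves both as an element of $(U \to R)$ and as the data defining the open $D(f)$ where that element becomes invertible.
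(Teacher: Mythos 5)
Your reading of the statement is correct — the ``$f:A$'' in the lemma is a typo for $f:U\to R$, as confirmed by how the lemma is invoked in the following \Cref{weakly-quasi-coherent-open-localization}. Your core argument is correct and produces the right map: the restriction $M(U)\to M(D(f))$ is $(U\to R)$-linear, $f$ becomes a unit in $(D(f)\to R)$, so the universal property of localization yields the factorization through $M(U)_f$, and $R$-linearity follows by restriction of scalars. By the uniqueness clause of the universal property this agrees with the paper's map, since the paper's composite also restricts to the plain restriction map on the image of $M(U)$ in $M(U)_f$.

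Your route is genuinely different from the paper's and strictly more elementary. The paper first invokes \Cref{module-bundle-localization-pointwise} to identify $M(U)_f=\prod_{x:U}(M_x)_{f(x)}$ (which costs all three axioms, via \Cref{strengthened-boundedness} and \Cref{zariski-choice-scheme}), then applies the pointwise map of \Cref{localization-to-module-if-non-zero} and collapses $\prod_{x:U}(M_x)^{D(f(x))}$ to $\prod_{x:D(f)}M_x$. Your argument sidesteps that identification entirely and, as you observe, needs none of \axiomref{loc}, \axiomref{sqc}, \axiomref{Z-choice}. What the paper's route buys is that the explicit pointwise description of the map (as $\frac{m}{f^k}\mapsto(x\mapsto\frac{m(x)}{f(x)^k})$) is visible from the outset, which is what \Cref{weakly-quasi-coherent-open-localization} and \Cref{weakly-quasi-coherent-pi} actually need when they re-examine the map. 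Your approach is preferable as a standalone existence proof.

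One small correction to your closing parenthetical: \Cref{module-bundle-localization-pointwise} does \emph{not} show that $M(U)_f\to M(D(f))$ is an isomorphism. It only gives $M(U)_f=\prod_{x:U}(M_x)_{f(x)}$; the further step $\prod_{x:U}(M_x)_{f(x)}\to\prod_{x:U}(M_x)^{D(f(x))}=M(D(f))$ is an isomorphism precisely when each $M_x$ is weakly quasi-coherent, which is the content of \Cref{weakly-quasi-coherent-open-localization} and fails in general (see \Cref{RN-non-wqc} and \Cref{non-wqc-module-family}).
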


\begin{proof}
  Combining \Cref{module-bundle-localization-pointwise}
  and pointwise application of \Cref{localization-to-module-if-non-zero} we get
  \[
    M(U)_f=\left(\prod_{x:U}(M_x)_{f(x)}\right)\to \left(\prod_{x:U}(M_x)^{D(f(x))}\right)
    =\left(\prod_{x:D(f)}M_x\right)
    =M(D(f))
  \]
\end{proof}

A characterization of quasi coherent sheaves in the little Zariski-topos was found with \cite{ingo-thesis}[Theorem 8.3].
This characterization is similar to our following definition of weak quasi-coherence,
which will provide us with an abelian subcategory of the $R$-module bundles over a scheme,
where we can show that higher cohomology vanishes if the scheme is affine.

\begin{definition}
  \label{weakly-quasi-coherent-module}
  An $R$-module $M$ is \notion{weakly quasi-coherent},
  if for all $f:R$, the canonical homomorphism
  \[
    M_f\to M^{D(f)}
  \]
  from \Cref{localization-to-module-if-non-zero} is an equivalence.
  We denote the type of weakly quasi-coherent $R$-modules
  with $\Mod{R}_{wqc}$\index{$\Mod{R}_{wqc}$}.
\end{definition}

\begin{lemma}
  \label{kernel-wqc}
  For any $R$-linear map $f:M\to N$ of weakly quasi-coherent modules $M$ and $N$,
  the kernel of $f$ is weakly quasi-coherent.
\end{lemma}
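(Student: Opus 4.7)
Write $K = \ker(f)$ and fix $g : R$; I need to show the canonical map $\psi : K_g \to K^{D(g)}$ from \Cref{localization-to-restriction} is an equivalence. The plan is to exhibit $\psi$ as the left-hand vertical arrow in a morphism of short left-exact sequences whose other two verticals are equivalences by hypothesis.

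I would start from the left-exact sequence $0 \to K \to M \to N$ of $R$-modules and apply the two functors of interest. Applying $(-)_g$ preserves exactness, since localization of $R$-modules is exact by a routine algebraic check (if $\phi(m)/g^k = 0$ in $N_g$ then $g^\ell m$ lies in $K$ for some $\ell$, so $m/g^k$ lies in the image of $K_g$), giving $0 \to K_g \to M_g \to N_g$. Applying $(-)^{D(g)}$, i.e.\ taking dependent functions out of the fixed type $D(g)$, preserves kernels because it is a right adjoint, giving $0 \to K^{D(g)} \to M^{D(g)} \to N^{D(g)}$.

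The naturality clause of \Cref{localization-to-module-if-non-zero}, applied to the algebra $R_g$ with $\Spec R_g = D(g)$, then assembles these two sequences into the commutative diagram
\[
\begin{tikzcd}
0 \ar[r] & K_g \ar[r] \ar[d, "\psi"] & M_g \ar[r] \ar[d, "\sim"] & N_g \ar[d, "\sim"] \\
0 \ar[r] & K^{D(g)} \ar[r] & M^{D(g)} \ar[r] & N^{D(g)}
\end{tikzcd}
\]
whose two rightmost vertical arrows are equivalences because $M$ and $N$ are weakly quasi-coherent. A short diagram chase -- the left half of the five lemma -- then gives the result: for injectivity, if $\psi(x) = 0$ then the image of $x$ in $M^{D(g)}$ vanishes, hence so does the image of $x$ in $M_g$ (by the middle iso), and hence $x = 0$ by left-exactness of the top row; for surjectivity, lift a section $\sigma \in K^{D(g)}$ along the middle iso to some $m \in M_g$, observe that the image of $m$ in $N_g$ must vanish by commutativity and injectivity of $N_g \to N^{D(g)}$, so $m \in K_g$ and $\psi(m) = \sigma$ follows from injectivity of $K^{D(g)} \to M^{D(g)}$.

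The only step that is not purely formal is verifying that module localization is exact in the constructive setting, but this is standard algebra. The rest is pure naturality plus a five-lemma-style chase, so I do not anticipate any conceptual obstacle.
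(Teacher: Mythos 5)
Your proof is correct and follows essentially the same approach as the paper: set up the commutative diagram of kernel sequences using the naturality of the canonical maps from \Cref{localization-to-module-if-non-zero}, note that the two rightmost verticals are equivalences by weak quasi-coherence of $M$ and $N$, and conclude the left vertical is an equivalence because it is the induced map on kernels of an isomorphism of diagrams. The only difference is that you spell out the five-lemma chase and the exactness of both functors explicitly, where the paper states the conclusion more abstractly; the underlying argument is identical.
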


\begin{proof}
  Let $K\to M$ be the kernel of $f$.
  For any $f:R$, the map $K^{D(f)}\to M^{D(f)}$ is the kernel of $M^{D(f)}\to N^{D(f)}$.
  The latter map is equal to $M_f\to N_f$ by weak quasi-coherence of $M$ and $N$
  and $K_f\to M_f$ is the kernel of $M_f\to N_f$.
  Let the vertical maps in
  \begin{center}
    \begin{tikzcd}
      K_f\ar[r]\ar[d] & M_f\ar[r]\ar[d,"\simeq"] & N_f\ar[d,"\simeq"] \\
      K^{D(f)}\ar[r] & M^{D(f)}\ar[r] & N^{D(f)}
    \end{tikzcd}
  \end{center}
  be the canonical maps from \Cref{localization-to-module-if-non-zero}.
  The squares commute because of the naturality of the vertical maps.
  Then the map $K_f\to K^{D(f)}$ is an isomorphism,
  because by commutativity, it is equal to the induced map between the kernels $K_f$ and $K^{D(f)}$,
  which has to be an isomorphism, since it is induced by an isomorphism of diagrams.
\end{proof}

\begin{definition}%
  \label{weakly-quasi-coherent-bundle}
  Let $X$ be a scheme.
  A weakly quasi-coherent bundle on $X$, is a map $M:X\to \Mod{R}_{wqc}$.
\end{definition}

An immediate consequence is, that
weakly quasi coherent dependent modules have
the property that ``restricting is the same as localizing'':

\begin{lemma}[using \axiomref{loc}, \axiomref{sqc}, \axiomref{Z-choice}]
  \label{weakly-quasi-coherent-open-localization}
  Let $X$ be a scheme and $M:X\to \Mod{R}$ weakly quasi-coherent,
  then for all open $U\subseteq X$ and $f:U\to R$
  the canonical morphism
  \[
    M(U)_f\to M(D(f))
  \]
  is an equivalence.
\end{lemma}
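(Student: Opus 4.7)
The plan is to reduce everything to the pointwise case and then apply the tools already in hand. First, I would note that $U$, being an open subtype of the scheme $X$, is itself a scheme by Proposition (open-subscheme). This lets me apply Lemma (module-bundle-localization-pointwise) to the scheme $U$ equipped with the restricted bundle $M\restriction_U : U \to \Mod{R}$ and the function $f : U \to R$, yielding a canonical equivalence
\[ M(U)_f \;=\; \prod_{x:U}\,(M_x)_{f(x)}. \]

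Next, I would use the pointwise hypothesis: for every $x : U$, the module $M_x$ is weakly quasi-coherent, so by Definition (weakly-quasi-coherent-module) the canonical map $(M_x)_{f(x)} \to M_x^{D(f(x))}$ is an equivalence, where $D(f(x))$ is just the proposition $\inv(f(x))$. Taking the dependent product over $x : U$ of these equivalences gives
\[ \prod_{x:U}\,(M_x)_{f(x)} \;=\; \prod_{x:U}\,M_x^{\inv(f(x))}. \]

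Finally, I would rearrange the iterated product by currying: since $D(f) \subseteq U$ is exactly the subtype $\sum_{x:U}\inv(f(x))$, we have
\[ \prod_{x:U}\,M_x^{\inv(f(x))} \;=\; \prod_{(x,p):\,\sum_{x:U}\inv(f(x))} M_x \;=\; \prod_{y:D(f)} M_y \;=\; M(D(f)). \]
Composing the three equivalences yields $M(U)_f = M(D(f))$.

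The main obstacle is not the existence of this equivalence, but verifying that the composite agrees with the canonical map produced by Lemma (localization-to-restriction). This should follow from unwinding each step: the first equivalence is defined as $\frac{m}{f^k} \mapsto (x \mapsto \frac{m(x)}{f(x)^k})$ precisely via the pointwise localization maps, the second is applied componentwise, and the third is the tautological rearrangement of a $\Sigma$-type in the base of a $\Pi$-type. Chasing the element $\frac{m}{f^k} : M(U)_f$ through all three steps should produce the same assignment as the canonical map constructed in Lemma (localization-to-restriction), namely $(y : D(f)) \mapsto m(\pi_1(y))$ viewed as an element of $M_{\pi_1(y)}$ using invertibility of $f(\pi_1(y))$. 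Hence the canonical map is an equivalence.
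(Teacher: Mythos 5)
Your proof is correct and is essentially the same as the paper's, just unfolded. The paper's proof is the single line ``By construction of the canonical map from Lemma (localization-to-restriction)'', which refers to exactly the three-step composite you describe: $M(U)_f = \prod_{x:U}(M_x)_{f(x)} \to \prod_{x:U}M_x^{D(f(x))} = M(D(f))$, where the first equality is Lemma (module-bundle-localization-pointwise), the middle map is the pointwise map from Remark (localization-to-module-if-non-zero), and the last equality is the currying rearrangement. Since by weak quasi-coherence each pointwise map $(M_x)_{f(x)} \to M_x^{D(f(x))}$ is an equivalence, so is the composite. Your final worry about ``verifying that the composite agrees with the canonical map'' is moot: Lemma (localization-to-restriction) \emph{defines} the canonical map to be precisely that composite, so there is nothing extra to check.
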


\begin{proof}
  By construction of the canonical map from \Cref{localization-to-restriction}.
\end{proof}

Let us look at an example.

\begin{proposition}%
  \label{fp-algebra-bundle-is-quasi-coherent}
  Let $X$ be a scheme and $C:X\to \Alg{R}_{fp}$.
  Then $C$, as a bundle of $R$-modules, is weakly quasi coherent.
\end{proposition}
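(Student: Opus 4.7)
The conclusion is pointwise in $x : X$, so it suffices to prove the following purely algebraic statement: every finitely presented $R$-algebra $A$ is weakly quasi-coherent as an $R$-module. So fix such an $A$ and an element $f : R$. Since $f$ is already a scalar, for any $R$-algebra map $\varphi : A \to R$ one has $\varphi(f) = f$, so $\Spec(A[f^{-1}]) = \Spec A \times D(f)$, and the canonical map of \Cref{localization-to-module-if-non-zero} takes the shape
\[ A[f^{-1}] \longrightarrow (D(f) \to A), \]
where $D(f) \subseteq \Spec R$ is the proposition ``$f$ is invertible'', i.e.\ $D(f) = \Spec R[f^{-1}]$.

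To show this is an equivalence, I would apply \Cref{algebra-valued-functions-on-affine} to the two finitely presented $R$-algebras $R[f^{-1}]$ and $A$ (note that $R[f^{-1}] = R[T]/(fT-1)$ is finitely presented). That lemma gives a bijection
\[ R[f^{-1}] \otimes_R A \xrightarrow{\ \sim\ } \bigl(\Spec R[f^{-1}] \to A\bigr) = \bigl(D(f) \to A\bigr). \]
The source is, by definition of localization as base change, exactly $A[f^{-1}]$.

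The remaining step is to check that this equivalence agrees with the canonical map from \Cref{localization-to-module-if-non-zero}. Unwinding both definitions: the map of \Cref{algebra-valued-functions-on-affine} sends $\tfrac{1}{f^k}\otimes a$ to the function $\varphi \mapsto \varphi(\tfrac{1}{f^k})\cdot a$, while the canonical map sends $a\otimes \tfrac{1}{f^k}$ to $\varphi \mapsto \varphi(\tfrac{1}{f^k})\cdot a$ as well, so they coincide after identifying $R[f^{-1}]\otimes_R A$ with $A\otimes_R R[f^{-1}] = A[f^{-1}]$ via the symmetry isomorphism. The main obstacle is nothing more than this bookkeeping; the real content sits in \Cref{algebra-valued-functions-on-affine}, which in turn is powered by (\axiomref{sqc}).
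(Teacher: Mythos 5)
Your proposal is correct and follows essentially the same route as the paper: fix $x : X$ and $f : R$, and invoke \Cref{algebra-valued-functions-on-affine} to identify $C_x \otimes_R R_f$ with $(D(f) \to C_x)$, noting $D(f) = \Spec R_f$ and that localization of an algebra at a scalar is base change along $R \to R_f$. The only difference is that you additionally verify that the equivalence produced by \Cref{algebra-valued-functions-on-affine} really is the canonical map of \Cref{localization-to-module-if-non-zero}; the paper leaves this bookkeeping implicit, so your version is, if anything, slightly more complete.
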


\begin{proof}
  Then for any $f:R$ and $x:X$, using \Cref{algebra-valued-functions-on-affine}, we have
  \[
    (C_x)_f=C_x\otimes_R R_f=(\Spec R_f \to C_x)=(D(f)\to C_x)={C_x}^{D(f)}
    \rlap{.}
  \]
\end{proof}

For examples of non weakly quasicoherent modules,
see \Cref{non-wqc-module-family}
and \Cref{RN-non-wqc}.

\begin{lemma}[using \axiomref{loc}, \axiomref{sqc}, \axiomref{Z-choice}]%
  \label{weakly-quasi-coherent-pi}
  Let $X$ be an affine scheme and $M_x$ a weakly quasi-coherent $R$-module for any $x:X$,
  then
  \[
    \prod_{x:X}M_x
  \]
  is a weakly quasi-coherent $R$-module.
\end{lemma}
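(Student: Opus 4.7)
My plan is to unfold both sides of the canonical map $N_f \to N^{D(f)}$ (where $N \colonequiv \prod_{x:X} M_x$) into pointwise products and recognize that the two sides become equal via results already established in the paper.

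First I would rewrite the codomain. Since exponentials distribute over dependent products, we have
\[ N^{D(f)} = \Bigl(D(f) \to \prod_{x:X} M_x\Bigr) = \prod_{x:X} \bigl(D(f) \to M_x\bigr) = \prod_{x:X} M_x^{D(f)}. \]
Each $M_x$ is weakly quasi-coherent, so pointwise the canonical map $(M_x)_f \to M_x^{D(f)}$ from \Cref{localization-to-module-if-non-zero} is an equivalence. Taking the product gives an equivalence
\[ \prod_{x:X} (M_x)_f \xrightarrow{\sim} N^{D(f)}. \]

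Next I would handle the domain using \Cref{module-bundle-localization-pointwise}, applied to $X$ affine (hence a scheme) with the bundle $M$ and the \emph{constant} function $x \mapsto f : X \to R$. That lemma gives
\[ N_f = M(X)_f = \prod_{x:X} (M_x)_{f}. \]
Composing the two identifications yields an equivalence $N_f \simeq N^{D(f)}$, which proves the result provided this equivalence is the canonical map of \Cref{localization-to-module-if-non-zero}.

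The only step requiring a little care is this coherence check: I would verify it by chasing a typical element. An element $\frac{m}{f^k} \in N_f$ is sent by \Cref{module-bundle-localization-pointwise} to $x \mapsto \frac{m(x)}{f^k} \in \prod_{x:X}(M_x)_f$, and then pointwise by \Cref{localization-to-module-if-non-zero} to the section $x \mapsto (y \mapsto y(f)^{-k} \cdot m(x)) \in \prod_{x:X} M_x^{D(f)}$; unfolding $N^{D(f)}$ as a function on $D(f)$, this is exactly $y \mapsto y(f)^{-k} \cdot m(-)$, which matches the canonical map applied directly to $\frac{m}{f^k}$. I expect this naturality verification to be the main (mild) obstacle, since the substantive work is already carried out by \Cref{module-bundle-localization-pointwise}; everything else is formal manipulation of products and localizations.
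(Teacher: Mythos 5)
Your proof follows essentially the same route as the paper's: rewrite $(\prod_x M_x)_f$ via \Cref{module-bundle-localization-pointwise} (with the constant family $x \mapsto f$), apply pointwise weak quasi-coherence of each $M_x$, and commute the product past the exponential $D(f) \to {-}$. The one thing you do beyond the paper is the explicit coherence check that the composed equivalence agrees with the canonical map of \Cref{localization-to-module-if-non-zero}, which the paper leaves implicit; that check is correct and worth noting.
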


\begin{proof}
  We need to show:
  \[
    \left(\prod_{x:X}M_x\right)_f=\left(\prod_{x:X}M_x\right)^{D(f)}
  \]
  for all $f:R$.
  By weak \Cref{module-bundle-localization-pointwise}, quasi-coherence
  and \Cref{weakly-quasi-coherent-open-localization}
  we know:
  \[
    \left(\prod_{x:X}M_x\right)_f
    =\prod_{x:X}\left(M_x\right)_{f(x)}
    =\prod_{x:X}\left(M_x\right)^{D(f)}
    =\left(\prod_{x:X}M_x\right)^{D(f)}
    \rlap{.}
  \]
\end{proof}

Quasi-coherent dependent modules turn out to have very good properties,
which are to be expected from what is known about their external counterparts.
We will show below, that quasi coherence is preserved by the following constructions:

\begin{definition}
  \label{pullback-push-forward}
  Let $X,Y$ be types and $f:X\to Y$ be a map.
  \begin{enumerate}[(a)]
  \item \index{$f^*M$} For any dependent module $N:Y\to\Mod{R}$,
    the \notion{pullback} or \notion{inverse image} is the dependent module
    \[
      f^*N\colonequiv (x:X) \mapsto M_{f(x)}\rlap{.}
    \]
  \item \index{$f_*M$} For any dependent module $M:X\to\Mod{R}$,
    the \notion{push-forward} or \notion{direct image} is the dependent module
    \[
      f_*M\colonequiv (y:Y) \mapsto \prod_{x:\fib_f(y)}M_{\pi_1(x)}\rlap{.}
    \]
  \end{enumerate}
\end{definition}

\begin{theorem}[using \axiomref{loc}, \axiomref{sqc}, \axiomref{Z-choice}]%
  \label{pullback-push-forward-qcoh}
  Let $X,Y$ be schemes and $f:X\to Y$ be a map.
  \begin{enumerate}[(a)]
  \item For any weakly quasi-coherent dependent module $N:Y\to\Mod{R}$,
    the inverse image $f^*N$ is weakly quasi-coherent.
  \item For any weakly quasi-coherent dependent module $M:X\to\Mod{R}$,
    the direct image $f_*M$ is weakly quasi-coherent.
  \end{enumerate}
\end{theorem}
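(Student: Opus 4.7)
The plan is to handle the two parts separately. Part (a) is immediate from the pointwise nature of weak quasi-coherence for bundles: the fiber $(f^*N)_x = N_{f(x)}$ is a weakly quasi-coherent $R$-module by the assumption on $N$, so $f^*N$ already factors through $\Mod{R}_{wqc}$. For part (b), each fiber is by definition $(f_*M)_y = \prod_{x : \fib_f(y)} M_{\pi_1(x)}$, and the fiber $F \colonequiv \fib_f(y)$ is itself a scheme by \Cref{scheme-map-classification}. Restriction of $M$ along $\pi_1 : F \to X$ yields a weakly quasi-coherent bundle $N$ on $F$, so it suffices to prove the following auxiliary claim: for every scheme $F$ and every weakly quasi-coherent bundle $N : F \to \Mod{R}_{wqc}$, the $R$-module $\prod_{x:F} N_x$ is weakly quasi-coherent.

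To prove the auxiliary claim, I would fix a finite affine open cover $F = U_1 \cup \dots \cup U_n$ and use the sheaf property \Cref{kraus-glueing} to present $\prod_{x:F} N_x$ as the kernel of the $R$-linear \v{C}ech difference map
\[ \prod_i \prod_{x:U_i} N_x \longrightarrow \prod_{i,j} \prod_{x:U_i \cap U_j} N_x \rlap{.} \]
The domain is a finite product of modules $\prod_{x:U_i} N_x$, each weakly quasi-coherent by \Cref{weakly-quasi-coherent-pi}; finite products preserve weak quasi-coherence since both localization at $f$ and the exponential $(-)^{D(f)}$ preserve finite products. For the codomain, each intersection $U_i \cap U_j$ is qc-open inside the affine $U_i$, so by \Cref{qc-open-affine-open} it has the form $D(g_1) \cup \dots \cup D(g_m)$ for some $g_k : U_i \to R$. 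The crucial observation is that $D(g_k) \cap D(g_l) = D(g_k g_l)$ is again a standard open of $U_i$, hence affine. Applying \Cref{kraus-glueing} a second time presents $\prod_{x:U_i \cap U_j} N_x$ as a kernel of a map between finite products of modules $\prod_{x:D(g_k)} N_x$ and $\prod_{x:D(g_k g_l)} N_x$, each weakly quasi-coherent by \Cref{weakly-quasi-coherent-pi}. Two applications of \Cref{kernel-wqc} then conclude.

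The hard part is ensuring the recursion terminates. Naively, the intersection $U_i \cap U_j$ of two affine opens in a general scheme need not be affine, so an induction on the depth of the \v{C}ech complex could continue indefinitely. The saving fact is that the standard opens of a fixed affine scheme are closed under pairwise intersection, so the second-level gluing already lands in products over affines and no further descent is needed. A secondary technicality is to check that the gluing bijection of \Cref{kraus-glueing} is $R$-linear, so that the equalizer it identifies is genuinely a kernel of $R$-modules; this is immediate pointwise from the module structure on each $N_x$.
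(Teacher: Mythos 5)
Your part (a) matches the paper exactly. For part (b), the paper's own proof is essentially a two-line citation: by \Cref{fiber-product-scheme} the fiber $\fib_f(y)$ is a scheme, and then it cites \Cref{weakly-quasi-coherent-pi} directly. You took a genuinely different and considerably longer route, and the reason is interesting: \Cref{weakly-quasi-coherent-pi} as stated requires $X$ \emph{affine}, whereas $\fib_f(y)$ is only known to be a scheme, so the paper's citation is, read literally, a gap. You correctly noticed this and filled it via a Čech descent: cover the fiber $F$ by finitely many affine opens $U_i$, present $\prod_{x:F} N_x$ as the kernel (via \Cref{kraus-glueing}) of the difference map $\prod_i \prod_{U_i} N \to \prod_{i,j} \prod_{U_i \cap U_j} N$, resolve the potentially non-affine intersections $U_i \cap U_j$ by a second Čech level using \Cref{qc-open-affine-open} and closure of standard opens under pairwise intersection, and then invoke \Cref{kernel-wqc} twice together with closure of weak quasi-coherence under finite products. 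This is correct, and the observation that terminates the recursion — that you only ever need two levels because standard opens of an affine are stable under intersection — is exactly the right point.

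However, a much shorter repair exists and is likely what the paper tacitly intends: the proof of \Cref{weakly-quasi-coherent-pi} never uses that $X$ is affine. It only cites \Cref{module-bundle-localization-pointwise} and \Cref{weakly-quasi-coherent-open-localization}, both of which are stated for arbitrary schemes. So \Cref{weakly-quasi-coherent-pi} holds verbatim for any scheme $X$, and the paper's citation goes through once one reads the proof rather than just the statement. Your Čech argument is a valid alternative that buys independence from the internals of \Cref{weakly-quasi-coherent-pi}, at the price of two extra layers of bookkeeping (finite products preserving weak quasi-coherence, and two applications of \Cref{kernel-wqc}). Both versions are sound; the paper's is shorter once the implicit generalization of \Cref{weakly-quasi-coherent-pi} is acknowledged.
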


\begin{proof}
  \begin{enumerate}[(a)]
  \item There is nothing to do, when we use the pointwise definition of weak quasi-coherence. 
  \item We need to show, that
    \[
      \prod_{x:\fib_f(y)}M_{\pi_1(x)}
    \]
    is a weakly quasi-coherent $R$-module.
    By \Cref{fiber-product-scheme},
    the type $\fib_f(y)$ is a scheme.
    So by \Cref{weakly-quasi-coherent-pi},
    the module in question is weakly quasi-coherent.
  \end{enumerate}
\end{proof}

With a non-cyclic forward reference to a cohomological result,
there is a short proof of the following:

\begin{proposition}[using \axiomref{loc}, \axiomref{sqc}, \axiomref{Z-choice}]%
  Let $f:M\to N$ be an $R$-linear map of weakly quasi-coherent $R$-modules $M$ and $N$,
  then the cokernel $N/M$ is weakly quasi-coherent.
\end{proposition}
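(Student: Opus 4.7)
The plan is to factor through the image of $f$. Let $K \colonequiv \ker(f)$ and $I \colonequiv \mathrm{image}(f) \subseteq N$, giving two short exact sequences of $R$-modules,
$$0 \to K \to M \to I \to 0 \quad\text{and}\quad 0 \to I \to N \to N/M \to 0 \rlap{.}$$
By \Cref{kernel-wqc}, $K$ is weakly quasi-coherent. The strategy is then to apply the forward-referenced cohomological result --- that higher cohomology of weakly quasi-coherent bundles vanishes on affine schemes --- to each of these sequences in turn, viewing each $R$-module as a constant bundle on $\Spec R$ restricted to the affine $D(g)$ for arbitrary $g : R$.

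First I would show that $I$ is weakly quasi-coherent. For any $g : R$, the long exact sequence associated to the first short exact sequence on the affine scheme $D(g)$ collapses, since $H^1(D(g), K) = 0$ by the forward reference and the weak quasi-coherence of $K$, to a short exact sequence of sections $0 \to K^{D(g)} \to M^{D(g)} \to I^{D(g)} \to 0$. Using weak quasi-coherence of $K$ and $M$ to identify the first two terms with $K_g$ and $M_g$, and the exactness of localization to identify $I_g = M_g / K_g$, one obtains the desired isomorphism $I_g \xrightarrow{\sim} I^{D(g)}$.

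The same argument applied to the second short exact sequence, now using that $I$ is weakly quasi-coherent, yields a short exact sequence $0 \to I^{D(g)} \to N^{D(g)} \to (N/M)^{D(g)} \to 0$; comparing with $0 \to I_g \to N_g \to (N/M)_g \to 0$ via weak quasi-coherence of $I$ and $N$ gives $(N/M)_g \xrightarrow{\sim} (N/M)^{D(g)}$. The main obstacle is to ensure that the referenced cohomological result is indeed available in a form that applies to the constant-with-value-a-wqc-module bundles used here and that the induced long exact sequence collapses as claimed; once that black box is in place, the bookkeeping with the two short exact sequences is routine, and the non-cyclicity of the reference is guaranteed since the cohomological vanishing will be proved only for weakly quasi-coherent bundles (which $K$, $M$, $N$, and then $I$ are) and not assumed for arbitrary quotient bundles.
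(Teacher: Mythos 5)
Your proof is correct and takes essentially the same approach as the paper: the paper reduces to the case of an embedding, then applies that case twice — first to $K \hookrightarrow M$ (whose cokernel $M/K$ is your $I$), then to $M/K \hookrightarrow N$ — and in each step uses the vanishing of $H^1$ on the affine $D(g)$ together with exactness of localization, exactly as you do.
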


\begin{proof}
  We will first show, that for an $R$-linear embedding $m:M\to N$
  of weakly quasi-coherent $R$-modules $M$ and $N$,
  the cokernel $N/M$ is weakly quasi-coherent.
  We need to show:
  \[
    (N/M)_f=(N/M)^{D(f)}.
  \]
  By algebra: $(N/M)_f=N_f/M_f$.
  This means we are done, if $(N/M)^{D(f)}=N^{D(f)}/{M^{D(f)}}$.
  To see this holds, let us consider $0\to M\to N\to N/M\to 0$ as a short exact sequence of dependent modules,
  over the subtype of the point $D(f)\subseteq 1=\Spec R$.
  Then, taking global sections, by \Cref{cohomology-les},
  we have an exact sequence
  \[
    0\to M^{D(f)}\to N^{D(f)}\to (N/M)^{D(f)}\to H^1(D(f),M)
  \]
  -- but $D(f)=\Spec R_f$ is affine,
  so the last term is 0 by \Cref{H1-wqc-module-affine-trivial}
  and $(N/M)^{D(f)}$ is the cokernel $N^{D(f)}/M^{D(f)}$.

  Now we will show the statement for a general $R$-linear map $f:M\to N$.
  By algebra, the cokernel of $f$ is the same as the cokernel of the induced map
  $M/K\to N$, where $K$ is the kernel of $f$.
  By \Cref{kernel-wqc}, $K$ is weakly quasi-coherent, so by the proof above,
  $M/K$ is weakly quasi-coherent.
  $M/K\to N$ is an embedding, so again by the proof above, its cokernel is weakly quasi-coherent.
\end{proof}

\subsection{Finitely presented bundles}

We now investigate the relationship between bundles of $R$-modules on $X = \Spec A$
and $A$-modules.

\begin{proposition}
  Let $A$ be a finitely presented $R$-algebra.
  There is an adjunction
  \[ \begin{tikzcd}[row sep=tiny]
    M \ar[r, mapsto] & {(M \otimes x)}_{x : \Spec A} \\
    \Mod{A} \ar[r, shift left=2] \ar[r, phantom, "\rotatebox{90}{$\vdash$}"] &
    \Mod{R}^{\Spec A} \ar[l, shift left=2] \\
    \prod_{x : \Spec A} N_x & N \ar[l, mapsto]
  \end{tikzcd} \]
  between the category of $A$-modules
  and the category of bundles of $R$-modules on $\Spec A$.
\end{proposition}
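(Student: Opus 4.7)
The plan is to build the natural bijection on hom-sets by combining the classical extension-of-scalars / restriction-of-scalars adjunction pointwise with the hom–product adjunction. A morphism in $\Mod{R}^{\Spec A}$ from $(M \otimes_A R_x)_{x}$ to $N$ unfolds to a dependent family of $R$-linear maps $\varphi_x : M \otimes_A R_x \to N_x$ indexed by $x : \Spec A$, where $R_x$ denotes $R$ equipped with the $A$-module structure coming from the homomorphism $x : A \to R$. On the other side, a morphism in $\Mod{A}$ is an $A$-linear map $M \to \prod_{x : \Spec A} N_x$, where the $A$-action on the product is defined pointwise by $(a \cdot s)(x) \colonequiv x(a) \cdot s(x)$.

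First I would apply, at each $x$, the standard tensor–hom adjunction for the ring map $x : A \to R$: an $R$-linear map $M \otimes_A R_x \to N_x$ corresponds naturally to an $A$-linear map $\tilde\varphi_x : M \to N_x$, with $N_x$ viewed as an $A$-module by restriction along $x$. All of these constructions (tensor product, restriction of scalars, hom) are internal to our type theory, so there is no subtlety with the ambient setting.

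Next I would package such a family $(\tilde\varphi_x)_{x : \Spec A}$ into a single map $\tilde\varphi : M \to \prod_{x : \Spec A} N_x$ by $\tilde\varphi(m)(x) \colonequiv \tilde\varphi_x(m)$, and check that $\tilde\varphi$ is $A$-linear: the identity $\tilde\varphi(am)(x) = \tilde\varphi_x(am) = x(a) \cdot \tilde\varphi_x(m) = (a \cdot \tilde\varphi(m))(x)$ is exactly the $A$-linearity of each $\tilde\varphi_x$ post-composed with the definition of the $A$-action on the product. The inverse direction sends an $A$-linear $\psi : M \to \prod_x N_x$ to the family $\psi_x \colonequiv \mathrm{pr}_x \circ \psi$, each of which is $A$-linear into $N_x$ and hence extends uniquely to an $R$-linear map $M \otimes_A R_x \to N_x$.

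Finally, naturality in $M$ and $N$ is immediate from the two universal properties used, and the two assignments are mutually inverse by construction. There is no genuine obstacle here; this is the standard extension/restriction adjunction applied fibrewise, combined with the fact that a dependent family of maps into $(N_x)_x$ is the same datum as a map into $\prod_{x} N_x$. The only point that requires a little care is keeping track of which $A$-module structures are being used on $R_x$, on $N_x$, and on $\prod_x N_x$, which is why I would separate the argument into the pointwise adjunction and then the global assembly.
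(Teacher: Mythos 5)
Your proof is correct. The paper actually omits a proof of this proposition and only records the unit $\eta_M : m \mapsto (m \otimes 1)_{x : \Spec A}$, which is precisely what your construction produces: your argument, composing the fibrewise extension-of-scalars/restriction-of-scalars adjunction along each $x : A \to R$ with the bijection between $A$-linear maps into the pointwise-structured product $\prod_{x} N_x$ and families of $A$-linear maps into the $N_x$, is the expected one and fills in the omitted details. The one place you rightly flag as needing care -- which $A$-module structure is in play on $R_x$, on $N_x$, and on $\prod_x N_x$ -- is handled consistently, and the check that $A$-linearity of $\tilde\varphi$ is equivalent to $A$-linearity of each $\tilde\varphi_x$ under restriction along $x$ is exactly what makes the two bijections compose.
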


For an $A$-module $M$,
the unit of the adjunction is:
\begin{align*}
  \eta_M : M &\to \prod_{x : \Spec A} (M \otimes x) \\
  m &\mapsto (m \otimes 1)_{x : \Spec A}
\end{align*}

\begin{example}[using \axiomref{sqc}, \axiomref{loc}]
  It is not the case that
  for every finitely presented $R$-algebra $A$
  and every $A$-module $M$
  the map $\eta_M$ is injective.
\end{example}

\begin{proof}
  \cite{topology-draft}.
\end{proof}

\begin{theorem}%
  \label{fp-module}
  Let $X=\Spec(A)$ be affine and
  let a bundle of finitely presented $R$-modules $M : X\to \fpMod{R}$ be given.
  Then the $A$-module
  \[ \tilde{M}\coloneqq\prod_{x:X}M_x \]
  is finitely presented and for any $x:X$ the $R$-module $\tilde{M}\otimes_A R$ is $M_x$.
  Under this correspondence, localizing $\tilde{M}$ at $f:A$ corresponds to restricting $M$ to $D(f)$.
\end{theorem}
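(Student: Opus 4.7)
The plan is to establish the three claims in the order: the localization statement $\tilde M_f = M(D(f))$, then finite presentation of $\tilde M$ over $A$, and finally the fiber computation $\tilde M \otimes_A R = M_x$. Localization will follow directly from the weak quasi-coherence machinery already developed, while the finite presentation and fiber claims will be deduced together from a local analysis using \axiomref{Z-choice}.

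For the localization statement, I first observe that every finitely presented $R$-module is weakly quasi-coherent. Free modules $R^n$ are trivially wqc, since both $(R^n)_f$ and $(R^n)^{D(f)}$ equal $R_f^n$, and by the Proposition at the end of the weakly quasi-coherent subsection the cokernel of a map of wqc modules is again wqc; hence every finitely presented $R$-module, being a cokernel of free modules, is wqc. Consequently $M$ is a bundle of wqc modules, so by \Cref{weakly-quasi-coherent-pi} the $R$-module $\tilde M$ is itself wqc, and \Cref{weakly-quasi-coherent-open-localization} applied with $U = X$ gives $\tilde M_f = M(D(f))$ for every $f : A$.

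For the remaining two claims, I will use \axiomref{Z-choice} together with \Cref{boundedness} to obtain a finite cover $\Spec A = D(f_1) \cup \cdots \cup D(f_k)$ and natural numbers $n_i, m_i$ such that on each $D(f_i)$ there is a uniform bundle presentation $R^{m_i} \to R^{n_i} \to M|_{D(f_i)} \to 0$. By \Cref{algebra-valued-functions-on-affine} the bundle map $R^{m_i} \to R^{n_i}$ corresponds to an $A_{f_i}$-linear map $\alpha_i : A_{f_i}^{m_i} \to A_{f_i}^{n_i}$; let $N_i$ be its cokernel, a finitely presented $A_{f_i}$-module satisfying $N_i \otimes_{A_{f_i}, x} R = M_x$ for every $x : D(f_i)$ by construction. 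The key claim is then $N_i = \tilde M_{f_i}$: granted this, the fiber formula $\tilde M \otimes_A R = \tilde M_{f_i} \otimes_{A_{f_i}, x} R = N_i \otimes_{A_{f_i}, x} R = M_x$ holds for any $x \in D(f_i)$, and finite presentation of $\tilde M$ over $A$ follows from the classical local-to-global principle for finitely presented modules applied to the unimodular cover $(f_1, \dots, f_k)$.

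The hard part will be showing $N_i = M(D(f_i))$, that is, that the natural surjection $N_i \to M(D(f_i))$ is also injective. Phrased cohomologically, we have a short exact sequence $0 \to K \to R^{n_i} \to M|_{D(f_i)} \to 0$ of bundles on $D(f_i)$, with $K$ the kernel bundle of $R^{n_i} \to M$, and we want this to remain right exact after taking global sections $\prod_{x : D(f_i)}$. By the long exact cohomology sequence this reduces to $H^1(D(f_i), K) = 0$, which would follow from weak quasi-coherence of $K$ together with \Cref{H1-wqc-module-affine-trivial} since $D(f_i) = \Spec A_{f_i}$ is affine. Weak quasi-coherence of $K$ requires some care, since $K_x$ is only pointwise finitely generated rather than finitely presented; alternatively, one can proceed directly by lifting sections of $M$ locally to $R^{n_i}$ via \axiomref{Z-choice} and patching the lifts with \Cref{kraus-glueing} after adjusting the differences on overlaps through the image of $\alpha_i$.
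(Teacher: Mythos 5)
The paper states this theorem without supplying a proof, so there is nothing to compare against; your proposal is therefore assessed on its own terms.

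Your overall scaffolding is right: deduce weak quasi-coherence of fp $R$-modules from the free case plus closure under cokernels, use \Cref{weakly-quasi-coherent-pi} and \Cref{weakly-quasi-coherent-open-localization} to settle the localization statement, then use \axiomref{Z-choice} with \Cref{boundedness} to get uniform local presentations $\alpha_i : A_{f_i}^{m_i}\to A_{f_i}^{n_i}$ and reduce everything to the claim $N_i := \coker(\alpha_i) = M(D(f_i))$. But the cohomological step is confused about what it establishes. The short exact sequence $0 \to K \to R^{n_i} \to M\!\mid_{D(f_i)} \to 0$ together with $H^1(D(f_i),K)=0$ gives right-exactness of global sections, which only shows that $A_{f_i}^{n_i} \to M(D(f_i))$ is \emph{surjective} (so that the induced map $N_i\to M(D(f_i))$ is onto -- something you claimed was already obvious by calling it a ``natural surjection,'' which it is not). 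You present this as the argument for \emph{injectivity}, but it is not: injectivity of $N_i\to M(D(f_i))$ amounts to $K(D(f_i))\subseteq\im(\alpha_i)$, i.e.\ surjectivity of $A_{f_i}^{m_i}\to K(D(f_i))$, which is a further step requiring a \emph{second} application of $H^1$-vanishing, this time for the bundle $L_x := \ker(R^{m_i}\to K_x)$. (Both $K$ and $L$ are wqc directly by \Cref{kernel-wqc} -- your worry about $K_x$ being merely finitely generated is unfounded, since that lemma has no finiteness hypothesis.) With the two exact sequences $0\to L\to R^{m_i}\to K\to 0$ and $0\to K\to R^{n_i}\to M\to 0$ and $H^1$-vanishing applied to both $L$ and $K$, you get $A_{f_i}^{m_i}\twoheadrightarrow K(D(f_i))$ and $K(D(f_i)) = \ker\bigl(A_{f_i}^{n_i}\to M(D(f_i))\bigr)$, which together give $N_i = A_{f_i}^{n_i}/\im(\alpha_i) = A_{f_i}^{n_i}/K(D(f_i)) = M(D(f_i))$. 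Fill in this second vanishing and drop the misstated ``natural surjection''/``injectivity'' framing, and the proof is complete; the remaining pieces (the fiber computation via $N_i\otimes_{A_{f_i}}R$ and the local-to-global principle for finite presentation) are fine as written.
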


\subsection{Cohomology on affine schemes}

\begin{definition}%
  \label{torsor}
  Let $X$ be a type and $A:X\to \AbGroup$ a map to the type of abelian groups.
  For $x:X$ let $T_x$ be a set with an $A_x$ action.
  \begin{enumerate}[(a)]
  \item $T$ is an \notion{$A$-pseudotorsor}, if the action is free and transitive for all $x:X$.
  \item $T$ is an \notion{$A$-torsor}, if it is an $A$-pseudotorsor and
    \[ \prod_{x:X} \propTrunc{ T_x } \rlap{.}\]
  \item We write $\Tors{A}(X)$ for the type of $A$-torsors on $X$.
  \end{enumerate}
\end{definition}

Torsors on a point are a concrete implementation of first deloopings:

\begin{definition}
  \label{delooping}
  Let $n:\N$.
  A $n$-th \notion{delooping}\index{$K(A,n)$} of an abelian group $A$,
  is a pointed, $(n-1)$-connected, $n$-truncated type $K(A,n)$,
  such that $\Omega^nK(A,n)=_{\AbGroup}A$.
\end{definition}

For any abelian group and any $n$, a delooping $K(A,n)$ exists by \cite{licata-finster}.
Deloopings can be used to represent cohomology groups by mapping spaces.
This is usually done in homotopy type theory to study higher inductive types, such as spheres and CW-complexes,
but the same approach works for internally representing sheaf cohomology,
which is the intent of the following definition:

\begin{definition}
  \label{cohomology}
  Let $X$ be a type and $\mathcal F:X\to\AbGroup$ a dependent abelian group.
  The $n$-th cohomology group of $X$ with coefficients in $\mathcal F$ is
  \[
    H^n(X,\mathcal F)\colonequiv \left\propTrunc{\prod_{x:X}K(\mathcal F,n)\right}_0\rlap{.}
  \]
\end{definition}

\begin{theorem}%
  \label{cohomology-les}
  Let $\mathcal F,\mathcal G,\mathcal H:X\to \AbGroup$ be such that for all $x:X$,
  \[
    0\to \mathcal F_x\to\mathcal G_x\to\mathcal H_x\to 0
  \]
  is an exact sequence of abelian groups. Then there is a long exact sequence:
  \begin{center}
    \begin{tikzcd}
      & \dots\ar[r] & H^{n-1}(X,\mathcal H)\ar[dll] \\
      H^n(X,\mathcal F)\ar[r] & H^n(X,\mathcal G)\ar[r] & H^n(X,\mathcal H)\ar[dll] \\
      H^{n+1}(X,\mathcal F)\ar[r] & \dots &
    \end{tikzcd}
  \end{center}
\end{theorem}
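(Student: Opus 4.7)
The plan is to reduce this to the standard long exact sequence of homotopy groups of a fiber sequence of pointed types, produced by pointwise delooping followed by taking dependent sections.

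First, I would promote the pointwise short exact sequence of abelian groups to a pointwise fiber sequence of Eilenberg--MacLane spaces. That is, for each $x : X$ and each $n \geq 1$, I would construct a fiber sequence
\[ K(\mathcal{F}_x, n) \to K(\mathcal{G}_x, n) \to K(\mathcal{H}_x, n) \]
in which $K(\mathcal{F}_x, n)$ is identified with the fiber of the second map at the basepoint. This can be built inductively using $K(A, n) = B K(A, n-1)$, together with the fact that delooping preserves fiber sequences of sufficiently connected pointed types (up to a degree shift). The base case $n = 1$ is the standard fiber sequence $BF \to BG \to BH$ associated with a short exact sequence of groups, and all constructions are natural in $x$.

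Second, because $\prod_{x:X}$ commutes with limits and in particular with fibers, applying it to the pointwise fiber sequence yields a fiber sequence of pointed types
\[ \prod_{x:X} K(\mathcal{F}_x, n) \to \prod_{x:X} K(\mathcal{G}_x, n) \to \prod_{x:X} K(\mathcal{H}_x, n) \rlap{.} \]

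Third, I would invoke the long exact sequence of homotopy groups of this fiber sequence. Since $\Omega$ commutes with $\prod_{x:X}$ and $\Omega K(\mathcal{F}_x, n) = K(\mathcal{F}_x, n-1)$, for $0 \leq k \leq n$ we have
\[ \pi_k \left( \prod_{x:X} K(\mathcal{F}_x, n) \right) = \propTrunc{ \prod_{x:X} K(\mathcal{F}_x, n-k) }_0 = H^{n-k}(X, \mathcal{F}) \]
and analogously for $\mathcal{G}$ and $\mathcal{H}$. Translating the LES of homotopy groups under this identification yields the stated LES of cohomology, up through degree $n$. Taking $n$ arbitrarily large produces the full LES; the truncated LES's for different choices of $n$ are compatible because the fiber sequence at level $n+1$ loops down to the one at level $n$, so all the connecting maps match.

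The main obstacle, I expect, is step one: carefully producing the pointwise fiber sequence of Eilenberg--MacLane spaces from the pointwise short exact sequence, and verifying that iterating the delooping really does identify the fiber with $K(\mathcal{F}_x, n)$ naturally in $x$. While this is classical in principle, its rigorous execution in HoTT requires care, especially in tracking the naturality of the fiber identification and the compatibility of the connecting morphisms across levels. Once this is in hand, steps two and three are essentially formal.
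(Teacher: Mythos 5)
Your proposal is correct and takes the same route as the paper, which gives only the one-line proof ``by applying the long exact homotopy fiber sequence''; you have spelled out the steps that one-liner elides (pointwise fiber sequence of Eilenberg--MacLane spaces, commuting $\prod_{x:X}$ with fibers and loop spaces, identifying $\pi_k$ of the section space with the cohomology groups). One small remark: the cleanest way to carry out your step one is not to deloop the fiber sequence, but to take the fiber of $K(\mathcal G_x,n)\to K(\mathcal H_x,n)$ directly and identify it as $K(\mathcal F_x,n)$ via its homotopy groups using the LES itself; this sidesteps the ``delooping preserves fiber sequences'' lemma and, being a construction parametric in $x$, also makes the naturality you worry about automatic.
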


\begin{proof}
  By applying the long exact homotopy fiber sequence.
\end{proof}

The following is an explicit formulation of the fact, that the Čech-Complex for an
$\mathcal{O}_X$-module sheaf on $X=\Spec(A)$ given by an $A$-module $M$ is exact in degree 1.
\begin{lemma}%
  \label{H1-algebra}
  Let $M$ be a module over a commutative ring $A$, $F_1,\dots,F_l$ a coprime system on $A$
  and for $i,j\in\{1,\dots,l\}$, let $s_{ij} : F_i^{-1} F_j^{-1} M$ such that:
  \[ s_{jk}-s_{ik}+s_{ij}=0 \rlap{.}\]
  Then there are $u_i:F_i^{-1}M$ such that $s_{ij}=u_j - u_i$.
\end{lemma}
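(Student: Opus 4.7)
The plan is to run the standard partition-of-unity argument from \v{C}ech cohomology. Since $(F_1,\dots,F_l)=(1)$, the same holds for any power, so for any $N\geq 1$ there exist $b_1,\dots,b_l:A$ with $\sum_k b_k F_k^N = 1$. The main work is to choose $N$ large enough and representatives $m_{ij}:M$ with $s_{ij}=m_{ij}/(F_iF_j)^N$ so that the cocycle identity holds \emph{on the nose in $M$}, not merely in some further localization.

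First I would pick an initial $N_0$ and representatives $m_{ij}^0:M$ with $s_{ij}=m_{ij}^0/(F_iF_j)^{N_0}$ (possible because the index set of pairs is finite). The cocycle condition $s_{jk}-s_{ik}+s_{ij}=0$ in $F_i^{-1}F_j^{-1}F_k^{-1}M$ then says that for each triple $(i,j,k)$ there is a power $(F_iF_jF_k)^{M_{ijk}}$ killing
\[ F_k^{N_0} m_{ij}^0 - F_i^{N_0} m_{kj}^0 + F_j^{N_0} m_{ki}^0 \]
in $M$. Taking $M:=\max_{i,j,k}M_{ijk}$, setting $N:=N_0+M$, and redefining $m_{ij}:=(F_iF_j)^M m_{ij}^0$, one checks by a direct calculation that
\[ F_k^{N}\, m_{ij} - F_i^{N}\, m_{kj} + F_j^{N}\, m_{ki} = 0 \qquad \text{in } M \]
for all $i,j,k$, while still $s_{ij}=m_{ij}/(F_iF_j)^N$.

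Now choose $b_k:A$ with $\sum_k b_k F_k^N = 1$ and define
\[ u_i \colonequiv \sum_{k=1}^{l} b_k\, \frac{m_{ki}}{F_i^N} \;\in\; F_i^{-1}M. \]
Then in $F_i^{-1}F_j^{-1}M$ one computes
\[ u_j - u_i \;=\; \sum_k b_k \frac{F_i^N m_{kj} - F_j^N m_{ki}}{(F_iF_j)^N} \;=\; \sum_k b_k \frac{F_k^N m_{ij}}{(F_iF_j)^N} \;=\; \frac{m_{ij}}{(F_iF_j)^N} \;=\; s_{ij}, \]
using the exact relation from the previous step to rewrite $F_i^N m_{kj} - F_j^N m_{ki} = F_k^N m_{ij}$, and then $\sum_k b_k F_k^N = 1$.

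The main obstacle is the bookkeeping with the exponent $N$: the cocycle condition a priori only holds after further localization at $F_k$, so one has to absorb the annihilating powers of $F_iF_jF_k$ into the chosen denominators to promote the condition to an honest equality in $M$. Once that is done, the remainder is a short algebraic computation that slots together exactly because the partition of unity $\sum_k b_k F_k^N=1$ is tuned to cancel the extra $F_k^N$ produced by the cocycle.
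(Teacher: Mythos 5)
Your proposal is correct and follows essentially the same approach as the paper's proof: choose a partition of unity and define $u_i$ as an explicit weighted sum of the representatives $m_{ki}$, using the cocycle relation to collapse the telescoping sum. You are in fact more careful than the paper, which implicitly assumes representatives $m_{ij}$ and a single denominator $f_if_j$ can be chosen so the cocycle identity holds exactly in $M$; your explicit tracking of the exponent $N$ needed to absorb the annihilating powers of $F_iF_jF_k$ fills in precisely that step.
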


\begin{proof}
  Let $s_{ij}=\frac{m_{ij}}{f_i f_j}$ with $m_{ij}:M$, $f_i:F_i$ and $f_j:F_j$ such that:
  \[ f_i\cdot m_{jk}-f_j\cdot m_{ik}+f_k\cdot m_{ij}=0 \rlap{.}\]
  Let $r_i$ such that $\sum r_i f_i =1$.
  Then for
  \[ u_i \coloneqq -\sum_{k=1}^l\frac{r_k}{f_i}m_{ik} \]
  we have:
  \begin{align*}
      u_j-u_i &= -\sum_{k=1}^l\frac{r_k}{f_j}m_{jk} + \sum_{k=1}^l\frac{r_k}{f_i}m_{ik} \\
              &= -\sum_{k=1}^l\frac{r_k}{f_j f_i}f_i m_{jk} + \sum_{k=1}^l\frac{r_k}{f_i f_j} f_j m_{ik} \\
              &= \sum_{k=1}^l\frac{r_k}{f_j f_i}(-f_i m_{jk} + f_j m_{ik}) \\
              &= \sum_{k=1}^l\frac{r_k}{f_j f_i}f_k m_{ij} \\
              &= \frac{m_{ij}}{f_i f_j}
  \end{align*}
  \ %
\end{proof}

\begin{theorem}[using \axiomref{loc}, \axiomref{sqc}, \axiomref{Z-choice}]%
  \label{H1-wqc-module-affine-trivial}
  For any affine scheme $X=\Spec(A)$ and coefficients $M: X\to \Mod{R}_{wqc}$, we have
  \[ H^1(X,M)=0 \rlap{.} \]
\end{theorem}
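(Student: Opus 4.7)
The plan is to translate $H^1(X,M) = 0$ into the statement that every $M$-torsor over $X$ (in the sense of \Cref{torsor}) has a global section, and then build such a section by trivializing locally, identifying the coboundary using \Cref{H1-algebra}, and gluing.

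First, I would unfold \Cref{cohomology} and use the standard identification between $\propTrunc{\prod_{x:X} K(M_x,1)}_0$ and isomorphism classes of $M$-torsors on $X$: the trivial torsor corresponds to the basepoint, so $H^1(X,M) = 0$ is equivalent to asserting that every $M$-torsor on $X$ merely has a global section. Fix then an $M$-torsor $T$ on $X = \Spec A$. By \axiomref{Z-choice}, there merely exists a unimodular sequence $f_1,\dots,f_n : A$ together with local sections $s_i : \prod_{x:D(f_i)} T_x$. On each pairwise overlap $D(f_i f_j) = D(f_i) \cap D(f_j)$, the torsor structure produces an element $s_{ij} \colonequiv s_j - s_i \in M(D(f_i f_j))$, and a direct calculation inside the torsor shows that these elements satisfy the cocycle condition $s_{jk} - s_{ik} + s_{ij} = 0$ on $D(f_i f_j f_k)$.

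By weak quasi-coherence (\Cref{weakly-quasi-coherent-open-localization}), the $(X \to R)$-module $M(D(f_i f_j))$ identifies with $M(X)_{f_i f_j}$, and likewise $M(D(f_i)) = M(X)_{f_i}$. Viewing $M(X)$ as an $A$-module via $A = R^X$, the unimodularity of $(f_1,\dots,f_n)$ lets me apply \Cref{H1-algebra} to the cocycle $(s_{ij})$, producing $u_i \in M(X)_{f_i} = M(D(f_i))$ with $s_{ij} = u_j - u_i$. Setting $\tilde{s}_i \colonequiv s_i - u_i : \prod_{x : D(f_i)} T_x$, the new local sections of $T$ agree on overlaps, since $\tilde{s}_j - \tilde{s}_i = s_{ij} - (u_j - u_i) = 0$. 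Finally, \Cref{kraus-glueing} glues the $\tilde{s}_i$ into a global section of $T$, proving triviality.

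The main obstacle is the opening move: making rigorous the identification of $H^1(X,M)$ with isomorphism classes of $M$-torsors in the dependent, homotopy-type-theoretic setting, so that the vanishing statement really does reduce to "every torsor on $X$ has a global section". Once this classifying-space interpretation is in place, the remainder is a fairly direct adaptation of the classical Čech argument for $H^1$ of a quasi-coherent sheaf on an affine scheme, with weak quasi-coherence doing exactly the work of turning localizations of global sections into restrictions to standard opens.
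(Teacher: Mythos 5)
Your proposal matches the paper's proof essentially step for step: reduce $H^1 = 0$ to triviality of $M$-torsors, use \axiomref{Z-choice} to extract a standard-open cover with local sections, form the coboundary $s_{ij}$ on overlaps, reinterpret it via \Cref{weakly-quasi-coherent-open-localization} as an element of the localization $M(X)_{f_i f_j}$, apply \Cref{H1-algebra} to find the $u_i$, correct the sections to $\tilde{s}_i = s_i - u_i$, and glue via \Cref{kraus-glueing}. The concern you raise about identifying $H^1$ with isomorphism classes of torsors is also present in the paper, which handles it by remarking (above \Cref{delooping}) that torsors are a concrete implementation of first deloopings; your proof is otherwise identical.
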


\begin{proof}
  We need to show, that any $M$-torsor $T$ on $X$ is merely equal to the trivial torsor $M$,
  or equivalently show the existence of a section of $T$.
  We have
  \[ \prod_{x:X}\propTrunc{ T_x }\]
  and therefore, by (\axiomref{Z-choice}),
  there merely are $f_1,\dots,f_l:A$,
  such that the $U_i\coloneqq \Spec(A_{f_i})$ cover $X$ and
  there are local sections
  \[ s_i:\prod_{x:U_i}T_x\]
  of $T$. Our goal is to construct a matching family from the $s_i$.
  On intersections, let $t_{ij}\coloneqq s_i-s_j$ be the difference, so $t_{ij}:(x : U_i\cap U_j) \to M_x$.
  By \Cref{weakly-quasi-coherent-open-localization} equivalently,
  we have $t_{ij}:M(U_{i}\cap U_j)_{f_i f_j}$.
  Since the $t_{ij}$ were defined as differences,
  the condition in \Cref{H1-algebra} is satisfied and we get
  $u_i:M(U_i)_{f_i}$, such that $t_{ij}=u_i-u_j$.
  So we merely have a matching family $\tilde{s}_i\coloneqq s_i-u_i$ and therefore, using Lemma \ref{kraus-glueing} merely a section of $T$.
\end{proof}

A similar result is provable for $H^2(X,M)$ using the same approach.
There is an extension of this result to general $n$ in work in progress \cite{chech-draft}.

\subsection{Čech-Cohomology}

In this section, let $X$ be a type, $U_1,\dots,U_n\subseteq X$ open subtypes that cover $X$
and $\mathcal F:X\to \AbGroup$ a dependent abelian group on $X$.
We start by repeating the classical definition of \v{C}hech-Cohomology groups for a given cover.

\begin{definition}%
  \label{chech-complex}
  \begin{enumerate}[(a)]
  \item \index{$\mathcal F(U)$} For open $U\subseteq X$, we use the notation from \Cref{application-of-bundle-to-subtype}:
    \[
      \mathcal F(U)\colonequiv \prod_{x:U}\mathcal F_x\rlap{.}
    \]
  \item For $s:\mathcal F(U)$ and open $V\subseteq U$ we use the notation $s\colonequiv s_{|V} \colonequiv (x:V)\mapsto s_x$.
  \item \index{$U_{i_1\dots i_l}$}For a selection of indices $i_1,...,i_l:\{1,\dots,n\}$, we use the notation
    \[
      U_{i_1\dots i_l}\colonequiv U_{i_1}\cap\dots\cap U_{i_l}\rlap{.}
    \]
  \item For a list of indices $i_1,\dots,i_l$, let $i_1,\dots,\hat{i_t},\dots,i_l$ be the same list with the $t$-th element removed.
  \item For $k:\Z$, the $k$-th \notion{Čech-boundary operator}\index{$\partial^k$} is the homomorphism
    \[
      \partial^k:\bigoplus_{i_0,\dots,i_k}\mathcal F(U_{i_0\dots i_k})\to \bigoplus_{i_0,\dots,i_{k+1}}\mathcal F(U_{i_0\dots i_{k+1}})
    \]
    given by $\partial^k(s)\colonequiv (l_0,\dots,l_{k+1}) \mapsto \sum_{j=0}^k (-1)^j s_{l_0,\dots,\hat{l_j},\dots,l_k|U_{l_0,\dots,l_{k+1}}}$.
  \item The $k$-th \notion{Čech-Cohomology group} for the cover $U_1,\dots,U_n$ with coefficients in $\mathcal F$ is
    \[
      \check{H}^k(\{U\},\mathcal F)\colonequiv \ker\partial^{k} / \im(\partial^{k-1})\rlap{.}
    \]
  \end{enumerate}
\end{definition}

It is possible to construct a torsor from a \v{C}ech cocycle:

\begin{lemma}%
  \label{deligne-construction}
  Let $A$ be an abelian group and $L$ a type with $\propTrunc{L}$.
  Let us call $c:(i,j:L)\to A$ a $L$-cocycle, if $c_{ij}+c_{jk}=c_{ik}$ for all $i,j,k:L$.
  Then there is a bijection:
  \[
    \left((T:\text{$A$-torsor})\times T^L\right) \to \text{$L$-cocycles}
    \rlap{.}
  \]
\end{lemma}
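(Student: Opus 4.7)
The plan is to construct mutually inverse maps between the two types. In the forward direction, given an $A$-torsor $T$ with a family $s \in T^L$, we define $c_{ij}$ as the unique element of $A$ satisfying $s(i) + c_{ij} = s(j)$, using additive notation for the torsor action; this is well-defined by freeness and transitivity. The cocycle condition $c_{ij} + c_{jk} = c_{ik}$ is then immediate from $s(i) + (c_{ij} + c_{jk}) = s(j) + c_{jk} = s(k) = s(i) + c_{ik}$ combined with freeness of the action.

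For the inverse direction, given a cocycle $c$, we construct $T_c$ as the set quotient of $L \times A$ by the relation $(i, a) \sim (j, b)$ defined to hold precisely when $b = a + c_{ij}$. Before this is an equivalence relation, we note that $c_{ii} = 0$ (setting $i = j = k$ in the cocycle law) and $c_{ji} = -c_{ij}$ (setting $k = i$); transitivity then reduces directly to the cocycle law. We equip $T_c$ with the shift action $[i, a] + b \colonequiv [i, a + b]$ and verify it is free and transitive. Since $L$ is merely inhabited so is $T_c$, making it an $A$-torsor, and we take the section $s_c(i) \colonequiv [i, 0]$. The extracted cocycle from $(T_c, s_c)$ is $c$ itself, since $s_c(j) = [j, 0] = [i, c_{ij}] = s_c(i) + c_{ij}$.

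For the other composition, given $(T, s)$ with associated cocycle $c$, we construct a map $\varphi : T_c \to T$ by $\varphi([i, a]) \colonequiv s(i) + a$; well-definedness follows from $s(j) = s(i) + c_{ij}$. The map $\varphi$ is manifestly $A$-equivariant and satisfies $\varphi(s_c(i)) = s(i)$; being an equivariant map between $A$-torsors, it is automatically a bijection. Since pairs consisting of a torsor together with a section form a $1$-type, this isomorphism yields the required equality by univalence. The main obstacle is keeping the sign conventions in $\sim$ consistent so that $\varphi$ is simultaneously equivariant and section-preserving; once these are fixed the remainder is routine verification.
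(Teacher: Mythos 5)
Your approach is genuinely different from the paper's in how the torsor $T_c$ is built from a cocycle: the paper constructs $T_c$ as the subtype $\{u : A^L \mid u_i - u_j = c_{ij}\}$ of the product $A^L$, whereas you construct it as a set quotient of $L \times A$. These are dual constructions and both are standard. The quotient version makes the free, transitive shift action visually obvious, at the cost of invoking a set quotient (an extra higher inductive type); the subtype version avoids that. The paper also establishes the bijection by showing the forward map is an embedding and then exhibiting a preimage, whereas you produce explicit mutual inverses and invoke univalence for the final path of pairs --- also a valid route.

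However, the sign convention you stated for $\sim$ is incorrect, and this is not a purely cosmetic issue. With $(i,a) \sim (j,b) \Leftrightarrow b = a + c_{ij}$, the comparison map $\varphi([i,a]) \colonequiv s(i) + a$ is not well defined: if $(i,a) \sim (j,b)$ then $\varphi([j,b]) = s(j) + b = s(i) + c_{ij} + a + c_{ij}$, which differs from $\varphi([i,a]) = s(i) + a$ by $2c_{ij}$. The same sign problem makes $[j,0] = [i,c_{ij}]$ fail, since $(i, c_{ij}) \sim (j, 0)$ would require $0 = 2c_{ij}$. The relation should instead be $(i,a) \sim (j,b) \Leftrightarrow a = b + c_{ij}$ (equivalently $b = a + c_{ji}$); with that choice, reflexivity, symmetry and transitivity all reduce to the cocycle law, $\varphi$ is well defined and equivariant, $\varphi(s_c(i)) = s(i)$, and the roundtrip identity $[j,0] = [i,c_{ij}]$ holds. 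You flagged that the sign in $\sim$ needs care, which is the right instinct, but the proof as written has the wrong sign and must be corrected before it stands.
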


\begin{proof}
  Let us first check, that the left side is a set.
  Let $(T,u),(T',u'):(T:\text{$A$-torsor})\times T^L$,
  then $(T,u)=(T',u')$ is equivalent to $(e:T\cong T')\times ((i:L)\to e(u_i)=u'_i)$.
  But two maps $e$ with this property are equal,
  since a map between torsors is determined by the image of a single element and $L$ is inhabited.
  
  Assume now $(T,u):(T:\text{$A$-torsor})\times T^L$ to construct the map.
  Then $c_{ij}\colonequiv u_i-u_j$ defines an $L$-cocycle,
  because
  \[
    u_i-u_j + u_j-u_k = u_i-u_k
    \rlap{.}
  \]
  This defines an embedding: Assume $(T,u)$ and $(T',u')$ define the same $L$-cocycle,
  then $u_i-u_j=u'_i-u'_j$ for all $i,j:L$.
  We want to show a proposition, so we can assume there is $i:L$ and use that to get a map $e:T\to T'$
  that sends $u_i$ to $u'_i$.
  But then we also have
  \[
    e(u_j)=e(u_j-u_i+u_i)=e(u'_j-u'_i+u_i)=u'_j-u'_i+e(u_i)=u'_j-u'_i+u'_i=u'_j
  \]
  for all $j:L$, which means $(T,u)=(T',u')$.
    
  Now let $c$ be an $L$-cocycle.
  Following \cite{Deligne91}[Section 5.2], we can define a preimage-candidate:
  \[
    T_c\colonequiv \{u:A^L\mid u_i-u_j=c_{ij}\}
    \rlap{.}
  \]
  $A$ acts on $T_c$ pointwise, since $(a+u_i)-(a+u_j)=u_i-u_j=c_{ij}$ for all $a:A$.
  
  To show that $T_c$ is inhabited,
  we may assume $i_0:L$.
  Then we define $u_i\colonequiv -c_{i_0i}$ to get $u_i-u_j=-c_{i_0i}+c_{i_0j}=c_{ij}$.

  Now $c$ is of type $(A^L)^L=A^{L\times L}$, so we have an element of the left hand side.
  Applying the map constructed above yields a cocycle
  \[
    \tilde{c}_{ij}=(k\mapsto c_{ki})-(k\mapsto c_{kj})=(k\mapsto c_{ki}-c_{kj})=(k\mapsto c_{kj}+c_{ji}-c_{kj})=(k\mapsto c_{ji})
  \]
  -- so $(T_c,c)$ is a preimage of $c_{ij}$.
\end{proof}

\begin{definition}
  The cover $U_1,\dots,U_n$ is called \notion{r-acyclic} for $\mathcal F$,
  if we have the following triviality of higher (non Čech) cohomology groups:
  \[
    \forall l, r\geq l>0\ \forall i_0,\dots,i_{r-l}. H^l(U_{i_0,\dots,i_{r-l}},\mathcal F)=0\rlap{.}
  \]
\end{definition}

\begin{example}
  If $X$ is a scheme, $U_1,\dots,U_n$ a cover by affine open subtypes
  and $\mathcal F$ pointwise a weakly quasi coherent $R$-module,
  then $U_1,\dots,U_n$ is 1-acyclic for $\mathcal F$ by \Cref{H1-wqc-module-affine-trivial}.
\end{example}

\begin{theorem}[using \axiomref{Z-choice}]%
  If $U_1,\dots,U_n$ is a 1-acyclic cover for $\mathcal F$, then
  \[
    \check{H}^1(\{U\},\mathcal F)=H^1(X,\mathcal F)\rlap{.}
  \]
\end{theorem}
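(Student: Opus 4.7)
The plan is to identify both sides with the set of isomorphism classes of $\mathcal{F}$-torsors on $X$. For this one uses the standard fact that $H^1(X,\mathcal F)$ as defined in \Cref{cohomology} classifies $\mathcal F$-torsors on $X$, via the classifying-space property of $K(\mathcal F_x,1)$. Under this identification I will build mutually inverse maps $\Phi$ and $\Psi$.

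For $\Phi : \check H^1(\{U\},\mathcal F) \to H^1(X,\mathcal F)$, given a Čech cocycle $(c_{ij})$ I produce a dependent torsor $T:(x:X)\to \Tors{\mathcal F_x}$ pointwise via \Cref{deligne-construction}: at $x:X$, let $L_x\colonequiv\{i\mid U_i(x)\}$, which is inhabited because $U_1,\dots,U_n$ is a cover; the restriction $(c_{ij}(x))_{i,j\in L_x}$ is an $L_x$-cocycle, its Deligne cocycle condition $c_{ij}(x)+c_{jk}(x)=c_{ik}(x)$ being exactly $\partial^1 c=0$ evaluated at $x$, so \Cref{deligne-construction} delivers an $\mathcal F_x$-torsor $T_x$. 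A Čech coboundary $c_{ij}=s_j|_{U_{ij}}-s_i|_{U_{ij}}$ with $s_i\in\mathcal F(U_i)$ yields the global element $(-s_i(x))_{i\in L_x}\in T_x$, trivializing $T$, so the construction descends to cohomology.

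For $\Psi : H^1(X,\mathcal F) \to \check H^1(\{U\},\mathcal F)$, given an $\mathcal F$-torsor $T$ on $X$, the 1-acyclicity hypothesis $H^1(U_i,\mathcal F)=0$ means that $T|_{U_i}$ is merely trivial, so a section $s_i:\prod_{x:U_i}T_x$ merely exists. Finite choice over $\{1,\dots,n\}$ (or a direct application of \axiomref{Z-choice} to the family of inhabited types of sections) produces all $s_i$ simultaneously, and then $c_{ij}\colonequiv s_i|_{U_{ij}}-s_j|_{U_{ij}}\in\mathcal F(U_{ij})$ is a Čech cocycle by a direct cancellation. Replacing $s_i$ by $s_i+t_i$ with $t_i:U_i\to\mathcal F$ modifies $c$ by the coboundary $\partial^0(t)$, and an isomorphism $T\cong T'$ carries sections equivariantly, so the class $[c]\in\check H^1(\{U\},\mathcal F)$ depends only on the class of $T$.

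That $\Phi$ and $\Psi$ are mutually inverse reduces, pointwise at each $x:X$, to the Deligne bijection of \Cref{deligne-construction}: $\Psi\circ\Phi$ recovers the original cocycle as the Deligne-differences of local sections of $T$, and $\Phi\circ\Psi$ reproduces the original torsor via the canonical Deligne isomorphism. The hard part is the truncation bookkeeping — sections and torsor isomorphisms live inside mere-existence statements — but because the index set is finite and everything else is defined fiberwise, all well-definedness claims reduce to pointwise applications of \Cref{deligne-construction}, and the only nontrivial use of choice is the passage from $H^1(U_i,\mathcal F)=0$ to an honest family of sections, where \axiomref{Z-choice} comfortably suffices.
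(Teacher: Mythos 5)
Your core ideas match the paper's: apply \Cref{deligne-construction} pointwise with $L_x = \sum_i U_i(x)$ to convert between torsors-with-local-sections and Čech 1-cocycles, and use $1$-acyclicity (plus finite choice over $\{1,\dots,n\}$) to produce local sections of an arbitrary torsor. But the organization is genuinely different, and the difference is exactly where you wave your hands.

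The paper does not construct an explicit pair of mutually inverse maps $\Phi,\Psi$. Instead it introduces the auxiliary type $T(\mathcal F,U) = \sum_{T:\Tors{\mathcal F}(X)}\prod_i\prod_{x:U_i}T_x$ of torsors \emph{equipped} with chosen local sections. Pointwise Deligne identifies $T(\mathcal F,U)$ with $\ker\partial^1$, and $1$-acyclicity makes the forgetful projection $\pi:T(\mathcal F,U)\to\Tors{\mathcal F}(X)$ surjective. Then both $\check H^1(\{U\},\mathcal F)$ and $H^1(X,\mathcal F)$ are exhibited as quotients of $T(\mathcal F,U)$ by the subtype corresponding to $\im\partial^0$ (equivalently, the trivializable torsors), and the key step is \Cref{surjective-abgroup-hom-is-cokernel}: a surjective homomorphism of abelian groups is automatically its own cokernel. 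Since both groups are cokernels of the same diagram, they coincide. No explicit inverse map is ever constructed, and no well-definedness check on a representative-by-representative basis is needed.

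By contrast, your $\Psi$ must be a function out of the $0$-truncated type $H^1(X,\mathcal F)$, built from choices of sections $s_i$ that only merely exist. You correctly note that the target $\check H^1$ is a set and that different choices give cohomologous cocycles, so in principle the universal property of set-truncation applies — but this is precisely the ``truncation bookkeeping'' you acknowledge and then defer. That deferral is a genuine gap in your write-up: the verification that $\Psi$ is well-defined on $\propTrunc{\Tors{\mathcal F}(X)}_0$, that $\Phi$ and $\Psi$ are group homomorphisms (not merely set bijections), and that they compose to the identity at the level of the quotients, is nontrivial and is exactly the content that the paper's cokernel argument packages away in one stroke. One smaller point: the passage from $H^1(U_i,\mathcal F)=0$ to an honest tuple $(s_1,\dots,s_n)$ is finite choice over a decidable finite index and does not need \axiomref{Z-choice}; invoking the axiom there is overkill (and slightly misleading about where the choice principle is actually doing work).
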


\begin{proof}
  Let $\pi$ be the projection map
  \[
    \pi :
    \left(
      \sum_{T:\Tors{\mathcal F}(X)}\prod_{i}\prod_{x:U_i}T_x
    \right)
    \to \Tors{\mathcal F}(X)\rlap{.}
  \]
  Let us abbreviate the left hand side with $T(\mathcal F,U)$.
  Since the cover is 1-acyclic, $\pi$ is surjective.
  With $L_x\colonequiv \sum_{i}U_i(x)$ and \Cref{deligne-construction} we get:
  \begin{align*}
    T(\mathcal F,U)&=\prod_{x:X}(T_x:\Tors{\mathcal F_x})\times T_x^{L_x} \\
                   &=\prod_{x:X}\text{$L_x$-cocycles}
                     \rlap{.}
  \end{align*}
  The latter is the type of \v{C}ech-1-cocycles (\Cref{chech-complex} (e))
  and in total the equality is given by the isomorphism
  \[
    (T,t) \mapsto (i,j\mapsto t_i - t_j) :
    T(\mathcal F,U)
    \to
    \ker(\partial^1)
    \subseteq
    \bigoplus_{i,j}\mathcal F(U_{ij})\rlap{.}
  \]

  Realizing, that $\im(\partial^0)$ corresponds to the subtype of $T(\mathcal F,U)$ of trivial torsors,
  we arrive at the following diagram:
  \begin{center}
    \begin{tikzcd}
      & \Tors{\mathcal F}(X)\ar[r,->>] & H^1(X,\mathcal F) \\
      \sum_{T:T(\mathcal F,U)}\propTrunc{\pi_1(T)=\mathcal F}\ar[r,hook] & T(\mathcal F,U)\ar[u,->>]\ar[d,equal] & \\
      \im{\partial^0}\ar[r,hook]\ar[u,equal] & \ker{\partial^1}\ar[r,->>] & \check{H}^1(\{U\},\mathcal F)
    \end{tikzcd}
  \end{center}
  The composed map $T(\mathcal F,U)\to H^1(X,\mathcal F)$ is a homomorphism
  and therefore by \Cref{surjective-abgroup-hom-is-cokernel} a cokernel.
  So the two cohomology groups are equal, since they are cokernels of the same diagram.
\end{proof}

It is possible to pass from torsors to gerbes,
which are the degree 2 analogue of torsors:

\begin{definition}
  \label{gerbe}
  Let $A:\AbGroup$ be an abelian group.
  An \notion{$A$-banded gerbe}, is a connected type $\mathcal G:\mathcal U$,
  together with, for all $y:\mathcal G$ an identification of groups $\Omega (\mathcal G,y)=A$.
\end{definition}

Analogous to the type of $A$-torsors, the type of $A$-banded gerbes is a second delooping of an abelian group $A$.
We can formulate a second degree version of \Cref{deligne-construction}:

\begin{theorem}
  \label{deligne-construction-gerbes}
  Let $A$ be an abelian group and $L$ a type with $\propTrunc{L}$.
  Let us call $c:(i,j,k : L)\to A$ a $L$-2-cocycle,
  if $c_{jkl}-c_{ikl}+c_{ijl}-c_{ijk}=0$ for all $i,j,k,l : L$.
  Then there is a bijection:
  \[
    \left((\mathcal G:\text{$A$-gerbe})\times (u:\mathcal G^{L})\times (i,j : L) \to u_i=u_j\right) \to \text{$L$-2-cocycle}
    \rlap{.}
  \]
\end{theorem}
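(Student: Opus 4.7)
The plan is to follow the pattern of \Cref{deligne-construction}, its torsor (degree~$1$) analogue. Given a triple $(\mathcal G, u, \alpha)$, define the candidate cocycle by
\[
  c_{ijk} \colonequiv \alpha_{ij}\cdot\alpha_{jk}\cdot\alpha_{ik}^{-1},
\]
a loop at $u_i$ which the banding identifies with an element of $A$. Because $A$ is abelian, the identifications $\Omega(\mathcal G,y)=A$ at different base-points agree after path-transport (conjugation on an abelian group is trivial), so all the $c_{ijk}$ can be regarded as living in a common copy of $A$. The $2$-cocycle condition then reduces to a direct computation: writing $c_{ijk}=\alpha_{ij}+\alpha_{jk}-\alpha_{ik}$ additively, the twelve terms of $c_{jkl}-c_{ikl}+c_{ijl}-c_{ijk}$ cancel in pairs.

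Next, to see that the left-hand side is a set and that the forward map is an embedding, I would mimic the argument of \Cref{deligne-construction}. Both are propositional statements, so I may fix some $i_0:L$ using the mere inhabitedness of $L$. An equivalence of gerbes compatible with the bandings is uniquely determined by where it sends $u_{i_0}$, because a gerbe is a connected $1$-type and the banding rigidifies the remaining automorphism freedom. If two triples have the same associated cocycle, the unique equivalence sending $u_{i_0}\mapsto u'_{i_0}$ must further carry each $\alpha_{ij}$ to $\alpha'_{ij}$: any discrepancy is a loop whose class in $A$ is precisely the difference of the cocycles, which vanishes by hypothesis.

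The main obstacle is surjectivity. Given a cocycle $c$, I would construct a preimage $(\mathcal G_c, u_c, \alpha_c)$ in analogy with the $T_c$ of \Cref{deligne-construction}: take $\mathcal G_c$ to be the $1$-truncated type of pairs $(v:A^L,\;\beta:L\times L\to A)$ satisfying $\beta_{ij}+\beta_{jk}-\beta_{ik}=c_{ijk}$, equipped with the diagonal $A$-action providing the loop structure. The tautological family then supplies $u_c$ and $\alpha_c$, and by construction the associated cocycle is $c$. The delicate point is verifying that $\mathcal G_c$ is genuinely an $A$-banded gerbe, i.e.\ that it is connected and $\Omega(\mathcal G_c,y)=A$; in the torsor case this reduced to inhabitedness of $T_c$ together with freeness and transitivity of the action, whereas here the higher truncation level forces a more careful analysis, and the cocycle condition on $c$ is exactly what one needs to make the coherences work out. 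A cleaner alternative may be to present $\mathcal G_c$ as a small higher inductive type generated by $u_i$, $\alpha_{ij}$ with $2$-cells $\alpha_{ij}\cdot\alpha_{jk}\cdot\alpha_{ik}^{-1}=c_{ijk}$, but the verification of the gerbe structure remains the crux.
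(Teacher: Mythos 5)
The paper does not actually supply a proof of this theorem; it only records that it follows by translating Deligne's argument (\cite{Deligne91}[Section 5.3]), so there is no reference proof to compare against. Your description of the forward map, the cocycle computation, and the embedding argument are sound in outline: the key points -- that abelianness of $A$ makes the banding base-point-independent, that the data type is a set once one fixes $i_0:L$, and that agreement of cocycles forces agreement of the $\alpha$'s -- all check out with some care about which composites are loops at $u_{i_0}$.

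The proposed inverse, however, has a structural error, not merely a missing verification. The type of pairs $(v:A^L,\ \beta:L\times L\to A)$ with $\beta_{ij}+\beta_{jk}-\beta_{ik}=c_{ijk}$ is a \emph{set}: $A$ is a set, function types into sets are sets, and a $\Sigma$ of a set over a proposition is a set (and the $1$-truncation of a set is that set). No group action on a set makes it an $A$-banded gerbe for nontrivial $A$: a set has contractible loop spaces, and passing to a homotopy quotient does not help either -- a free action gives back a set, a non-free action gives a stabilizer that need not be $A$. Deligne's construction handles exactly this degree shift by going one categorical level up: the correct analogue of $T_c$ is a type of \emph{twisted families of $A$-torsors}, e.g.
\[
  \mathcal G_c \colonequiv
  \sum_{T:L\to BA}\ \sum_{\rho:(i,j:L)\to T_j=T_i}\
  \prod_{i,j,k:L}\bigl(\rho_{ij}\cdot\rho_{jk}=\rho_{ik}\cdot c_{ijk}\bigr),
\]
where $BA$ is the type of $A$-torsors and $c_{ijk}$ is read as a loop at $T_i$ via the banding of $BA$. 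Since $BA$ is a $1$-type, $\mathcal G_c$ is a $1$-type; mere connectedness reduces (after merely trivializing the torsors) to the degree-$1$ fact that a $1$-cocycle on $L$ is a coboundary, i.e.\ to \Cref{deligne-construction}; and the loop space at any point is the set of $L$-families of loops commuting with the $\rho$'s, which by abelianness is the set of constant maps $L\to A$, namely $A$. The tautological point $u_c(i)$ (trivial torsors, $\rho_{jk}$ translation by $c_{ijk}$, well-defined by the $2$-cocycle identity) and the paths $\alpha_c(i,i')$ (translation by $c_{ii'j}$ in the $j$-th component) then recover $c$. Equivalently, $\mathcal G_c$ is the homotopy quotient of $\{\beta:L^2\to A\mid d\beta=c\}$ by the $A^L$-action $\delta\cdot\beta=\beta+d\delta$; note the $v$-component of your proposal plays no role. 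Your HIT alternative could also work but would require separate, nontrivial proofs of $1$-truncatedness and of the loop-space computation, which the torsor presentation supplies structurally.
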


This is provable, again, by translating Deligne's argument \cite{Deligne91}[Section 5.3].
Using this, the correspondence of Eilenberg-MacLane-Cohomology and \v{C}ech-Cohomology can be extended in the following way:

\begin{theorem}
  If $U_1,\dots,U_n$ is a 2-acyclic cover for $\mathcal F$, then
  \[
    \check{H}^2(\{U\},\mathcal F)=H^2(X,\mathcal F)\rlap{.}
  \]  
\end{theorem}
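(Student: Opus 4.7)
The plan is to parallel closely the degree 1 proof, using $\mathcal F$-banded gerbes in place of $\mathcal F$-torsors and \Cref{deligne-construction-gerbes} in place of \Cref{deligne-construction}. First I would define the analogue
\[
  G(\mathcal F, U) \colonequiv \sum_{\mathcal G} \left(\prod_i \prod_{x : U_i} \mathcal G_x\right) \times \left(\prod_{i,j}\prod_{x : U_{ij}} u_i(x) = u_j(x)\right),
\]
where $\mathcal G$ ranges over $\mathcal F$-banded gerbes on $X$, and consider the projection $\pi : G(\mathcal F, U) \to \text{gerbes}_{\mathcal F}(X)$. Since gerbes constitute a second delooping of $\mathcal F$, the set $\pi_0$ of the codomain is by definition $H^2(X, \mathcal F)$.

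Next, I would verify surjectivity of $\pi$ using 2-acyclicity in two stages. For a given gerbe $\mathcal G$, the restriction $\mathcal G|_{U_i}$ has class in $H^2(U_i, \mathcal F) = 0$, so there merely exists a section $u_i : \prod_{x : U_i} \mathcal G_x$. For each pair $i, j$, the dependent type $x \mapsto (u_i(x) = u_j(x))$ is an $\mathcal F$-torsor on $U_{ij}$ (the space of identifications of two objects in a connected $1$-type with band $\mathcal F$), and this torsor is trivial since $H^1(U_{ij}, \mathcal F) = 0$ by the same hypothesis, yielding the desired trivializations $p_{ij}$.

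The third step is to apply the Deligne construction for gerbes pointwise, with $L_x \colonequiv \sum_i U_i(x)$, so as to identify
\[
  G(\mathcal F, U) = \prod_{x : X}\{\text{$L_x$-2-cocycles}\} = \ker(\partial^2),
\]
the isomorphism sending $(\mathcal G, u, p)$ to the cocycle $c_{ijk}(x) \colonequiv p_{ij}(x) \cdot p_{jk}(x) \cdot p_{ik}(x)^{-1}$, read as a loop at $u_i(x)$ and hence as an element of $\mathcal F_x$. Just as in the degree 1 case $\im(\partial^0)$ corresponds to trivial torsors with arbitrary sections, here $\im(\partial^1) \subseteq \ker(\partial^2)$ corresponds to the subtype of $G(\mathcal F, U)$ on which the gerbe $\mathcal G$ is (merely) trivial. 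The conclusion is then formal: both $H^2(X, \mathcal F)$ and $\check H^2(\{U\}, \mathcal F)$ appear as cokernels of the same diagram, and by \Cref{surjective-abgroup-hom-is-cokernel} the composed surjection $G(\mathcal F, U) \twoheadrightarrow H^2(X, \mathcal F)$ factors through $\check H^2(\{U\}, \mathcal F)$ as the desired isomorphism.

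The main obstacle will be the matching of $\im(\partial^1)$ with the trivial-gerbe subtype of $G(\mathcal F, U)$. For the trivial gerbe $B\mathcal F$ the sections $u_i$ are $\mathcal F$-torsors on $U_i$ and the $p_{ij}$ are isomorphisms of torsors; untangling the cocycle coming from such data into a coboundary $c_{ijk} = t_{jk} - t_{ik} + t_{ij}$ requires trivializing the $u_i$ using $H^1(U_i, \mathcal F) = 0$ (again part of 2-acyclicity) and then tracking the higher coherence that distinguishes the gerbe case from the torsor case. Once this identification is in place, the rest of the diagram chase is formally the same as in the degree 1 proof.
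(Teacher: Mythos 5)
Your proposal takes exactly the route the paper intends: the paper states this theorem without proof, describing it only as the degree-$2$ analogue of the degree-$1$ argument with \Cref{deligne-construction-gerbes} in place of \Cref{deligne-construction}, and your outline supplies that analogue in detail --- $G(\mathcal F,U)$ carries the extra identification data $(u,p)$ required by the gerbe version of the Deligne construction, $2$-acyclicity is used twice for surjectivity of $\pi$ ($l=2$ on single pieces for the sections $u_i$, $l=1$ on double intersections for the identifications $p_{ij}$), and the pointwise bijection identifies $G(\mathcal F,U)$ with $\ker\partial^2$ by distributing $\Sigma$ over $\Pi$ just as in the torsor case.

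One detail you should make explicit when matching $\im\partial^1$ with the trivial-gerbe fibre: your appeal to $H^1(U_i,\mathcal F)=0$ to trivialize the torsors $u_i$ does follow from $2$-acyclicity, but only because the Čech indexing of \Cref{chech-complex} ranges over arbitrary tuples with repeats allowed, so that $U_{ii}=U_i$ falls under the $l=1$ clause of the definition; if one read the index tuples as strictly increasing, the hypothesis would be genuinely too weak (a nonvanishing $H^1(U_i)$ would feed a potentially nonzero differential $d_2$ into $\check H^2$ in the Čech-to-derived-functor spectral sequence). Once the $u_i$ are (merely) trivialized by isomorphisms $q_i$, the ``higher coherence'' you gesture at reduces to the observation that the loop $c_{ijk}$ at $u_i$ is invariant under conjugation by $q_i$ because the band $\mathcal F$ is abelian, so the resulting $1$-cochain $t_{ij}$ is unambiguous up to a coboundary and $c=\partial^1 t$. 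The paper's remark that one needs a higher-truncation version of \Cref{kraus-glueing} concerns the proof of \Cref{deligne-construction-gerbes} itself (which you, like the paper, take as a black box), not the diagram chase you describe.
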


However, with this approach, we need versions of \Cref{kraus-glueing}, with increasing truncation level.
While this suggests, we can prove the correspondence for any cohomology group of \emph{external} degree $l$,
there is follow-up work in progress \cite{chech-draft},
which proves the correspondence for all \emph{internal} $l:\N$.
In the same draft, there is also a version of the vanishing result for all internal $l$.
This means that many of the usual, essential computations with \v{C}ech-Cohomology can be transferred to synthetic algebraic geometry.

\section{Type Theoretic justification of axioms}
\newcommand{\inc}{\mathsf{inc}}
\newcommand{\inl}{\mathsf{inl}}
\newcommand{\inr}{\mathsf{inr}}
\newcommand{\idd}{\mathsf{id}}
\newcommand{\II}{\mathbf{I}}
\newcommand{\nats}{\mathbb{N}}
\newcommand\norm[1]{\left\lVert #1 \right\rVert}

\newcommand{\Gm}{\mathsf{G_m}}
\newcommand{\ext}{\mathsf{ext}}
\newcommand{\patch}{\mathsf{patch}}
\newcommand{\cov}{\mathsf{cov}}
\newcommand{\isSheaf}{\mathsf{isSheaf}}
\newcommand{\isIso}{\mathsf{isIso}}
\newcommand{\Fib}{\mathsf{Fib}}

\newcommand{\Typp}{\mathsf{Type}}
\newcommand{\Elem}{\mathsf{Elem}}
\newcommand{\Cont}{\mathsf{Cont}}

\newcommand{\BB}{\square}
\newcommand{\CC}{\mathcal{C}}
\newcommand{\UU}{\mathcal{U}}
\newcommand{\WW}{\mathcal{W}}
\newcommand{\VV}{\mathcal{V}}

In this section, we present a model of the 3 axioms stated in \Cref{statement-of-axioms}.
This model is best described as an \emph{internal} model
of a presheaf model. The first part can then be described purely syntactically, starting from any model
of 4 other axioms that are valid in a suitable \emph{presheaf} model. We obtain then the sheaf model by defining
a family of open left exact modalities, and the new model is the model of types that are modal for all these modalities.
This method works both in a $1$-topos framework and for models of univalent type theory.

\subsection{Internal sheaf model}

\subsubsection{Axioms for the presheaf model}

We start from 4 axioms. The 3 first axioms can be seen as variation of our 3 axioms for synthetic algebraic geometric.

\begin{enumerate}[(1)]
\item $R$ is a ring,
\item for any f.p.\ $R$-algebra $A$, the canonical map $A\rightarrow R^{\Spec(A)}$ is an equivalence
\item for any f.p.\ $R$-algebra $A$, the set $\Spec(A)$ satisfies choice, which can be formulated as
  the fact that for any family of types $P(x)$ for $x:\Spec(A)$ there is a map
  $(\Pi_{x:\Spec(A)}\norm{P(x)})\rightarrow \norm{\Pi_{x:\Spec(A)}P(x)}$.
\item for any f.p.\ $R$-algebra $A$, the diagonal map $\nats\rightarrow\nats^{\Spec(A)}$ is an equivalence.
\end{enumerate}

As before, $\Spec(A)$ denotes the type of $R$-algebra maps from $A$ to $R$, and
if $r$ is in $R$, we write $D(r)$ for the proposition $\Spec(R_r)$.

Note that the first axiom does not require
$R$ to be local, and the third axiom states that $\Spec(A)$ satisfies \emph{choice} and not only Zariski local choice,
for any f.p. $R$-algebra $A$.

\subsubsection{Justification of the axioms for the presheaf model}

\newcommand{\FP}{\mathsf{FP}}

We justify briefly the second axiom (synthetic quasi-coherence). This justification will be done
in a $1$-topos setting, but exactly the same argument holds in the setting of presheaf models of
univalent type theory, since it only involves strict presheaves. A similar direct verification holds
for the other axioms.

We work with presheaves on the opposite of the category of finitely presented $k$-algebra. We write
$L,M,N,\dots$ for such objects, and $f,g,h,\dots$ for the morphisms. A presheaf $F$ on this category is given
by a collection of sets $F(L)$ with restriction maps $F(L)\rightarrow F(M),~u\mapsto f u$ for
$f:L\rightarrow M$ satisfying the usual uniformity conditions.

We first introduce the presheaf $\FP$ of {\em finite presentations}. This is internally the type
$$
\Sigma_{n:\nats}\Sigma_{m:\nats}R[X_1,\dots,X_n]^m
$$
which is interpreted by $\FP(L) = \Sigma_{n:\nats}\Sigma_{m:\nats}L[X_1,\dots,X_n]^m$.
If $\xi = (n,m,q_1,\dots,q_m)\in\FP(L)$ is such a presentation, we build a natural extension
$\iota:L\rightarrow L_{\xi} = L[X_1,\dots,X_n]/(q_1,\dots,q_m)$ where the system $q_1 = \dots = q_m = 0$
has a solution $s_{\xi}$. Furthermore, if we have another extension $f:L\rightarrow M$
and a solution $s\in M^n$ of this system in $M$, there exists a unique map $i(f,s):L_{\xi}\rightarrow M$
such that $i(f,s) s_{\xi} = s$ and $i(f,s)\circ \iota = f$.
Note that $i(\iota,s_{\xi}) = \id$.

\medskip

Internally, we have a map $A:\FP\rightarrow R\mathsf{-alg}(\UU_0)$, which to any presentation
$\xi = (n,m,q_1,\dots,q_m)$ associates the $R$-algebra $A(\xi) = L[X_1,\dots,X_n]/(q_1,\dots,q_m)$.
This corresponds externally to the presheaf on the category of elements of $\FP$ defined
by $A(L,\xi) = L_{\xi}$.

Internally, we have a map $\Spec(A):\FP\rightarrow \UU_0$, defined by $\Spec(A)(\xi) = Hom(A(\xi),R)$.
We can replace it by the isomorphic map which to $\xi = (n,m,q_1,\dots,q_m)$ associates the set
$S(\xi)$ of solutions of the system $q_1=\dots=q_m= 0$ in $R^n$.
Externally, this corresponds to the presheaf on the category of elements of $\FP$ so that
$\Spec(A)(L,n,m,q_1,\dots,q_m)$ is the set of solutions of the system $q_1=\dots=q_m=0$ in $L^n$.

\medskip

We now define externally two inverse maps $\varphi:A(\xi)\rightarrow R^{\Spec(A(\xi))}$ and
$\psi:R^{\Spec(A(\xi))}\rightarrow A(\xi)$.

\medskip

Notice first that $R^{\Spec(A)}(L,\xi)$, for $\xi = (n,m,q_1,\dots,q_m)$,
is the set of families of elements $l_{f,s}:M$ indexed by $f:L\rightarrow M$
and $s:M^n$ a solution of $fq_1 = \dots = fq_m=0$, satisfying the uniformity condition
$g(l_{f,s}) = l_{(g\circ f),gs}$ for $g:M\rightarrow N$.

\medskip

For $u$ in $A(L,\xi) = L_{\xi}$ we define $\varphi~u$ in $R^{\Spec(A)}(L,\xi)$ by
$$
(\varphi~u)_{f,s} = i(f,s)~u
$$
and for $l$ in $R^{\Spec(A)}(L,\xi)$ we define $\psi~l$ in $A(L,\xi) = L_{\xi}$ by
$$
\psi~ l = l_{\iota,s_{\xi}}
$$
These maps are natural, and one can check
$$
\psi~(\varphi~u) = (\varphi~u)_{\iota,s_{\xi}} = i(\iota,s_{\xi})~u = u
$$
and
$$
(\varphi~(\psi~l))_{f,s} = i(f,s)~(\psi~l) = i(f,s)~l_{\iota,s_{\xi}} = l_{(i(f,s)\circ \iota),(i(f,s)~s_{\xi})} = l_{f,s}
$$
which shows that $\varphi$ and $\xi$ are inverse natural transformations.

Furthermore, the map $\varphi$ is the external version of the canonical map $A(\xi)\rightarrow R^{\Spec(A(\xi))}$.
The fact that this map is an isomorphism is an (internally) equivalent statement of the second axiom.

\subsubsection{Sheaf model obtained by localisation from the presheaf model}

We define now a family of propositions. As before, if $A$ is a ring, we let $\Um(A)$ be the type of unimodular sequences
(\Cref{unimodular})
$f_1,\dots,f_n$ in $A$, i.e.\ such that $(1) = (f_1,\dots,f_n)$. To any element $\vec{r} = r_1,\dots,r_n$
in $\Um(R)$ we associate
the proposition $D(\vec{r}) = D(r_1)\vee\dots\vee D(r_n)$. If $\vec{r}$ is the empty sequence then
$D(\vec{r})$ is the proposition $1 =_R 0$. %For $n=0$, we get the proposition $1=_R 0$.

  Starting from any model of dependent type theory with univalence satisfying the 4 axioms above, we build a new
  model of univalent type theory by considering the types $T$ that are modal for all modalities defined by the propositions
  $D(\vec{r})$, i.e.\ such that all diagonal maps $T\rightarrow T^{D(\vec{r})}$ are equivalences.
  This new model is called the \emph{sheaf model}.

    This way of building a new sheaf model can be described purely syntactically, as in \cite{Quirin16}. In \cite{CRS21}, we extend
    this interpretation to cover inductive data types. In particular, we describe there the sheafification $\nats_S$ of the type
    of natural numbers with the unit map $\eta:\nats\rightarrow\nats_S$. 

    A similar description can be done starting with the $1$-presheaf model. In this case, we use for the propositional truncation of a
    presheaf $A$ the image of the canonical map $A\rightarrow 1$. We however get a model of type theory {\em without} universes when we
    consider modal types.

    \begin{proposition}\label{modal}
      The ring $R$ is modal. It follows that any f.p.\ $R$-algebra is modal.
    \end{proposition}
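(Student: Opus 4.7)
The plan is to separate the two assertions in \Cref{modal} and derive the second formally from the first. Given a finitely presented $R$-algebra $A$, axiom~(2) identifies $A$ with $R^{\Spec A}$. For any proposition $P$ the operation $X \mapsto X^P$ is an open modality, whose modal types are closed under arbitrary exponentials in the target: if $Y$ is $P$-modal, then
\[ (X \to Y)^P = (P \to (X \to Y)) = (X \to Y^P) = (X \to Y)\rlap{.} \]
Hence, once I know that $R$ is $D(\vec r)$-modal for every unimodular $\vec r$, it follows that $R^{\Spec A}$, and so $A$, is modal as well.

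The heart of the argument is thus to prove that for a fixed unimodular sequence $\vec r = r_1, \ldots, r_n$, the diagonal $R \to R^{D(\vec r)}$ is an equivalence. My approach is to rewrite the codomain as a Čech equalizer using axiom~(2). Since $D(\vec r) = D(r_1) \vee \cdots \vee D(r_n)$ is the propositional truncation of $\coprod_i D(r_i)$, and $R$ is a set, a function $D(\vec r) \to R$ corresponds to a family $(f_i : D(r_i) \to R)_i$ whose components agree pairwise on each intersection $D(r_i) \wedge D(r_j)$. Now $D(r_i) = \Spec R_{r_i}$ and $D(r_i) \wedge D(r_j) = D(r_i r_j) = \Spec R_{r_i r_j}$, and both $R_{r_i}$ and $R_{r_i r_j}$ are finitely presented over $R$, so axiom~(2) identifies $R^{D(r_i)}$ with $R_{r_i}$ and $R^{D(r_i) \wedge D(r_j)}$ with $R_{r_i r_j}$. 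This identifies $R^{D(\vec r)}$ with
\[ \mathrm{eq}\Bigl(\textstyle\prod_i R_{r_i} \rightrightarrows \prod_{i,j} R_{r_i r_j}\Bigr), \]
and the diagonal becomes the canonical comparison $r \mapsto (r/1)_i$.

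The claim then reduces to the purely algebraic fact that for a unimodular sequence in any commutative ring $R$, this equalizer is canonically $R$ itself. This is the classical Zariski sheaf condition for the structure sheaf on $\Spec R$: a partition of unity $\sum a_i r_i = 1$ yields surjectivity, while the fact that any element killed in each $R_{r_i}$ is annihilated by some power of each $r_i$, and therefore by a power of $(r_1, \ldots, r_n) = (1)$, yields injectivity.

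The main technical obstacle I anticipate is the identification of $R^{D(\vec r)}$ with the equalizer. It rests on the universal property of the propositional truncation combined with the $0$-truncation of $R$, and one has to verify that the pairwise coherence condition on the $f_i$ really matches the equalizer relation rather than a more delicate higher-coherence condition. Once that description is in hand, the rest is a standard commutative algebra computation together with the formal closure argument that transfers modality from $R$ to every finitely presented $R$-algebra.
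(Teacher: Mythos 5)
Your proof is correct and follows essentially the same strategy as the paper's. The paper's argument also passes from an element $u:R^{D(r_1,\dots,r_n)}$ to a compatible family $u_i : R^{D(r_i)} \cong R_{r_i}$ (your Čech equalizer description) and then invokes the local-global principle from Lombardi--Quitté — which is precisely your ``classical Zariski sheaf condition for the structure sheaf'' — to patch to a unique element of $R$; the second assertion is likewise derived from $A \cong R^{\Spec A}$ and closure of modal types under exponentials. Your worry about higher coherence is resolved exactly as you suspect: since $R$ is a set, the factorization through the propositional truncation is controlled by the $0$-truncated case of Kraus's lemma, which the paper isolates as its gluing lemma.
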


    \begin{proof}
      If $r_1,\dots,r_n$ is in $\Um(R)$, we build a patch function $R^{D(r_1,\dots,r_n)}\rightarrow R$.
      Any element $u:R^{D(r_1,\dots,r_n)}$ gives a compatible family of elements $u_i:R^{D(r_i)}$, hence
      a compatible family of elements in $R_{r_i}$ by quasi-coherence. But then it follows from local-global
      principle \cite{lombardi-quitte}, that we can patch this family to a unique element of $R$.
      
      If $A$ is a f.p.\ $R$-algebra, then $A$ is isomorphic to $R^{\Spec(A)}$ and hence is modal.
    \end{proof}

    \begin{proposition}
      In this new sheaf model, $\perp_S$ is $1 =_R 0$.
    \end{proposition}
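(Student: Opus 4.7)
My plan is to prove the equality of propositions $\perp_S = (1 =_R 0)$ by establishing the two implications separately, using the characterization of $\perp_S$ as the initial modal proposition---the reflection of $\perp$ into the subuniverse of types modal for every $\bigcirc_{D(\vec{r})}$ indexed by $\vec{r} : \Um(R)$.

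For the first implication $\perp_S \Rightarrow (1 =_R 0)$, I will show that $1 =_R 0$ is itself modal; initiality of $\perp_S$ then supplies the required map. Fixing $\vec{r} = (r_1, \dots, r_n) : \Um(R)$ and assuming $D(\vec{r}) \to (1 =_R 0)$, I restrict to each disjunct to obtain $D(r_i) \to (1 =_R 0)$ for every $i$. This last statement says precisely that the constant functions $1$ and $0$ from $D(r_i)$ to $R$ coincide, i.e., $1 = 0$ in the ring $R^{D(r_i)}$. Synthetic quasi-coherence identifies $R^{D(r_i)} \simeq R_{r_i}$, so $R_{r_i} = 0$, meaning $r_i$ is nilpotent in $R$. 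Since the $r_i$ are unimodular and each lies in the nilradical, $1 \in (r_1, \dots, r_n) \subseteq \sqrt{(0)}$, so $1$ is itself nilpotent, forcing $1 = 0$ in $R$.

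For the reverse implication $(1 =_R 0) \Rightarrow \perp_S$, I will observe that under the hypothesis $1 =_R 0$ we have $(1) = (0)$, so the empty sequence is a legitimate element of $\Um(R)$, and the associated proposition $D(())$---an empty disjunction---is $\perp$. Thus $\bigcirc_\perp$ is one of the modalities defining the sheaf model, and $\perp_S$, being modal for it, satisfies that the constant map $\perp_S \to \perp_S^{\perp} = \top$ is an equivalence. This forces $\perp_S$ to hold.

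The main obstacle will be the first implication, and specifically the step that extracts the algebraic fact ``$r_i$ is nilpotent'' from the purely logical hypothesis $D(r_i) \to (1 =_R 0)$: this is where synthetic quasi-coherence is essential, allowing one to rephrase ``$1 = 0$ on $D(r_i)$'' as the strict algebraic equation $R_{r_i} = 0$. Everything else is routine: the radical characterization of unimodular vectors of nilpotents is standard algebra, and the second implication reduces to a near-definitional unpacking once one spots that $1 = 0$ supplies the empty sequence as a unimodular witness.
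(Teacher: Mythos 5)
Your proof is correct, and it follows the same overall structure as the paper's: establish that $1 =_R 0$ is modal, and then use the observation that $1 =_R 0$ makes the empty sequence unimodular so that $\perp$ itself becomes one of the covering propositions, forcing any modal proposition to hold. The one genuine variation is in the first step: the paper simply cites the preceding proposition (that $R$, hence any f.p.\ $R$-algebra, is modal, so its identity type $1 =_R 0$ is modal), whereas you re-derive modality of $1 =_R 0$ from scratch by unwinding $D(r_i) \to (1 =_R 0)$ through SQC into $R_{r_i} = 0$ and then using the radical characterization of unimodularity. This is a perfectly valid, self-contained alternative to the paper's citation, and it makes explicit exactly where SQC enters; the paper's route is shorter but leans on the standard fact that identity types of modal types are modal.
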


    \begin{proof}
      The proposition $1=_R0$ is modal by the previous proposition.
      If $T$ is modal, all diagonal maps $T\rightarrow T^{D(\vec{r})}$ are equivalences. For the empty sequence $\vec{r}$
      we have that $D(\vec{r})$ is $\perp$, and the empty sequence is unimodular exactly when $1 =_R 0$. So $1=_R0$
      implies that $T$ and $T^{\perp}$ are equivalent, and so implies that $T$ is contractible. By extensionality,
      we get that $(1=_R0)\rightarrow T$ is contractible when $T$ is modal.
    \end{proof}
    
    \begin{lemma}\label{Um}
      For any f.p.\ $R$-algebra $A$, we have $\Um(R)^{\Spec(A)} = \Um(A)$.
    \end{lemma}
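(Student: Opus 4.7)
The plan is to build an explicit equivalence
$$\Phi : \Um(A) \xrightarrow{\ \sim\ } \Um(R)^{\Spec(A)}$$
by pointwise application, and to invert it using the three axioms (2), (3) and (4) in sequence. The forward map sends $(n,(f_1,\dots,f_n),p)$ to the function $x \mapsto (n,(x(f_1),\dots,x(f_n)), x_*p)$, which is well-defined since $R$-algebra homomorphisms preserve the relation $(f_1,\dots,f_n)=(1)$.

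To construct the inverse, I would take $\sigma : \Spec(A)\to\Um(R)$ and extract its three pieces of data separately. First, the length function $x\mapsto\pi_1(\sigma(x)):\Spec(A)\to\N$ is, by axiom (4), equal to a constant $n_0 : \N$. This gives a function $\Spec(A) \to \sum_{\vec r : R^{n_0}}((r_1,\dots,r_{n_0})=(1))$ and in particular, by second projection, a function $\Spec(A)\to R^{n_0}$; by axiom (2) applied componentwise this corresponds to an $n_0$-tuple $f_1,\dots,f_{n_0} : A$ satisfying $x(f_i) = \pi_2(\sigma(x))_i$ for all $x$ and $i$.

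The only nontrivial step is showing unimodularity of $(f_1,\dots,f_{n_0})$ in $A$, which is a proposition. Pointwise unimodularity of $\sigma(x)$ gives
$$\prod_{x:\Spec(A)} \Bigl\lVert \sum_{g : R^{n_0}} {\textstyle\sum_i} g_i \cdot x(f_i) = 1 \Bigr\rVert \rlap{.}$$
By axiom (3), this lifts to $\bigl\lVert \prod_{x:\Spec(A)} \sum_{g:R^{n_0}} \sum_i g_i \cdot x(f_i) = 1 \bigr\rVert$, i.e.\ to the mere existence of a function $g : \Spec(A) \to R^{n_0}$ realising these coefficients globally. Since the goal is propositional, we may assume such $g$. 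Axiom (2) turns each $g_i : \Spec(A)\to R$ into $\tilde g_i : A$, and then $x\bigl(\sum_i \tilde g_i f_i\bigr) = x(1)$ holds for every $x:\Spec(A)$; the injectivity half of axiom (2) gives $\sum_i \tilde g_i f_i = 1$ in $A$, as desired.

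The round-trips are formal: starting from $(n,\vec f,p)\in\Um(A)$, axiom (4) recovers $n$ and axiom (2) recovers $\vec f$ as the unique preimage of $x\mapsto x(\vec f)$, while starting from $\sigma$, the constructed $(\vec f, \text{unimodularity})$ produces the function $x\mapsto(n_0,(x(f_1),\dots,x(f_{n_0})),\dots)$, which equals $\sigma$ by construction of $\vec f$ and because the unimodularity component lives in a proposition. The main obstacle is the third step: pointwise existence of coefficient vectors does not on its own yield a globally defined coefficient vector, so axiom (3) (full choice over $\Spec(A)$, not merely Zariski-local choice) is essential here, and this is precisely why the lemma is formulated at the level of the presheaf model rather than inside the sheaf model.
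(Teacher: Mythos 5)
Your proposal is correct and follows essentially the same route as the paper: both use axiom (4) to collapse the length function $\Spec(A)\to\N$ to a constant, axiom (2) to turn the pointwise entries $\Spec(A)\to R$ into elements of $A$, and axiom (3) (full choice over $\Spec(A)$) combined again with axiom (2) to convert the family of mere witnesses of pointwise unimodularity into a single unimodularity relation over $A$. The paper states this more tersely as a direct computation of the type $\Um(R)^{\Spec(A)}$, whereas you spell out the inverse map and the round-trips; the mathematical content is identical, and your observation that axiom (3) rather than Zariski-local choice is what makes the unimodularity step work matches the paper's intent.
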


    \begin{proof}
      Note that the fact that $r_1,\dots,r_n$ is unimodular is expressed by
      $$\norm{\Sigma_{s_1,\dots,s_n:R}r_1s_1+\dots+r_ns_n = 1}$$
      and we can use these axioms 2 and 3 to get
      $$\norm{\Sigma_{s_1,\dots,s_n:R}r_1s_1+\dots+r_ns_n = 1}^{\Spec(A)} = \norm{\Sigma_{v_1,\dots,v_n:A}\Pi_{x:\Spec(A)}r_1v_1(x)+\dots+r_nv_n(x) = 1}$$
      The result follows then from this and axiom 4.
    \end{proof}      
      %, that $A$ is quasi-coherent.
%      \rednote{I think it follows from axioms 4,2 and 3.}

%%     If $A$ is a ring, a fundamental system of orthogonal idempotents $e_1,\dots,e_p$ of $A$ is a sequence of 
%%     idempotent elements satisying $e_1+\dots+e_p = 1$ and $e_ie_j = 0$ if $i\neq j$. We then have a partition
%%     of $\Spec(A)$ into open subsets $\Spec(A_{e_i})$.

%%     \begin{lemma}\label{nats}
%%       For any function $u:\nats^{\Spec(A)}$ there exists a fundamental system of orthogonal idempotents $e_1,\dots,e_p$, and corresponding
%%       numbers $n_1,\dots,n_p$ such that $u$ is constant and equal to $n_i$ on $\Spec(A_{e_i})$.
%%     \end{lemma}

%%     We write $Um(A)$ for $\Sigma_{n:\nats}\Um(A)$.

%%     \begin{corollary}
%%       If $A$ is a f.p. ring, then any function in $Um(R)^{\Spec(A)}$ is given by
%%       a fundamental system of orthogonal idempotents $e_1,\dots,e_p$, and corresponding elements in $Um(A_{e_1}),\dots,Um(A_{e_p})$.
%%     \end{corollary}
    
    For an f.p.\ $R$-algebra $A$, we can define the type of presentations $Pr_{n,m}(A)$ as the type $A[X_1,\dots,X_n]^m$.
    Each element in $Pr_{n,m}(A)$ defines an
    f.p.\ $A$-algebra. Since $Pr_{n,m}(A)$ is a modal type since $A$ is f.p., the type of presentations $Pr_{n,m}(A)_S$ in the sheaf model
    defined for $n$ and $m$ in $\nats_S$ will be such that $Pr_{\eta p,\eta q}(A)_S = Pr_{p,q}(A)$ \cite{CRS21}.
%    We have $Pr(R)^{\Spec(A)} = Pr(A)$. \rednote{Def of Pr missing}
    
    \begin{lemma}\label{propsheaf}
      If $P$ is a proposition, then the sheafification of $P$ is
      $$\norm{\Sigma_{(r_1,\dots,r_n):\Um(R)}P^{D(r_1,\dots,r_n)}}$$
    \end{lemma}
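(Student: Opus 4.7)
The plan is to identify $Q \colonequiv \norm{\Sigma_{\vec r : \Um(R)} P^{D(\vec r)}}$ with the sheafification of $P$ by checking three things: there is a unit $P \to Q$, the proposition $Q$ is modal with respect to $D(\vec s)$ for every $\vec s \in \Um(R)$, and $Q$ enjoys the expected universal property into modal propositions.

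First I would dispatch the easy pieces. The singleton list $(1)$ is unimodular with $D(1) = \top$, so the unit $P \to Q$ simply sends $p$ to the class of $((1),\, x \mapsto p)$. For the universal property, if $T$ is modal and $f : P \to T$, then any pair $(\vec r, g) : \Sigma_{\vec r : \Um(R)} P^{D(\vec r)}$ produces $f \circ g : D(\vec r) \to T$, which modality of $T$ turns into a unique element of $T$; because $T$ is a proposition, this descends through the outer propositional truncation to a map $Q \to T$, and trivially commutes with the unit.

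The heart of the argument is the modality of $Q$: for each $\vec s = (s_1, \dots, s_k) \in \Um(R)$, I need to produce a map $(D(\vec s) \to Q) \to Q$. Given such a map, restrict along the inclusions $D(s_i) \hookrightarrow D(\vec s)$ to get $D(s_i) \to Q$ for each $i$. The choice axiom (3) applied with $A = R_{s_i}$ (so $\Spec A = D(s_i)$) converts this to $\norm{D(s_i) \to \Sigma_{\vec r : \Um(R)} P^{D(\vec r)}}$, and since my goal is itself a proposition I may pick a representative $h_i$. By Lemma~\ref{Um} this is the same data as a unimodular sequence $\vec u_i \in \Um(R_{s_i})$ together with a map $g_i$ from its basic opens into $P$. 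Clearing denominators, each $u_{ij} = \tilde u_{ij}/s_i^{m_{ij}}$ lifts to $R$, and a high enough power of $s_i$ lies in the ideal generated by the $\tilde u_{ij}$ in $R$. I would then form the concatenated list $\vec r \colonequiv (s_i^M \tilde u_{ij})_{i,j}$ and verify, using the identity $1 = \sum_i a_i s_i$ raised to a large power together with the multinomial expansion, that $\vec r \in \Um(R)$. On the basic open $D(s_i^M \tilde u_{ij})$ both $s_i$ and $\tilde u_{ij}$ are invertible, so $g_i$ delivers an element of $P$; since $P$ is a proposition these local values patch into a single $g : D(\vec r) \to P$, giving the required inhabitant of $Q$.

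The main obstacle I expect is precisely this unimodularity bookkeeping: reconciling the finitely many locally chosen $\vec u_i \in \Um(R_{s_i})$ into one global $\vec r \in \Um(R)$ whose basic opens refine the $D(s_i) \cap D_{R_{s_i}}(u_{ij})$ simultaneously. Everything else is a formal consequence of axiom~(3) and Lemma~\ref{Um}, plus the observation that $P$-valued functions from a proposition are controlled by a single value.
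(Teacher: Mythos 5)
Your proof is correct and follows the same two-part structure as the paper's: verify the universal property into modal propositions, then show $Q$ itself is modal by invoking Lemma~\ref{Um}. The paper's treatment of the modality step is telegraphic (``This follows from $\Um(R)^{D(r)}=\Um(R_r)$, Lemma~\ref{Um}''), and you correctly fill in what that gesture suppresses --- the use of axiom~(3) (choice over $\Spec R_{s_i}$) to strip the outer truncation, the passage through $\Um(R_{s_i})$, and the clearing-of-denominators bookkeeping that produces the single global unimodular sequence.
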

    
    \begin{proof}
      If $Q$ is a modal proposition and $P\rightarrow Q$ we have
      $$\norm{\Sigma_{(r_1,\dots,r_n):\Um(R)}P^{D(r_1,\dots,r_n)}}\rightarrow Q$$
      since
      $P^{D(r_1,\dots,r_n)}\rightarrow Q^{D(r_1,\dots,r_n)}$ and $Q^{D(r_1,\dots,r_n)}\rightarrow Q$.
      It is thus enough to show that
      $$P_0 = \norm{\Sigma_{(r_1,\dots,r_n):\Um(R)}P^{D(r_1,\dots,r_n)}}$$
      is modal.
      If $s_1,\dots,s_m$ is in $\Um(R)$ we show $P_0^{D(s_1,\dots,s_m)}\rightarrow P_0$. This follows
      from $\Um(R)^{D(r)} = \Um(R_r)$, Lemma \ref{Um}.
    \end{proof}

    \begin{proposition}\label{norm}
      For any modal type $T$, the proposition $\norm{T}_S$ is
      $$\norm{\Sigma_{(r_1,\dots,r_n):\Um(R)}T^{D(r_1)}\times\dots\times T^{D(r_n)}}$$
    \end{proposition}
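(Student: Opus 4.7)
The plan is to apply Lemma~\ref{propsheaf} to the proposition $\norm{T}$ and then rewrite the resulting expression using axiom~(3) together with basic manipulations of propositional truncation. Since $T$ is modal, $\norm{T}_S$ is the sheafification of the presheaf-level proposition $\norm{T}$, so Lemma~\ref{propsheaf} applied with $P \colonequiv \norm{T}$ immediately yields
$$ \norm{T}_S = \norm{\Sigma_{(r_1,\dots,r_n):\Um(R)}\, \norm{T}^{D(r_1,\dots,r_n)}}. $$
It then remains to identify the inner exponential $\norm{T}^{D(r_1,\dots,r_n)}$ with $\norm{T^{D(r_1)}\times\dots\times T^{D(r_n)}}$.

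By definition $D(r_1,\dots,r_n) = D(r_1)\vee\dots\vee D(r_n)$, and since the target $\norm{T}$ is a proposition, the universal property of the disjunction gives
$$ \norm{T}^{D(r_1,\dots,r_n)} \simeq \prod_{i=1}^n \norm{T}^{D(r_i)}. $$
For each factor, axiom~(3) applied with $A \colonequiv R_{r_i}$ and the constant family $P(x) \colonequiv T$ on $\Spec(R_{r_i}) = D(r_i)$ yields an implication $\norm{T}^{D(r_i)} \to \norm{T^{D(r_i)}}$; the reverse implication is immediate (compose a section $D(r_i)\to T$ with the unit $T\to\norm{T}$, and extend over the truncation using that the target is a proposition). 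Hence $\norm{T}^{D(r_i)} = \norm{T^{D(r_i)}}$.

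To finish, propositional truncation commutes with finite products, so $\prod_{i=1}^n \norm{T^{D(r_i)}} = \norm{T^{D(r_1)}\times\dots\times T^{D(r_n)}}$, and the standard identity $\norm{\Sigma_x \norm{Q(x)}} = \norm{\Sigma_x Q(x)}$ absorbs the inner truncation into the outer one. Chaining these rewrites gives the stated formula. The only real content of the argument is the use of axiom~(3): this choice principle for $\Spec(R_{r_i})$ is precisely what promotes a pointwise-truncated family $D(r_i) \to \norm{T}$ to a merely existing actual section $D(r_i) \to T$, and it is the reason that $\norm{T}_S$ is described in terms of the genuine products $T^{D(r_1)}\times\dots\times T^{D(r_n)}$ rather than their factor-wise truncations. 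No essential obstacle is anticipated, since all steps take place in the presheaf model where axiom~(3) is available and the definition $D(r_1,\dots,r_n) \equiv D(r_1)\vee\dots\vee D(r_n)$ is unfolded as stated.
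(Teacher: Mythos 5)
Your proof is correct and follows the same route as the paper's: apply Lemma~\ref{propsheaf} to $P=\norm{T}$, rewrite $\norm{T}^{D(r_1,\dots,r_n)}$ as the product $\prod_i\norm{T}^{D(r_i)}$ via the universal property of the disjunction, use choice over each $\Spec(R_{r_i})$ to push the truncation inside the exponential, and then commute propositional truncation with finite products and absorb the inner truncation. The paper states these same steps, just more tersely.
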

    
    \begin{proof}
      It follows from Lemma \ref{propsheaf} that the proposition $\norm{T}_S$ is
      $$\norm{\Sigma_{(r_1,\dots,r_n):\Um(R)}\norm{T}^{D(r_1,\dots,r_n)}} = \norm{\Sigma_{(r_1,\dots,r_n):\Um(R)}\norm{T}^{D(r_1)}\times\dots\times\norm{T}^{D(r_n)}}$$
      and we get the result using the fact that choice holds for each $D(r_i)$, so that
      \[\norm{T}^{D(r_1)}\times\dots\times\norm{T}^{D(r_n)} = \norm{T^{D(r_1)}}\times\dots\times\norm{T^{D(r_n)}} =
        \norm{T^{D(r_1)}\times\dots\times T^{D(r_n)}}\]
    \end{proof}
    
    \begin{proposition}
      In the sheaf model, $R$ is a local ring.
    \end{proposition}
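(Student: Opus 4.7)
The plan is to check both conditions of \Cref{local-ring} in the sheaf model: namely $1 \neq 0$ and the property that if $x+y$ is invertible then $x$ is invertible or $y$ is invertible. Since $R$ is modal by \Cref{modal}, the ring $R$ interpreted in the sheaf model agrees with the ambient $R$, and the proposition $\inv(r)$ is modal (it is a subtype of the modal type $R$ cut out by a modal equation), so these ``atomic'' pieces do not change in the passage to the sheaf model.

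For $1 \neq_S 0$: by definition the negation uses the sheaf-level bottom, and the previous proposition tells us that $\perp_S$ is the proposition $1 =_R 0$. Hence $1 \neq_S 0$ unfolds to the tautology $(1 =_R 0) \to (1 =_R 0)$, which holds trivially.

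For the main property, suppose $x, y : R$ satisfy that $x + y$ is invertible. Because invertibility is modal, we may pick a witness $z : R$ with $(x+y)z = 1$ in the ambient model, so that $xz + yz = 1$ and therefore $(xz, yz) \in \Um(R)$. I want to establish $\inv(x) \lor_S \inv(y)$ in the sheaf model. Using the explicit description of sheafification given by \Cref{propsheaf} (applied to the proposition $\inv(x) \lor \inv(y)$), or equivalently \Cref{norm} applied after commuting truncation past the modality, it suffices to produce a unimodular sequence $(r_1, \dots, r_n) : \Um(R)$ such that on each $D(r_i)$ the (presheaf-level) disjunction $\inv(x) \lor \inv(y)$ is inhabited. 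Take $(r_1, r_2) \colonequiv (xz, yz)$: on $D(xz)$ the element $xz = x \cdot z$ is invertible, so $x$ has inverse $z \cdot (xz)^{-1}$ and we obtain the left disjunct; symmetrically, on $D(yz)$ we obtain the right disjunct.

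The main obstacle is purely bookkeeping: one has to trace how the logical connectives ``$\neq$'' and ``$\lor$'' appearing in the definition of ``local ring'' get reinterpreted under the sheafification, and verify that the formulas from \Cref{propsheaf} and \Cref{norm} are applicable (in particular, that the types to be truncated are modal, which they are since they are built from $R$). The substantive algebraic content is essentially trivial, amounting only to the classical observation that an inverse $z$ of $x+y$ furnishes the Zariski cover $\{D(xz), D(yz)\}$ on which the locality property splits.
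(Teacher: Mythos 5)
Your proof is correct and takes essentially the same approach as the paper, whose proof is the one-liner ``This follows from \Cref{norm} and Lemma \ref{Um}.'' Your elaboration --- reducing $1\neq_S 0$ to a tautology via $\perp_S = (1=_R 0)$, noting that invertibility of $x+y$ is a modal proposition so the witness $z$ is available in the ambient model, and producing the unimodular sequence $(xz,yz)$ on which $\inv(x)\lor\inv(y)$ splits, then applying \Cref{norm}/\Cref{propsheaf} --- is exactly the content compressed into the paper's terse proof.
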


    \begin{proof}
      This follows from \Cref{norm} and Lemma \ref{Um}.
    \end{proof}

    \begin{lemma}\label{localfp}
      If $A$ is a $R$-algebra which is modal and there exists $r_1,\dots,r_n$ in $\Um(R)$ such that each
      $A^{D(r_i)}$ is a f.p.\ $R_{r_i}$-algebra, then $A$ is a f.p.\ $R$-algebra.
    \end{lemma}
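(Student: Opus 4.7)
My plan is to unfold the meaning of ``finitely presented'' in the sheaf model using \Cref{norm}, and observe that the hypothesis already packages as precisely the required local data, so that no additional glueing is needed beyond what \Cref{norm} already provides.

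\emph{Step 1 (unpack the hypothesis).} The hypothesis says that each $A^{D(r_i)}$ is a f.p.\ $R_{r_i}$-algebra in the sheaf model. Being f.p.\ is a propositional truncation, so by \Cref{norm} applied inside $D(r_i)$, and using \Cref{Um} to identify $\Um(R_{r_i})$ with $\Um(R)^{D(r_i)}$, this mere existence unfolds to: for each $i$, there merely exists a unimodular sequence $(s_{i,j})_{j=1}^{k_i}$ in $R_{r_i}$ together with, for each $j$, an actual (non-truncated) presentation of $A^{D(r_i\,s_{i,j})}$ as an $R^{D(r_i\,s_{i,j})}$-algebra.

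\emph{Step 2 (assemble a global cover).} The combined family $(r_i\,s_{i,j})_{i,j}$ is unimodular in $R$: from $\sum_i u_i r_i = 1$ in $R$ and, for each $i$, $\sum_j v_{i,j} s_{i,j} = 1$ in $R_{r_i}$ (clearing denominators by a power of $r_i$), one multiplies the two identities to obtain $\sum_{i,j} \lambda_{i,j}\,(r_i\,s_{i,j}) = 1$ in $R$. On each piece $D(r_i\,s_{i,j})$ of this cover we already have a non-truncated presentation of $A$ from Step 1.

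\emph{Step 3 (repack via \Cref{norm}).} A unimodular cover $(r_i\,s_{i,j})$ of $R$ together with actual presentations of $A$ on each piece is exactly the datum that, by \Cref{norm} applied in the reverse direction, witnesses the sheaf-model proposition ``$A$ is a f.p.\ $R$-algebra''. This finishes the proof.

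\emph{Main obstacle.} The delicate bookkeeping is the identification of ``an actual presentation of $A^{D(r)}$ as an $R^{D(r)}$-algebra'' with ``the restriction to $D(r)$ of an actual presentation of $A$ as an $R$-algebra''. This is what makes Steps~1 and~3 legitimate applications of \Cref{norm} in opposite directions. It rests on the type of presentations $Pr_{n,m}(A)$ being modal whenever $A$ is modal (the observation made immediately before the lemma statement, together with \Cref{modal}), so that the exponential $(-)^{D(r)}$ commutes with forming the presentation type for external $n,m$, and on padding arguments to uniformize $n,m$ where needed.
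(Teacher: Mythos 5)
There is a genuine gap, and it is a circularity. Lemma \ref{localfp} is a statement \emph{internal to the presheaf model}: the conclusion ``$A$ is a f.p.\ $R$-algebra'' means the presheaf-level truncation $\norm{\Sigma_{n,m}\Phi(n,m,A)}$, not the sheafified one $\norm{\cdots}_S$. Indeed, the whole point of the lemma (as used in the Corollary that follows it) is to pass from the sheaf-level notion of ``f.p.'' to the presheaf-level one; the converse direction is trivial. Your Step~3 assembles a unimodular cover $(r_i s_{i,j})$ of $R$ with a presentation of $A$ over each piece, and then ``repacks'' this via \Cref{norm}. But \Cref{norm} in the reverse direction only yields $\norm{\Sigma_{n,m}\Phi(n,m,A)}_S$, i.e.\ the \emph{sheaf-model} proposition that $A$ is f.p.\ — which is exactly what you started from, up to refining the cover. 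You have not produced the presheaf-level truncation, so the target of the lemma is not reached.

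The missing content is the actual glueing: from presentations of $A^{D(r_i)}$ as $R_{r_i}$-algebras that agree on overlaps, one must construct a single finitely presented $R$-algebra $B$ with $B_{r_i} \cong A^{D(r_i)}$. This is a nontrivial constructive commutative-algebra fact — the local--global principle of Lombardi and Quitté — and it is exactly what the paper's proof invokes. Once $B$ exists, one uses \Cref{modal} to see that $B$ is modal and locally equal to $A$, hence equal to $A$ since $A$ is modal. Your ``Main obstacle'' paragraph correctly senses that identifying ``a presentation of $A^{D(r)}$'' with ``the restriction of a presentation of $A$'' is the crux, but it misdiagnoses this as bookkeeping about modality of $Pr_{n,m}$: modality of $Pr_{n,m}(A)$ only shows that a \emph{given} global presentation restricts well, not that compatible local presentations come from a global one. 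The latter is the substantive patching step that your argument omits.
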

    
    \begin{proof}
      Using the local-global principles presented in \cite{lombardi-quitte}, we can patch together the f.p.\ $R_{r_i}$-algebra
      to a global f.p.\ $R$-algebra. This f.p.\ $R$-algebra is modal by Proposition \ref{modal}, and is locally equal to $A$
      and hence equal to $A$ since $A$ is modal.
    \end{proof}

    \begin{corollary}
      The type of f.p.\ $R$-algebras is modal and is the type of f.p.\ $R$-algebras in the sheaf model.
    \end{corollary}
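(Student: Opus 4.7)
I would first establish part (b) --- that the internal ``type of finitely presented $R$-algebras'' computed in the sheaf model coincides, externally, with $\Alg{R}_{fp}$ --- and then deduce part (a) (modality) as an immediate consequence, since any type that arises from an internal construction in the sheaf model is automatically a sheaf.

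For (b), I would unfold the internal notion in the sheaf model. An $R$-algebra there is just a modal $R$-algebra, since $R$ is modal (Proposition \ref{modal}) and the operations come from modal maps $R\times R \to R$. The internal predicate ``$A$ is finitely presented'' is the sheaf-model propositional truncation $\norm{\Sigma_{n,m:\nats}\Sigma_{\vec f : A[X_1,\dots,X_n]^m}\,A = R[X_1,\dots,X_n]/(\vec f)}$. Writing $T$ for the untruncated $\Sigma$-type, $T$ is modal because $\nats$, $A$, the polynomial modules $A[X]^m$, and the equation type are all modal (the first by axiom 4 together with Proposition \ref{modal}, the rest by modality of $A$ and closure of modal types under modal operations). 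Proposition \ref{norm} then rewrites the sheaf-model truncation as $\norm{\Sigma_{\vec r \in \Um(R)}\prod_i T^{D(r_i)}}$, which, unraveled using Lemma \ref{Um} and the equality $A^{D(r_i)} = A_{r_i}$ (a consequence of modality of $A$ and synthetic quasi-coherence), expresses precisely that on some unimodular cover $r_1,\dots,r_n \in \Um(R)$ each $A_{r_i}$ is a finitely presented $R_{r_i}$-algebra. By Lemma \ref{localfp}, this is equivalent for modal $A$ to $A$ being externally f.p., and externally f.p.\ implies modal by Proposition \ref{modal}, closing the loop. Hence the internal sheaf-model type of f.p.\ $R$-algebras is $\Alg{R}_{fp}$.

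Part (a) is then immediate: $\Alg{R}_{fp}$ has just been identified with a type internal to the sheaf model, hence is modal. The main delicate point to get right is the interaction between the sheaf natural numbers $\nats_S$ and the external $\nats$: internally the integers $n,m$ in a presentation live a priori in $\nats_S$, but axiom 4 (which yields modality of $\nats$, since $D(r) = \Spec(R_r)$ and a join of modalities is controlled by the pieces) ensures that on each $D(r_i)$ they reduce to honest external naturals, producing local presentations of exactly the form fed into the hypothesis of Lemma \ref{localfp}. Checking this bookkeeping, together with the computation $T^{D(r_i)} = $ ``$A_{r_i}$ is f.p. over $R_{r_i}$'' via axiom 2 and axiom 3, will be the only nontrivial work.
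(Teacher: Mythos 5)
Your overall architecture --- reduce to a local statement over a unimodular cover via Proposition \ref{norm}, compute each localized piece using Lemma \ref{Um} and quasi-coherence, and then glue with Lemma \ref{localfp} --- matches the paper's. But the proposal rests on a false step: the claim that $\nats$ is modal, and hence that the untruncated $\Sigma$-type $T$ indexed over $\nats\times\nats$ is modal, so that the sheaf-model predicate can be written directly with $\nats$ rather than the sheafification $\nats_S$.

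The constant presheaf $\nats$ is \emph{not} modal. Axiom 4 only asserts that $\nats\to\nats^{\Spec(A)}$ is an equivalence for $\Spec(A)$ affine, i.e.\ for a \emph{single} $D(r_i)=\Spec(R_{r_i})$; it says nothing about $\nats^{D(\vec r)}$ for $\vec r$ of length $\geq 2$ or length $0$, since $D(r_1)\vee\dots\vee D(r_n)$ is not of the form $\Spec(A)$. Your ``join of modalities'' heuristic would need the empty case to work as well, and there it fails decisively: the empty sequence is unimodular exactly when $1=_R0$, and then $D(\vec r)=\perp$, so every modal type must be contractible at that stage (this is precisely the paper's observation that $\perp_S$ is $1=_R0$); but $\nats$ is a nontrivial constant presheaf, hence not contractible at the zero-ring stage. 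This is exactly why the paper introduces $\nats_S$ with its unit $\eta:\nats\to\nats_S$ and does \emph{not} identify the two.

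The paper's actual proof therefore writes the sheaf-model predicate with $\Sigma_{n,m:\nats_S}\Phi(n,m,A)_S$, and replaces $\nats_S$ by $\nats$ only \emph{inside the sheaf truncation}, using that $\eta$ is a surjection in the sheaf sense (so $\norm{\Sigma_{n:\nats_S}\dots}_S = \norm{\Sigma_{n:\nats}\dots}_S$), together with $\Phi(\eta n,\eta m,A)_S = \Phi(n,m,A)$, which holds because $\Phi(n,m,A)$ only quantifies over modal algebras. Only after this reduction do Proposition \ref{norm}, choice, axiom 4 (now applied to single principal opens $D(r_i)$, where it is legitimate), and Lemma \ref{localfp} enter. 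If you replace your appeal to ``modality of $\nats$'' with this passage through $\eta$, your proof aligns with the paper's; as written, the central modality claim is a genuine gap.
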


    \begin{proof}
          For any $R$-algebra $A$, we can form a type $\Phi(n,m,A)$ expressing that $A$ has a presentation for some $v:Pr_{n,m}(R)$,
    as the type stating that there is some map $\alpha:R[X_1,\dots,X_n]\rightarrow A$ and that $(A,\alpha)$ is universal such that
    $\alpha$ is $0$ on all elements of $v$. We can also look at this type $\Phi(n,m,A)_S$ in the sheaf model. Using the translation
    from \cite{Quirin16,CRS21}, we see that the type $\Phi(\eta n,\eta m,A)_S$ is exactly the type stating that $A$ is presented by
    some $v:Pr_{n,m}(A)$ among the modal $R$-algebras. This is actually equivalent to $\Phi(n,m,A)$ since any f.p. $R$-algebra is modal.

     If $A$ is a modal $R$-algebra which is f.p. in the sense of the sheaf model, this means that we have
     $$\norm{\Sigma_{n:\nats_S}\Sigma_{m:\nats_S}\Phi(n,m,A)_S}_S$$
     This is equivalent to
     $$\norm{\Sigma_{n:\nats}\Sigma_{m:\nats}\Phi(\eta n,\eta m,A)_S}_S$$
     which in turn is equivalent to
     $$\norm{\Sigma_{n:\nats}\Sigma_{m:\nats}\Phi(n,m,A)}_S$$
     Using Lemma \ref{localfp} and Proposition \ref{norm}, this is equivalent to $\norm{\Sigma_{n:\nats}\Sigma_{m:\nats}\Phi(n,m,A)}$.
    \end{proof}

     Note that the type of f.p. $R$-algebra is universe independent.

    \begin{proposition}
      For any f.p.\ $R$-algebra $A$, the type $\Spec(A)$ is modal and satisfies the axiom of Zariski local choice in
      the sheaf model.
    \end{proposition}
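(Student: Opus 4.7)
The plan is to handle modality and Zariski-local choice separately, reusing the translation results already established for the sheaf model.

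For modality of $\Spec(A)$: by \Cref{modal}, the ring $R$ is modal, and the class of modal types is closed under $\Pi$-types, $\Sigma$-types and identity types, since each modality is defined by orthogonality to a map $T\mapsto T^{D(\vec{r})}$, which commutes with all these limits. Writing $\Spec(A) = \Sigma_{\varphi:R^A}(\text{$\varphi$ is an $R$-algebra homomorphism})$, the outer type $R^A$ is modal and the predicate is a conjunction of equalities in the modal type $R$, so $\Spec(A)$ is modal.

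For Zariski-local choice in the sheaf model: suppose $B:\Spec(A)\to \mU$ with $\Pi_{x:\Spec(A)}\norm{B(x)}_S$ in the sheaf model. The goal is to produce, under sheaf truncation, $(f_1,\dots,f_n)\in\Um(A)$ together with sections $s_i:\Pi_{x:D(f_i)}B(x)$. First I would apply \Cref{norm} pointwise to unfold $\norm{B(x)}_S$ as the standard (presheaf) truncation of a $\Sigma$-type over $\Um(R)$ recording local data. Axiom 3 of the presheaf model then commutes $\Pi_{x:\Spec(A)}$ past this standard truncation, and axiom 4 forces the length $n:\Spec(A)\to\nats$ to be a single global constant. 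This leaves, under one $\norm{-}$, a natural number $n$, functions $r_1,\dots,r_n:\Spec(A)\to R$ that are pointwise unimodular, and local section data. By axiom 2 (SQC), each $r_i$ corresponds to an element $f_i\in A$; by \Cref{Um}, pointwise unimodularity in $R$ is equivalent to $(f_1,\dots,f_n)\in\Um(A)$. Rearranging the local section data by swapping $\Pi_{x:\Spec(A)}$ with $\Pi_i$, and recognizing $D(r_i(x))$ as $D(f_i)(x)$, yields $s_i:\Pi_{x:D(f_i)}B(x)$. The result then holds under $\norm{-}$, hence \emph{a fortiori} under $\norm{-}_S$.

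The main bookkeeping hurdle will be keeping straight which mere existence is meant at each step -- presheaf $\norm{-}$ versus sheaf $\norm{-}_S$ -- and checking that the pointwise application of \Cref{norm} interacts correctly with the subsequent use of axiom 3 over $\Spec(A)$. The genuine content of the argument, namely turning a family of pointwise unimodular sequences in $R$ into a single unimodular sequence in $A$ with coherent local sections, is precisely what \Cref{Um}, SQC and the constancy of $\nats$-valued maps on $\Spec(A)$ are designed to supply, so I expect no substantial obstruction beyond careful unfolding of definitions.
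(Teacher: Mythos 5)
Your proposal is correct and follows essentially the same route as the paper: apply Proposition~\ref{norm} to unfold $\norm{-}_S$, commute the $\Pi$ over $\Spec(A)$ past the presheaf truncation using the choice axiom, and use Lemma~\ref{Um} to convert the resulting family of unimodular sequences in $R$ into a single unimodular sequence in $A$. One small stylistic difference: you invoke axioms~2 and~4 directly to get the global constant $n$ and to convert $r_i:\Spec(A)\to R$ into $f_i:A$, but this is already packaged inside Lemma~\ref{Um} (whose proof uses exactly those axioms), so the paper simply cites that lemma as a black box. Your explicit argument for modality of $\Spec(A)$ — as a $\Sigma$ of modal equalities in $R$ over the modal type $R^A$, using that the modalities $(-)^{D(\vec r)}$ are lex and commute with $\Pi$, $\Sigma$ and identity — fills in a step the paper leaves implicit and is a reasonable way to do it.
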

    
    \begin{proof}
      Let $P(x)$ be a family of types over $x:\Spec(A)$ and assume $\Pi_{x:\Spec(A)}\norm{P(x)}_S$. By Proposition \ref{norm},
      this means $\Pi_{x:\Spec(A)}\norm{\Sigma_{(r_1,\dots,r_n):Um}P(x)^{D(r_1)}\times\dots\times P(x)^{D(r_n)}}$. The result follows
      then from choice over $\Spec(A)$ and Lemma \ref{Um}.
    \end{proof}      

%It is then natural to ask how the global section operation behaves for this model, and we show that
%it satisfies a property similar to Zariski local choice
%\rednote{We do not understand what the global section operation has to do with Zariski choice}. 

    \subsection{Presheaf models of univalence}

    We recall first how to build presheaf models of univalence \cite{CCHM,survey},
    and presheaf models satisfying the 3 axioms of the previous section.

The constructive models of univalence are presheaf models parametrised by an interval object $\II$
(presheaf with two global distinct elements $0$ and $1$ and which is tiny) and a classifier object
$\Phi$ for cofibrations. The model is then obtained as an internal model of type theory inside the
presheaf model. For this, we define $C:U\rightarrow U$, uniform in the universe $U$, operation
closed by dependent products, sums and such that $C(\Sigma_{X:U}X)$ holds. It further satisfies, for $A:U^{\II}$, the transport principle
$$
(\Pi_{i:\II}C(Ai))\rightarrow (A0\rightarrow A1)
$$
We get then a model of univalence by interpreting a type as a presheaf $A$ together with an element
of $C(A)$.

 This is over a base category $\BB$.
 
 If we have another category $\CC$, we automatically get a new model of univalent type theory by
 changing $\BB$ to $\BB\times\CC$.

 A particular case is if $\CC$ is the opposite of the category of f.p.\@ $k$-algebras, where $k$ is a
 fixed commutative ring.

 We have the presheaf $R$ defined by $R(J,A) = Hom(k[X],A)$ where $J$ object of $\BB$ and $A$ object of $\CC$.

  The presheaf $\Gm$ is defined by $\Gm(J,A) = Hom(k[X,1/X],A) = A^{\times}$, the set of invertible elements of $A$.

\subsection{Propositional truncation}

    We start by giving a simpler interpretation of propositional truncation. This will simplify
    the proof of the validity of choice in the presheaf model.

    We work in the presheaf model over a base category $\BB$ which interprets univalent type theory,
    with a presheaf $\Phi$ of cofibrations. The interpretation of the propositional
    truncation $\norm{T}$ {\em does not} require the use of the interval $\II$.

    We recall that in the models, to be contractible can be formulated as having an operation
    $\ext(\psi,v)$ which extends any partial element $v$ of extent $\psi$ to a total element.

    The (new) remark is then that to be a (h)proposition can be formulated as having instead
    an operation $\ext(u,\psi,v)$ which, now {\em given}
    an element $u$, extends any partial element $v$ of extent $\psi$ to a total element.

\medskip    

Propositional truncation is defined as follows. An element of $\norm{T}$ is either of the form
$\inc(a)$ with $a$ in $T$, or of the form $\ext(u,\psi,v)$ where $u$ is in $\norm{T}$ and $\psi$
in $\Phi$ and $v$ a partial element of extent $\psi$.

In this definition, the special constructor $\ext$ is a ``constructor with restrictions'' which
satisfies $\ext(u,\psi,v) = v$ on the extent $\psi$ \cite{CoquandHM18}.

\subsection{Choice}

We prove choice in the presheaf model: if $A$ is a f.p.\@ algebra over $R$ then we have a map
$$
l:(\Pi_{x:\Spec(A)}\norm{P})\rightarrow \norm{\Pi_{x:\Spec(A)}P}
$$

For defining the map $l$, we define $l(v)$ by induction on $v$.
The element $v$ is in $(\Pi_{x:\Spec(A)}\norm{P})(B)$, which can be seen as
an element of $\norm{P}(A)$. If it is $\inc(u)$ we associate $\inc(u)$ and 
if it is $\ext(u,\psi,v)$ the image is $\ext(l(u),\psi,l(v))$.

\subsection{$1$-topos model}

For any small category $\CC$ we can form the presheaf model of type theory over the base category $\CC$ \cite{hofmann,huber-phd-thesis}.
%\rednote{Reference to Hoffmann/Simon's thesis?}.

\medskip

We look at the special case where $\CC$ is the opposite of the category of finitely presented $k$-algebras for a fixed
ring $k$.

    In this model we have a presheaf $R(A) = Hom(k[X],A)$ which has a ring structure.

    In the {\em presheaf} model, we can check that we have $\neg\neg (0=_R 1)$. Indeed, at any stage $A$ we have
    a map $\alpha:A\rightarrow 0$ to the trivial f.p. algebra $0$, and $0 =_R 1$ is valid at the stage $0$.

    The previous internal description of the sheaf model applies as well in the $1$-topos setting.

    \medskip

    However the type of modal types in a given universe is not modal in this $1$-topos setting. This problem can actually be seen as a
    motivation for introducing the notion of stacks, and is solved when we start from a constructive model of univalence.

    \subsection{Some properties of the sheaf model}

    \subsubsection{Quasi-coherence}

A module $M$ in the sheaf model defined at stage $A$, where $A$ is a f.p.\@ $k$-algebra, is given by a sheaf over the category
of elements of $A$. It is thus given by a family of modules $M(B,\alpha)$, for $\alpha:A\rightarrow B$, and restriction maps
$M(B,\alpha)\rightarrow M(C,\gamma\alpha)$ for $\gamma:B\rightarrow C$. In general this family is not determined by
its value $M_A = M(A,\idd_A)$ at $A,\idd_A$. The next proposition expresses when this is the case in an internal way
(this characterisation is due to Blechschmidt \cite{ingo-thesis}).

\begin{proposition}
  $M$ is internally quasi-coherent\footnote{In the sense that the canonical map $M\otimes A\rightarrow M^{\Spec(A)}$ is an isomorphism for any
  f.p. $R$-algebra $A$.} iff we have $M(B,\alpha) = M_A\otimes_A B$ and the restriction map for
  $\gamma:B\rightarrow C$ is $M_A\otimes_A\gamma$.
\end{proposition}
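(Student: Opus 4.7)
The plan is to unpack both sides externally at stage $A$ and verify the equivalence by a direct computation, closely paralleling the justification of synthetic quasi-coherence for $R$ given earlier in this section. I would use throughout that, by the corollary above, an internal f.p.\@ $R$-algebra at stage $A$ corresponds externally to an f.p.\@ $A$-algebra, and that $\Spec(B)$ at a further stage $(C, \gamma \colon A \to C)$ is the set of $A$-algebra maps from $B$ to $C$.

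For the forward direction, I would assume $M$ is internally quasi-coherent and fix $\alpha \colon A \to B$. Then I would apply the isomorphism $M \otimes_R B \xrightarrow{\sim} M^{\Spec B}$ at the further stage $(B, \alpha)$, where $\Spec B$ carries a tautological point $\mathrm{id}_B$. Evaluation at this tautological point sends a section of $M^{\Spec B}$ to an element of $M(B, \alpha)$, and tracing through the definition of the canonical map shows that the composite sends $m \otimes b \in M_A \otimes_A B$ to $b \cdot \mathrm{res}_\alpha(m)$. Bijectivity of the internal isomorphism, combined with the fact (the same Yoneda-type observation used earlier for $R$) that a section of $M^{\Spec B}$ is determined by its value at the tautological point, forces the induced map $M_A \otimes_A B \to M(B, \alpha)$ to be an isomorphism; naturality in $\alpha$ then identifies the restriction maps with $M_A \otimes_A \gamma$.

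For the reverse direction, I would assume $M(B, \alpha) = M_A \otimes_A B$ with restrictions $M_A \otimes_A \gamma$, and check that the canonical internal map $M \otimes_R B \to M^{\Spec B}$ is an isomorphism at every further stage $(C, \gamma)$. Following the explicit description used in the $R$-case, an element of $M^{\Spec B}$ at stage $(C, \gamma)$ is a uniform family $(f \colon C \to D, \, s) \mapsto l_{f, s} \in M(D, \gamma f) = M_A \otimes_A D$, indexed by $A$-algebra maps $s \colon B \to D$ and subject to the usual compatibility under morphisms $D \to D'$. I would then write down the inverse of the canonical map by sending such a family to its value at the tautological instance, and verify that the two composites are the identity by exactly the calculation already carried out for $R$.

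The main obstacle will be matching the internal tensor product $M \otimes_R B$ with its external pointwise description, since tensor products of modules in a sheaf topos generally require sheafification. Under the hypothesis of the reverse direction, however, $M$ and $B$ are both of the form $(-) \otimes_A -$, so the tensor can be computed pointwise as $M_A \otimes_A B \otimes_A D$ at stage $(D, \gamma f)$, and all bookkeeping reduces to the case already handled for $R$.
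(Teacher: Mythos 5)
The paper does not actually prove this proposition: it is stated with a citation to Blechschmidt's thesis, immediately followed by the subsection on projective space, so there is no in-paper proof to compare against. Your outline is nevertheless a reasonable adaptation of the paper's verification of synthetic quasi-coherence for $R$, and I will assess it on its own terms.

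The main genuine gap is one you yourself flag but do not close. For the reverse direction you observe that $M$ and $\mathcal B$ are both of the form $(-)\otimes_A(-)$, so the tensor is computed pointwise and is already a sheaf; good. But for the forward direction you silently identify $(M\otimes_R\mathcal B)$ at the base stage with $M_A\otimes_A B$, which presupposes that the pointwise tensor presheaf is already a sheaf. That is precisely the thing that is \emph{not} automatic, and it is where the work lies. It can be repaired: internal quasi-coherence applied to $B=R_f$ first yields weak quasi-coherence of $M$ (so $M(C)_g = M(C_g)$ externally for all stages $C$ and $g:C$); then, for a cover $D(g_1),\dots,D(g_n)$ of $\Spec C$, the pointwise tensor restricts to $(M(C)\otimes_A B)_{g_i}$, and exactness of the patching sequence for the module $M(C)\otimes_A B$ over $C$ (local-global principle, as in \cite{lombardi-quitte}) shows the pointwise tensor is a sheaf. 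Without this or an equivalent argument, the chain $(M\otimes\mathcal B)(A)\cong M^{\Spec\mathcal B}(A)\cong M(B,\alpha)$ does not land you at $M_A\otimes_A B$.

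There is also a smaller issue of phrasing that, read literally, breaks the Yoneda step. You say you would ``apply the isomorphism at the further stage $(B,\alpha)$'' and then evaluate at the tautological point $\mathrm{id}_B$. But $\Spec\mathcal B$ restricted to the slice over $(B,\alpha)$ is represented by $(B\otimes_A B, b\mapsto b\otimes 1)$, not by $(B,\alpha)$; a section of $M^{\Spec\mathcal B}$ at stage $(B,\alpha)$ is \emph{not} determined by its value at $(\mathrm{id}_B,\mathrm{id}_B)$, since a general index $(f:B\to C, s:B\to C)$ does not factor through it. The correct move --- and the one the paper makes for $R$ with $\psi(l)=l_{\iota,s_\xi}$ --- is to stay at stage $A$, where a section is a natural family $l_{f:A\to C,\,s:\mathrm{Hom}_A(B,C)}$, and to observe that every $(f,s)$ factors uniquely through $(\alpha,\mathrm{id}_B)$ via $s$ itself. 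That gives the isomorphism $M^{\Spec\mathcal B}(A)\cong M(B,\alpha)$. I suspect this is what you meant, but it is worth stating precisely since the wrong version of the Yoneda step does not go through.
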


    \subsubsection{Projective space}

We have defined $\bP^n$ to be the set of lines in $V = R^{n+1}$, so we have
$$
\bP^n ~=~ \Sigma_{L:V\rightarrow \Omega}[\exists_{v:V}\neg (v = 0)\wedge L = R v]
$$
The following was noticed in \cite{kockreyes}.

\begin{proposition}
  $\bP^n(A)$ is the set of submodules of $A^{n+1}$ factor direct in $A^{n+1}$ and of rank $1$.
\end{proposition}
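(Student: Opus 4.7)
The plan is to unfold the internal definition of $\bP^n$ in the sheaf model and translate it into classical algebraic data. At an external stage $A$, an element of $\bP^n(A)$ is a sub-$R$-module bundle $L$ of $V = R^{n+1}$ over the slice $\CC/A$, equipped with a proof of the proposition $\exists_{v:V}(\neg(v=0) \wedge L = Rv)$. First I would interpret this proposition externally: using \Cref{norm} together with the identification $\Um(R)^{\Spec(A)} = \Um(A)$ of \Cref{Um}, the existential unfolds into the data of a unimodular sequence $(r_1, \dots, r_m) \in \Um(A)$ and, for each $i$, a vector $v_i \in A[1/r_i]^{n+1}$ such that $L$ restricted to $D(r_i) = \Spec(A[1/r_i])$ coincides with $A[1/r_i] \cdot v_i$, and $v_i \neq 0$ there. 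Because $R$ is a local ring in the sheaf model, the condition $v_i \neq 0$ forces, after possibly refining the cover, some coordinate $(v_i)_{k_i}$ to be invertible in $A[1/r_i]$.

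From such data I would construct the candidate submodule $M \colonequiv L(A, \mathrm{id}_A) \subseteq A^{n+1}$. Each local presentation $L|_{D(r_i)} = A[1/r_i] \cdot v_i$ with an invertible coordinate provides a canonical $A[1/r_i]$-linear idempotent $\pi_i : A[1/r_i]^{n+1} \to A[1/r_i]^{n+1}$ given by $x \mapsto ((v_i)_{k_i})^{-1}\, x_{k_i} \cdot v_i$, whose image is exactly $L|_{D(r_i)}$. Since the local generators are unique up to a unit, the $\pi_i$ automatically agree on overlaps, and by the local-global principle for modules over $A$ from \cite{lombardi-quitte} they glue to a global idempotent $\pi : A^{n+1} \to A^{n+1}$ whose image is $M$. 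Thus $M$ is a direct summand of $A^{n+1}$ and, being locally free of constant rank $1$, has rank~$1$.

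For the converse direction, given a rank-$1$ direct factor $M \subseteq A^{n+1}$, projectivity combined with the local-global structure theorem for finitely generated projective modules yields a unimodular $(r_1, \dots, r_m) \in \Um(A)$ over which each $M[1/r_i]$ is free of rank $1$; a generator $v_i$ necessarily has an invertible coordinate, since $M[1/r_i]$ is a direct summand of $A[1/r_i]^{n+1}$. The assignment $B \mapsto (M \otimes_A B) \hookrightarrow B^{n+1}$ then defines a subsheaf $L$ of $V$ in the slice over $A$, and the local data constructed above witnesses the internal proposition $\exists_{v:V}(\neg(v=0)\wedge L=Rv)$, giving a line in the internal sense. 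Since both maps are identity on the underlying submodule $M = L(A,\mathrm{id}_A)$, they are mutually inverse.

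The main obstacle I expect is the coherence verification: showing that the local idempotents $\pi_i$ agree on intersections regardless of the choices of $v_i$ and $k_i$, and that the whole correspondence is independent of the chosen unimodular cover. This requires a careful use of the fact that ``direct summand'' and ``locally free of rank~$1$'' are Zariski-local notions, together with the extraction of the invertible-coordinate condition from $v_i\neq 0$ via the generalized field property \Cref{generalized-field-property}, which legitimately holds because $R$ is local in the sheaf model.
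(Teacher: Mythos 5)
Your overall strategy matches the paper's: unfold the internal proposition via \Cref{norm} and \Cref{Um}, reduce to classical data over $A$, and invoke the local-global machinery of \cite{lombardi-quitte}. But there are two genuine gaps.

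First, the claim that the local idempotents $\pi_i : x \mapsto ((v_i)_{k_i})^{-1} x_{k_i}\cdot v_i$ ``automatically agree on overlaps'' because the generators $v_i$ are unique up to a unit is false. The idempotent $\pi_i$ depends on the chosen coordinate index $k_i$ as well as on $v_i$: its kernel is the coordinate hyperplane $\{x : x_{k_i}=0\}$, and different $k_i$ give genuinely different projectors with the same image. Uniqueness of $v_i$ up to a unit fixes the image but not the complement, and there is in general no consistent global choice of $k_i$. So the $\pi_i$ do not glue. The correct route (which the paper takes) is: use the local-global principle only to conclude that $L_A$ is finitely generated, then apply the structure theorem for finitely generated locally free modules (Theorem 5.14 of \cite{lombardi-quitte}) to conclude $L_A$ is a rank-$1$ direct summand. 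Equivalently, one can observe that $A^{n+1}/L_A$ is finitely presented and locally free, hence projective, hence the quotient map splits --- but this is a different argument than gluing a global idempotent from local ones.

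Second, your check that the two constructions are mutually inverse only verifies the composite starting from $M$. Going the other way, you take a sub-presheaf $L$ and send it to $L(A,\mathrm{id}_A)$ and then to the presheaf $B \mapsto L(A,\mathrm{id}_A)\otimes_A B$; for this to recover $L$ you need to know that $L$ is quasi-coherent, i.e.\ determined by its value at $(A,\mathrm{id}_A)$. This is not automatic for an arbitrary subobject of $V$ in the slice and is the substance of the sentence in the paper ``This condition implies that $L$ is a quasicoherent submodule of $R^{n+1}$.'' The paper justifies this separately after the proof (internally: $L = R$ as an $R$-module and $R$ is quasi-coherent, or by transport along isomorphism without univalence). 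Your proposal silently assumes it.
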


\begin{proof}
  $\bP^n$ is the set of pairs $L,0$ where $L:\Omega^V(A)$ satisfies the proposition $\exists_{v:V}\neg (v = 0)\wedge L = Rv$ at stage
  $A$. This condition implies that $L$ is a quasicoherent submodule of $R^{n+1}$ defined at stage $A$.
  It is thus determined by its value $L(A,\idd_A) = L_A$.

  Furthermore, the condition also implies that $L_A$ is locally free of rank $1$. By local-global principle \cite{lombardi-quitte},
  $L_A$ is finitely generated. We can then apply Theorem 5.14 of
  \cite{lombardi-quitte} to deduce that $L_A$ is factor direct in $A^{n+1}$ and of rank $1$.
\end{proof}

One point in this argument was to notice that the condition
$$
\exists_{v:V}\neg (v = 0)\wedge L = R v
$$
implies that $L$ is quasi-coherent. This would be direct in presence of univalence, since we would have then $L = R$ as a $R$-module
and $R$ is quasi-coherent. But it can also be proved without univalence by transport along isomorphism: a $R$-module which is
isomorphic to a quasi-coherent module is itself quasi-coherent.

\subsection{Global sections and Zariski global choice}

We let $\Box T$ the type of global sections of a globally defined sheaf $T$.
If $c = r_1,\dots,r_n$ is in $\Um(R)$ we let $\Box_c T$ be the type $\Box T^{D(r_1)}\times\dots\times\Box T^{D(r_n)}$.

Using these notations, we can state the principle of Zariski global choice
$$
(\Box \norm{T})\leftrightarrow \norm{\Sigma_{c:\Um(k)}\Box_c T}
$$

This principle is valid in the present model.

Using this principle, we can show that $\Box K(\Gm,1)$ is equal to the type of projective modules of rank $1$ over $k$
and that each $\Box K(R,n)$ for $n>0$ is contractible.
                                                                                  
%This should work over $\bP^n$ as well.

\appendix

\section{Negative results}
Here we collect some results of
the theory developed from the axioms
(\axiomref{loc}), (\axiomref{sqc}) and (\axiomref{Z-choice})
that are of a negative nature
and primarily serve the purpose of counterexamples.

We adopt the following definition from
\cite[Section IV.8]{lombardi-quitte}.

\begin{definition}%
  \label{zero-dimensional-ring}
  A ring $A$ is \notion{zero-dimensional}
  if for all $x : A$
  there exists $a : A$ and $k : \N$
  such that $x^k = a x^{k + 1}$.
\end{definition}

\begin{lemma}[using \axiomref{loc}, \axiomref{sqc}, \axiomref{Z-choice}]%
  \label{R-not-zero-dimensional}
  The ring $R$ is not zero-dimensional.
\end{lemma}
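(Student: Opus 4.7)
The plan is to assume $R$ is zero-dimensional and derive a contradiction by specializing to the ``generic point'' of $\A^1 = \Spec R[X]$. Unpacking the definition, zero-dimensionality says that for every $x : R$, there merely exist $a : R$ and $k : \N$ with $x^k = a x^{k+1}$.

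First I would extract a uniform exponent $K$. For each $x : R$ the set $P(x) \colonequiv \{\, K : \N \mid \exists a : R.\; x^K = a x^{K+1} \,\}$ is upward-closed (multiply the witnessing equation by $x$) and, by the zero-dimensionality hypothesis, merely inhabited. Applying \Cref{strengthened-boundedness} to the family $P$ over $\A^1$, I obtain a single $K : \N$ such that $\forall x : R.\; \exists a : R.\; x^K = a x^{K+1}$.

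Next I would feed this uniform statement into \axiomref{Z-choice} applied to the family $B(x) \colonequiv \sum_{a : R}(x^K = a x^{K+1})$ over $\A^1 = \Spec R[X]$. This yields unimodular $f_1, \dots, f_n : R[X]$ and functions $a_i : D(f_i) \to R$ with $x^K = a_i(x)\, x^{K+1}$ for all $x : D(f_i)$. By (\axiomref{sqc}) applied to the affine scheme $D(f_i) = \Spec R[X]_{f_i}$, each $a_i$ corresponds to an element $\alpha_i \in R[X]_{f_i}$, and the pointwise identity rearranges to $X^K(1 - \alpha_i X) = 0$ in $R[X]_{f_i}$.

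Finally I would use that $X$ is regular in $R[X]$ (by \Cref{polynomial-with-regular-value-is-regular} at $a = 1$, since $X(1) = 1$ is a unit and hence regular) and that regularity of $X$ passes to the localization $R[X]_{f_i}$, so $X^K$ is regular there. Hence $1 - \alpha_i X = 0$, i.e.\ $X$ is invertible in $R[X]_{f_i}$, which means $D(f_i) \subseteq D(X)$ as subsets of $\A^1$. Since the $D(f_i)$ cover $\A^1$, this forces $\A^1 = D(X)$, so every element of $R$ is invertible; this contradicts (\axiomref{loc}) since $0 : R$ is not invertible. The only real subtlety is the preservation of regularity under localization, a standard constructive algebra fact not explicitly recorded in the preliminaries; everything else threads the existing boundedness/choice machinery of the paper in a very direct way.
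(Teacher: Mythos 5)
Your proof is correct but takes a genuinely different route from the paper's after the common first step. Both proofs begin by using \Cref{strengthened-boundedness} to extract a uniform exponent $K$ with $\forall x : R.\;\exists a : R.\; x^K = a x^{K+1}$. From there the paper specializes to nilpotent elements: the consequence $f^{K+1}=0 \Rightarrow f^K=0$ shows the inclusion $\Spec R[X]/(X^K) \hookrightarrow \Spec R[X]/(X^{K+1})$ of infinitesimal disks is a bijection, which via \Cref{spec-embedding} (so just \axiomref{sqc}) would force the quotient map $R[X]/(X^{K+1}) \to R[X]/(X^{K})$ to be an isomorphism --- which it is not. You instead go back to the generic picture, apply \axiomref{Z-choice} and \axiomref{sqc} a second time to obtain identities $X^K = \alpha_i X^{K+1}$ in the localizations $R[X]_{f_i}$, and then use regularity of $X$ to conclude that $X$ is a unit on each piece of a Zariski cover of $\A^1$, hence globally, contradicting $0$ not being invertible. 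Both derive a contradiction from the same source; the paper's route is shorter and stops at the level of infinitesimal algebras, while yours spends an extra round of choice and localization but illustrates the regularity machinery (\Cref{polynomial-with-regular-value-is-regular}, or indeed the paper's example that monomials $X^k$ are regular, together with \Cref{units-products-regular}). The one gap you flag --- that regularity of $X$ passes to $R[X]_{f_i}$ --- is indeed true and short (if $X \cdot \tfrac{g}{f_i^m} = 0$ in $R[X]_{f_i}$ then $f_i^n X g = 0$ in $R[X]$ for some $n$, and regularity of $X$ in $R[X]$ gives $f_i^n g = 0$, hence $\tfrac{g}{f_i^m} = 0$), but you are right that it is not recorded in the preliminaries and would need to be argued inline.
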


\begin{proof}
  Assume that $R$ is zero-dimensional,
  so for every $f : R$ there merely is some $k : \N$ with $f^k \in (f^{k + 1})$.
  We note that $R = \A^1$ is an affine scheme and
  that if $f^k \in (f^{k + 1})$,
  then we also have $f^{k'} \in (f^{k' + 1})$ for every $k' \geq k$.
  This means that we can apply \Cref{strengthened-boundedness}
  and merely obtain a number $K : \N$
  such that $f^K \in (f^{K + 1})$ for all $f : R$.
  In particular, $f^{K + 1} = 0$ implies $f^K = 0$,
  so the canonical map
  $\Spec R[X]/(X^K) \to \Spec R[X]/(X^{K + 1})$
  is a bijection.
  But this is a contradiction,
  since the homomorphism $R[X]/(X^{K + 1}) \to R[X]/(X^K)$
  is not an isomorphism.
\end{proof}

\begin{example}[using \axiomref{loc}, \axiomref{sqc}, \axiomref{Z-choice}]%
  \label{non-existence-of-roots}
  It is not the case that
  every monic polynomial $f : R[X]$ with $\deg f \geq 1$ has a root.
  More specifically,
  if $U \subseteq \A^1$ is an open subset
  with the property that
  the polynomial $X^2 - a : R[X]$ merely has a root
  for every $a : U$,
  then $U = \emptyset$.
\end{example}

\begin{proof}
  Let $U \subseteq \A^1$ be as in the statement.
  Since we want to show $U = \emptyset$,
  we can assume a given element $a_0 : U$
  and now have to derive a contradiction.
  By \axiomref{Z-choice},
  there exists in particular a standard open $D(f) \subseteq \A^1$
  with $a_0 \in D(f)$
  and a function $g : D(f) \to R$
  such that ${(g(x))}^2 = x$ for all $x : D(f)$.
  By \axiomref{sqc},
  this corresponds to an element $\frac{p}{f^n} : R[X]_f$
  with ${(\frac{p}{f^n})}^2 = X : R[X]_f$.
  We use \Cref{polynomial-with-regular-value-is-regular}
  together with the fact that $f(a_0)$ is invertible
  to get that $f : R[X]$ is regular,
  and therefore $p^2 = f^{2n}X : R[X]$.
  Considering this equation over $R^{\mathrm{red}} = R/\sqrt{(0)}$ instead,
  we can show by induction that all coefficients of $p$ and of $f^n$ are nilpotent,
  which contradicts the invertibility of $f(a_0)$.
\end{proof}

\begin{remark}
  \Cref{non-existence-of-roots} shows that
  the axioms we are using here
  are incompatible with a natural axiom that is true
  for the structure sheaf of the big étale topos,
  namely that $R$ admits roots for unramifiable monic polynomials.
  The polynomial $X^2 - a$ is even separable for invertible $a$,
  assuming that $2$ is invertible in $R$.
  To get rid of this last assumption,
  we can use the fact that either $2$ or $3$ is invertible in the local ring $R$
  and observe that the proof of \Cref{non-existence-of-roots}
  works just the same for $X^3 - a$.
\end{remark}

We now give two different proofs that not all $R$-modules are weakly quasi-coherent
in the sense of \Cref{weakly-quasi-coherent-module}.
The first shows that the map
\[ M_f \to M^{D(f)} \]
is not always surjective,
the second shows that it is not always injective.

\begin{proposition}[using \axiomref{loc}, \axiomref{sqc}, \axiomref{Z-choice}]%
  \label{RN-non-wqc}
  The $R$-module $R^\N$ is not weakly quasi-coherent
  (in the sense of \Cref{weakly-quasi-coherent-module}).
\end{proposition}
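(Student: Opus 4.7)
The plan is to assume $R^{\mathbb{N}}$ is weakly quasi-coherent and derive a contradiction with \Cref{R-not-zero-dimensional} by showing that under this assumption every $f : R$ would satisfy the zero-dimensional equation of \Cref{zero-dimensional-ring}.

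First I would identify the codomain of the canonical comparison map. For any $f : R$, applying \axiomref{sqc} to the finitely presented algebra $R_f$ gives $(D(f) \to R) = R_f$, hence
\[
  (R^{\mathbb{N}})^{D(f)} = (D(f) \to (\mathbb{N} \to R)) = (\mathbb{N} \to (D(f) \to R)) = R_f^{\mathbb{N}}.
\]
Tracing the definition of the map from \Cref{localization-to-module-if-non-zero}, the canonical comparison $(R^{\mathbb{N}})_f \to R_f^{\mathbb{N}}$ is the obvious one sending a fraction $(s_n)_n / f^k$ to the termwise localized sequence $(s_n / f^k)_n$.

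Next, fix an arbitrary $f : R$ and assume this map is surjective. Consider the sequence $\sigma \in R_f^{\mathbb{N}}$ defined by $\sigma_n \colonequiv 1/f^n$. Since our goal $\bot$ is a proposition, from mere surjectivity we may extract an actual preimage: some $k : \mathbb{N}$ and $s : \mathbb{N} \to R$ with $(s_n/f^k)_n = (1/f^n)_n$ in $R_f^{\mathbb{N}}$. Specializing to the index $n = k + 1$, the equality $s_{k+1}/f^k = 1/f^{k+1}$ in $R_f$ unfolds, by the definition of equality in a localization, to the existence of some $l : \mathbb{N}$ with $f^{k+l+1} s_{k+1} = f^{k+l}$ in $R$. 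Setting $K \colonequiv k + l$ and $a \colonequiv s_{k+1}$ gives the witness $f^K = a \cdot f^{K+1}$ required by \Cref{zero-dimensional-ring}.

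Since $f : R$ was arbitrary, weak quasi-coherence of $R^{\mathbb{N}}$ would force $R$ to be zero-dimensional, contradicting \Cref{R-not-zero-dimensional}. The only conceptual step is recognizing that weak quasi-coherence imposes a uniform bound on the denominator across all coordinates of a sequence, which is precisely the uniformity ruled out by \Cref{R-not-zero-dimensional}; the remainder is bookkeeping with fractions in $R_f$, and no choice principle is needed since we produce a single witness at the one index $n = k+1$.
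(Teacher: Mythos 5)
Your proof is correct and follows essentially the same route as the paper's: identify $(R^\N)^{D(f)} = (R_f)^\N$ via \axiomref{sqc}, feed in the sequence $(1/f^n)_n$, use surjectivity of the comparison map to extract a uniform denominator exponent $k$, specialize to the index $k+1$, unwind the equality in $R_f$ to get $f^K = a f^{K+1}$, and conclude $R$ would be zero-dimensional, contradicting \Cref{R-not-zero-dimensional}. You are a bit more explicit than the paper about stripping the nested truncations (surjectivity, choice of representative, and the localization equality) using that the goal $\bot$ is a proposition, but the content and the key specialization to a single index are identical.
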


\begin{proof}
  For $f : R$,
  we have ${(R^{\N})}^{D(f)} = {(R^{D(f)})}^\N = {(R_f)}^\N$,
  so the question is whether the canonical map
  \[ {(R^\N)}_f \to {(R_f)}^\N \]
  is an equivalence.
  If it is,
  for a fixed $f : R$,
  then the sequence $(1, \frac{1}{f}, \frac{1}{f^2}, \dots)$
  has a preimage,
  so there is an $n : \N$ such that
  for all $k : \N$,
  $\frac{a_k}{f^n} = \frac{1}{f^k}$ in $R_f$
  for some $a_k : R$.
  In particular, $\frac{a_{n+1}}{f^n} = \frac{1}{f^{n+1}}$ in $R_f$
  and therefore $a_{n+1} f^{n+1+\ell} = f^{n+\ell}$ in $R$ for some $\ell : \N$.
  This shows that $R$ is zero-dimensional
  (\Cref{zero-dimensional-ring})
  if $R^\N$ is weakly quasi-coherent.
  So we are done by \Cref{R-not-zero-dimensional}.
\end{proof}

\begin{proposition}[using \axiomref{loc}, \axiomref{sqc}, \axiomref{Z-choice}]%
  \label{non-wqc-module-family}
  The implication
  \[ M^{D(f)} = 0 \quad\Rightarrow\quad M_f = 0 \]
  does not hold for all $R$-modules $M$ and $f : R$.
  In particular,
  the map $M_f \to M^{D(f)}$ from \Cref{weakly-quasi-coherent-module}
  is not always injective.
\end{proposition}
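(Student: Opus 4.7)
The plan is to argue by contradiction and reduce to the non-zero-dimensionality of $R$ (\Cref{R-not-zero-dimensional}), following the same strategy as the proof of \Cref{RN-non-wqc}. Suppose, toward a contradiction, that the implication $M^{D(f)} = 0 \Rightarrow M_f = 0$ holds for every $R$-module $M$ and every $f : R$.

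For an arbitrary $f : R$, I would consider the $R$-module $M \colonequiv \prod_{n : \N} R/(f^n)$. The subtype $D(f) \subseteq \Spec R$ is equivalent to the proposition $\inv(f)$, because the identity is the only $R$-algebra endomorphism of $R$. A function $\varphi : D(f) \to M$ decomposes componentwise into functions $\varphi_n : D(f) \to R/(f^n)$. Under any $x : D(f)$ we have $\inv(f)$ and hence $\inv(f^n)$, so $R/(f^n) = 0$ and $\varphi_n(x) = 0$. Thus each $\varphi_n$ is the zero function, $\varphi = 0$, and $M^{D(f)} = 0$.

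The assumed implication then yields $M_f = 0$. Applied to the constant sequence $m \colonequiv (1)_{n : \N} \in M$, this produces $k : \N$ with $f^k \cdot m = 0$ in $M$, i.e.\ $f^k \in (f^n)$ in $R$ for every $n$. Specializing to $n = k + 1$ gives $a : R$ with $f^k = a f^{k+1}$. Since $f$ was arbitrary, this is precisely the defining condition for $R$ to be zero-dimensional (\Cref{zero-dimensional-ring}), contradicting \Cref{R-not-zero-dimensional}. The ``in particular'' clause is then immediate: in any counterexample the hypothesis $M^{D(f)} = 0$ forces the map $M_f \to M^{D(f)}$ to be zero, and its kernel equals the nontrivial module $M_f$.

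The main obstacle is guessing the right module; once $\prod_n R/(f^n)$ is chosen, the computation of $M^{D(f)}$ is automatic from the fact that a quotient by a unit is trivial, and the remainder of the argument is the same ``no uniform exponent'' phenomenon already exploited in \Cref{R-not-zero-dimensional} and \Cref{RN-non-wqc}.
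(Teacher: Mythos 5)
Your proof is correct and follows the same strategy as the paper's: take $M = \prod_{n:\N} R/(f^n)$, show $M^{D(f)} = 0$ because $f$ is invertible (equivalently nonzero) on $D(f)$, deduce $M_f = 0$ from the hypothetical implication, extract a uniform exponent $k$ with $f^k \in (f^{k+1})$, and contradict \Cref{R-not-zero-dimensional}. The only cosmetic difference is that the paper phrases $D(f)$ as $f \neq 0$ via \Cref{non-zero-invertible} and uses the $R$-algebra structure of $M$ to say $M_f = 0$ iff $f$ is nilpotent in $M$, whereas you apply the localization vanishing directly to the element $(1)_n$; these are the same computation.
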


\begin{proof}
  Assume that the implication always holds.
  We construct a family of $R$-modules,
  parametrized by the elements of $R$,
  and deduce a contradiction from the assumption
  applied to the $R$-modules in this family.

  Given an element $f : R$,
  the $R$-module we want to consider is
  the countable product
  \[ M(f) \colonequiv \prod_{n : \N} R/(f^n) \rlap{.} \]
  If $f \neq 0$ then $M(f) = 0$
  (using \Cref{non-zero-invertible}).
  This implies that the $R$-module $M(f)^{f \neq 0}$
  is trivial:
  any function $f \neq 0 \to M(f)$ can only assign the value $0$
  to any of the at most one witnesses of $f \neq 0$.
  By assumption, this implies that $M(f)_f$ is also trivial.
  Noting that
  $M(f)$ is not only an $R$-module
  but even an $R$-algebra in a natural way,
  we have
  \begin{align*}
    M(f)_f = 0
    &\;\Leftrightarrow\;
    \exists k : \N.\; \text{$f^k = 0$ in $M(f)$} \\
    &\;\Leftrightarrow\;
    \exists k : \N.\; \forall n : \N.\; f^k \in (f^n) \subseteq R \\
    &\;\Leftrightarrow\;
    \exists k : \N.\; f^k \in (f^{k + 1}) \subseteq R
    \rlap{.}
  \end{align*}

  In summary,
  our assumption implies that the ring $R$ is zero-dimensional
  (in the sense of \Cref{zero-dimensional-ring}).
  But this is not the case,
  as we saw in \Cref{R-not-zero-dimensional}.
\end{proof}

\begin{example}[using \axiomref{loc}, \axiomref{sqc}]
  It is not the case that
  for any pair of lines $L, L' \subseteq \bP^2$,
  the $R$-algebra $R^{L \cap L'}$ is
  as an $R$-module free of rank $1$.
\end{example}

\begin{proof}
  The $R$-algebra $R^{L \cap L'}$ is free of rank $1$
  if and only if the structure homomorphism
  $\varphi : R \to R^{L \cap L'}$ is bijective.
  We will show that it is not even always injective.

  Consider the lines
  \[ L = \{\, [x : y : z] : \bP^2 \mid z = 0 \,\} \]
  and
  \[ L' = \{\, [x : y : z] : \bP^2 \mid \varepsilon x + \delta y + z = 0 \,\}
     \rlap{,} \]
  where $\varepsilon$ and $\delta$ are elements of $R$
  with $\varepsilon^2 = \delta^2 = 0$.
  Consider the element $\varphi(\epsilon \delta) : R^{L \cap L'}$,
  which is the constant function $L \cap L' \to R$
  with value $\varepsilon \delta$.
  For any point $[x : y : z] : L \cap L'$,
  we have $z = 0$ and $\varepsilon x + \delta y = 0$.
  But also, by definition of $\bP^3$,
  we have $(x, y, z) \neq 0 : R^3$,
  so one of $x, y$ must be invertible.
  This implies $\delta \divides \varepsilon$ or $\varepsilon \divides \delta$,
  and in both cases we can conclude $\varepsilon \delta = 0$.
  Thus, $\varphi(\epsilon \delta) = 0 : R^{L \cap L'}$.

  If $\varphi$ was always injective
  then this would imply $\varepsilon \delta = 0$
  for any $\varepsilon, \delta : R$
  with $\varepsilon^2 = \delta^2 = 0$.
  In other words, the inclusion
  \[ \Spec R[X, Y]/(X^2, Y^2, XY) \hookrightarrow \Spec R[X, Y]/(X^2, Y^2) \]
  would be a bijection.
  But the corresponding $R$-algebra homomorphism is not an isomorphism.
\end{proof}

\printindex

\printbibliography

\end{document}